\documentclass[color]{tumphthesis}

\usepackage{duckuments}
\usepackage{epigraph}
\usepackage{lipsum}
\usepackage{tcolorbox}
\usepackage{mathtools}
\usepackage{aligned-overset}
\usepackage{hyperref}
\usepackage{dirtytalk}
\usepackage{graphicx, caption}
\usepackage{hyperref}
\usepackage{cleveref}
\usepackage{extarrows}
\usepackage{bbm}
\usepackage[dvipsnames]{xcolor}
\usepackage{pgfplots}

\thesistitle
{Quantitative Mean-Field Limits for Recent Variants of Consensus-Based Optimization}
{Der Titel meiner Masterarbeit}
\thesisauthor{Konstantin Niklas Haug}{Author}
\thesisdate{2026}{3}{30}
\supervisor{Prof. Dr. Massimo Fornasier, Jona Klemenc}
\examiner{Prof. Dr. Massimo Fornasier}
\date{30 March 2026}
\newenvironment{proof}[1][Proof]{ \par\noindent\textit{#1.}\ }{\hfill$\square$\par}

\begin{document}

\maketitle
\vspace*{5cm}
\noindent
I hereby declare that this thesis is my own work and that no other sources have been used except those clearly indicated and referenced.
\\
\\
\\
\\
Munich, \,  March 30, 2026 
\\
\\
\\
\\
Konstantin Haug
\providecommand*{\Covv}[1]{\operatorname{Cov}\left({#1}\right)}   
\providecommand*{\textCovv}[1]{\operatorname{Cov}({#1})}   

\providecommand*{\Covvn}[2]{\operatorname{Cov}_{#2}\left({#1}\right)}   
\providecommand*{\Var}[1]{\operatorname{Var}\left({#1}\right)}   
\providecommand*{\textVar}[1]{\operatorname{Var}({#1})}   

\providecommand*{\Uni}[1]{\operatorname{Uni}({#1})}
\providecommand*{\Span}{\operatorname{span}}     

\providecommand*{\len}[1]{\operatorname{length}({#1})}   
\providecommand{\subG}{\operatorname{subG}}
\providecommand*{\seqInt}[2]{[{#1}:{#2}]}   %

\providecommand{\Bdsymb}{\Gamma}                   
\providecommand{\Div}{\operatorname{div}}          
\providecommand{\bDiv}{{\Div}_{\Bdsymb}}           
\providecommand{\mDiv}{\operatorname{\sf div}}     
\providecommand{\curl}{\operatorname{{\bf curl}}}  
\providecommand{\plcurl}{\operatorname{{\bf curl}}_{2D}} 
\providecommand{\bcurl}{{\curl}_{\Bdsymb}}         
\providecommand{\bscurl}{\operatorname{curl}_{\Bdsymb}} 
\providecommand{\scurl}{\operatorname{curl}_{2D}}       
\providecommand{\mcurl}{\operatorname{{\bf\sf curl}}}   
\providecommand{\grad}{\operatorname{{\bf grad}}}       
\providecommand{\bgrad}{\grad_{\Bdsymb}}                
\providecommand{\mgrad}{\operatorname{{\bf\sf grad}}}   

\providecommand{\Dim}{\operatorname{dim}}            
\providecommand{\dim}{\Dim}
\providecommand{\Card}{\sharp}                       
\providecommand{\Vol}{\operatorname{Vol}}   
\providecommand{\vol}{\Vol}
\providecommand{\ker}{\operatorname{ker}}
\providecommand{\ran}{\operatorname{ran}}
\providecommand*{\Meas}[2][]{\operatorname{meas}_{#1}({#2})} 
\providecommand*{\meas}[2][]{\Meas{#1}{#2}}
\providecommand*{\dist}[2]{\operatorname{dist}({#1};{#2})}   
\providecommand*{\normaldist}[2]{\operatorname{ndist}({#1};{#2})}   
\providecommand*{\Dist}[2]{\dist{#1}{#2}}
\providecommand{\Det}{\operatorname{det}}                    
\providecommand{\Sgn}{\operatorname{sgn}}                    
\providecommand*{\Kern}[1]{{\rm Ker}({#1})}                  
\providecommand*{\Dom}[1]{{\rm Dom}({#1})}                   
\providecommand*{\Span}[1]{\operatorname{Span}\left\{{#1}\right\}}     
\providecommand*{\Convex}[1]{\operatorname{convex}\left\{{#1}\right\}} 
\providecommand{\Supp}{\operatorname{supp}}                            
\providecommand{\supp}{\Supp}
\providecommand{\WF}{\operatorname{WF}}                            

\providecommand{\sing}{\operatorname{sing}}                            
\providecommand*{\Image}[1]{\mathrm{Im}({#1})}                         
\providecommand*{\Range}[1]{\operatorname{Range}({#1})}                
\providecommand{\range}{\operatorname{range}}                
\providecommand{\rank}{\operatorname{rank}}                        
\providecommand{\Rank}{\rank}
\renewcommand{\Re}{\operatorname{Re}}             
\renewcommand{\Im}{\operatorname{Im}}             
\providecommand{\argmin}{\operatorname*{\arg\min}}  
\providecommand{\arginf}{\operatorname*{arginf}}  
\providecommand{\argmax}{\operatorname*{argmax}}  
\providecommand{\nnz}{\operatorname{nnz}}         
\providecommand{\Id}{\Op{Id}}                     
\providecommand{\diam}{\operatorname{diam}}
\providecommand{\diag}{\operatorname{diag}}
\providecommand{\Diag}[1]{\operatorname{Diag}\left({#1}\right)}
\providecommand{\esssup}{\operatorname*{esssup}}
\providecommand{\essinf}{\operatorname*{essinf}}
\providecommand{\limsup}{\operatorname*{limsup}}
\providecommand{\closed}[1]{\operatorname*{cl}\left({#1}\right)}
\providecommand{\Law}{\operatorname*{Law}}


\newcommand{\Va}{{\mathbf{a}}}
\newcommand{\Vb}{{\mathbf{b}}}
\newcommand{\Vc}{{\mathbf{c}}}
\newcommand{\Vd}{{\mathbf{d}}}
\newcommand{\Ve}{{\mathbf{e}}}
\newcommand{\Vf}{{\mathbf{f}}}
\newcommand{\Vg}{{\mathbf{g}}}
\newcommand{\Vh}{{\mathbf{h}}}
\newcommand{\Vi}{{\mathbf{i}}}
\newcommand{\Vj}{{\mathbf{j}}}
\newcommand{\Vk}{{\mathbf{k}}}
\newcommand{\Vl}{{\mathbf{l}}}
\newcommand{\Vm}{{\mathbf{m}}}
\newcommand{\Vn}{{\mathbf{n}}}
\newcommand{\Vo}{{\mathbf{o}}}
\newcommand{\Vp}{{\mathbf{p}}}
\newcommand{\Vq}{{\mathbf{q}}}
\newcommand{\Vr}{{\mathbf{r}}}
\newcommand{\Vs}{{\mathbf{s}}}
\newcommand{\Vt}{{\mathbf{t}}}
\newcommand{\Vu}{{\mathbf{u}}}
\newcommand{\Vv}{{\mathbf{v}}}
\newcommand{\Vw}{{\mathbf{w}}}
\newcommand{\Vx}{{\mathbf{x}}}
\newcommand{\Vy}{{\mathbf{y}}}
\newcommand{\Vz}{{\mathbf{z}}}


\providecommand{\Ba}{{\boldsymbol{a}}}
\providecommand{\Bb}{{\boldsymbol{b}}}
\providecommand{\Bc}{{\boldsymbol{c}}}
\providecommand{\Bd}{{\boldsymbol{d}}}
\providecommand{\Be}{{\boldsymbol{e}}}
\providecommand{\Bf}{{\boldsymbol{f}}}
\providecommand{\Bg}{{\boldsymbol{g}}}
\providecommand{\Bh}{{\boldsymbol{h}}}
\providecommand{\Bi}{{\boldsymbol{i}}}
\providecommand{\Bj}{{\boldsymbol{j}}}
\providecommand{\Bk}{{\boldsymbol{k}}}
\providecommand{\Bl}{{\boldsymbol{l}}}
\providecommand{\Bm}{{\boldsymbol{m}}}
\providecommand{\Bn}{{\boldsymbol{n}}}
\providecommand{\Bo}{{\boldsymbol{o}}}
\providecommand{\Bp}{{\boldsymbol{p}}}
\providecommand{\Bq}{{\boldsymbol{q}}}
\providecommand{\Br}{{\boldsymbol{r}}}
\providecommand{\Bs}{{\boldsymbol{s}}}
\providecommand{\Bt}{{\boldsymbol{t}}}
\providecommand{\Bu}{{\boldsymbol{u}}}
\providecommand{\Bv}{{\boldsymbol{v}}}
\providecommand{\Bw}{{\boldsymbol{w}}}
\providecommand{\Bx}{{\boldsymbol{x}}}
\providecommand{\By}{{\boldsymbol{y}}}
\providecommand{\Bz}{{\boldsymbol{z}}}


\newcommand{\VA}{{\mathbf{A}}}
\newcommand{\VB}{{\mathbf{B}}}
\newcommand{\VC}{{\mathbf{C}}}
\newcommand{\VD}{{\mathbf{D}}}
\newcommand{\VE}{{\mathbf{E}}}
\newcommand{\VF}{{\mathbf{F}}}
\newcommand{\VG}{{\mathbf{G}}}
\newcommand{\VH}{{\mathbf{H}}}
\newcommand{\VI}{{\mathbf{I}}}
\newcommand{\VJ}{{\mathbf{J}}}
\newcommand{\VK}{{\mathbf{K}}}
\newcommand{\VL}{{\mathbf{L}}}
\newcommand{\VM}{{\mathbf{M}}}
\newcommand{\VN}{{\mathbf{N}}}
\newcommand{\VO}{{\mathbf{O}}}
\newcommand{\VP}{{\mathbf{P}}}
\newcommand{\VQ}{{\mathbf{Q}}}
\newcommand{\VR}{{\mathbf{R}}}
\newcommand{\VS}{{\mathbf{S}}}
\newcommand{\VT}{{\mathbf{T}}}
\newcommand{\VU}{{\mathbf{U}}}
\newcommand{\VV}{{\mathbf{V}}}
\newcommand{\VW}{{\mathbf{W}}}
\newcommand{\VX}{{\mathbf{X}}}
\newcommand{\VY}{{\mathbf{Y}}}
\newcommand{\VZ}{{\mathbf{Z}}}


\providecommand{\BA}{{\boldsymbol{A}}}
\providecommand{\BB}{{\boldsymbol{B}}}
\providecommand{\BC}{{\boldsymbol{C}}}
\providecommand{\BD}{{\boldsymbol{D}}}
\providecommand{\BE}{{\boldsymbol{E}}}
\providecommand{\BF}{{\boldsymbol{F}}}
\providecommand{\BG}{{\boldsymbol{G}}}
\providecommand{\BH}{{\boldsymbol{H}}}
\providecommand{\BI}{{\boldsymbol{I}}}
\providecommand{\BJ}{{\boldsymbol{J}}}
\providecommand{\BK}{{\boldsymbol{K}}}
\providecommand{\BL}{{\boldsymbol{L}}}
\providecommand{\BM}{{\boldsymbol{M}}}
\providecommand{\BN}{{\boldsymbol{N}}}
\providecommand{\BO}{{\boldsymbol{O}}}
\providecommand{\BP}{{\boldsymbol{P}}}
\providecommand{\BQ}{{\boldsymbol{Q}}}
\providecommand{\BR}{{\boldsymbol{R}}}
\providecommand{\BS}{{\boldsymbol{S}}}
\providecommand{\BT}{{\boldsymbol{T}}}
\providecommand{\BU}{{\boldsymbol{U}}}
\providecommand{\BV}{{\boldsymbol{V}}}
\providecommand{\BW}{{\boldsymbol{W}}}
\providecommand{\BX}{{\boldsymbol{X}}}
\providecommand{\BY}{{\boldsymbol{Y}}}
\providecommand{\BZ}{{\boldsymbol{Z}}}

\newcommand{\nablabf}{\boldsymbol{\nabla}}
\newcommand{\alphabf}{\boldsymbol{\alpha}}
\newcommand{\betabf}{\boldsymbol{\beta}}
\newcommand{\gammabf}{\boldsymbol{\gamma}}
\newcommand{\deltabf}{\boldsymbol{\delta}}
\newcommand{\epsilonbf}{\boldsymbol{\epsilon}}
\newcommand{\varepsilonbf}{\boldsymbol{\varepsilon}}
\newcommand{\zetabf}{\boldsymbol{\zeta}}
\newcommand{\etabf}{\boldsymbol{\eta}}
\newcommand{\thetabf}{\boldsymbol{\theta}}
\newcommand{\varthetabf}{\boldsymbol{\vartheta}}
\newcommand{\iotabf}{\boldsymbol{\iota}}
\newcommand{\kappabf}{\boldsymbol{\kappa}}
\newcommand{\lambdabf}{\boldsymbol{\lambda}}
\newcommand{\mubf}{\boldsymbol{\mu}}
\newcommand{\nubf}{\boldsymbol{\nu}}
\newcommand{\xibf}{\boldsymbol{\xi}}
\newcommand{\pibf}{\boldsymbol{\pi}}
\newcommand{\varpibf}{\boldsymbol{\varpi}}
\newcommand{\rhobf}{\boldsymbol{\rho}}
\newcommand{\varrhobf}{\boldsymbol{\varrho}}
\newcommand{\sigmabf}{\boldsymbol{\sigma}}
\newcommand{\varsigmabf}{\boldsymbol{\varsigma}}
\newcommand{\taubf}{\boldsymbol{\tau}}
\newcommand{\upsilonbf}{\boldsymbol{\upsilon}}
\newcommand{\phibf}{\boldsymbol{\phi}}
\newcommand{\varphibf}{\boldsymbol{\varphi}}
\newcommand{\chibf}{\boldsymbol{\chi}}
\newcommand{\psibf}{\boldsymbol{\psi}}
\newcommand{\omegabf}{\boldsymbol{\omega}}
\newcommand{\Gammabf}{\boldsymbol{\Gamma}}
\newcommand{\Deltabf}{\boldsymbol{\Delta}}
\newcommand{\Thetabf}{\boldsymbol{\Theta}}
\newcommand{\Lambdabf}{\boldsymbol{\Lambda}}
\newcommand{\Xibf}{\boldsymbol{\Xi}}
\newcommand{\Pibf}{\boldsymbol{\Pi}}
\newcommand{\Psibf}{\boldsymbol{\Psi}}
\newcommand{\Sigmabf}{\boldsymbol{\Sigma}}
\newcommand{\Upsilonbf}{\boldsymbol{\Upsilon}}
\newcommand{\Phibf}{\boldsymbol{\Phi}}
\newcommand{\Omegabf}{\boldsymbol{\Omega}}


\newcommand{\VGamma}  {\mathbf{\Gamma}}
\newcommand{\VDelta}  {\mathbf{\Delta}}
\newcommand{\VTheta}  {\mathbf{\Theta}}
\newcommand{\VLambda} {\mathbf{\Lambda}}
\newcommand{\VXi}     {\mathbf{\Xi}}
\newcommand{\VPi}     {\mathbf{\Pi}}
\newcommand{\VPsi}    {\mathbf{\Psi}}
\newcommand{\VSigma}  {\mathbf{\Sigma}}
\newcommand{\VUpsilon}{\mathbf{\Upsilon}}
\newcommand{\VPhi}    {\mathbf{\Phi}}
\newcommand{\VOmega}  {\mathbf{\Omega}}

\newcommand{\BGamma}  {\boldsymbol{\mathit{\Gamma}}}
\newcommand{\BDelta}  {\boldsymbol{\mathit{\Delta}}}
\newcommand{\BTheta}  {\boldsymbol{\mathit{\Theta}}}
\newcommand{\BLambda} {\boldsymbol{\mathit{\Lambda}}}
\newcommand{\BXi}     {\boldsymbol{\mathit{\Xi}}}
\newcommand{\BPi}     {\boldsymbol{\mathit{\Pi}}}
\newcommand{\BPsi}    {\boldsymbol{\mathit{\Psi}}}
\newcommand{\BSigma}  {\boldsymbol{\mathit{\Sigma}}}
\newcommand{\BUpsilon}{\boldsymbol{\mathit{\Upsilon}}}
\newcommand{\BPhi}    {\boldsymbol{\mathit{\Phi}}}
\newcommand{\BOmega}  {\boldsymbol{\mathit{\Omega}}}

\newcommand{\Valpha}     {{\boldsymbol{\upalpha}}}
\newcommand{\Vbeta}      {{\boldsymbol{\upbeta}}}
\newcommand{\Vgamma}     {{\boldsymbol{\upgamma}}}
\newcommand{\Vdelta}     {\boldsymbol{\updelta}}
\newcommand{\Vepsilon}   {\boldsymbol{\upepsilon}}
\newcommand{\Vvarepsilon}{\boldsymbol{\upvarepsilon}}
\newcommand{\Vzeta}      {\boldsymbol{\upzeta}}
\newcommand{\Veta}       {\boldsymbol{\upeta}} %
\newcommand{\Vtheta}     {\boldsymbol{\uptheta}}
\newcommand{\Vvartheta}  {\boldsymbol{\upvartheta}}
\newcommand{\Viota}      {\boldsymbol{\upiota}}
\newcommand{\Vkappa}     {\boldsymbol{\upkappa}}
\newcommand{\Vlambda}    {\boldsymbol{\uplambda}}
\newcommand{\Vmu}        {\boldsymbol{\upmu}}
\newcommand{\Vnu}        {\boldsymbol{\upnu}}
\newcommand{\Vxi}        {\boldsymbol{\upxi}}
\newcommand{\Vpi}        {\boldsymbol{\uppi}}
\newcommand{\Vvarpi}     {\boldsymbol{\upvarpi}}
\newcommand{\Vrho}       {\boldsymbol{\uprho}}
\newcommand{\Vvarrho}    {\boldsymbol{\upvarrho}}
\newcommand{\Vsigma}     {\boldsymbol{\upsigma}}
\newcommand{\Vvarsigma}  {\boldsymbol{\upvarsigma}}
\newcommand{\Vtau}       {\boldsymbol{\uptau}}
\newcommand{\Vupsilon}   {\boldsymbol{\upupsilon}}
\newcommand{\Vphi}       {\boldsymbol{\upphi}}
\newcommand{\Vvarphi}    {\boldsymbol{\upvarphi}}
\newcommand{\Vchi}       {\boldsymbol{\upchi}}
\newcommand{\Vpsi}       {\boldsymbol{\uppsi}}
\newcommand{\Vomega}     {\boldsymbol{\upomega}}

\newcommand{\Vzero}      {\boldsymbol{0}}
\newcommand{\Vone}       {\boldsymbol{1}}

\newcommand{\Balpha}     {\boldsymbol{\alpha}}
\newcommand{\Bbeta}      {\boldsymbol{\beta}}
\newcommand{\Bgamma}     {\boldsymbol{\gamma}}
\newcommand{\Bdelta}     {\boldsymbol{\delta}}
\newcommand{\Bepsilon}   {\boldsymbol{\epsilon}}
\newcommand{\Bvarepsilon}{\boldsymbol{\varepsilon}}
\newcommand{\Bzeta}      {\boldsymbol{\zeta}}
\newcommand{\Beta}       {\boldsymbol{\eta}}                
\newcommand{\Btheta}     {\boldsymbol{\theta}}
\newcommand{\Bvartheta}  {\boldsymbol{\vartheta}}
\newcommand{\Biota}      {\boldsymbol{\iota}}
\newcommand{\Bkappa}     {\boldsymbol{\kappa}}
\newcommand{\Blambda}    {\boldsymbol{\lambda}}
\newcommand{\Bmu}        {\boldsymbol{\mu}}
\newcommand{\Bnu}        {\boldsymbol{\nu}}
\newcommand{\Bxi}        {\boldsymbol{\xi}}
\newcommand{\Bpi}        {\boldsymbol{\pi}}
\newcommand{\Bvarpi}     {\boldsymbol{\varpi}}
\newcommand{\Brho}       {\boldsymbol{\rho}}
\newcommand{\Bvarrho}    {\boldsymbol{\varrho}}
\newcommand{\Bsigma}     {\boldsymbol{\sigma}}
\newcommand{\Bvarsigma}  {\boldsymbol{\varsigma}}
\newcommand{\Btau}       {\boldsymbol{\tau}}
\newcommand{\Bupsilon}   {\boldsymbol{\upsilon}}
\newcommand{\Bphi}       {\boldsymbol{\phi}}
\newcommand{\Bvarphi}    {\boldsymbol{\phi}}
\newcommand{\Bchi}       {\boldsymbol{\chi}}
\newcommand{\Bpsi}       {\boldsymbol{\psi}}
\newcommand{\Bomega}     {\boldsymbol{\omega}}


\newcommand{\Amat}{{\rm A}}
\newcommand{\Bmat}{{\rm B}}
\newcommand{\Cmat}{{\rm C}}
\newcommand{\Dmat}{{\rm D}}
\newcommand{\Emat}{{\rm E}}
\newcommand{\Fmat}{{\rm F}}
\newcommand{\Gmat}{{\rm G}}
\newcommand{\Hmat}{{\rm H}}
\newcommand{\Imat}{{\rm I}}
\newcommand{\Jmat}{{\rm J}}
\newcommand{\Kmat}{{\rm K}}
\newcommand{\Lmat}{{\rm L}}
\newcommand{\Mmat}{{\rm M}}
\newcommand{\Nmat}{{\rm N}}
\newcommand{\Omat}{{\rm O}}
\newcommand{\Pmat}{{\rm P}}
\newcommand{\Qmat}{{\rm Q}}
\newcommand{\Rmat}{{\rm R}}
\newcommand{\Smat}{{\rm S}}
\newcommand{\Tmat}{{\rm T}}
\newcommand{\Umat}{{\rm U}}
\newcommand{\Vmat}{{\rm V}}
\newcommand{\Wmat}{{\rm W}}
\newcommand{\Xmat}{{\rm X}}
\newcommand{\Ymat}{{\rm Y}}
\newcommand{\Zmat}{{\rm Z}}


\newcommand{\Asf}{\mathsf{A}}
\newcommand{\Bsf}{\mathsf{B}}
\newcommand{\Csf}{\mathsf{C}}
\newcommand{\Dsf}{\mathsf{D}}
\newcommand{\Esf}{\mathsf{E}}
\newcommand{\Fsf}{\mathsf{F}}
\newcommand{\Gsf}{\mathsf{G}}
\newcommand{\Hsf}{\mathsf{H}}
\newcommand{\Isf}{\mathsf{I}}
\newcommand{\Jsf}{\mathsf{J}}
\newcommand{\Ksf}{\mathsf{K}}
\newcommand{\Lsf}{\mathsf{L}}
\newcommand{\Msf}{\mathsf{M}}
\newcommand{\Nsf}{\mathsf{N}}
\newcommand{\Osf}{\mathsf{O}}
\newcommand{\Psf}{\mathsf{P}}
\newcommand{\Qsf}{\mathsf{Q}}
\newcommand{\Rsf}{\mathsf{R}}
\newcommand{\Ssf}{\mathsf{S}}
\newcommand{\Tsf}{\mathsf{T}}
\newcommand{\Usf}{\mathsf{U}}
\newcommand{\Vsf}{\mathsf{V}}
\newcommand{\Wsf}{\mathsf{W}}
\newcommand{\Xsf}{\mathsf{X}}
\newcommand{\Ysf}{\mathsf{Y}}
\newcommand{\Zsf}{\mathsf{Z}}


\newcommand{\asf}{\mathsf{a}}
\newcommand{\bsf}{\mathsf{b}}
\newcommand{\csf}{\mathsf{c}}
\newcommand{\dsf}{\mathsf{d}}
\newcommand{\esf}{\mathsf{e}}
\newcommand{\fsf}{\mathsf{f}}
\newcommand{\gsf}{\mathsf{g}}
\newcommand{\hsf}{\mathsf{h}}
\newcommand{\isf}{\mathsf{i}}
\newcommand{\jsf}{\mathsf{j}}
\newcommand{\ksf}{\mathsf{k}}
\newcommand{\lsf}{\mathsf{l}}
\newcommand{\msf}{\mathsf{m}}
\newcommand{\nsf}{\mathsf{n}}
\newcommand{\osf}{\mathsf{o}}
\newcommand{\psf}{\mathsf{p}}
\newcommand{\qsf}{\mathsf{q}}
\newcommand{\rsf}{\mathsf{r}}
\newcommand{\ssf}{\mathsf{s}}
\newcommand{\tsf}{\mathsf{t}}
\newcommand{\usf}{\mathsf{u}}
\newcommand{\vsf}{\mathsf{v}}
\newcommand{\wsf}{\mathsf{w}}
\newcommand{\xsf}{\mathsf{x}}
\newcommand{\ysf}{\mathsf{y}}
\newcommand{\zsf}{\mathsf{z}}


\newcommand{\bbs}{{\boldsymbol{\mathsf{b}}}}
\newcommand{\cbs}{{\boldsymbol{\mathsf{c}}}}
\newcommand{\dbs}{\mathsf{d}}
\newcommand{\ebs}{{\boldsymbol{\mathsf{e}}}}
\newcommand{\fbs}{\mathsf{f}}
\newcommand{\gbs}{\mathsf{g}}
\newcommand{\hbs}{\mathsf{h}}
\newcommand{\ibs}{\mathsf{i}}
\newcommand{\jbs}{\mathsf{j}}
\newcommand{\kbs}{\mathsf{k}}
\newcommand{\lbs}{\mathsf{l}}
\newcommand{\mbs}{\mathsf{m}}
\newcommand{\nbs}{{\boldsymbol{\mathsf{n}}}}
\newcommand{\obs}{\mathsf{o}}
\newcommand{\pbs}{{\boldsymbol{\mathsf{p}}}}
\newcommand{\qbs}{\mathsf{q}}
\newcommand{\rbs}{\mathsf{r}}
\newcommand{\sbs}{\mathsf{s}}
\newcommand{\tbs}{{\boldsymbol{\mathsf{t}}}}
\newcommand{\ubs}{{\boldsymbol{\mathsf{u}}}}
\newcommand{\vbs}{{\boldsymbol{\mathsf{v}}}}
\newcommand{\wbs}{\mathsf{w}}
\newcommand{\xbs}{{\boldsymbol{\mathsf{x}}}}
\newcommand{\ybs}{{\boldsymbol{\mathsf{y}}}}
\newcommand{\zbs}{\mathsf{z}}


\providecommand{\Ca}{{\cal A}}
\providecommand{\Cb}{{\cal B}}
\providecommand{\Cc}{{\cal C}}
\providecommand{\Cd}{{\cal D}}
\providecommand{\Ce}{{\cal E}}
\providecommand{\Cf}{{\cal F}}
\providecommand{\Cg}{{\cal G}}
\providecommand{\Ch}{{\cal H}}
\providecommand{\Ci}{{\cal I}}
\providecommand{\Cj}{{\cal J}}
\providecommand{\Ck}{{\cal K}}
\providecommand{\Cl}{{\cal L}}
\providecommand{\Cm}{{\cal M}}
\providecommand{\Cn}{{\cal N}}
\providecommand{\Co}{{\cal O}}
\providecommand{\Cp}{{\cal P}}
\providecommand{\Cq}{{\cal Q}}
\providecommand{\Cr}{{\cal R}}
\providecommand{\Cs}{{\cal S}}
\providecommand{\Ct}{{\cal T}}
\providecommand{\Cu}{{\cal U}}
\providecommand{\Cv}{{\cal V}}
\providecommand{\Cw}{{\cal W}}
\providecommand{\Cx}{{\cal X}}
\providecommand{\Cy}{{\cal Y}}
\providecommand{\Cz}{{\cal Z}}

\providecommand{\CA}{{\cal A}}
\providecommand{\CB}{{\cal B}}
\providecommand{\CC}{{\cal C}}
\providecommand{\CD}{{\cal D}}
\providecommand{\CE}{{\cal E}}
\providecommand{\CF}{{\cal F}}
\providecommand{\CG}{{\cal G}}
\providecommand{\CH}{{\cal H}}
\providecommand{\CI}{{\cal I}}
\providecommand{\CJ}{{\cal J}}
\providecommand{\CK}{{\cal K}}
\providecommand{\CL}{{\cal L}}
\providecommand{\CM}{{\cal M}}
\providecommand{\CN}{{\cal N}}
\providecommand{\CO}{{\cal O}}
\providecommand{\CP}{{\cal P}}
\providecommand{\CQ}{{\cal Q}}
\providecommand{\CR}{{\cal R}}
\providecommand{\CS}{{\cal S}}
\providecommand{\CT}{{\cal T}}
\providecommand{\CU}{{\cal U}}
\providecommand{\CV}{{\cal V}}
\providecommand{\CW}{{\cal W}}
\providecommand{\CX}{{\cal X}}
\providecommand{\CY}{{\cal Y}}
\providecommand{\CZ}{{\cal Z}}


\newcommand{\Cav}{\boldsymbol {\cal A}}
\newcommand{\Cbv}{\boldsymbol {\cal B}}
\newcommand{\Ccv}{\boldsymbol {\cal C}}
\newcommand{\Cdv}{\boldsymbol {\cal D}}
\newcommand{\Cev}{\boldsymbol {\cal E}}
\newcommand{\Cfv}{\boldsymbol {\cal F}}
\newcommand{\Cgv}{\boldsymbol {\cal G}}
\newcommand{\Chv}{\boldsymbol {\cal H}}
\newcommand{\Civ}{\boldsymbol {\cal I}}
\newcommand{\Cjv}{\boldsymbol {\cal J}}
\newcommand{\Ckv}{\boldsymbol {\cal K}}
\newcommand{\Clv}{\boldsymbol {\cal L}}
\newcommand{\Cmv}{\boldsymbol {\cal M}}
\newcommand{\Cnv}{\boldsymbol {\cal N}}
\newcommand{\Cov}{\boldsymbol {\cal O}}
\newcommand{\Cpv}{\boldsymbol {\cal P}}
\newcommand{\Cqv}{\boldsymbol {\cal Q}}
\newcommand{\Crv}{\boldsymbol {\cal R}}
\newcommand{\Csv}{\boldsymbol {\cal S}}
\newcommand{\Ctv}{\boldsymbol {\cal T}}
\newcommand{\Cuv}{\boldsymbol {\cal U}}
\newcommand{\Cvv}{\boldsymbol {\cal V}}
\newcommand{\Cwv}{\boldsymbol {\cal W}}
\newcommand{\Cxv}{\boldsymbol {\cal X}}
\newcommand{\Cyv}{\boldsymbol {\cal Y}}
\newcommand{\Czv}{\boldsymbol {\cal Z}}


\providecommand{\bbA}{\mathbb{A}}
\providecommand{\bbB}{\mathbb{B}}
\providecommand{\bbC}{\mathbb{C}}
\providecommand{\bbD}{\mathbb{D}}
\providecommand{\bbE}{\mathbb{E}}
\providecommand{\bbF}{\mathbb{F}}
\providecommand{\bbG}{\mathbb{G}}
\providecommand{\bbH}{\mathbb{H}}
\providecommand{\bbI}{\mathbb{I}}
\providecommand{\bbJ}{\mathbb{J}}
\providecommand{\bbK}{\mathbb{K}}
\providecommand{\bbL}{\mathbb{L}}
\providecommand{\bbM}{\mathbb{M}}
\providecommand{\bbN}{\mathbb{N}}
\providecommand{\bbO}{\mathbb{O}}
\providecommand{\bbP}{\mathbb{P}}
\providecommand{\bbQ}{\mathbb{Q}}
\providecommand{\bbR}{\mathbb{R}}
\providecommand{\bbS}{\mathbb{S}}
\providecommand{\bbT}{\mathbb{T}}
\providecommand{\bbU}{\mathbb{U}}
\providecommand{\bbV}{\mathbb{V}}
\providecommand{\bbW}{\mathbb{W}}
\providecommand{\bbX}{\mathbb{X}}
\providecommand{\bbY}{\mathbb{Y}}
\providecommand{\bbZ}{\mathbb{Z}}

\providecommand{\reel}{\bbR}
\newcommand{\nat}{\bbN}
\newcommand{\ganz}{\bbZ}
\providecommand{\comp}{\bbC}


\providecommand{\FA}{\mathfrak{A}}
\providecommand{\FB}{\mathfrak{B}}
\providecommand{\FC}{\mathfrak{C}}
\providecommand{\FD}{\mathfrak{D}}
\providecommand{\FE}{\mathfrak{E}}
\providecommand{\FF}{\mathfrak{F}}
\providecommand{\FG}{\mathfrak{G}}
\providecommand{\FH}{\mathfrak{H}}
\providecommand{\FI}{\mathfrak{I}}
\providecommand{\FJ}{\mathfrak{J}}
\providecommand{\FK}{\mathfrak{K}}
\providecommand{\FL}{\mathfrak{L}}
\providecommand{\FM}{\mathfrak{M}}
\providecommand{\FN}{\mathfrak{N}}
\providecommand{\FO}{\mathfrak{O}}
\providecommand{\FP}{\mathfrak{P}}
\providecommand{\FQ}{\mathfrak{Q}}
\providecommand{\FR}{\mathfrak{R}}
\providecommand{\FS}{\mathfrak{S}}
\providecommand{\FT}{\mathfrak{T}}
\providecommand{\FU}{\mathfrak{U}}
\providecommand{\FV}{\mathfrak{V}}
\providecommand{\FW}{\mathfrak{W}}
\providecommand{\FX}{\mathfrak{X}}
\providecommand{\FY}{\mathfrak{Y}}
\providecommand{\FZ}{\mathfrak{Z}}

\providecommand{\Fa}{\mathfrak{a}}
\providecommand{\Fb}{\mathfrak{b}}
\providecommand{\Fc}{\mathfrak{c}}
\providecommand{\Fd}{\mathfrak{d}}
\providecommand{\Fe}{\mathfrak{e}}
\providecommand{\Ff}{\mathfrak{f}}
\providecommand{\Fg}{\mathfrak{g}}
\providecommand{\Fh}{\mathfrak{h}}
\providecommand{\Fi}{\mathfrak{i}}
\providecommand{\Fj}{\mathfrak{j}}
\providecommand{\Fk}{\mathfrak{k}}
\providecommand{\Fl}{\mathfrak{l}}
\providecommand{\Fm}{\mathfrak{m}}
\providecommand{\Fn}{\mathfrak{n}}
\providecommand{\Fo}{\mathfrak{o}}
\providecommand{\Fp}{\mathfrak{p}}
\providecommand{\Fq}{\mathfrak{q}}
\providecommand{\Fr}{\mathfrak{r}}
\providecommand{\Fs}{\mathfrak{s}}
\providecommand{\Ft}{\mathfrak{t}}
\providecommand{\Fu}{\mathfrak{u}}
\providecommand{\Fv}{\mathfrak{v}}
\providecommand{\Fw}{\mathfrak{w}}
\providecommand{\Fx}{\mathfrak{x}}
\providecommand{\Fy}{\mathfrak{y}}
\providecommand{\Fz}{\mathfrak{z}}

\newcommand*{\textEUSP}[2]{\langle{#1},{#2}\rangle} 

\newcommand*{\EUSP}[2]{\left<{#1},{#2}\right>} 
\newcommand*{\DP}[2]{\left<{#1},{#2}\right>} 
\newcommand*{\cross}[2]{{#1}\times{#2}}       

\providecommand*{\wt}[1]{\widetilde{#1}}
\providecommand*{\wh}[1]{\widehat{#1}}
\providecommand{\DS}{\displaystyle}
\newcommand{\dis}{\displaystyle}


\providecommand*{\abs}[1]{\left|{#1}\right|} 
\providecommand*{\N}[1]{\left\|{#1}\right\|} 
\providecommand*{\Nnormal}[1]{\|{#1}\|} 
\providecommand*{\Nbig}[1]{\big\|{#1}\big\|} 
\providecommand*{\NBig}[1]{\Big\|{#1}\Big\|} 
\providecommand*{\textN}[1]{\|{#1}\|} 

\newcommand*{\IP}[2]{\left({#1},{#2}\right)} 
\providecommand*{\NN}[1]{\N{#1}_{0}}         
\newcommand*{\SN}[1]{\left|{#1}\right|}      
\newcommand*{\textSN}[1]{|{#1}|}      

\providecommand*{\abs}[1]{\left|{#1}\right|} 
\newcommand*{\SP}[2]{\left({#1},{#2}\right)} 
\newcommand{\xLinf}[1]{L^{\infty}({#1})}
\newcommand{\NxLinf}[2]{\left\|{#1}\right\|_{\xLinf{#2}}}
\newcommand*{\NxLone}[2]{\left\|{#1}\right\|_{L^{1}({#2})}}
\newcommand*{\NxLtwo}[2]{\left\|{#1}\right\|_{L^{2}({#2})}}
\newcommand*{\Pol}[1]{\Cp_{#1}}
\newcommand*{\Spline}[2]{\Cs_{#1,#2}}

\newcommand*{\LRP}[1]{\left(#1\right)}
\newcommand*{\LRSP}[1]{\left[#1\right]}

\newcommand{\DX}{\mathrm{d}\Bx}
\newcommand{\dS}{\mathrm{d}S}
\providecommand{\ds}{\mathrm{d}\vec{\Bs}}

\newcommand{\compemb}{\overset{c}{\hookrightarrow}} 
\newcommand{\contemb}{\hookrightarrow}              


\newcommand*{\mat}[1]{\mathbf{#1}} 
\newcommand*{\Op}[1]{\mathsf{#1}} 
\newcommand*{\blf}[1]{\mathsf{#1}} 
\newcommand*{\cvec}[1]{\vec{#1}} 
\newcommand*{\conj}[1]{\overline{#1}} 

\newcommand*{\rst}[2]{\left.\mathstrut#1\right|_{#2}}

\newcommand{\defaultdomain}{\CX;\rho}
\newcommand{\defaultboundary}{\partial\defaultdomain}



\providecommand*{\Lp}[2][\defaultdomain]{L^{#2}({#1})}
\newcommand*{\Lpv}[2][\defaultdomain]{\BL^{#2}({#1})}
\newcommand*{\NLp}[3][\defaultdomain]{\N{#2}_{\Lp[#1]{#3}}}
\newcommand*{\NLpv}[3][\defaultdomain]{\N{#2}_{\Lpv[#1]{#3}}}

\newcommand*{\Ltwo}[1][\defaultdomain]{\Lp[#1]{2}}
\newcommand*{\Ltwov}[1][\defaultdomain]{\Lpv[#1]{2}}
\newcommand*{\NLtwo}[2][\defaultdomain]{\NLp[#1]{#2}{2}}
\newcommand*{\NLtwov}[2][\defaultdomain]{\NLpv[#1]{#2}{2}}
\newcommand*{\SPLtwo}[3][\defaultdomain]{\SP{#2}{#3}_{\Ltwo[#1]}}
\newcommand*{\Linf}[1][\defaultdomain]{L^{\infty}({#1})}
\newcommand*{\NLinf}[2][\defaultdomain]{\N{#2}_{\Linf[{#1}]}}

\newcommand{\defaultseqdomain}{\bbZ}
\newcommand*{\lp}[2][\defaultseqdomain]{l^{#2}({#1})}
\newcommand*{\lpv}[2][\defaultseqdomain]{\Bl^{#2}({#1})}
\newcommand*{\Nlp}[3][\defaultseqdomain]{\N{#2}_{\lp[#1]{#3}}}
\newcommand*{\Nlpv}[3][\defaultseqdomain]{\N{#2}_{\lpv[#1]{#3}}}
\newcommand*{\ltwo}[1][\defaultseqdomain]{\lp[#1]{2}}
\newcommand*{\ltwov}[1][\defaultseqdomain]{\lpv[#1]{2}}
\newcommand*{\Nltwo}[2][\defaultseqdomain]{\Nlp[#1]{#2}{2}}
\newcommand*{\Nltwov}[2][\defaultseqdomain]{\Nlpv[#1]{#2}{2}}
\newcommand*{\linf}[1][\defaultseqdomain]{l^{\infty}({#1})}
\newcommand*{\Nlinf}[2][\defaultseqdomain]{\N{#2}_{\linf[{#1}]}}

\newcommand*{\Wmp}[3][\defaultdomain]{W^{{#2},{#3}}({#1})}
\newcommand*{\Wmpv}[3][\defaultdomain]{\BW^{{#2},{#3}}({#1})}
\newcommand*{\bWmp}[4][\defaultdomain]{W_{#4}^{{#2},{#3}}({#1})}    
\newcommand*{\bWmpv}[4][\defaultdomain]{\BW_{#4}^{{#2},{#3}}({#1})}
\newcommand*{\zbWmp}[3][\defaultdomain]{\bWmp[#1]{#2}{#3}{0}}       
\newcommand*{\zbWmpv}[3][\defaultdomain]{\bWmpv[#1]{#2}{#3}{0}}
\newcommand*{\NWmp}[4][\defaultdomain]{\N{#2}_{\Wmp[#1]{#3}{#4}}}
\newcommand*{\NWmpv}[4][\defaultdomain]{\N{#2}_{\Wmpv[#1]{#3}{#4}}}
\newcommand*{\SNWmp}[4][\defaultdomain]{\SN{#2}_{\Wmp[#1]{#3}{#4}}}
\newcommand*{\SNWmpv}[4][\defaultdomain]{\SN{#2}_{\Wmp[#1]{#3}{#4}}}

\newcommand*{\Hm}[2][\defaultdomain]{H^{#2}({#1})}
\newcommand*{\Hmv}[2][\defaultdomain]{\BH^{#2}({#1})}
\newcommand*{\bHm}[3][\defaultdomain]{H_{#3}^{#2}({#1})}
\newcommand*{\zbHm}[3][\defaultdomain]{\bHm[#1]{#2}{0}}
\newcommand*{\bHmv}[3][\defaultdomain]{\BH_{#3}^{#2}({#1})}
\newcommand*{\zbHmv}[3][\defaultdomain]{\bHmv[#1]{#2}{0}}
\newcommand*{\NHm}[3][\defaultdomain]{{\N{#2}}_{\Hm[{#1}]{#3}}}
\newcommand*{\NHmv}[3][\defaultdomain]{{\N{#2}}_{\Hmv[{#1}]{#3}}}
\newcommand*{\SNHm}[3][\defaultdomain]{{\SN{#2}}_{\Hm[{#1}]{#3}}}
\newcommand*{\SNHmv}[3][\defaultdomain]{{\SN{#2}}_{\Hmv[{#1}]{#3}}}

\newcommand*{\Hone}[1][\defaultdomain]{\Hm[#1]{1}}
\newcommand*{\Honev}[1][\defaultdomain]{\Hmv[#1]{1}}
\newcommand*{\bHone}[2][\defaultdomain]{\bHm[#1]{1}{#2}}
\newcommand*{\bHonev}[2][\defaultdomain]{\bHmv[#1]{1}{#2}}
\newcommand*{\zbHone}[1][\defaultdomain]{\bHm[#1]{1}{0}}
\newcommand*{\zbHonev}[1][\defaultdomain]{\bHmv[#1]{1}{0}}
\newcommand*{\NHone}[2][\defaultdomain]{{\N{#2}}_{\Hone[{#1}]}}
\newcommand*{\NHonev}[2][\defaultdomain]{{\N{#2}}_{\Honev[{#1}]}}
\newcommand*{\SNHone}[2][\defaultdomain]{{\SN{#2}}_{\Hone[{#1}]}}
\newcommand*{\SNHonev}[2][\defaultdomain]{{\SN{#2}}_{\Honev[{#1}]}}

\newcommand{\hlb}{\frac{1}{2}}
\providecommand*{\Hh}[1][\defaultboundary]{\Hm[#1]{\hlb}}
\newcommand*{\Hhv}[1][\defaultboundary]{\Hmv[#1]{\hlb}}
\newcommand*{\NHh}[2][\defaultboundary]{\N{#2}_{\Hh[#1]}}
\newcommand*{\NHhv}[2][\defaultboundary]{\N{#2}_{\Hhv[#1]}}
\newcommand*{\SNHh}[2][\defaultboundary]{\SN{#2}_{\Hh[#1]}}
\newcommand*{\SNHhv}[2][\defaultboundary]{\SN{#2}_{\Hhv[#1]}}

\newcommand*{\Hdiv}[1][\defaultdomain]{\boldsymbol{H}(\Div,{#1})}
\newcommand*{\bHdiv}[2][\defaultdomain]{\boldsymbol{H}_{#2}(\Div,{#1})}
\newcommand*{\zbHdiv}[1][\defaultdomain]{\bHdiv[#1]{0}}
\newcommand*{\kHdiv}[1][\defaultdomain]{\boldsymbol{H}(\Div0,{#1})}
\newcommand*{\bkHdiv}[2][\defaultdomain]{\boldsymbol{H}_{#2}(\Div0,{#1})}
\newcommand*{\zbkHdiv}[1][\defaultdomain]{\bkHdiv[#1]{0}}
\newcommand*{\NHdiv}[2][\defaultdomain]{\N{#2}_{\Hdiv[#1]}}
\newcommand{\Honediv}[1][\defaultdomain]{\boldsymbol{H}^{1}_{0}(\Div0,{#1})}

\newcommand*{\Hcurl}[1][\defaultdomain]{\boldsymbol{H}(\curl,{#1})}
\newcommand*{\bHcurl}[2][\defaultdomain]{\boldsymbol{H}_{#2}(\curl,{#1})}
\newcommand*{\zbHcurl}[1][\defaultdomain]{\bHcurl[#1]{0}}
\newcommand*{\kHcurl}[1][\defaultdomain]{\boldsymbol{H}(\curl0,{#1})}
\newcommand*{\bkHcurl}[2][\defaultdomain]{\boldsymbol{H}_{#2}(\curl0,{#1})}
\newcommand*{\zbkHcurl}[1][\defaultdomain]{\bkHcurl[#1]{0}}
\newcommand*{\NHcurl}[2][\defaultdomain]{\N{#2}_{\Hcurl[#1]}}

\newcommand*{\Contm}[2][\defaultdomain]{C^{#2}({#1})}
\newcommand*{\Contmv}[2][\defaultdomain]{\BC^{#2}({#1})}
\newcommand*{\NContm}[3][\defaultdomain]{\N{#2}_{\Cm[#1]{#3}}}
\newcommand*{\NContmv}[3][\defaultdomain]{\N{#2}_{\Cmv[#1]{#3}}}

\newcommand{\period}{]0,T[}
\newcommand{\cperiod}{]0,T[}
\newcommand*{\TLtwo}[2][\period]{L^2({#1};{#2})}
\newcommand*{\TLinf}[2][\period]{L^{\infty}({#1};{#2})}
\newcommand*{\TCont}[2][\cperiod]{C^{0}({#1};{#2})}
\newcommand*{\THm}[3][\period]{H^{#2}({#1};{#3})}

\newcommand*{\BdV}[1][\defaultdomain]{BV_{\mathrm{loc}}({#1})}


\newcommand*{\jump}[2][]{\left[{#2}\right]_{#1}}
\newcommand*{\avg}[2][]{\left\{{#2}\right\}_{#1}}

\newcommand*{\Dp}[2]{\frac{\partial{#1}}{\partial{#2}}}
\newcommand*{\DDp}[2]{\frac{\partial^2{#1}}{\partial{#2}^2}}
\newcommand*{\DpDp}[3]{\frac{\partial^2{#1}}{\partial{#2}\partial{#3}}}
\newcommand*{\dtemp}[1]{\frac{\partial {#1}}{\partial t}}
\newcommand*{\ddtemp}[1]{\frac{\partial^2 {#1}}{\partial t^2}}

\newcommand{\MFH}{{\overbar{X}}}
\newcommand{\cons}{{X^\alpha}}
\newcommand{\oa}{\omega_\alpha}
\newcommand{\Sum}[2]{\sum_{#1=1}^{#2}}
\newcommand{\Gnorm}[1]{\N{\omega_\alpha}_{L^1({#1})}}

\newcommand{\supk}{\underset{k \leq K}{\sup}}
\newcommand{\maxk}{\underset{k \leq K}{\max}}
\newcommand{\maxu}[2]{\underset{1 \leq {#1} \leq {#2}}{\max}}
\newcommand{\supu}[2]{\underset{ 0 \leq {#1} \leq {#2}}{\sup}}
\newcommand{\overleq}[1]{\overset{#1}{\leq}}
\newcommand{\eck}[1]{[#1]}
\newcommand{\bigeck}[1]{\big[ #1 \big]}
\newcommand{\Bigeck}[1]{\Big[ #1 \Big]}
\newcommand{\Biggeck}[1]{\Bigg[ #1 \Bigg]}
\newcommand{\biggeck}[1]{\bigg[ #1 \bigg]}
\newcommand{\rou}[1]{( #1 )}
\newcommand{\bigrou}[1]{\big( #1 \big)}
\newcommand{\Bigrou}[1]{\Big( #1 \Big)}
\newcommand{\Biggrou}[1]{\Bigg( #1 \Bigg)}
\newcommand{\biggrou}[1]{\bigg( #1 \bigg)}
\newcommand{\supttj}{\underset{t \in [t_j, t_{j+1})}{\sup}}
\newcommand{\suptT}{\underset{t \in [0, T]}{\sup}}
\newcommand{\supjJ}{\underset{0 \leq j \leq J}{\sup}}
\newcommand{\supr}[3]{\underset{#1 \leq #2 \leq #3}{\sup}}
\newcommand{\empIPSd}{\mu_{\mathcal{X}^N_k}}
\newcommand{\empIPSc}{\mu_{\mathcal{X}^N_t}}
\newcommand{\MempIPSd}{\mu_{\overbar{\mathcal{X}}^N_k}}
\newcommand{\MempIPSc}{\mu_{\overbar{\mathcal{X}}^N_t}}
\newcommand{\IPS}[1]{\mathcal{X}^N_{#1}}
\newcommand{\MIPS}[1]{\overbar{\mathcal{X}}^N_{#1}}
\newcommand{\Emp}[1]{\mu_{\mathcal{X}^N_{#1}}}
\newcommand{\MEmp}[1]{\mu_{\overbar{\mathcal{X}}^N_{#1}}}

\newcommand{\PRd}{\mathcal{P}(\mathbb{R}^d)}
\newcommand{\iid}{\operatorname{i.i.d.}} 
\newcommand{\ProbStd}{\rou{\Omega, \mathcal{F}, \mathbb{P}}}
\newcommand{\standfilt}{\rou{\mathcal{F}_t}_{t\geq 0}}
\newcommand{\bnuak}{\overbar{\nu}^N_{k}}
\newcommand{\brhoak}{\overbar{\rho}^N_{k}}
\newcommand{\nuak}{\nu^{\alpha}_k}
\newcommand{\orho}{\overbar{\rho}}
\newcommand{\mnu}{\overline{\nu}^N}
\newcommand{\mrho}{\overbar{\rho}^N}

\providecommand{\EMP}{\operatorname*{EMP}}

\newcommand{\tiT}{\Tilde{T}}
\newcommand{\suptTit}{\underset{t \in [0, \tiT]}{\sup}}

\newcommand{\overbar}[1]{\makebox[0pt]{$\phantom{#1}\mkern 1.5mu\overline{\mkern-1.5mu\phantom{#1}\mkern-1.5mu}\mkern 0.2mu$}#1}

\acknowledgments
I would like to express my sincere gratitude to Professor Dr. Massimo Fornasier for supervising this thesis and for introducing me to the fascinating field of consensus-based optimization.

I am also very grateful to Jona Klemenc for many helpful discussions, thoughtful advice, and constant support throughout this work.

\abstractpage
Consensus-based optimization (CBO) is a robust and flexible zero-order, multi-particle class of methods devised to provide provable solutions to high-dimensional global optimization problems, including truly nonconvex and/or nonsmooth cases. The algorithmic framework is grounded in a principled trade-off between stochastic exploration of the search space and deterministic contraction toward a consensus point, the latter being defined, via the Laplace principle, as a proxy for the global minimizer. This thesis is concerned with three recently established variants of CBO. The $\delta$-\textbf{CBO} method is similar to the classic CBO method, with the difference that the diffusion function is constant. The \textbf{consensus freezing} (CF) scheme can be understood as a piecewise CBO method. The overall time horizon is divided into intervals in which the consensus point is fixed. The \textbf{consensus hopping} (CH) scheme is related to CF and can be understood as a deterministic iterative scheme that jumps at each iteration to a Gaussian distribution centered at the consensus point of the previous distribution.
The central mathematical contribution of this thesis is the rigorous derivation of propagation of chaos results for all three schemes. For each variant, we establish that the empirical measure of the interacting particle system converges, as the number of particles $N$ approaches infinity, to the solution of the corresponding mean-field equation. Two regimes are treated systematically: first, on a high-probability set on which the empirical measures are uniformly bounded; second, in the unrestricted setting, where moment bounds and a careful treatment of the inter-particle dependence yield convergence without imposing conditioning on favorable events. The propagation of chaos estimates are derived by the well-known coupling method. Together, these results provide quantitative propagation of chaos estimates for all three CBO variants and establish that the particle systems faithfully approximate the idealized mean-field dynamics in the large-particle limit.
Finally, we demonstrate how the propagation of chaos results can be combined with recently established global convergence results for the mean-field dynamics to derive high-probability convergence guarantees for the empirical mean of the interacting particle systems.

\tableofcontents

\mainmatter\setcounter{page}{1}
\chapter{Introduction}
\label{chap:intro}

Consensus-based optimization (CBO) is a class of stochastic particle methods designed to solve optimization problems of the form
\begin{align*}
    \text{(arg)}\underset{x \in \mathbb{R}^d}{\min} \, f(x)
\end{align*}
for a possibly nonconvex or nonsmooth objective function $f$. Introduced in \cite{Pinnau2017ALimit}, the core idea is to model optimization as the collective evolution of an interacting particle system in which particles are driven toward a weighted consensus point — an average of particle positions that puts more weight in regions where the objective function is small — while stochastic noise promotes global exploration of the search space. The analytical properties and first global convergence results, established for example in \cite{Fornasier2024Consensus-BasedGlobally}, have made CBO a subject of growing mathematical interest.
Convergence proofs of CBO are typically done on a mean-field level.
This raises an important question underlying the theoretical validity of any CBO scheme: \textit{Do the finite-particle dynamics faithfully approximate the mean-field limit as the number of particles $N$ grows?} This is the content of
propagation of chaos: the phenomenon, first proposed by Kac \cite{Kac1956FOUNDATIONSTHEORY} and developed intensively by McKean \cite{McKean1967PropagationEquations}, in which particles in a large interacting system become asymptotically independent, with their marginal distributions converging to the law of a single limiting process. Establishing propagation of chaos quantitatively — with an explicit convergence rate in the particle number $N$ — is both of theoretical importance and of direct practical relevance, as it underpins the use of the mean-field process as a tractable surrogate for the finite-particle system and feeds into global convergence guarantees for the optimization algorithm, as can be seen in \cite[Theorem 3.8]{Fornasier2024Consensus-BasedGlobally}.

This thesis is devoted to proving quantitative propagation of chaos for three CBO variants that have recently been proposed: the $\delta$-\textbf{CBO} scheme \cite{Fornasier2026FromConvergence}, the \textbf{consensus freezing scheme} (CF) \cite{Fornasier2026FromConvergence}, and the \textbf{consensus hopping scheme} (CH) \cite{Riedl2023GradientNeed}. These three schemes form a hierarchy of decreasing structural proximity to the classical CBO dynamics
\begin{align}
    dX^i_t = -\lambda (X^i_t - X^\alpha(\rho^N_t))dt + \sigma \N{X^i_t- X^\alpha(\rho^N_t)} dB^i_t, \quad &X^i_0 \sim \orho_0, \quad i = 1, ..., N,  \\
    & X^\alpha(\rho^N_t) = \frac{\sum_{i=1}^N X^i_t \exp(-\alpha f(X^i_t))}{\sum_{i=1}^N \exp(- \alpha f(X^i_t))}, \notag
    \label{Intro:ClassicalCBO} 
\end{align} 
where $\rho^N_t = \frac{1}{N} \sum^N_{i = 1} \delta_{X^i_t}$ denotes the empirical measure of all particles at time $t$. The constants $\lambda >0$ and $\sigma >0$ are the drift and diffusion parameters, respectively.
The $\delta$-CBO scheme is the closest to the classical CBO scheme, differing only in that the diffusion coefficient is a fixed constant rather than depending on the distance from the consensus.
\begin{align}
     dX^i_t = - s \cdot \lambda (X^i_t - X^\alpha(\rho^N_{t}))dt + \sqrt{s}\delta dB^i_t,  \quad X^i_0 \sim \orho_0, \quad i = 1, ..., N.
     \tag{$\delta$-CBO}
\end{align}
The consensus freezing scheme operates in piecewise continuous time, with the consensus point being fixed over uniformly spaced time intervals. 
This reduces the dynamics within each interval to an Ornstein–Uhlenbeck process that admits an explicit strong solution. At each freezing interval, the dynamics are
\begin{align*}
    dX^i_t = - s \cdot \lambda (X^i_t - X^\alpha(\rho^N_{t_j}))dt + \sqrt{s}\delta dB^i_t,  \quad \forall t \in [t_j, t_{j+1}), \quad X^i_0 \sim \orho_0, \quad  i = 1, ..., N. \tag{CF}
\end{align*}
The hopping scheme operates in discrete time: at step 
$k$, the weighted average is computed with respect to the Gaussian measure $\rho_k = \mathcal{N}(x_{k-1}^{HS}, \sigma^2 I_d)$, which is determined by the previous consensus point $x_{k-1}^{HS}$:
\begin{align*}
    & x^{HS}_k = X^\alpha (\rho_{k}), \, \, \rho_{k} = \mathcal{N}(x^{HS}_{k-1}, \sigma^2 I_d), \quad \text{for } 1 \leq k \in \mathbb{N}, \tag{CH}\\
    & x^{HS}_0 = x_0. 
\end{align*}

For all three schemes, we establish convergence of the empirical particle distributions toward their respective mean-field limits in Wasserstein-$p$ distance, uniformly over finite time horizons. Under sufficiently high moment assumptions, this convergence is obtained at the rate $\mathcal{O}(N^{-1/2})$, while in the unrestricted setting weaker moment assumptions lead to slower rates. The analysis proceeds along two different ideas that correspond to the two main chapters of the thesis.

In Chapter \ref{chap:PropGoodSets}, following the ideas presented in \cite{Fornasier2024Consensus-BasedGlobally}, propagation of chaos is established conditionally on a high-probability set $\Omega_M$ on which the empirical measures of both the interacting and mean-field particle systems have uniformly bounded $p$th moments. Although the probability satisfies $\mathbb{P}(\Omega_M)\xrightarrow[]{M \rightarrow \infty} 1$, the restriction to such good trajectories leaves open the question of unconditional estimates. Chapter \ref{chap:PropChaUnrestricted} solves this by developing an unrestricted approach, following the ideas of \cite{Gerber2025Mean-fieldSampling}, in which the expected consensus point distance is split according to a time-dependent set $\Omega_{N,t}$ on which the empirical measure of just the independent mean-field copies has bounded $p$th moments. This allows us to use a law-of-large-numbers-type bound to ensure $\mathbb{P}(\Omega \setminus \Omega_{N,t}) \xrightarrow[]{N \rightarrow \infty} 0$. Together with an improved Wasserstein stability estimate for the consensus point, which only requires one of the two measures to have uniformly bounded moments, this leads to unrestricted propagation of chaos.

For propagation of chaos in Wasserstein-$p$ distance, the unrestricted approach requires bounded moments of order $q \geq p^2 +p$ to achieve a rate $\mathcal{O}(N^{-1/2})$, whereas for $2p \leq q < p^2+p$ it yields the slower rate $\mathcal{O}\rou{N^{-(q-p)/2p^2}}$. By contrast, the restricted approach requires only bounded moments of order $p$ to achieve the rate $\mathcal{O}(N^{-1/2})$. This distinction has concrete consequences: in the relevant case of propagation of chaos in Wasserstein-$2$ distance under bounded fourth moments, the restricted approach achieves the sharp rate $\mathcal{O}(N^{-1/2})$, while the unrestricted approach yields the slower rate $\mathcal{O}(N^{-1/4})$.

As a main application of the propagation of chaos results, in Section \ref{section:globalconvergence} we demonstrate how they combine with the global convergence results of \cite{Fornasier2026FromConvergence} for the mean-field dynamics to yield high-probability convergence guarantees for the empirical mean of the interacting particle systems toward the mean $\boldsymbol\mu_\infty^\alpha$ of the invariant measure $\boldsymbol\rho^\alpha_\infty$, which concentrates near the global minimizer as $\alpha \to \infty$. The core argument is a triangle inequality separating the statistical error — bounded by our propagation of chaos estimates\footnote{We assume the initial distribution to have bounded moments of sufficiently large order, i.e. $q \geq p^2 +p$ for Wasserstein-$p$ distance.} — from the deterministic exponential convergence of the mean-field limit to stationarity, yielding an explicit high-probability bound on the distance of the empirical mean from $\boldsymbol\mu^\alpha_\infty$.

The thesis is organized as follows. Chapter \ref{chap:prelim} collects the necessary mathematical background, including basic martingale theory and the Burkholder–Davis–Gundy inequalities, \\Wasserstein distances and empirical measures, the general theory of interacting particle systems and the coupling method for proving propagation of chaos, and the classical CBO framework together with the key analytic properties of the consensus point. Chapter \ref{chap:PropGoodSets} then treats the three schemes in turn, establishing restricted propagation of chaos for each. Chapter \ref{chap:PropChaUnrestricted} develops the unrestricted theory, refining selected previous results and applying them to obtain unconditional propagation of chaos estimates for all three schemes. Finally, Section \ref{section:globalconvergence} applies these results to derive global convergence guarantees for the freezing and hopping interacting particle systems.

\section{Related Work}
In general, the consensus point $X^\alpha(\rho^N_t)$ is not globally Lipschitz continuous. Consequently, classical propagation of chaos results, such as McKean’s theorem \cite[Theorem 3.1]{Chaintron2022PropagationApplications}, are not directly applicable. 
The seminal papers \cite{Pinnau2017ALimit, Carrillo2018AnMethod} postulated that propagation of chaos holds for \eqref{Intro:ClassicalCBO} and that its mean-field process is governed by the SDE
\begin{align}
    d\overbar{X}_t = -\lambda (\overbar{X}_t - X^\alpha(\orho_t))dt + \sigma \N{\overbar{X}_t- X^\alpha(\orho_t)} dB_t, \quad X_0 \sim \orho_0. \label{Intro:ClassicalCBOMeanFieldLimit}
\end{align}
Furthermore, in both works \cite{Pinnau2017ALimit, Carrillo2018AnMethod}, the authors show that the distribution of the mean-field process \eqref{Intro:ClassicalCBOMeanFieldLimit} $\orho_t$ satisfies the nonlinear and nonlocal Fokker-Planck equation
\begin{align}
    \partial_t \orho_t = \lambda(\nabla \cdot (x-X^\alpha(\orho_t))\orho_t) + \frac{\sigma^2}{2}\Delta\bigrou{\N{x-X^\alpha(\orho_t)}^2 \, \orho_t}. \label{Intro:FPE}
\end{align}
A rigorous proof of propagation of chaos of \eqref{Intro:ClassicalCBO} toward \eqref{Intro:ClassicalCBOMeanFieldLimit} was left as an open problem. In recent years, several proofs establishing propagation of chaos for consensus-based optimization methods have been developed \cite{Fornasier2024Consensus-BasedGlobally, Gerber2025Mean-fieldSampling, Huang2022OnOptimization}.
The work presented in \cite{Huang2022OnOptimization} proves propagation of chaos for the classical CBO \eqref{Intro:ClassicalCBO} over a finite time horizon $[0, T]$ using Prokhorov's theorem. 
The authors write 
\begin{align*}
    (\rho_t^N)_t = \frac{1}{N} \sum^N_{i=1}\delta_{(X^i_t)_t} \quad \text{and} \quad  \mathcal{L}\bigrou{(\rho^N_t)_t} := \Law \bigrou{(\rho^N_t)_t} \in \mathcal{P}\bigrou{\mathcal{P}(C([0, T]; \mathbb{R}^d))}
\end{align*}
and show that the sequence $\big \{ \mathcal{L}\bigrou{(\rho^N_t)_t} \big \}_N$ is tight in $\mathcal{P}\bigrou{\mathcal{P}(C([0, T]; \mathbb{R}^d))}$. By Prokhorov's theorem, it follows that there exists a subsequence of $\big \{ \mathcal{L}\bigrou{(\rho^N_t)_t} \big \}_N$ that converges in law to a random measure $(\rho_t)_t$. Afterwards they show that this measure solves the to the mean-field process \eqref{Intro:ClassicalCBOMeanFieldLimit} corresponding Fokker-Planck equation \eqref{Intro:FPE} in the weak sense.
This first result on propagation of chaos has the problem of not providing a quantitative estimate in terms of the number of particles $N$. Hence, we cannot deduce a quantitative convergence result for the finite particle system to the global minimizer. \newline
A solution for this problem is presented in \cite{Fornasier2024Consensus-BasedGlobally}. Here,  quantitative propagation of chaos in Wasserstein-$2$ distance is proven with the coupling method \cite[Definition 4.1]{Chaintron2022PropagationMethods} using a probabilistic approach. To circumvent the non-Lipschitzness of the consensus point, they introduce the set
\begin{align*}
    \Omega_M := \bigg\{ \omega \in \Omega : \suptT \,  \frac{1}{N} \Sum{i}{N} \max \Big\{ \N{X^i_t}^4 , \N{\overbar{X}^i_t}^4 \Big \} \leq M \bigg\}, 
\end{align*}
for which they show that $\mathbb{P}(\Omega_M)\xrightarrow[]{M \rightarrow \infty} 1$. Upon this result, they show propagation of chaos by coupling restricted to the high-probability set $\Omega_M$.

\begin{align}
    \underset{1 \leq i \leq N}{\max}\, \suptT \,\mathbb{E}  \biggeck{ \N{X^i_t - \overbar{X}^i_t}^2 \Big | \Omega_M} \leq CN^{-1}, \label{Intro:FKREstimate}
\end{align}
for some constant $C>0$ independent of $N$. However, this still left open the question whether one can prove propagation of chaos for CBO without having to make restrictions on a high-probability set.\newline
The answer to this question is presented in \cite{Gerber2025Mean-fieldSampling}, where the authors remove the conditioning on a set $\Omega_M$ and therefore provide an unrestricted propagation of chaos result for \eqref{Intro:ClassicalCBO}. If the initial distribution $\orho_0$ has bounded moments of all orders, they show that for every $p \geq 1$
\begin{align*}
   \underset{1 \leq i \leq N}{\max}\, \mathbb{E}  \biggeck{  \suptT \, \N{X^i_t - \overbar{X}^i_t}^p} \leq CN^{- \frac{p}{2}},
\end{align*}
see \cite[Theorem 2.6]{Gerber2025Mean-fieldSampling}. For initial distributions with only finitely many bounded moments, a slower rate is obtained, see \cite[Remark 2.7]{Gerber2025Mean-fieldSampling}. In particular, the speed of convergence of \eqref{Intro:FKREstimate} is preserved without restricting to $\Omega_M$. Observe that in each of the aforementioned results, the constant $C$ exhibits an exponential dependence on the final time $T$. This exponential growth prevents these estimates from being applicable to propagation of chaos over an infinite time horizon. \newline
Recently, there have been several results on uniform-in-time propagation of chaos for consensus based optimization \cite{Bayraktar2025Uniform-in-timeOptimization, Huang2025Uniform-in-timeOptimization, Gerber2026Uniform-in-timeOptimization}. We postpone the discussion of these results to Section \ref{Sec:Outlook}.

\section*{Notation}

\begin{itemize}

    \item $\|\cdot\|$ denotes the Euclidean norm on 
    $\mathbb{R}^d$, i.e. $\N{x} := \Bigrou{\sum_{l=1}^d |x_l|^2}^{1/2}$ 
    for $x \in \mathbb{R}^d$. This is the only norm used throughout 
    this thesis unless explicitly stated otherwise.

    \item For a matrix $A \in \mathbb{R}^{d \times d}$, $\mathrm{tr}(A)$ 
    denotes its trace.

    \item For two real numbers $a, b$, we may write 
    $a \vee b := \max\{a,b\}$ and $a \wedge b := \min\{a,b\}$.

    \item For a set $A \subseteq \Omega$, $\mathbbm{1}_A$ denotes 
    its indicator function.

    \item $\Gamma(p) := \int_0^\infty t^{p-1}e^{-t}\,dt$ denotes 
    the Gamma function.

    \item $(\Omega, \mathcal{F}, \mathbb{P})$ denotes a complete 
    filtered probability space. Expectations with respect to 
    $\mathbb{P}$ are written $\mathbb{E}[\cdot]$.

    \item $\mathcal{P}(E)$ denotes the set of all Borel probability 
    measures on a measurable space $(E, \mathcal{E})$. For $p > 0$, 
    the space of probability measures on $\mathbb{R}^d$ with finite 
    $p$th moment is
    \begin{align*}
        \mathcal{P}_p(\mathbb{R}^d) := \bigg\{ \mu \in 
        \mathcal{P}(\mathbb{R}^d) : \int_{\mathbb{R}^d} 
        \|x\|^p\, d\mu(x) < \infty \bigg\}.
    \end{align*}

    \item For $\mu \in \mathcal{P}(\mathbb{R}^d)$ and a measurable 
    function $\varphi$, we write 
    $\mathbb{E}_\mu[\varphi(x)] := \int_{\mathbb{R}^d} \varphi(x)\, 
    d\mu(x)$, abbreviated 
    to $\mathbb{E}[\varphi(x)]$ when the measure is clear from context.

    \item The $p$th moment of $\mu \in \mathcal{P}_p(\mathbb{R}^d)$ 
    is denoted $M_p(\mu) := \mathbb{E}_\mu[\|x\|^p]$, abbreviated 
    to $M_p$ when the measure is clear from context.

    \item For interacting particle systems $(\IPS{t
    })_t$, we often denote their empirical measure with $\rho_t^N := \mu_{\IPS{t}} = \frac{1}{N}\sum_{i=1}^N 
    \delta_{X^i_t}$ when $(\IPS{t})_t$ is clear from context

    \item For $(\MIPS{t})_t$ defined by independent and identically distributed copies $\overbar{X}_t^1, \ldots, \overbar{X}_t^N 
    \sim \orho_t$, their empirical measure is written $\orho^N_t := \mu_{\overline{\mathcal{X}}^N_t} =  \frac{1}{N}\sum_{i=1}^N 
    \delta_{\overline{X}^i_t} $ when $(\MIPS{t})_t$ is clear from context

    \item $\mu^{\otimes N}$ denotes the $N$-fold product of 
    $\mu$

    \item $\mathrm{Law}(X)$ denotes the distribution of a random 
    variable $X$. We write $X \sim \mu$ to mean $\mathrm{Law}(X) 
    = \mu$.

     \item $f : \mathbb{R}^d \to \mathbb{R}_+$ denotes the objective function to be minimized, with unique global minimizer $x^* \in \mathbb{R}^d$. Throughout, $f$ is assumed to satisfy \eqref{A1}--\eqref{A3} of Section \ref{Pre:ToolsForMeanField}.

    \item $\omega_\alpha(x) := e^{-\alpha f(x)}$ for $\alpha > 0$.
    
    \item $\|\omega_\alpha\|_{L^1(\mu)} := \int_{\mathbb{R}^d} 
    \omega_\alpha(x)\, d\mu(x)$.

     \item $X^\alpha(\mu) := \mathbb{E}_{\eta^\alpha_\mu}[x] = 
    \frac{\int_{\mathbb{R}^d} x\,\omega_\alpha(x)\,d\mu(x)}
    {\int_{\mathbb{R}^d} \omega_\alpha(x)\,d\mu(x)}$ 
    denotes the consensus point of $\mu$.

    \item $\eta^\alpha_\mu := \frac{\omega_\alpha \mu}
    {\|\omega_\alpha\|_{L^1(\mu)}}$ denotes the consensus measure associated to $\mu$, abbreviated to $\eta^\alpha$ when the measure is clear from context

\end{itemize}
\chapter{Preliminaries}
\label{chap:prelim}
This chapter introduces the mathematical background required for the analysis of the systems carried out in this thesis. Since our main results concern mean-field limits and propagation of chaos for stochastic interacting particle models, we first review several probabilistic tools and then introduce the framework of interacting particle systems and consensus-based optimization methods.

Section \ref{Pre:ProbabilityTheory} recalls fundamental concepts from probability theory that are used throughout the thesis, including filtrations, martingales, stopping times, quadratic variation, and key tools such as the Burkholder–Davis–Gundy inequalities. We also introduce the Wasserstein-$p$ distance, which provides a natural metric for measuring the convergence of sequences of measures, such as the convergence of distributions of interacting particle systems toward their mean-field limits.

In Section \ref{Pre:InteractingParticleSystems}, we present the general theory of interacting particle systems. This gives us a probabilistic framework for studying the particle dynamics arising in CBO. In the continuous time case, they are often described as systems of stochastic differential equations. We introduce the important object of the empirical measure which can be understood as the distribution of an interacting particle system assigning equal mass to each particle.\\
Section \ref{Pre:PropagationofChaos} then introduces the concept of propagation of chaos, which describes how particles become asymptotically independent and identically distributed as their number approaches infinity. We establish the coupling method that will be used repeatedly in Chapters \ref{chap:PropGoodSets} and \ref{chap:PropChaUnrestricted} to prove propagation of chaos.

Section \ref{SectionCBO} provides an introduction to consensus-based optimization (CBO) methods. We describe the classical CBO dynamics, highlighting how proving propagation of chaos usually works alongside the main challenges we face when we try to prove propagation of chaos.

Finally, Section \ref{Pre:ToolsForMeanField} introduces the main CBO-specific analytical tools employed in the proofs of propagation of chaos. In particular, it provides a Wasserstein stability estimate for the consensus points of two probability measures, as well as a quantitative convergence estimate describing how the empirical consensus of i.i.d. samples approaches the consensus of their underlying distribution. 

Together, these preliminaries establish the theoretical foundation needed to perform the analysis carried out in Chapters \ref{chap:PropGoodSets} and \ref{chap:PropChaUnrestricted}.
\newpage
\section{Some Preliminaries from Probability Theory}
\label{Pre:ProbabilityTheory}
In this section, we collect several basic notions and results from probability theory that are crucial to the concepts introduced and the analysis carried out in this thesis. These tools are standard in the study of stochastic differential equations, interacting particle systems, and mean-field limits, and will be used repeatedly in the proofs of moment bounds, coupling arguments, and propagation-of-chaos results presented in later chapters. Most of the material is drawn from \cite{LeGall2016BrownianCalculus}, with the introduction of Wasserstein distances following \cite{Villani2009OptimalTransport}.

Throughout this section, we consider the probability space $\ProbStd$.
\begin{definition}
A \textbf{filtration} is a sequence of $\sigma$-Algebras $\standfilt$ such that for any $s < t$, we have $\mathcal{F}_s \subseteq \mathcal{F}_t \subseteq \mathcal{F}$.
\newline
We call a stochastic process $\rou{X_t}_{t\geq 0}$ on $\ProbStd$ \textbf{adapted} to $\rou{\mathcal{F}_t}_{t\geq 0}$ if, for any $t \geq 0$, $X_t$ is $\mathcal{F}_t$-measurable.
\end{definition}
\begin{definition}
    A \textbf{stopping time} of the filtration $\standfilt$ is a random variable $T \colon \Omega \longrightarrow [0, \infty]$ such that at any given time $t\geq0$, we have $\{ T \leq t \} \in \mathcal{F}_t$.
\end{definition}
\begin{definition}
    We call a stochastic process $(X_t)_{t \geq 0}$ \textbf{uniformly integrable} if 
    \begin{align*}
        \underset{t \in [0, \infty)}{\sup} \, \mathbb{E} \Bigeck{|X_t| \mathbbm{1}_{\{|X_t| \geq \varepsilon\}}} \xrightarrow[]{\varepsilon \rightarrow \infty}0.
    \end{align*}
\end{definition}
\begin{definition}
    A real-valued stochastic process $(X_t)_{t \geq 0}$ is called a \textbf{martingale} with respect to the filtration $(\mathcal{F}_t)_{t \geq 0}$ if  $(X_t)_{t \geq 0}$ is adapted to $(\mathcal{F}_t)_{t \geq 0}$ and
    \begin{itemize}
        \item [(i)] $\mathbb{E}\bigeck{| X_t|} < \infty \quad $for every $t \geq 0$ and
        \item [(ii)]  $\mathbb{E} \bigeck{X_t  | \mathcal{F}_s} = X_s \quad \mathbb{P} \text{-a.s.}$ for every $0 \leq s \leq t$.
    \end{itemize}
\end{definition}

\begin{definition}
    A stochastic process $M \equiv (M_t)_{t \geq 0}$ with $M_0 = 0$ a.s. and continuous sample paths that is adapted to the filtration $\standfilt$ is a \textbf{continuous local martingale} if there exists a nondecreasing sequence of stopping times $(T_n)_{n \geq 0}$ such that $T_n(\omega) \nearrow \infty$ for every $\omega \in \Omega$ and for every $n\geq 0$, the stopped process $M_{t \wedge T_n}$ is a uniformly integrable martingale.
\end{definition}
\begin{theorem}
    Let $M \equiv (M_t)_{t \geq 0}$ be a continuous local martingale. Then there exists an increasing process $\left\langle  M\right \rangle \equiv (\left\langle  M\right \rangle_t)_{t \geq 0}$ that is unique up to sets with measure zero such that $(M^2_t - \left\langle  M\right \rangle_t)_{t \geq 0}$ is a continuous local martingale. Furthermore, for fixed $t>0$, let $E_n = \big \{ 0 = t^{(n)}_0 < t^{(n)}_1 <...< t^{(n)}_{m_n} = t \big \}$ be an arbitrary increasing subdivision of $[0, t]$ with
    \begin{align*}
        \underset{t_i \in E_n}{\sup} |t_i - t_{i+1}| \xrightarrow[]{n\rightarrow \infty}0.
    \end{align*}
    Then, we have 
    \begin{align*}
        \left\langle  M\right \rangle_t = \lim_{n \rightarrow \infty} \sum^{m_n-1}_{i = 0} \Bigrou{M_{t^{(n)}_{i+1}} -M_{t^{(n)}_{i}}}^2.
    \end{align*}
    We call $\left\langle  M\right \rangle$ the \textbf{quadratic variation} of $M$.
\end{theorem}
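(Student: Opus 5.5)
The plan is to establish the quadratic variation theorem in three stages: the bounded martingale case, uniqueness, and the extension to local martingales by localization. First, I prove uniqueness. If $A$ and $A'$ are two continuous increasing adapted processes with $A_0=A'_0=0$ such that $M^2-A$ and $M^2-A'$ are continuous local martingales, then $A-A'$ is a continuous local martingale of finite variation. I then use the standard lemma that a continuous local martingale of finite variation starting at $0$ vanishes. To prove the lemma, localize so that both the process $N$ and its total variation $V$ are bounded by $K$. Take a subdivision of $[0,t]$; orthogonality of martingale increments gives
\begin{align*}
\mathbb{E}[N_t^2]=\mathbb{E}\Big[\sum_i (N_{t_{i+1}}-N_{t_i})^2\Big]\le \mathbb{E}\Big[V_t \,\sup_i |N_{t_{i+1}}-N_{t_i}|\Big].
\end{align*}
The right-hand side tends to $0$ by continuity of paths and dominated convergence. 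Hence $N\equiv 0$ up to indistinguishability.

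For existence, I first assume $M$ is a bounded continuous martingale, $|M|\le K$. Fix $T$ and dyadic subdivisions $E_n$ of $[0,T]$. Set
\begin{align*}
X^n_t=\sum_i M_{t_i}\big(M_{t_{i+1}\wedge t}-M_{t_i\wedge t}\big).
\end{align*}
This is a continuous martingale, and it satisfies the algebraic identity
\begin{align*}
M_{t_j}^2-2X^n_{t_j}=\sum_{i<j}\big(M_{t_{i+1}}-M_{t_i}\big)^2 .
\end{align*}
The key step, and the main obstacle, is to show that $(X^n)$ is Cauchy in $L^2$ uniformly in time, i.e. $\mathbb{E}[\sup_{t\le T}|X^n_t-X^m_t|^2]\to 0$. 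Using Doob's $L^2$ inequality, this reduces to bounding $\mathbb{E}[(X^n_T-X^m_T)^2]$. Expanding by orthogonality of increments, I reduce this to controlling
\begin{align*}
\mathbb{E}\Big[\sup_{|s-r|\le 2^{-n}}|M_s-M_r|^2\cdot \sum_i (M_{t_{i+1}}-M_{t_i})^2\Big].
\end{align*}
I handle this product with Cauchy--Schwarz. Continuity and boundedness send the first factor to $0$ in $L^2$. For the second factor I need the uniform bound $\mathbb{E}[(\sum_i (\Delta_i M)^2)^2]\le C K^4$. This follows by expanding the square and using $\mathbb{E}[\sum_{i>j}(\Delta_i M)^2\mid\mathcal{F}_{t_{j+1}}]=\mathbb{E}[(M_T-M_{t_{j+1}})^2\mid\mathcal{F}_{t_{j+1}}]\le 4K^2$.

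Passing to a subsequence converging a.s. uniformly, I obtain a continuous limit $Y$ which is a martingale, being an $L^2$ limit of martingales. I define $\langle M\rangle=M^2-M_0^2-2Y$, which I will write as $M^2-2Y$ since $M_0=0$. This process is continuous, and $M^2-\langle M\rangle$ is a martingale. It is increasing on dyadic times, since there it is the limit of nondecreasing sums, and hence increasing everywhere by continuity. The convergence of the sums of squared increments to $\langle M\rangle_t$ in $L^2$, and hence in probability, also follows from this construction. To pass from dyadic subdivisions to arbitrary subdivisions with vanishing mesh, I repeat the same Cauchy estimate with general meshes. Consistency across different horizons $T$ follows from uniqueness.

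Finally, for a general continuous local martingale I localize with $T_n=\inf\{t:|M_t|\ge n\}$, so that each stopped process $M^{T_n}$ is bounded. By uniqueness, $\langle M^{T_{n+1}}\rangle^{T_n}=\langle M^{T_n}\rangle$, so these processes patch together into a single process $\langle M\rangle$. Then $M^2-\langle M\rangle$ is a local martingale along the sequence $T_n$. The convergence of the sums of squared increments holds on the events $\{T_n\ge t\}$, whose probability tends to $1$, so the limit formula holds in probability. I interpret the limit in the statement in this sense, in probability, which is the standard meaning.
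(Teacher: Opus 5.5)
Your proposal is correct and is essentially the proof the paper defers to (Le Gall, pp.~80--84). It has the same ingredients: uniqueness via the vanishing of continuous local martingales of finite variation, existence for bounded $M$ through the discrete integrals $X^n$ with Doob's $L^2$ inequality, Cauchy--Schwarz and the bound $\mathbb{E}\bigl[(\sum_i(\Delta_i M)^2)^2\bigr]\le 12K^4$, monotonicity on a dense set of subdivision points, and localization by $T_n=\inf\{t:|M_t|\ge n\}$, with the limit read in probability. The one step worth spelling out is the passage from dyadic to arbitrary, possibly non-nested, subdivisions $\sigma,\sigma'$: there you run the Cauchy estimate of each against the common refinement $\sigma\cup\sigma'$, which works because that estimate depends only on the modulus of continuity at the coarser mesh and on the fourth-moment bound, both uniform over refinements.
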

\begin{proof}
    \cite[p. 80-84]{LeGall2016BrownianCalculus}.
\end{proof}
\begin{example}
    The Brownian motion $(B_t)_{t \geq 0}$ is a continuous local martingale and its quadratic variation is $\left \langle B \right \rangle_t = t$ for every $t \geq 0$.
    \label{BrownianExample}
\end{example}
\begin{proof}
    One can see immediately that Brownian motion is a martingale and its sample paths are by definition continuous. Therefore, Brownian motion is a continuous local martingale. For the computation of the quadratic variation, we refer the reader to \cite[Proposition 2.16]{LeGall2016BrownianCalculus}.
\end{proof}
\begin{lemma}[Burkholder-Davis-Gundy inequalities]
For every $p>0$, there exist two constants $c_{\mathrm{BDG}}, C_{\mathrm{BDG}} >0$ depending only on $p$ such that, for every continuous local martingale $M$ that vanishes at $t = 0$ and every stopping time $T$
\begin{align*}
    c_{\mathrm{BDG}} \mathbb{E}\biggeck{ \left\langle  M\right \rangle_T^{p/2}} \leq \mathbb{E} \biggeck{ \suptT \N{M_t}^p} \leq C_{\mathrm{BDG}} \mathbb{E} \biggeck{ \left\langle  M\right \rangle_T^{p/2}}.
\end{align*}
\label{Burkholder}
\end{lemma}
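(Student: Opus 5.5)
The plan is to reduce first to bounded martingales and the deterministic horizon $T=\infty$, then to treat $p\geq 2$ with Itô's formula and Doob's inequality, and finally to treat $0<p<2$ with a domination argument of Lenglart type. Since $\langle M^T\rangle=\langle M\rangle_{\cdot\wedge T}$ for the stopped martingale $M^T=M_{\cdot\wedge T}$, it suffices to prove the inequalities for $T=\infty$ (with $M^T$ in place of $M$).

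\textbf{Localization.} Introduce the stopping times
\begin{align*}
    S_n:=\inf\{t\geq 0: |M_t|\geq n \text{ or } \langle M\rangle_t\geq n\}.
\end{align*}
The stopped process $M^{S_n}$ is a bounded martingale with bounded quadratic variation, so all expectations below are finite and stochastic integrals against $dM$ have zero mean. Once both inequalities are shown for $M^{S_n}$ with constants independent of $n$, we let $n\to\infty$. Since $S_n\nearrow\infty$, the quantities $\sup_{t\leq S_n}|M_t|$ and $\langle M\rangle_{S_n}$ increase to their limits, and monotone convergence gives the claim for $M$. From now on, write $M^*:=\sup_t |M_t|$ and assume $M$ is bounded.

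\textbf{Case $p\geq 2$.} For the upper bound, apply Itô's formula to the $C^2$ function $x\mapsto|x|^p$:
\begin{align*}
    |M_\infty|^p=\int_0^\infty p|M_s|^{p-1}\operatorname{sgn}(M_s)\,dM_s+\frac{p(p-1)}{2}\int_0^\infty |M_s|^{p-2}\,d\langle M\rangle_s .
\end{align*}
Taking expectations removes the martingale term. Bounding $|M_s|^{p-2}\leq (M^*)^{p-2}$ and using Hölder's inequality with exponents $\tfrac{p}{p-2}$ and $\tfrac p2$ gives
\begin{align*}
    \mathbb{E}|M_\infty|^p\leq C_p\,\mathbb{E}\big[(M^*)^p\big]^{(p-2)/p}\,\mathbb{E}\big[\langle M\rangle_\infty^{p/2}\big]^{2/p}.
\end{align*}
Doob's $L^p$ inequality, $\mathbb{E}(M^*)^p\leq (\tfrac{p}{p-1})^p\,\mathbb{E}|M_\infty|^p$, then yields an inequality in which $\mathbb{E}(M^*)^p$ appears on both sides. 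Because this quantity is finite by the localization, we can divide by $\mathbb{E}[(M^*)^p]^{(p-2)/p}$ and obtain the upper bound.

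For the lower bound, set $N:=\int_0^\cdot \langle M\rangle_s^{(p-2)/4}\,dM_s$. Then
\begin{align*}
    \langle N\rangle_\infty=\int_0^\infty\langle M\rangle_s^{(p-2)/2}\,d\langle M\rangle_s=\tfrac{2}{p}\langle M\rangle_\infty^{p/2}.
\end{align*}
Integration by parts gives $N_\infty=M_\infty\langle M\rangle_\infty^{(p-2)/4}-\int_0^\infty M_s\,d\big(\langle M\rangle_s^{(p-2)/4}\big)$. Since the last integrator is nondecreasing, this implies $|N_\infty|\leq 2M^*\langle M\rangle_\infty^{(p-2)/4}$. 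Hence
\begin{align*}
    \mathbb{E}\langle M\rangle_\infty^{p/2}=\tfrac p2\,\mathbb{E}N_\infty^2\leq 2p\,\mathbb{E}\big[(M^*)^2\langle M\rangle_\infty^{(p-2)/2}\big].
\end{align*}
We finish again with Hölder's inequality (exponents $\tfrac p2$ and $\tfrac{p}{p-2}$) and divide by the finite quantity $\mathbb{E}[\langle M\rangle_\infty^{p/2}]^{(p-2)/p}$.

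\textbf{Case $0<p<2$.} The case $p=2$ is already covered: $\mathbb{E}\langle M\rangle_\infty=\mathbb{E}M_\infty^2$, combined with Doob's inequality. For $p<2$ the Itô argument breaks down because $|x|^p$ is not $C^2$. Instead I would prove and use Lenglart's domination inequality. Let $X\geq 0$ be continuous and adapted, and let $A$ be continuous, adapted and nondecreasing with $A_0=0$. Suppose $\mathbb{E}X_\tau\leq\mathbb{E}A_\tau$ for every bounded stopping time $\tau$. Then for every $k\in(0,1)$,
\begin{align*}
    \mathbb{E}\big[(\sup_t X_t)^k\big]\leq \tfrac{2-k}{1-k}\,\mathbb{E}\big[A_\infty^k\big].
\end{align*}
\begin{itemize}
    \item Upper bound: take $X=M^2$, $A=\langle M\rangle$ and $k=p/2$. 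Optional stopping applied to $M^2-\langle M\rangle$ gives $\mathbb{E}M_\tau^2=\mathbb{E}\langle M\rangle_\tau$.
    \item Lower bound: take $X=\langle M\rangle$, $A_t=(\sup_{s\leq t}|M_s|)^2$ and $k=p/2$. Here $\mathbb{E}\langle M\rangle_\tau=\mathbb{E}M_\tau^2\leq\mathbb{E}A_\tau$.
\end{itemize}

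\textbf{Main obstacle.} I expect the main work to lie in the proof of Lenglart's inequality. The plan for it is as follows.
\begin{itemize}
    \item Using the hitting times $\inf\{t: X_t\geq c\}$ and $\inf\{t: A_t\geq d\}$ together with the domination hypothesis and Markov's inequality, show
    \begin{align*}
        \mathbb{P}(\sup_t X_t>c)\leq \frac{\mathbb{E}[A_\infty\wedge d]}{c}+\mathbb{P}(A_\infty\geq d).
    \end{align*}
    \item Choose $d=c$ and integrate against $k c^{k-1}\,dc$ via the layer-cake formula.
    \item Compute the two resulting integrals explicitly. The term $\int_0^\infty k c^{k-2}\,\mathbb{E}[A_\infty\wedge c]\,dc$ gives $\tfrac{1}{1-k}\mathbb{E}A_\infty^k$, and the second term gives $\mathbb{E}A_\infty^k$. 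Together they produce the constant $\tfrac{2-k}{1-k}$.
\end{itemize}
All constants obtained depend only on $p$, as the statement requires.
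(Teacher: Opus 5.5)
Your proof is correct, but it does not follow the paper: the paper gives no argument of its own for this lemma and only refers to Le Gall's textbook (pp.~124--126). You supply a complete, self-contained proof, and its steps hold up.

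\begin{itemize}
\item The localization by $S_n$ makes $M$, $\langle M\rangle$, $M^2-\langle M\rangle$ and every stochastic integral you use bounded or $L^2$-bounded. This justifies the vanishing expectations and the divisions by $\mathbb{E}[(M^*)^p]^{(p-2)/p}$ and $\mathbb{E}[\langle M\rangle_\infty^{p/2}]^{(p-2)/p}$.
\item The auxiliary martingale $N=\int_0^\cdot\langle M\rangle_s^{(p-2)/4}\,dM_s$, together with $|N_\infty|\le 2M^*\langle M\rangle_\infty^{(p-2)/4}$, gives the lower bound for every $p\ge 2$ at once, without bootstrapping from the upper bound.
\item Your Lenglart computation $\int_0^\infty k c^{k-2}\,\mathbb{E}[A_\infty\wedge c]\,dc=\frac{1}{1-k}\mathbb{E}[A_\infty^k]$ gives the constant $\frac{2-k}{1-k}$. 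It covers both directions for $0<p<2$ just by swapping the roles of $M^2$ and $\langle M\rangle$.
\end{itemize}

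The citation buys brevity and the full statement at no cost. Your route buys explicit constants and independence from the reference.

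It is also worth noting that the thesis only ever invokes the upper bound: for Brownian motion, and for the diffusion integrals in the coupling estimate, where $p\ge 2$. So your It\^o--Doob paragraph is all the paper really needs. For Brownian motion with $p<2$, Jensen's inequality reduces to the case $p=2$, because $\langle B\rangle_T=T$ is deterministic.

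Run with $x\mapsto\|x\|^p$ on $\mathbb{R}^d$, whose Hessian is bounded by $p(p-1)\|x\|^{p-2}I_d$, the same argument gives directly the vector-valued bound $\mathbb{E}[\sup_{t\le T}\|B_t\|^p]\le C_p(dT)^{p/2}$. This is the form in which the paper actually uses the lemma.
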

\begin{proof}
    \cite[p. 124-126]{LeGall2016BrownianCalculus}
\end{proof}
\begin{definition}
    For $p \in [1,\infty)$, the \textbf{Wasserstein-$p$ distance} between two probability measures $\mu,\nu \in \mathcal P_p(\mathbb R^d)$ is defined as
    \begin{align*}
        W_p(\mu,\nu)
        :=
        \biggrou{
        \underset{\pi \in \Pi(\mu,\nu)}{\inf}
        \int_{\mathbb R^d \times \mathbb R^d} d(x,y)^p \, d\pi(x,y)
        }^{1/p},
    \end{align*}
    where
    \[
    \Pi(\mu,\nu)
    :=
    \big\{
    \pi \in \mathcal P(\mathbb R^d \times \mathbb R^d)
    :
    M_X\pi=\mu \text{ and } M_Y\pi=\nu
    \big\}.
    \]
    Here, $d(x,y)$ denotes a prescribed metric on $\mathbb R^d$; a common choice is $d(x,y)=\N{x-y}$. The measures
    \begin{align*}
        (M_X\pi)(A) := \int_{A \times \mathbb R^d} d\pi(x,y)
        \quad \text{and} \quad
        (M_Y\pi)(B) := \int_{\mathbb R^d \times B} d\pi(x,y),
    \end{align*}
    for all $A,B \in \mathcal B(\mathbb R^d)$, are called the \textbf{first} and \textbf{second marginal} of $\pi$, respectively.
    \label{Pre:Wasserstein}
\end{definition}
The Wasserstein distance can also be written in terms of an expectation, namely
\begin{align*}
    W_p(\mu,\nu)
    =
    \inf \Bigg\{
    \mathbb E\biggeck{d(X,Y)^p}^{1/p}
    ,
    X\sim\mu,\; Y\sim\nu
    \Bigg\}.
\end{align*}
\begin{remark}
For any $p \in [1,\infty)$, the Wasserstein-$p$ distance is a metric on $\mathcal P_p(\mathbb R^d)$. Moreover, the Wasserstein space $\bigrou{\mathcal P_p(\mathbb R^d),W_p}$ is a complete separable metric space. \cite[Theorem 6.9, Theorem 6.18]{Villani2009OptimalTransport}
\end{remark}
We conclude this section by presenting some general statements about deviations of means of random variables. They will be very helpful throughout this thesis. 
The following lemma provides a polynomial upper bound on the upper tail of the empirical mean of i.i.d. random variables. It will be useful in the proofs of unrestricted propagation of chaos presented in Chapter \ref{chap:PropChaUnrestricted}.
\begin{lemma}[{{\cite[Lemma 2.5]{Gerber2025Mean-fieldSampling}}}]
    Let $(Z_i)_{i \in \mathbb{N}}$ be a collection of independent identically distributed real random variables such that $\mathbb{E} \bigeck{|Z_1|^r} < \infty$ for some $r \geq 2$. Then, for every $M > \mathbb{E}\bigeck{|Z_1|} $ there exists a constant $C > 0$, independent of $N$ such that for every $N \in \mathbb{N}$
    \begin{align*}
        \mathbb{P} \biggrou{\frac{1}{N} \sum^N_{i=1} Z_i \geq M} \leq CN^{-r/2}.
    \end{align*}
    \label{GerberLemma2.5}
\end{lemma}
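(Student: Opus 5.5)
The plan is to reduce the tail bound to a moment bound for a centered sum and then apply Markov's inequality. Set $m := \mathbb{E}[Z_1]$; note $m \leq \mathbb{E}[|Z_1|] < M$, so $\varepsilon := M - m > 0$. Define the centered variables $Y_i := Z_i - m$. They are i.i.d., have mean zero, and satisfy $\mathbb{E}[|Y_1|^r] \leq 2^{r-1}(\mathbb{E}[|Z_1|^r] + |m|^r) < \infty$. Since
\begin{align*}
    \Big\{ \tfrac{1}{N}\textstyle\sum_{i=1}^N Z_i \geq M \Big\} \subseteq \Big\{ \big|\textstyle\sum_{i=1}^N Y_i\big| \geq N\varepsilon \Big\},
\end{align*}
Markov's inequality with exponent $r$ gives
\begin{align*}
    \mathbb{P}\Big(\tfrac{1}{N}\textstyle\sum_{i=1}^N Z_i \geq M\Big) \leq (N\varepsilon)^{-r}\, \mathbb{E}\Big[\big|\textstyle\sum_{i=1}^N Y_i\big|^r\Big].
\end{align*}
It therefore suffices to prove the Marcinkiewicz--Zygmund type bound $\mathbb{E}[|S_N|^r] \leq C_r N^{r/2}\,\mathbb{E}[|Y_1|^r]$ for $S_N := \sum_{i=1}^N Y_i$. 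Inserting it yields the claim with $C = C_r \varepsilon^{-r}\mathbb{E}[|Y_1|^r]$, which is independent of $N$.

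To obtain the moment bound using only tools already stated in the paper, I would view $S_n$ as a discrete martingale with respect to $\mathcal{F}_n := \sigma(Y_1,\dots,Y_n)$. The Burkholder--Davis--Gundy inequality of Lemma \ref{Burkholder} is stated for continuous local martingales. The cleanest way to bring it to bear is the standard discrete Burkholder inequality $\mathbb{E}[|S_N|^r] \leq C_r\, \mathbb{E}\big[(\sum_{i=1}^N Y_i^2)^{r/2}\big]$, valid for martingale differences and $r \geq 2$, which I would cite as the discrete analogue. If one wants to avoid it, one can instead symmetrize with Rademacher signs and apply Khintchine's inequality conditionally on the $Y_i$; this gives the same square-function bound up to a factor of $2^r$.

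The remaining step is to bound the square function. Because $r/2 \geq 1$, Jensen's inequality (equivalently, convexity of $x \mapsto x^{r/2}$ applied to the average) gives
\begin{align*}
    \Big(\textstyle\sum_{i=1}^N Y_i^2\Big)^{r/2} = N^{r/2}\Big(\tfrac{1}{N}\textstyle\sum_{i=1}^N Y_i^2\Big)^{r/2} \leq N^{r/2}\,\tfrac{1}{N}\textstyle\sum_{i=1}^N |Y_i|^r.
\end{align*}
Taking expectations gives $N^{r/2}\mathbb{E}[|Y_1|^r]$, which is the desired bound.

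The main obstacle is this moment inequality for $S_N$. The paper's BDG lemma is formulated only in continuous time, so one either embeds the random walk into a continuous martingale or invokes the discrete Burkholder or Marcinkiewicz--Zygmund inequality. I would try the Rademacher symmetrization plus Khintchine route first, since it is elementary and self-contained.
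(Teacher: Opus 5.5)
Your proof is correct. Since $\mathbb{E}[Z_1]\le\mathbb{E}[|Z_1|]<M$, centering, Markov's inequality with exponent $r$, and the Marcinkiewicz--Zygmund bound $\mathbb{E}[|S_N|^r]\le C_rN^{r/2}\mathbb{E}[|Y_1|^r]$ together give the claim with $C=C_r\varepsilon^{-r}\mathbb{E}[|Y_1|^r]$. You obtain that moment bound via symmetrization, Khintchine, and the power-mean inequality, and all three steps are sound. In fact your argument only uses $M>\mathbb{E}[Z_1]$.

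The thesis gives no proof of this lemma and only cites \cite[Lemma 2.5]{Gerber2025Mean-fieldSampling}, so there is no internal argument to compare against. However, the step you single out as the main obstacle is already covered by a tool stated in the paper. Corollary~\ref{RosenthalCorr} is Rosenthal's inequality (Theorem~\ref{Rosenthal}), which the appendix uses to prove Lemma~\ref{CH:ConsBound}. Applied with $d=1$ and exponent $r$ to your i.i.d.\ centered $Y_i$, it yields
\begin{align*}
\mathbb{E}\Big[\Big|\frac{1}{N}\sum_{i=1}^N Y_i\Big|^r\Big] \le C_{r,1}\max\Big\{\frac{\mathbb{E}[|Y_1|^r]}{N^{r-1}},\frac{\mathbb{E}[|Y_1|^2]^{r/2}}{N^{r/2}}\Big\} \le C_{r,1}\,\mathbb{E}[|Y_1|^r]\,N^{-r/2}.
\end{align*}
The second inequality holds because $N^{1-r}\le N^{-r/2}$ for $r\ge 2$ and $\mathbb{E}[|Y_1|^2]^{r/2}\le\mathbb{E}[|Y_1|^r]$ by Jensen. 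Combined with your Markov step, $\mathbb{P}\big(\frac1N\sum_{i=1}^N Z_i\ge M\big)\le\varepsilon^{-r}\,\mathbb{E}\big[|\frac1N\sum_{i=1}^N Y_i|^r\big]$, this closes the proof in one line. You then need neither a discrete Burkholder inequality, nor an embedding into continuous time, nor Khintchine.

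Your symmetrization route has the merit of rederiving the moment bound from more elementary ingredients. The Rosenthal route stays entirely inside the paper's existing toolbox, which is what you said you were aiming for. It also gives a sharper two-term bound, although that extra precision does not matter here, since only the $N^{-r/2}$ scaling is used.
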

We now state a bound on the expected maximum of independent and identically distributed, centered Gaussian random vectors. This will help us bound the moments of the consensus freezing scheme in Section \ref{ch:MFF} and the consensus hopping scheme in Section \ref{ch:MFH}. The proof requires some auxiliary results and is therefore put in the appendix. We refer to Section \ref{sec:LemaNormalVectorsProof}.

\begin{lemma}
    Let $N \in \mathbb{N}, d \in \mathbb{N}$ and $Z_1, .., Z_N$ be independent, identically distributed Gaussian random vectors such that $Z_1 \sim \mathcal{N}(0, \sigma^2I_d)$
    Then, for every $p \geq 2$, there exists a constant $C_p >0$ depending only on $p$ such that
    \begin{align*}
          \mathbb{E} \biggeck{\underset{1 \leq i \leq N}{\max} \N{Z_i}^p} \leq \sigma^pC_p \Bigrou{d\log (2Nd)}^{\frac{p}{2}}.
    \end{align*}
\label{Pre:LemmaNormalVectors}
\end{lemma}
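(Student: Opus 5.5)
We reduce to scalar Gaussians. Write $Z_i = (Z_{i,1},\dots,Z_{i,d})$, so the $Nd$ coordinates $Z_{i,l}$ are i.i.d. $\mathcal{N}(0,\sigma^2)$. We use the elementary bound $\N{Z_i}^2 \leq d \max_{l\leq d} |Z_{i,l}|^2$. This gives
\begin{align*}
    \underset{1 \leq i \leq N}{\max} \N{Z_i}^p \leq d^{p/2} \Bigrou{\underset{i \leq N,\, l \leq d}{\max} |Z_{i,l}|}^p .
\end{align*}
By scaling we may take $\sigma = 1$, pulling out $\sigma^p$. It then suffices to show: for $n = Nd$ i.i.d. standard normals $g_1,\dots,g_n$,
\begin{align*}
    \mathbb{E}\Bigeck{\max_{k\leq n}|g_k|^p} \leq C_p \bigrou{\log(2n)}^{p/2}.
\end{align*}
Combining the two displays yields $\sigma^p C_p\, d^{p/2}(\log(2Nd))^{p/2}$, which is exactly the claim.

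For the scalar bound we use the tail-integral representation. Set $m = \max_k |g_k|$ and $u_0 = \sqrt{2\log(2n)}$. Then
\begin{align*}
    \mathbb{E}[m^p] = \int_0^\infty p u^{p-1}\mathbb{P}(m>u)\,du \leq u_0^p + \int_{u_0}^\infty p u^{p-1}\, 2n\, e^{-u^2/2}\,du ,
\end{align*}
using $\mathbb{P}(m>u)\leq 1$ below $u_0$, and above $u_0$ the union bound together with $\mathbb{P}(|g|>u)\leq 2e^{-u^2/2}$.

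To handle the remaining integral we substitute $u = u_0 + v$. Since $u^2/2 \geq u_0^2/2 + u_0 v + v^2/2$ and $u_0 \geq \sqrt{2\log 2}$ is bounded below, the factor $2n e^{-u_0^2/2} = 1$ cancels the union-bound factor. We are left with $\int_0^\infty p(u_0+v)^{p-1} e^{-u_0 v - v^2/2}\,dv$. Using $(u_0+v)^{p-1}\leq 2^{p-2}(u_0^{p-1}+v^{p-1})$ (valid for $p\geq 2$) and $\int_0^\infty e^{-u_0 v}\,dv = 1/u_0$, this is bounded by $C_p\bigrou{u_0^{p-2} + \Gamma\text{-type constant}}$. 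Because $u_0$ is bounded below by a positive constant, this is at most $C_p u_0^p$. The whole expression is therefore $\leq C_p \bigrou{2\log(2n)}^{p/2}$, and the constant depends only on $p$.

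The only delicate point is this last step. We must ensure the constant is uniform in $n$ and $d$. The lower bound $\log(2n) \geq \log 2 > 0$ is what makes this work: it lets the additive constants be absorbed into a multiple of $u_0^p$. The rest of the argument is routine.
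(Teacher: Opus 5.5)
Your proof is correct and follows essentially the same route as the paper: reduce via $\N{Z_i}^2\le d\max_l|Z_{i,l}|^2$ to the maximum of $Nd$ scalar standard normals, then split the tail integral at $\sqrt{2\log(\cdot)}$, apply a union bound, shift $u=u_0+v$, and absorb the lower-order terms using a positive lower bound on the logarithm. The differences are cosmetic: you use the tail bound $2e^{-u^2/2}$ instead of the Mills-ratio bound, and you split directly at $\sqrt{2\log(2n)}$, which avoids the paper's device of padding the family with $Nd$ extra normals to reach $2Nd\ge 2$ variables.
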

\section{Interacting Particle Systems}
\label{Pre:InteractingParticleSystems}
Interacting particle systems constitute a fundamental framework for modeling large collections of stochastic agents whose dynamics evolve under mutual influence. In this section, we introduce the probabilistic setting and basic notation for interacting particle systems in both discrete and continuous time. We then establish the notion of empirical measures, which play a central role in describing the collective behavior of particles and in analyzing convergence toward mean-field limits.\vspace{.3 cm}\newline
We begin by defining interacting particle systems as collections of stochastic processes on a common probability space, first in discrete time and subsequently in continuous time.
\begin{definition}
    Let $\bigrou{\Omega, \mathcal{F}, \mathbb{P}}$ be a probability space. An \textbf{interacting particle system} of size $N\in \mathbb{N}$ \textbf{in discrete time} is a collection of $N$ stochastic processes 
    \begin{align*}
   \big(\mathcal{X}^N_k \big)_k = \big(X^i_k\big)^N_k = \big ( X^1_k, ..., X^N_k  \big)_{k \in \mathbb{N}} ,
\end{align*}
where for each $i \leq N$ at each time point $k$, $X^i_k \colon \Omega \longrightarrow \mathbb{R}^d$ is a random variable in $\bigrou{\Omega, \mathcal{F}, \mathbb{P}}$.
\end{definition}
If we specifically want to observe our particle system over a time horizon $\{0, ..., K\}$, we may write:
\begin{align*}
     \mathcal{X}^N_K = \big( \mathcal{X}^N_k)_{k \in\{ 0, ..., K\}}.
\end{align*}
For a fixed $\omega = (\omega_1, ..., \omega_N) \in \Omega^N$, we call $k \longmapsto \mathcal{X}^N_k(\omega) = ( X^1_k(\omega_1), ..., X^N_k(\omega_N)  \big)$ a \textbf{realization} or \textbf{path} of $\big(\mathcal{X}^N_k \big)_k$. 
\begin{definition}
    Let $\bigrou{\Omega, \mathcal{F}, \mathbb{P}}$ be a probability space. An \textbf{interacting particle system} of size $N\in \mathbb{N}$ \textbf{in continuous time} is a collection of $N$ stochastic processes 
    \begin{align*}
   \big(\mathcal{X}^N_t \big)_t =   \big(X^i_t\big)^N_t = \big ( X^1_t, ..., X^N_t  \big)_{t \geq 0},
\end{align*}
where for each $i \leq N$ at each time $t$, $X^i_t \colon \Omega \longrightarrow \mathbb{R}^d$ is a random variable in $\bigrou{\Omega, \mathcal{F}, \mathbb{P}}$.
\label{Pre:IPScts}
\end{definition}
If we specifically want to observe our particle system over a time horizon $ [0, T]$, we may write:
\begin{align*}
     \ \mathcal{X}^N_T =  \big(\mathcal{X}^N_t \big)_{t \in [0, T]}.
\end{align*}
For a fixed $\omega = (\omega_1, ..., \omega_N) \in \Omega^N$, we call $t \longmapsto \mathcal{X}^N_t(\omega) = ( X^1_t(\omega_1), ..., X^N_t(\omega_N)  \big)$ a \textbf{realization} or \textbf{path} of $\big(\mathcal{X}^N_t \big)_t$. 
\vspace{.3 cm}
\newline 
The stochasticity of those interacting particle systems investigated in this thesis comes from Gaussian random vectors in the discrete time case and Brownian motion in the continuous time case, respectively. In particular, in both cases, the noise increments of each particle are independent. This means that the probabilistic analysis of the paths of the discrete time interacting particle systems can be done on the probability space
\begin{align*}
    \Bigrou{(\mathbb{R}^d)^{N \times \mathbb{N}}, \mathcal{B}\bigrou{(\mathbb{R}^d)^{N \times \mathbb{N}}}, \bigrou{\mathcal{N}(0, \sigma^2I_d)^{\otimes N}}^{\otimes \mathbb{N}}}. 
\end{align*}
The continuous time interacting particle systems that we are investigating in this thesis are all defined by stochastic differential equations (SDE). Furthermore, said SDEs admit unique strong solutions. This allows us to carry out their analysis in the Wiener space 
\begin{align*}
    \Bigrou{\bigrou{C_0\bigrou{[0, \infty};\mathbb{R}^d)}^N, \mathcal{B}\Bigrou{\bigrou{C_0([0, \infty);\mathbb{R}^d)}^N}, \mathbb{W}^{\otimes N}},
\end{align*}
where $\mathbb{W}$ denotes the $d$-dimensional Wiener measure (for reference, see \cite[p. 27 - 29]{LeGall2016BrownianCalculus}). When analyzing a specific interacting particle system, we refer to the probability space used as $\ProbStd$.

Interacting particle systems, in general, depend on the position of the individual particles. A central object that represents the instantaneous average of all particle positions in a distributional sense is the empirical measure.
\begin{definition}
    For a collection of random variables $\mathcal{X}^N = (X^1, ..., X^N)$, we define the \textbf{empirical measure} as
    \begin{align*}
        \mu_{\mathcal{X}^N} \colon \Omega^N &\longrightarrow \mathcal{P}(\mathbb{R}^d) \\
       (\omega_1, ..., \omega_N) &\longmapsto \frac{1}{N} \Sum{i}{N} \delta_{X^i (\omega_i)}.
    \end{align*}
    \label{Pre:EmpMeas}
\end{definition}
For an interacting particle system in the sense of Definition \ref{Pre:IPScts}, the empirical measure $\mu_{\mathcal{X}_t^N}$ encodes the distribution of particle positions at a given time $t$, assigning equal probability mass to each particle. 
Moreover, the empirical measure is a random variable between the event space $\Omega^N$ and the space of probability measures $\PRd$. In order to prove this, we follow the ideas presented in \cite[p. 27 and p. 77, Problem 24]{Schervish1995TheoryStatistics} to construct the Borel $\sigma$-Algebra over $\PRd$ with a suited generator. 
\begin{proposition}
    The empirical measure
    \[
    \mu_{\mathcal X^N}\colon (\Omega^N,\mathcal F^N)\to \bigl(\mathcal P(\mathbb R^d),\mathcal C(\mathcal P(\mathbb R^d))\bigr)
    \]
    is a random variable, where $\mathcal{C}(\PRd)$ denotes the $\sigma$-Algebra over $\PRd$ generated by all sets of the form $\big \{ \mu \in \PRd : \mu(B) \leq y \big \}$, for $B \in \mathcal{B}(\mathbb{R}^d)$ and $y \in [0, 1]$.
    \label{Pre:PropEmpMeas}
\end{proposition}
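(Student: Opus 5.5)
Since $\mathcal{C}(\PRd)$ is generated by the family of sets $G_{B,y} := \{\mu \in \PRd : \mu(B) \leq y\}$, $B \in \mathcal{B}(\mathbb{R}^d)$, $y \in [0,1]$, it suffices to check measurability of $\mu_{\mathcal X^N}$ on generators. Indeed, the class $\{A \in \mathcal{C}(\PRd) : \mu_{\mathcal X^N}^{-1}(A) \in \mathcal{F}^N\}$ is a $\sigma$-algebra, because preimages commute with complements and countable unions. If it contains every $G_{B,y}$, it therefore coincides with $\mathcal{C}(\PRd)$. So the plan is: fix $B$ and $y$, and show $\mu_{\mathcal X^N}^{-1}(G_{B,y}) \in \mathcal F^N$.

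Fix $B \in \mathcal{B}(\mathbb{R}^d)$ and consider the real-valued map
\begin{align*}
    \Phi_B \colon \Omega^N \longrightarrow [0,1], \qquad (\omega_1, \dots, \omega_N) \longmapsto \mu_{\mathcal X^N}(\omega_1,\dots,\omega_N)(B) = \frac{1}{N}\Sum{i}{N} \mathbbm{1}_B\bigrou{X^i(\omega_i)}.
\end{align*}
Then $\mu_{\mathcal X^N}^{-1}(G_{B,y}) = \Phi_B^{-1}([0,y])$, so it is enough to show that $\Phi_B$ is $\mathcal F^N$-measurable.

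Each summand is measurable. The coordinate projection $\pi_i \colon \Omega^N \to \Omega$ is $\mathcal{F}^N/\mathcal{F}$-measurable by the definition of the product $\sigma$-algebra. The map $X^i \colon \Omega \to \mathbb{R}^d$ is a random variable, and $\mathbbm{1}_B$ is Borel measurable. Hence $\mathbbm{1}_B \circ X^i \circ \pi_i$ is a composition of measurable maps and is therefore measurable. Equivalently, it is the indicator of the set $\pi_i^{-1}\bigrou{(X^i)^{-1}(B)} \in \mathcal F^N$. A finite sum of measurable real functions, scaled by $1/N$, is measurable, so $\Phi_B$ is measurable. This finishes the argument.

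There is no serious obstacle. The only point needing care is the reduction to generators, namely the standard fact that a map is measurable as soon as the preimages of a generating family are measurable. One should also note that $\mu_{\mathcal X^N}(\omega)$ genuinely lies in $\PRd$, being a convex combination of Dirac measures. If the thesis instead identifies $\mathcal{C}(\PRd)$ with the Borel $\sigma$-algebra of the weak topology, as in the cited exercise of \cite{Schervish1995TheoryStatistics}, that identification is a separate step that the statement does not require.
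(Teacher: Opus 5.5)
Your proposal is correct and follows essentially the same route as the paper: both reduce measurability to a generating family of $\mathcal{C}(\mathcal{P}(\mathbb{R}^d))$ and observe that $\omega \mapsto \mu_{\mathcal X^N}(\omega)(B) = \frac{1}{N}\sum_{i=1}^N \mathbbm{1}_B(X^i(\omega_i))$ is $\mathcal F^N$-measurable. The only cosmetic difference is that the paper uses the enlarged generating family $f_B^{-1}(A)$ with $A \in \mathcal{B}([0,1])$, while you work directly with the original sets $\{\mu(B)\le y\}$ and spell out the good-sets principle and the coordinate projections explicitly.
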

\begin{proof}
    For $B \in \mathcal{B}(\mathbb{R}^d)$ we define the function 
    \begin{align*}
      f_B\colon \PRd \longrightarrow [0, 1],  \quad f_B(\mu) := \mu(B). 
    \end{align*}
    Then $\mathcal{C}(\PRd)$ is generated by
    \begin{align*}
        \big \{ f_B^{-1}(A) : A \in \mathcal{B}([0, 1]), \, B \in \mathcal{B}(\mathbb{R}^d)\big \}.
    \end{align*}
    Therefore, we fix $B \in \mathcal{B}(\mathbb{R}^d)$ and consider arbitrary $A \in \mathcal{B}([0, 1])$. We can immediately deduce the equality
    \begin{align*}
        \big \{ \omega \in \Omega^N  : \mu_{\mathcal{X}^N}(\omega) \in f^{-1}_B(A)\big \} = \big \{ \omega \in \Omega^N  : \bigrou{\mu_{\mathcal{X}^N}(\omega)} (B) \in A \big \}, 
    \end{align*}
    where we used the fact that $f_B^{-1}(A)$ denotes the set of all probability measures $\mu \in \mathcal{P}(\mathbb{R}^d)$ such that $\mu (B) \in A$. We can use the definition of the empirical measure to further compute 
    \begin{align*}
        \big \{ \omega \in \Omega^N  : \bigrou{\mu_{\mathcal{X}^N}(\omega)} (B) \in A \big \} &= \bigg \{ \omega \in \Omega^N  : \biggrou{\frac{1}{N} \sum^N_{i=1} \delta_{X^i(\omega_i)}} (B) \in A \bigg \} \\
        &= \bigg \{ \omega \in \Omega^N  : \biggrou{\frac{1}{N} \sum^N_{i=1} \mathbbm{1}_B(X^i(\omega_i))} \in A \bigg \}. 
    \end{align*}
    This set belongs to $\mathcal{F}^N$, which follows from the measurability of each random variable $X^i$ and the measurability of the indicator functions and, therefore, their sum. By the fact that we have chosen $A$ and $B$ to be arbitrary and that  $\mathcal{C}(\PRd)$ is generated by \[\big \{ f^{-1}_B(A) : A \in \mathcal{B}([0, 1]), B \in \mathcal{B}(\mathbb{R}^d)\big \},\] we have shown that 
    \begin{align*}
        \big \{ \omega \in \Omega^N : \mu_{\mathcal{X}^N}(\omega) \in C \big \} \in \mathcal{F}^N, \quad \forall C \in \mathcal{C}(\PRd), 
    \end{align*}
    and therefore $\mu_{\mathcal{X}^N}$ is a random variable from $\bigrou{\Omega^N, \mathcal{F}^N}$ to $\bigrou{\PRd, \mathcal{C}(\PRd)}$. 
\end{proof}
The following example shows us that the expected Wasserstein distance between two empirical measures can be estimated with the maximum distance between two particles of the respective particle systems.
\begin{example}
\label{WasserBD}
    For two collections of random variables $\mathcal{X}^N, \mathcal{Y}^N$ of size $N \in \mathbb{N}$, the $p$th power of the Wasserstein-$p$ distance between their empirical measures can be estimated as follows.
    \begin{align*}
   \mathbb{E}\,  \Bigeck{W^p_p(\mu_{\mathcal{X}^N}, \mu_{\mathcal{Y}^N})} = \mathbb{E} \biggeck{\underset{\pi \in S_N}{\min} \frac{1}{N} \Sum{i}{N} \N{X^i - Y^{\pi (i)}}^p} &\leq  \frac{1}{N} \Sum{i}{N} \mathbb{E} \biggeck{\N{X^i - Y^i}^p} \notag \\&\leq \underset{1 \leq i \leq N}{\max} \,  \mathbb{E} \biggeck{\N{X^i- Y^i}^p}. 
\end{align*}
\end{example}
The following lemma gives a quantitative bound on the expected distance between the empirical measure of i.i.d. random variables and the distribution from which they are drawn. It is from \cite[Theorem 1]{Fournier2015OnMeasure}.
\begin{lemma}[{{\cite[Theorem 1]{Fournier2015OnMeasure}}}]
Let $1 \leq p<q$ and $\orho \in \mathcal{P}_q(\mathbb{R}^d)$ with $\mathbb{E}_{\overline{\rho}} \bigeck{\N{x}^q}\leq M_q$. Define the particle system $\overbar{\mathcal{X}}^N$ by random variables $\overbar{X}^1, ..., \overbar{X}^N$ that are drawn independently from $\orho$. Then there exists a constant $C = C(p, q, d)$ such that for every $N \geq 1$
\begin{align*}
    \mathbb{E} \bigeck{W^p_p(\mu_{\overline{\mathcal{X}}^N}, \orho)} \leq C M_q^{p/q} \begin{cases}
             N^{-\frac{1}{2}} + N^{- \frac{q-p}{q}} \quad &\text{if } p> \frac{d}{2} \text{ and } p \neq \frac{q}{2}, \\
             N^{-\frac{1}{2}}\log (1+ N) + N^{- \frac{q-p}{q}} \quad &\text{if } p = \frac{d}{2} \text{ and } p \neq \frac{q}{2}, \\
             N^{-\frac{p}{d}} + N^{- \frac{q-p}{q}} \quad &\text{if } p \in (0, \frac{d}{2}) \text{ and } q \neq \frac{d}{d-p}.
         \end{cases}
\end{align*}
\label{Pre:ConvEmp}
\end{lemma}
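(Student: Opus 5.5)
The statement is \cite[Theorem 1]{Fournier2015OnMeasure}, and I would reproduce the dyadic-partition argument of Fournier and Guillin. First I reduce to $M_q = 1$ by scaling. Write $\orho_\lambda := \Law(\lambda \overbar{X}^1)$ with $\lambda := M_q^{-1/q}$. Then $W_p^p$ scales like $\lambda^p$, and the empirical measure of the rescaled samples is the pushforward of $\mu_{\overline{\mathcal X}^N}$. Hence it suffices to show $\mathbb{E}\bigeck{W_p^p(\mu_{\overline{\mathcal X}^N},\orho)} \leq C(p,q,d)\,\varepsilon_N$ whenever $M_q(\orho)\leq 1$, where $\varepsilon_N$ denotes the rate on the right-hand side of the statement.

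The first step is a deterministic coupling bound. Let $\mathcal P_l$ be the partition of $(-1,1]^d$ into $2^{dl}$ dyadic cubes of side $2^{1-l}$. For probability measures $\mu,\nu$ on $(-1,1]^d$ I would prove
\begin{align*}
W_p^p(\mu,\nu) \leq C_{p,d} \sum_{l\geq 1} 2^{-pl} \sum_{F\in\mathcal P_l} |\mu(F)-\nu(F)| .
\end{align*}
This is done by building a coupling level by level: on each cube, transfer as much common mass as possible to its children, and pay the diameter $2^{-l}$ for the mismatched mass. To pass to $\mathbb R^d$, decompose it into $B_0=(-1,1]^d$ and the annuli $B_n=(-2^n,2^n]^d\setminus(-2^{n-1},2^{n-1}]^d$. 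Rescaling each annulus to unit size and arguing as above gives
\begin{align*}
W_p^p(\mu,\nu) \leq C \sum_{n\geq0} 2^{pn}\Bigrou{|\mu(B_n)-\nu(B_n)| + \sum_{l\geq1}2^{-pl}\sum_{F\in\mathcal P_l}\bigl|\mu(B_n)\,\mathcal R_{B_n}\mu(F)-\nu(B_n)\,\mathcal R_{B_n}\nu(F)\bigr|}.
\end{align*}
Here $\mathcal R_{B_n}\mu$ denotes the normalized restriction of $\mu$ to $B_n$, transported to the unit cube. The mass mismatch between annuli is handled by routing it through the origin, which costs at most $2^{pn}$.

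The second step is probabilistic. For any Borel set $A$, the quantity $N\mu_{\overline{\mathcal X}^N}(A)$ is binomial, so
\begin{align*}
\mathbb{E}|\mu_{\overline{\mathcal X}^N}(A)-\orho(A)| \leq \min\bigl\{2\orho(A),\sqrt{\orho(A)/N}\bigr\}.
\end{align*}
Summing over the $2^{dl}$ subcells of $B_n$ and applying Cauchy--Schwarz yields a bound of order $\min\{\orho(B_n),\,2^{dl/2}\sqrt{\orho(B_n)/N}\}$. Markov's inequality gives $\orho(B_n)\leq 2^{-q(n-1)}$.

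The third step is the summation in $l$ and $n$, which I expect to be the main obstacle; the three cases arise there. For fixed $n$, the sum over $l$ of $2^{-pl}\min\{1,2^{dl/2}N^{-1/2}\}$ behaves as follows:
\begin{itemize}
\item if $p>d/2$, it is a convergent geometric series of size $N^{-1/2}$;
\item if $p=d/2$, it is of size $N^{-1/2}$ times the number of relevant levels $\sim\log(1+N)$;
\item if $p<d/2$, it is dominated by the cutoff level $2^{dl}\approx N$, giving $N^{-p/d}$.
\end{itemize}
It then remains to sum $\sum_n 2^{pn}\min\{2^{-qn},\,2^{-qn/2}N^{-1/2}\}$. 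I would split at the level $2^{qn}\approx N$. The large-$n$ tail contributes $N^{-(q-p)/q}$, and the small-$n$ part contributes $N^{-1/2}$. The latter is a convergent geometric sum exactly when $q\neq 2p$ (at $q=2p$ an extra logarithm appears), which explains the exceptional cases $p=q/2$ and $q=d/(d-p)$ in the statement. Keeping track of which geometric sums converge, and combining the $n$- and $l$-sums when both cutoffs are active, is bookkeeping-heavy but elementary. The constants depend only on $p,q,d$, and undoing the scaling restores the factor $M_q^{p/q}$.
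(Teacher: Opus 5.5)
Your route is the Fournier--Guillin argument itself; the thesis gives no proof and only cites \cite[Theorem 1]{Fournier2015OnMeasure}. Steps 1 and 2 are correct: the scaling, the dyadic/annular coupling bound, and the binomial estimate summed over cells with Cauchy--Schwarz.

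\emph{First gap (Step 3, cutoff level).} You analyse the $l$-sum with cutoff $2^{dl}\approx N$, independently of $n$. But only about $N\varepsilon_n$ samples fall in $B_n$, where $\varepsilon_n:=\orho(B_n)$. So the cutoff must be $2^{dl}\approx N\varepsilon_n$, and $\varepsilon_n$ has to stay inside the minimum:
\begin{align*}
\sum_{l\geq 0}2^{-pl}\min\bigl\{\varepsilon_n,\,2^{dl/2}\sqrt{\varepsilon_n/N}\bigr\}
\lesssim
\begin{cases}
\min\{\varepsilon_n,\sqrt{\varepsilon_n/N}\}, & p>d/2,\\
\min\{\varepsilon_n,\sqrt{\varepsilon_n/N}\,\log(2+N\varepsilon_n)\}, & p=d/2,\\
\min\{\varepsilon_n,\varepsilon_n^{1-p/d}N^{-p/d}\}, & p<d/2.
\end{cases}
\end{align*}
These right-hand sides are nondecreasing in $\varepsilon_n$, so one may insert $\varepsilon_n\le 2^{-q(n-1)}$ and sum against $2^{pn}$. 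Only these bounds give the stated rates.

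Your $n$-independent version amounts, for $p<d/2$, to $\min\{\varepsilon_n,\sqrt{\varepsilon_n}\,N^{-p/d}\}$. That leads to
\begin{align*}
\sum_{n\geq 0}2^{pn}\min\bigl\{2^{-qn},\,2^{-qn/2}N^{-p/d}\bigr\}\approx N^{-p/d}+N^{-\frac{2p(q-p)}{dq}}\qquad (q\neq 2p).
\end{align*}
Since $2p/d<1$, this is strictly worse than the claimed $N^{-p/d}+N^{-(q-p)/q}$ whenever $p<q<2p$. Likewise, for $p=d/2$ the uniform factor $\log(1+N)$ leaves a spurious power of $\log N$ on the $N^{-(q-p)/q}$ term when $q<2p$. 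A minor point: for $q<2p$ the small-$n$ part is dominated by its last term $N^{-(q-p)/q}$, not by $N^{-1/2}$; this is harmless for the final bound, but your sentence about it is inaccurate.

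\emph{Second gap (exceptional value in the third case).} It does not come out as $q=d/(d-p)$, so your claim that the computation explains it is wrong. With the correct inner bound, the part $2^{qn}\lesssim N$ of the $n$-sum is geometric with exponent $p-q(d-p)/d$. This exponent vanishes exactly at $q=dp/(d-p)$, where $(q-p)/q=p/d$ and an extra $\log N$ appears. That value agrees with $d/(d-p)$ only for $p=1$. It also equals $2p$ when $p=d/2$, consistent with the second case. So the condition $q\neq d/(d-p)$ in the statement appears to be a misprint inherited from Fournier--Guillin. Once the cutoff is fixed, your argument proves the third case under $q\neq dp/(d-p)$. For $p>1$ it does not cover $q=dp/(d-p)$, which the statement as written includes.
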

\section{Propagation of Chaos}
\label{Pre:PropagationofChaos}
Analyzing interacting particle systems can be very difficult, and many interesting problems such as analyzing their long term behavior directly are intractable in the current state of the art. The concept of propagation of chaos provides a feasible framework for analyzing interacting particle systems: as the number of particles grows, the joint law of the system is expected to approach a product measure corresponding to independent copies of a limiting process. This phenomenon underlies so called mean-field approximations and is fundamental in the analysis of many stochastic particle models.

In this section, we formalize propagation of chaos using the Wasserstein-$p$ distance and introduce the notion of mean-field limits. We then present the coupling approach, a widely used technique for proving propagation of chaos, and outline how it applies to particle systems defined by stochastic differential equations. A detailed description of propagation of chaos in CBO can be found in \cite{Gerber2025Mean-fieldSampling}, while \cite{Chaintron2022PropagationMethods} provides a broader overview of the topic in general. The following two definitions are due to \cite{Chaintron2022PropagationMethods}, whereas the description of proving propagation of chaos by coupling is heavily inspired by \cite[Chapter 3]{Chaintron2022PropagationApplications}.
\begin{definition}
Let $p \in [1, \infty)$ and let $\bigrou{\rho^N}_N$ be a collection of measures on $\bigrou{\mathbb{R}^d}^N$. Then we say that \textbf{propagation of chaos in Wasserstein-$p$ distance} towards the probability measure $\orho \in \mathcal{P}\bigrou{\mathbb{R}^d}$ holds, if
\begin{align*}
    W_p\bigrou{\rho^N, \orho^{\otimes N}} \xrightarrow[]{N \rightarrow \infty}0.
\end{align*}
We call $\orho$ the \textbf{mean-field distribution} or \textbf{mean-field limit} of $\bigrou{\rho^N}_N$.\newline
For flows of probability measures $\rho_{[0, T]}^N := (\rho^N_t)_{t \in [0, T]} \in \mathcal{P}\bigrou{C([0, T];\mathbb{R}^d)^N}$, we say that propagation of chaos uniformly in time over $[0, T]$ towards $\orho_{[0, T]} \in \mathcal{P}\bigrou{C([0, T];\mathbb{R}^d)}$ holds, if
\begin{align*}
    W_p\Bigrou{\rho_{[0, T]}^N, \orho_{[0, T]}^{\otimes N}} \xrightarrow[]{N \rightarrow \infty}0,
\end{align*}
and propagation of chaos pointwise in time over $[0, T]$ holds if
\begin{align*}
  \suptT \, W_p\bigrou{\rho_t^N, \orho_t^{\otimes N}} \xrightarrow[]{N \rightarrow \infty}0.
\end{align*}
\end{definition}
If the collection of measures is the distribution of an interacting particle system, we refer to $\orho_k$ in discrete, and $\orho_t$ in continuous time as the mean-field limit of the respective particle system.
A very common way of proving propagation of chaos for an IPS defined by a system of stochastic differential equations is the so called coupling method, which goes as follows. We sample $N$ copies of the mean-field distribution that have the same initial states as the IPS. We then show that the average of the distances between the particles and their respective mean-field counterparts will converge to 0 uniformly over a finite time horizon as $N$ approaches infinity. We will now state this mathematically.
\begin{definition}
Fix $T \in (0, \infty]$. For every $N \in \mathbb{N}$, let $\mathcal{X}^N_T$ be a $N$-particle system with $\Law\big( X^1_t, \dots, X^N_t\big) \in \mathcal{P}\big((\mathbb{R}^d)^N\big)$ for every $0 \leq t \leq T$. We say that \textbf{propagation of chaos by coupling} holds for this system if:\\
For every $N \in \mathbb{N}$, there exists an independent particle system $\overbar{\mathcal{X}}^N_T$, i.e. a collection of $N$ independent stochastic processes $\overbar{\mathcal{X}}^N_T = (\overbar{X}^1_t, ..., \overbar{X}^N_t)_{t \in [0, T]}$, and some $\orho_t \in \mathcal{P}(\mathbb{R}^d)$ such that $\Law\big( \overbar{X}^1_t, \dots \overbar{X}^N_t\big) = \orho_t^{\otimes N}$ for every $0 \leq t \leq T$ and a $\varepsilon(N, T)>0$ with $\varepsilon(N, T) \xrightarrow[]{N \rightarrow \infty}0$ such that
\begin{align*}
    &\frac{1}{N} \Sum{i}{N} \mathbb{E} \biggeck{ \suptT \N{X^i_t - \overbar{X}^i_t}^p } \leq \varepsilon(N, T) \quad (\text{pathwise}) \\
    \text{or} \quad & \frac{1}{N} \Sum{i}{N}  \suptT \,\mathbb{E}  \biggeck{ \N{X^i_t - \overbar{X}^i_t}^p} \leq \varepsilon(N, T) \quad (\text{pointwise}).
\end{align*}
In this case, we will refer to $\orho_{[0, T]}$ as the \textbf{mean-field law} of $\mathcal{X}^N_T$, where $\orho_{[0,T]}$ denotes the pathwise law of one nonlinear process with time marginals $(\orho_t)_{t\in[0,T]}$.
\label{Pre:ChaosCoup}
\end{definition}

Definition \ref{Pre:ChaosCoup} is exactly Definition 4.1 of \cite{Chaintron2022PropagationMethods} with $d(x, y) = \N{x -y}$ as the chosen metric. We will show below that pathwise propagation of chaos by coupling implies propagation of chaos in Wasserstein-$p$ distance uniformly in time. To this end, we consider the path space $\mathcal{C}([0,T];(\mathbb{R}^d)^N)$ and define the metric
\begin{align*}
    d_{[0,T]}(x,\overbar{x}) := \biggrou{\frac{1}{N} \sum^N_{i=1} \suptT \N{x^i_t - \overbar{x}^i_t}^p}^{1/p},
\end{align*}
for $x=(x^1,\dots,x^N), \overbar{x}=(\overbar{x}^1,\dots,\overbar{x}^N) \in \mathcal{C}([0,T];(\mathbb{R}^d)^N)$. This means that we can estimate the Wasserstein-$p$ distance as follows:
\begin{align*}
    W_p^p\Bigrou{\rho^N_{[0, T]}, \orho^{\otimes N}_{[0, T]}} 
    &= \underset{\pi \in \Pi\bigrou{\rho^N_{[0, T]}, \overline{\rho}^{\otimes N}_{[0, T]}}}{\inf}
    \int_{\mathcal{C}([0,T];(\mathbb{R}^d)^N)\times \mathcal{C}([0,T];(\mathbb{R}^d)^N)}
    d_{[0,T]}(x,\overbar{x})^p \, d \pi(x,\overbar{x}) \\
    &= \underset{\pi \in \Pi\bigrou{\rho^N_{[0, T]}, \overline{\rho}^{\otimes N}_{[0, T]}}}{\inf}
    \int_{\mathcal{C}([0,T];(\mathbb{R}^d)^N)\times \mathcal{C}([0,T];(\mathbb{R}^d)^N)}
    \frac{1}{N} \sum^N_{i=1} \suptT \N{x^i_t - \overbar{x}^i_t}^p \, d \pi(x,\overbar{x}) \\
    &\leq \frac{1}{N} \sum^N_{i =1} \mathbb{E} \biggeck{\suptT \N{X^i_t - \overbar{X}^i_t}^p},
\end{align*}
where $\rho^N_{[0, T]}$ and $\orho^{\otimes N}_{[0, T]}$ denote the pathwise law of the interacting particle system and the mean-field limit over $[0, T]$, respectively.
\begin{remark}
In the last step, we bound the infimum by the specific coupling given by the joint law of $(\mathcal{X}^N_T, \overbar{\mathcal{X}}^N_T)$ from Definition \ref{Pre:ChaosCoup}, which is an admissible coupling since its marginals are $\rho^N_{[0,T]}$ and $\orho^{\otimes N}_{[0,T]}$ by construction. More precisely, under this coupling,
\begin{align*}
    \int_{\mathcal{C}([0,T];(\mathbb{R}^d)^N)\times \mathcal{C}([0,T];(\mathbb{R}^d)^N)}
    \frac{1}{N} \sum^N_{i=1} \suptT \N{x^i_t - \overbar{x}^i_t}^p \, d \pi(x,\overbar{x})
    = \frac{1}{N} \sum^N_{i =1} \mathbb{E} \biggeck{\suptT \N{X^i_t - \overbar{X}^i_t}^p}.
\end{align*}
\end{remark}

    Let us take a look at the expected Wasserstein distance between the empirical measure of an IPS and the assumed mean-field law at time $t \in [0, T]$: 
\begin{align*}
    \mathbb{E} \, \Bigeck{W^p_p(\empIPSc, \orho_t)} \leq 2^{p-1}\mathbb{E} \, \Bigeck{W^p_p(\empIPSc, \mu_{\overline{\mathcal{X}}^N_t})} + 2^{p-1}\mathbb{E} \,\Bigeck{W^p_p(\mu_{\overline{\mathcal{X}}^N_t}, \orho_t)}, 
\end{align*}
with $\mu_{\overline{\mathcal{X}}^N_t}$ being the empirical measure of the mean-field particles $(\overbar{X}^1_t, ..., \overbar{X}^N_t)$. The second part of this bound converges to 0 as $N \rightarrow \infty$ by Lemma \ref{Pre:ConvEmp}. Using Example \ref{WasserBD} yields
\begin{align}
    \mathbb{E} \, \Bigeck{W^p_p(\empIPSc, \mu_{\overline{\mathcal{X}_t}^N})} \leq \frac{1}{N} \sum^N_{i=1} \mathbb{E} \biggeck{\N{X^i_t - \overbar{X}^i_t}^p}. \label{WassersteinEmpiricalToDist}
\end{align}
Hence, pathwise coupling implies in expectation convergence of the empirical measure of the interacting particle system to the mean-field limit in Wasserstein-$p$ distance, uniform in time, while the pointwise version does imply it pointwise in time, respectively. More generally, by \cite[Proposition 2.2]{Sznitman1991TopicsChaos}, a sequence of symmetric laws is $\orho$-chaotic if and only if the associated empirical measures converge in law to the deterministic measure $\orho$.

\subsection{Proving Propagation of Chaos by Coupling}\label{Pre:ProvingPropChaos}
If we look at Definition \ref{Pre:ChaosCoup} we immediately ask ourselves the question of how to prove the existence of the mean-field law $\orho_{[0, T]}$ and the associated mean-field particle system $\MIPS{T}$. In many cases, including those studied in this thesis, the interacting particle system of interest is described by a system of stochastic differential equations of the form
\begin{align}
dX_t^i = b(X_t^i, \empIPSc)\,dt + \sigma(X_t^i, \empIPSc)\,dB_t^i, \quad i = 1, \dots, N,
\label{Pre:SDE}
\end{align}
with initial distribution $X_0^i \sim \orho_0$ for every $i \leq N$, and drift and diffusion coefficients
\begin{align*}
b:\mathbb{R}^d \times \mathcal{P}(\mathbb{R}^d) \to \mathbb{R}^d
\quad \text{and} \quad
\sigma:\mathbb{R}^d \times \mathcal{P}(\mathbb{R}^d) \to \mathbb{R}^{d \times d},
\end{align*}
respectively. In numerous instances, we have the convenience that we can claim that the distribution of the mean-field limit is characterized by the following SDE:
\begin{align}
    d\overbar{X}_t = b(\overbar{X}_t, \orho_t)dt + \sigma(\overbar{X}_t, \orho_t) dB_t,
    \label{Pre:MFSDE}
\end{align}
where we assume the initial distribution $\orho_0 \in \mathcal{P}(\mathbb{R}^d)$ to have sufficiently many moments. An issue to be dealt with is that these approaches require some well posedness properties for the particle SDEs and the mean-field process. In particular, we require the finite particle SDEs (\ref{Pre:SDE}) and the mean-field SDE (\ref{Pre:MFSDE}) to admit a unique strong solution. Based on these properties, we introduce the particle system $\MIPS{T}$, where for each $i \leq N$, the particle $\overbar{X}^i_t$ is defined by the stochastic differential equation:
\begin{align}
    d\overbar{X}^i_t = b(\overbar{X}^i_t, \orho_t)dt + \sigma(\overbar{X}^i_t, \orho_t) dB^i_t,
    \label{Pre:MFSDE2}
\end{align}
with initially $\overbar{X}^i_0 \sim \orho_0, \, \overbar{X}^i_0 = X^i_0$ almost surely. This defines an IPS which is coupled to $\IPS{T}$ by Brownian motion paths and initial condition.
The typical coupling approach is now to show that there exists a  non-negative $ \varepsilon(N, T) \xrightarrow[]{N \rightarrow \infty}0$ such that for all $N \in \mathbb{N}$ and all $i \leq N$  
\begin{align*}
    \mathbb{E} \biggeck{ \suptT \N{X^i_t - \overbar{X}^i_t}^p } \leq \varepsilon(N, T).
\end{align*}
In the fortunate case of having global Lipschitz continuity of the drift and diffusion terms of \eqref{Pre:SDE} and \eqref{Pre:MFSDE2} respectively, we immediately get pathwise propagation of chaos by the famous theorem of McKean (\cite[Theorem 3.1]{Chaintron2022PropagationApplications}). Unfortunately, for the methods we are investigating, this is not the case. However, the proofs we will use will follow many of the steps of the Sznitman proof of McKeans theorem (\cite[p. 26 - 28]{Chaintron2022PropagationApplications}). 

We typically use the integral form of \eqref{Pre:SDE} and \eqref{Pre:MFSDE2} to compute
\begin{align}
    & \mathbb{E}  \biggeck{ \supu{t}{T} \N{X^i_t - \overbar{X}^i_t}^p } \notag \\ = \, &\mathbb{E} \Biggeck{ \supu{t}{T} \N{\int^t_0 b\bigrou{X^i_\tau, \mu_{\mathcal{X}^N_\tau}} - b\bigrou{\overbar{X}^i_\tau, \orho_\tau}d\tau + \int^t_0 \sigma\bigrou{X_\tau^i, \mu_{\mathcal{X}^N_\tau}} - \sigma\bigrou{\overbar{X}^i_\tau, \orho_\tau} dB^i_\tau }^p } \notag \\
   \leq \,  &2^{p-1} \Biggrou{\mathbb{E} \Biggeck{ \biggrou{\int^T_0 \N{b\bigrou{X^i_\tau, \mu_{\mathcal{X}^N_\tau}} - b\bigrou{\overbar{X}^i_\tau, \orho_\tau}}d\tau}^p }\notag \\ & \quad   +   C_{\mathrm{BDG}} \mathbb{E} \Biggeck{ \biggrou{\int^T_0 \N{\sigma\bigrou{X_\tau^i, \mu_{\mathcal{X}^N_\tau}} - \sigma\bigrou{\overbar{X}^i_\tau, \orho_\tau}}^2 d\tau }^{\frac{p}{2}} }}\notag \\
   \leq \, &C_T \Biggrou{  \int^T_0 \mathbb{E} \Bigeck{\N{b\bigrou{X^i_\tau, \mu_{\mathcal{X}^N_\tau}} - b\bigrou{\overbar{X}^i_\tau, \orho_\tau}}^p }d\tau \notag \\ &\quad  +   C_{\mathrm{BDG}} \int^T_0 \mathbb{E} \Bigeck{\N{\sigma\bigrou{X_\tau^i, \mu_{\mathcal{X}^N_\tau}} - \sigma\bigrou{\overbar{X}^i_\tau, \orho_\tau}}^p} d\tau  },
   \label{Pre:ProofPropchaoEst1}
\end{align}
where we used Hölder's inequality and the Fubini-Tonelli theorem in the last line, with $C_T := 2^{p-1}\max\big\{T^{p-1}, T^{\frac{p}{2}-1}\big\}$ for $p \geq 2$. Here $\N{\cdot}$ denotes the Frobenius norm when applied to the diffusion matrices.
We then try to control the differences between the drift terms and between the diffusion terms separately, typically with a Grönwall argument (\cite[p. 310, VI]{Walter1998OrdinaryEquations}). Note that the integral of the diffusion part becomes deterministic due to Lemma \ref{Burkholder} together with the statement of Example \ref{BrownianExample} that a Brownian motion fulfills $\left\langle B \right\rangle_t = t$. In both the drift and diffusion, we expand the inside of the expectation with the respective mean-field term and use the triangle inequality to split the estimate into two more manageable terms:
\vspace{.4 cm}
\newline
\textbf{Drift Term}
\begin{align*}
    &\mathbb{E} \Bigeck{\N{b\bigrou{X^i_\tau, \mu_{\mathcal{X}^N_\tau}} - b\bigrou{\overbar{X}^i_\tau, \orho_\tau}}^p } \\ \leq \, & 2^{p-1} \Biggrou{\mathbb{E} \biggeck{\N{b\bigrou{X^i_\tau, \mu_{\mathcal{X}^N_\tau}} - b\bigrou{\overbar{X}^i_\tau, \mu_{\overline{\mathcal{X}}^N_\tau}}}^p } + \mathbb{E} \biggeck{\N{b\bigrou{\overbar{X}^i_\tau, \mu_{\overline{\mathcal{X}}^N_\tau}} - b\bigrou{\overbar{X}^i_\tau, \orho_\tau}}^p }}. 
\end{align*}
\textbf{Diffusion Term}
\begin{align*}
     &\mathbb{E} \Bigeck{\N{\sigma\bigrou{X^i_\tau, \mu_{\mathcal{X}^N_\tau}} - \sigma\bigrou{\overbar{X}^i_\tau, \orho_\tau}}^p }\\ \leq \, & 2^{p-1} \Biggrou{\mathbb{E} \biggeck{\N{\sigma\bigrou{X^i_\tau, \mu_{\mathcal{X}^N_\tau}} - \sigma\bigrou{\overbar{X}^i_\tau, \mu_{\overline{\mathcal{X}}^N_\tau}}}^p } + \mathbb{E} \biggeck{\N{\sigma\bigrou{\overbar{X}^i_\tau, \mu_{\overline{\mathcal{X}}^N_\tau}} - \sigma\bigrou{\overbar{X}^i_\tau, \orho_\tau}}^p }}.
\end{align*}
The strategy is then to derive for the drift part
\begin{align*}
    \mathbb{E} \biggeck{\N{b\bigrou{X^i_\tau, \mu_{\mathcal{X}^N_\tau}} - b\bigrou{\overbar{X}^i_\tau, \mu_{\overline{\mathcal{X}}^N_\tau}}}^p } &\leq K_{b, 1} \mathbb{E} \biggeck{\N{X^i_\tau - \overbar{X}^i_\tau}^p },\\
     \mathbb{E} \biggeck{\N{b\bigrou{X^i_\tau, \mu_{\overline{\mathcal{X}}^N_\tau}} - b\bigrou{\overbar{X}^i_\tau, \orho_\tau}}^p } &\leq K_{b, 2} \varepsilon(N, T) \xrightarrow[]{N \rightarrow \infty} 0, 
\end{align*}
and analogously for the diffusion part
\begin{align*}
    \mathbb{E} \biggeck{\N{\sigma\bigrou{X^i_\tau, \mu_{\mathcal{X}^N_\tau}} - \sigma\bigrou{\overbar{X}^i_\tau, \mu_{\overline{\mathcal{X}}^N_\tau}}}^p } &\leq K_{\sigma, 1} \mathbb{E} \biggeck{\N{X^i_\tau - \overbar{X}^i_\tau}^p }, \\
     \mathbb{E} \biggeck{\N{\sigma\bigrou{X^i_\tau, \mu_{\overline{\mathcal{X}}^N_\tau}} - \sigma\bigrou{\overbar{X}^i_\tau, \orho_\tau}}^p } &\leq K_{\sigma, 2} \varepsilon(N, T)\xrightarrow[]{N \rightarrow \infty} 0,
\end{align*}
where all constants on the right side are independent of $N$. 
Note that for every $\tau \in [0, T]$
\[
\mathbb{E} \biggeck{\N{X^i_\tau - \overbar{X}^i_\tau}^p} \leq \mathbb{E} \biggeck{\underset{s \in [0, \tau]}{\sup} \N{X^i_s - \overbar{X}^i_s}^p}.
\]
Putting the pieces together and absorbing all the constants would, therefore, yield for two constants $c_1, c_2 > 0$ independent of $N$
\begin{align}
    \mathbb{E} \biggeck{\suptT \N{X^i_t - \overbar{X}^i_t}^p} \leq c_1  \int^T_0  \mathbb{E} \biggeck{\underset{s \in [0, \tau]}{\sup} \N{X^i_s - \overbar{X}^i_s}^p}d\tau + c_2 \varepsilon(N, T), \label{Pre:ProvingPropChaosGronwallBound}
\end{align}
and the statement would follow directly from the below stated Grönwall lemma.
\begin{lemma}
    Let $\phi: [0, T] \rightarrow [0,\infty)$ be bounded and measurable and let there be constants $\alpha \geq 0, \beta >0$ such that
    \begin{align*}
        \phi(t) \leq \alpha + \beta \int^t_0 \phi(\tau) \, d \tau \quad \forall t \in [0, T].
    \end{align*}
Then, for all $t \in [0, T]$
\begin{align*}
    \phi(t) \leq \alpha e^{\beta t}.
\end{align*}
\label{GronwallLemma}
\end{lemma}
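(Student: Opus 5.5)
We prove the Grönwall inequality of Lemma \ref{GronwallLemma} by comparison with the integral term. Set
\begin{align*}
    \psi(t) := \alpha + \beta \int_0^t \phi(\tau)\, d\tau, \quad t \in [0,T].
\end{align*}
Since $\phi$ is bounded and measurable, $\psi$ is well defined and Lipschitz continuous, hence absolutely continuous. Its derivative satisfies $\psi'(t) = \beta \phi(t)$ for almost every $t$, by the Lebesgue differentiation theorem. The hypothesis reads $\phi(t) \leq \psi(t)$ for all $t$, which gives $\psi'(t) \leq \beta \psi(t)$ for almost every $t \in [0,T]$.

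Next we apply the integrating factor. Define $g(t) := e^{-\beta t}\psi(t)$, which is again absolutely continuous as a product of a Lipschitz function and a smooth one. For almost every $t$,
\begin{align*}
    g'(t) = e^{-\beta t}\bigrou{\psi'(t) - \beta \psi(t)} \leq 0 .
\end{align*}
Absolute continuity lets us integrate this: $g(t) - g(0) = \int_0^t g'(\tau)\,d\tau \leq 0$. Hence $g(t) \leq g(0) = \psi(0) = \alpha$, so $\psi(t) \leq \alpha e^{\beta t}$. Combining with $\phi \leq \psi$ yields the claim $\phi(t) \leq \alpha e^{\beta t}$ for all $t \in [0,T]$.

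The only delicate point is that $\phi$ is merely measurable, not continuous, so $\psi$ is differentiable only almost everywhere. We must pass from $g' \leq 0$ a.e. to monotonicity of $g$, and this needs absolute continuity rather than mere a.e. differentiability. Boundedness of $\phi$ supplies exactly this, since it makes $\psi$ Lipschitz.

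An alternative route avoids differentiation altogether: iterate the inequality. Substituting the bound into itself $n$ times gives
\begin{align*}
    \phi(t) \leq \alpha \sum_{k=0}^{n-1} \frac{(\beta t)^k}{k!} + \beta^n \int_0^t \frac{(t-\tau)^{n-1}}{(n-1)!}\phi(\tau)\, d\tau .
\end{align*}
The remainder is bounded by $\sup \phi \, (\beta T)^n/n!$, which tends to $0$ as $n \to \infty$. Letting $n \to \infty$ gives the same conclusion. We would keep this as a backup if one preferred to avoid the absolute-continuity argument.
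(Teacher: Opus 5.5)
Your proof is correct. The paper does not prove this lemma itself; it only cites Walter's ODE textbook (p.~310), so you are supplying a self-contained proof where the paper outsources one.

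The integrating-factor argument via $\psi(t)=\alpha+\beta\int_0^t\phi(\tau)\,d\tau$ is the standard textbook route. You correctly handle the one point where the hypotheses as stated (bounded, merely measurable $\phi$) need more than the continuous case. There $\psi'=\beta\phi$ holds only almost everywhere, and it is the Lipschitz continuity of $\psi$, hence absolute continuity of $g(t)=e^{-\beta t}\psi(t)$, that upgrades $g'\le 0$ a.e.\ to $g(t)\le g(0)=\alpha$.

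This generality matters for the functions the paper applies the lemma to, such as $t\mapsto\mathbb{E}\bigl[\sup_{s\le t}\|X^i_s-\overline{X}^i_s\|^p\bigr]$. For these, monotonicity (hence measurability) is immediate, but continuity is never checked.

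Your backup Picard-iteration argument is also valid. Substituting the hypothesis into the remainder is legitimate because the kernel $(t-\tau)^{n-1}/(n-1)!$ is nonnegative. Fubini then produces the next term of the exponential series, and the remainder is bounded by $\sup\phi\,(\beta T)^n/n!\to 0$. That route needs only monotonicity of the integral and Fubini, with no differentiation theory, at the cost of a short induction. Either version would serve as a complete replacement for the citation.
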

\begin{proof}
    \cite[p. 310]{Walter1998OrdinaryEquations}
\end{proof}
\section{Consensus-Based Optimization} \label{SectionCBO}
Consensus-based optimization (CBO) algorithms are stochastic particle methods designed to approximate global minimizers of possibly nonconvex objective functions of the form
\begin{align*}
    \text{(arg)}\underset{x \in \mathbb{R}^d}{\min} \, f(x).
\end{align*}
They model optimization as the evolution of an interacting particle system, where particles are driven toward a consensus point—a weighted average of particle positions that emphasizes regions where the objective function is small—while stochastic noise promotes exploration.\\
In this section, we introduce the classical CBO dynamics and define the consensus point. We discuss its main analytical properties, including the lack of global Lipschitz continuity, and present standard assumptions on the objective function that ensure moment bounds and stability estimates, on which analysis is typically done and also carried out in this thesis.
In Chapters \ref{chap:PropGoodSets} and \ref{chap:PropChaUnrestricted}, we analyze some newer CBO variants—$\delta$-CBO, consensus freezing, and consensus hopping—and establish quantitative convergence toward their mean-field limits.
\vspace{.5 cm}
\newline
As stated above, consensus-based optimization algorithms are a class of optimization methods designed to solve global optimization problems
\begin{align*}
    \text{(arg)} \underset{x \in \mathbb{R}^d}{\min} \, f(x),
\end{align*}
where the objective function $f \colon \mathbb{R}^d \longrightarrow \mathbb{R}$ is possibly nonconvex.
To do that, the classical CBO uses the dynamics of interacting particle systems in the form of the following system of SDEs:
\begin{align}
    dX^i_t = -\lambda (X^i_t - X^\alpha(\rho^N_t))dt + \sigma \N{X^i_t- X^\alpha(\rho^N_t)} dB^i_t, \quad X^i_0 \sim \orho_0, \quad i = 1, ..., N, 
    \label{Pre:CBOSDE}
\end{align}
where $\orho_0 \in \PRd$ is the initial distribution and $\lambda, \sigma >0$ are the drift and diffusion parameters, respectively. The main mechanism of CBO is the consensus point $X^\alpha(\rho_t^N)$ of the empirical measure of all particles at time $t$, which is defined in the following.
\begin{definition}
    Let $f \colon \mathbb{R}^d \longrightarrow \mathbb{R}$ be a proper function. The \textbf{consensus point} of a probability measure $\rho \in \mathcal{P}(\mathbb{R}^d)$ is defined as
    \begin{align*}
        X^\alpha \colon \mathcal{P}(\mathbb{R}^d) &\longrightarrow \mathbb{R}^d, \quad
        \rho \longmapsto \frac{\int_{\mathbb{R}^d} x\oa (x) d\rho(x)}{\Gnorm{\rho}},
    \end{align*}
    where $\oa(x) := \exp(-\alpha f(x))$.
\end{definition}
On an event $\omega = (\omega_1, ..., \omega_N)$ of an empirical measure of $N$ particles, the consensus point becomes 
\begin{align*}
    X^\alpha(\mu_{\mathcal{X}^N_t} (\omega)) = \frac{\sum_{i=1}^N X^i_t(\omega_i) \oa (X^i_t(\omega_i))}{\sum_{i=1}^N\oa (X^i_t(\omega_i))}.
\end{align*}
By the measurability of the empirical measure and the continuity of the consensus point, we can directly deduce that the consensus point of an empirical measure is again measurable. 
In general, the consensus point is not globally Lipschitz continuous.

The consensus point describes the interaction among the particles. One way is to think of it as a weighted average of all points, attributing more weight at the positions where the objective function attains lower values. 
More rigorously, the specific choice of the consensus point is motivated by the so-called Laplace Principle which originated in the theory of large deviations. There are several ways to state this property, a very famous one being Varadhan's Lemma\footnote{for reference, the reader is referred to \cite[p. 137, Theorem 4.3.1]{Dembo2010LargeApplications}}. In this thesis, we characterize the Laplace principle by the following theorem, since it gives the most obvious connection to the consensus point.
\begin{theorem}[{{\cite[Proposition 2.1]{Pinnau2017ALimit}}}]
Let $f \in C_b(\mathbb{R}^d; \mathbb{R})$ nonnegative and attain its unique global minimum at $x^* \in \mathbb{R}^d$. Then, if $\rho \in \mathcal{P}(\mathbb{R}^d)$ is absolutely continuous with respect to the Lebesgue measure with $x^* \in \supp(\rho)$, we have
\begin{align*}
    \underset{\alpha \rightarrow \infty}{\lim} \Biggrou{ - \frac{1}{\alpha} \log \biggrou{\int_{\mathbb{R}^d} e^{-\alpha f} d \rho}} = f(x^*).
\end{align*}
\end{theorem}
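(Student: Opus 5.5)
We prove the two inequalities $\limsup$ and $\liminf$ of $-\frac{1}{\alpha}\log\int e^{-\alpha f}\,d\rho$ separately. The upper bound on the integral is immediate. Since $f \geq f(x^*)$ everywhere, $e^{-\alpha f} \leq e^{-\alpha f(x^*)}$, so $\int e^{-\alpha f}\,d\rho \leq e^{-\alpha f(x^*)}$ because $\rho$ is a probability measure. Taking $-\frac{1}{\alpha}\log$ reverses the inequality and gives
\begin{align*}
    -\frac{1}{\alpha}\log\biggrou{\int_{\mathbb{R}^d} e^{-\alpha f}\,d\rho} \geq f(x^*) \quad \text{for every } \alpha>0,
\end{align*}
hence the $\liminf$ is at least $f(x^*)$. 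Nonnegativity of $f$ only guarantees that the integral lies in $(0,1]$, so the logarithm is well defined.

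For the reverse inequality we localize near the minimizer. Fix $\varepsilon>0$. By continuity of $f$ at $x^*$ there is $r>0$ with $f(x) \leq f(x^*)+\varepsilon$ on the ball $B_r(x^*)$. Then
\begin{align*}
    \int_{\mathbb{R}^d} e^{-\alpha f}\,d\rho \geq \int_{B_r(x^*)} e^{-\alpha f}\,d\rho \geq e^{-\alpha(f(x^*)+\varepsilon)}\,\rho\bigrou{B_r(x^*)}.
\end{align*}
Because $x^* \in \supp(\rho)$, every open ball around $x^*$ has positive $\rho$-mass, so $m_r := \rho(B_r(x^*))>0$. This is exactly where the support assumption enters. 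Absolute continuity is not really needed for this argument; it merely ensures the setting of the cited statement.

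Applying $-\frac{1}{\alpha}\log$ gives
\begin{align*}
    -\frac{1}{\alpha}\log\biggrou{\int_{\mathbb{R}^d} e^{-\alpha f}\,d\rho} \leq f(x^*)+\varepsilon - \frac{1}{\alpha}\log m_r .
\end{align*}
Since $m_r$ is a fixed positive number independent of $\alpha$, the last term vanishes as $\alpha\to\infty$. Therefore the $\limsup$ is at most $f(x^*)+\varepsilon$. Letting $\varepsilon \downarrow 0$ and combining with the first step shows that the limit exists and equals $f(x^*)$.

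The only step requiring care is the positivity $m_r>0$. It follows from the definition of support: the support is the complement of the largest open $\rho$-null set, so a point of the support cannot have a null neighbourhood. Boundedness of $f$ is not used beyond ensuring the integrals are finite and positive. Uniqueness of the minimizer is likewise not needed for this statement, only that the minimum is attained at $x^*$.
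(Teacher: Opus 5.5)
Your argument is correct and complete. The thesis does not prove this statement itself: it cites \cite[Proposition 2.1]{Pinnau2017ALimit} and points to Varadhan's lemma as the general large-deviations form of the Laplace principle.

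Your two-sided squeeze is self-contained and elementary. The global bound $e^{-\alpha f}\le e^{-\alpha f(x^*)}$ gives the lower bound on the limit. Localizing to $B_r(x^*)$, where $\rho(B_r(x^*))>0$ because $x^*\in\operatorname{supp}(\rho)$, gives the upper bound. The argument also makes clear which hypotheses are actually used:
\begin{itemize}
\item $f\ge f(x^*)$, where $\rho$-a.e.\ would already suffice;
\item upper semicontinuity of $f$ at $x^*$;
\item $x^*\in\operatorname{supp}(\rho)$.
\end{itemize}
You are right that absolute continuity and uniqueness of the minimizer play no role. The large-deviations route buys generality for families of measures satisfying a large deviation principle with a rate function. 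For a single fixed $\rho$, however, it reduces to exactly your estimate.

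One small correction of wording. Positivity of $\int e^{-\alpha f}\,d\rho$ comes from $e^{-\alpha f}>0$ pointwise together with $\rho(\mathbb{R}^d)=1$, not from $f\ge 0$. Nonnegativity, or merely $f\ge f(x^*)$, gives the bound $\int e^{-\alpha f}\,d\rho\le e^{-\alpha f(x^*)}$ and hence finiteness. So boundedness of $f$ is not needed even for the well-definedness of the logarithm.
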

So, for sufficiently large $\alpha$, the consensus point approximates, among the particle positions $(X_t^1,\dots,X_t^N)$, a point at which $f$ is close to minimal, i.e.
\begin{align*}
    X^\alpha\bigrou{\empIPSc} \approx \underset{X^1_t, ..., X^N_t}{\arg \min} \Big  \{f(X^1_t), ..., f(X^N_t) \Big \}.
\end{align*}
\subsection{Proving Propagation of Chaos for CBO}
\label{Pre:ProvPropChaosCBO}
In this subsection, we sketch the most common way of using the approach of Section \ref{Pre:ProvingPropChaos} to prove propagation of chaos for the CBO system defined by \eqref{Pre:CBOSDE}. As described in Section \ref{Pre:ProvingPropChaos}, we claim that the mean-field limit is described by the single particle process 
\begin{align}
    d\overbar{X}_t = -\lambda (\overbar{X}_t - X^\alpha(\orho_t))dt + \sigma \N{\overbar{X}_t- X^\alpha(\orho_t)} dB_t, \label{Pre:CBOMeanFieldLimit}
\end{align}
with $\orho_0 \in \PRd$ being the initial distribution. To employ the coupling method, we describe the interacting particle system of $N$ copies of \eqref{Pre:CBOMeanFieldLimit}:
\begin{align}
    d\overbar{X}^i_t = -\lambda (\overbar{X}^i_t - X^\alpha(\orho_t))dt + \sigma \N{\overbar{X}^i_t- X^\alpha(\orho_t)} dB^i_t, \quad \overbar{X}_0^i \sim \orho_0, \quad \overbar X_0^i = X_0^i \, \, \text{  a.s.}, \quad i = 1, ..., N,
\end{align}
In standard CBO, we have the fortunate case that inside a norm, the drift and diffusion functions are equal up to constants. Indeed, with $b(x, \mu) = -\lambda\bigrou{x - X^\alpha(\mu)}$ and $\sigma(x, \mu) = \sigma\N{x - X^\alpha(\mu)} I_d$, the reverse triangle inequality gives
\begin{align*}
    \N{b(X^i_\tau, \rho^N_\tau) - b(\overbar{X}^i_\tau, \orho_\tau)} &= \lambda \N{(X^i_\tau - \overbar{X}^i_\tau) - \bigrou{X^\alpha(\rho^N_\tau) - X^\alpha(\orho_\tau)}},\\
    \N{\sigma(X^i_\tau, \rho^N_\tau) - \sigma(\overbar{X}^i_\tau, \orho_\tau)}_F &\leq \sigma \sqrt{d}\, \N{(X^i_\tau - \overbar{X}^i_\tau) - \bigrou{X^\alpha(\rho^N_\tau) - X^\alpha(\orho_\tau)}}.
\end{align*}
Hence, we do not have to treat them separately. With $K_T := 2^{p-1} C_T \bigrou{\lambda^p + C_{\mathrm{BDG}}\, d^{\frac{p}{2}} \sigma^p}$, estimate \eqref{Pre:ProofPropchaoEst1} becomes
\begin{align}
   \mathbb{E} \biggeck{ \suptT \N{X^i_t - \overbar{X}^i_t}^p }
   &\leq C_T \bigrou{\lambda^p +  C_{\mathrm{BDG}}\, d^{\frac{p}{2}} \sigma^p} \int^T_0 \mathbb{E} \Bigeck{\N{(X^i_\tau - \overbar{X}^i_\tau) - \bigrou{X^\alpha(\rho^N_\tau) - X^\alpha(\orho_\tau)}}^p} d\tau \notag \\
   & \leq K_T \int^T_0 \mathbb{E} \biggeck{ \underset{t \in [0, \tau]}{\sup} \N{X^i_t - \overbar{X}^i_t}^p} + \mathbb{E}\Bigeck{\N{X^\alpha(\rho^N_\tau) - X^\alpha(\orho_\tau)}^p} d\tau.
   \label{CBOPROPIntegral}
\end{align}
We see that this gives us almost the opportunity to use Grönwall's lemma. To get our estimate of the desired form as \eqref{Pre:ProvingPropChaosGronwallBound}, we need to estimate the distance term by a function that vanishes as $N$ approaches infinity. So, what we try to achieve is to find for every $\tau \in [0, T]$ constants $C_1, C_2 >0$, which are independent of $N$ and a function $g(N) \xrightarrow[]{N \rightarrow \infty} 0$ such that 
\begin{align}
    \mathbb{E}\Bigeck{\N{X^\alpha(\rho^N_\tau) - X^\alpha(\orho_\tau)}^p} \leq C_1 \mathbb{E}\biggeck{ \underset{t \in [0, \tau]}{\sup}\N{X^i_t - \overbar{X}^i_t}^p} + C_2 g(N).
    \label{Pre:ConsensusDistProblem}
\end{align}
We use a simple expansion with $\orho^N_\tau$ that denotes the empirical measure of the $N$ i.i.d. random variables $(\overbar{X}^1_\tau, ..., \overbar{X}^N_\tau) \sim \orho_\tau^{\otimes N}$ at time $\tau$:
\begin{align}
    \mathbb{E}\Bigeck{\N{X^\alpha(\rho^N_\tau) - X^\alpha(\orho_\tau)}^p} \leq 2^{p-1} \biggrou{\mathbb{E}\Bigeck{\N{X^\alpha(\rho^N_\tau) - X^\alpha(\orho^N_\tau)}^p} + \mathbb{E}\Bigeck{\N{X^\alpha(\orho^N_\tau) - X^\alpha(\orho_\tau)}^p}}.
    \label{Pre:CBOPropchaosConsEstimate1}
\end{align}
Assuming, for the sake of this sketch, that the empirical measures $\rho^N_\tau$ and $\orho^N_\tau$ have $p$th moments bounded by some $M>0$ almost surely, we can invoke Lemma \ref{CarrL2}, which is stated in Section \ref{Pre:ToolsForMeanField}, on the first term. Chapters \ref{chap:PropGoodSets} and \ref{chap:PropChaUnrestricted} show how this assumption can be removed. We obtain
\begin{align}
    \mathbb{E}\Bigeck{\N{X^\alpha(\rho^N_\tau) - X^\alpha(\orho^N_\tau)}^p} \leq C^p_0\, \mathbb{E}\Bigeck{W_p^p(\rho^N_\tau, \orho^N_\tau)},
    \label{CBOPROP1}
\end{align}
for a constant $C_0 >0$ that is independent of $N$.
We can then use Example \ref{WasserBD} and the invariance under permutation of the particle labels to eventually get
\begin{align}
    \mathbb{E}\Bigeck{W_p^p(\rho^N_\tau, \orho^N_\tau)} \leq  \frac{1}{N} \sum^N_{j=1}\mathbb{E}\biggeck{\N{X^j_\tau - \overbar{X}^j_\tau}^p} = \mathbb{E}\biggeck{\N{X^i_\tau - \overbar{X}^i_\tau}^p} \leq \mathbb{E}\biggeck{ \underset{t \in [0, \tau]}{\sup}\N{X^i_t - \overbar{X}^i_t}^p}.
    \label{CBOPROP2}
\end{align}
The second term requires more care. Lemma \ref{CH:ConsBound}, introduced in Section \ref{Pre:ToolsForMeanField}, only controls it on the event that the empirical $p$th moments of the mean-field particles are bounded by some $M>0$. If $\orho_\tau$ has bounded moments of some order $q>p$, Lemma \ref{GerberLemma3.7} in Chapter \ref{chap:PropChaUnrestricted} removes this restriction and gives a constant $C_{\mathrm{EM}}>0$, independent of $N$, such that
\begin{align}
    \mathbb{E}\Bigeck{\N{X^\alpha(\orho^N_\tau) - X^\alpha(\orho_\tau)}^p} \leq C_{\mathrm{EM}} N^{-\frac{p}{2}}.
    \label{CBOPROP3}
\end{align}
We put the estimates \eqref{CBOPROP1}, \eqref{CBOPROP2}, and \eqref{CBOPROP3} into \eqref{Pre:CBOPropchaosConsEstimate1} and get
\begin{align*}
   \mathbb{E}\Bigeck{\N{X^\alpha(\rho^N_\tau) - X^\alpha(\orho_\tau)}^p} \leq 2^{p-1}\biggrou{C^p_0\, \mathbb{E}\biggeck{ \underset{t \in [0, \tau]}{\sup}\N{X^i_t - \overbar{X}^i_t}^p} + C_{\mathrm{EM}} N^{-\frac{p}{2}}}.
\end{align*}
Combining the latter with our original estimate \eqref{CBOPROPIntegral} yields
\begin{align*}
    \mathbb{E}  \biggeck{ \suptT \N{X^i_t - \overbar{X}^i_t}^p }
    \leq K_T\bigrou{1+2^{p-1}C_0^p} \int^T_0 \mathbb{E} \biggeck{ \underset{t \in [0, \tau]}{\sup} \N{X^i_t - \overbar{X}^i_t}^p} d\tau + 2^{p-1} K_T T\, C_{\mathrm{EM}} N^{-\frac{p}{2}}.
\end{align*}
We can now apply Grönwall's lemma (Lemma \ref{GronwallLemma}) to obtain our final estimate
\begin{align*}
   \mathbb{E}  \biggeck{ \suptT \N{X^i_t - \overbar{X}^i_t}^p } \leq 2^{p-1} K_T T\, C_{\mathrm{EM}} N^{-\frac{p}{2}}\, e^{K_T (1+2^{p-1}C_0^p) T} =: \varepsilon(N, T).
\end{align*}
By Definition \ref{Pre:ChaosCoup}, this implies propagation of chaos in Wasserstein-$p$ distance. 
\begin{remark}
 Note that these estimates are not uniform over infinite time horizons, as the above $\varepsilon(N, T)$ grows exponentially with $T$.
\end{remark}
\newpage
\section{Assumptions and Tools for Mean-Field Approximations in CBO}
\label{Pre:ToolsForMeanField}
In this section, we state the assumptions on the objective function that will be used throughout the mean-field analysis of CBO methods and collect several auxiliary results needed later in the proofs of propagation of chaos.
\vspace{.2 cm}

Throughout this thesis, we impose the following CBO-typical assumptions about the objective function:
\begin{itemize}
    \item[\textbf{A1}] $f : \mathbb{R}^d \rightarrow\mathbb{R}_+$ is bounded from below, attains its global minimum at a unique point $x^*$ and there exists a $C_1 \geq 0$ such that \begin{align*}
         \big | f(x) - f(y)\big | \leq C_1 (1 + \N{x} + \N{y}) \N{x-y} \quad \forall x, y \in \mathbb{R}^d.
         \label{A1} \tag{A1} 
    \end{align*}
    \item[\textbf{A2}] There is a $C_2 \geq 0$ such that \begin{align*}
        f(x) - f(x^*) \leq C_2 (1+ \N{x}^2) \quad \forall x \in \mathbb{R}^d.
        \label{A2} \tag{A2}
    \end{align*}
    \item[\textbf{A3}] $\exists C_3 >0, R>0 :$  \begin{align*}
        f(x) - f(x^*) \geq C_3 \N{x}^2 \quad\forall x \in B_R (x^*)^{\mathrm{c}}.
   \label{A3} \tag{A3}
    \end{align*}
    
\end{itemize}
\begin{remark}
   Throughout this thesis, whenever we assert propagation of chaos in the Wasserstein-$p$ distance, we shall, unless explicitly stated otherwise, assume that the objective function $f$ satisfies conditions \eqref{A1}, \eqref{A2}, and \eqref{A3}. We emphasize that the said assumptions do not depend on the order $p$ of the Wasserstein distance or on the moments we work with. 
\end{remark}

\begin{remark}
The setting \eqref{A1}--\eqref{A3} is the special case $s = 1$, $\ell = u = 2$ of the assumptions in \cite[Section 2.1]{Gerber2025Mean-fieldSampling}, where the local Lipschitz condition is allowed to grow like $(1 + \N{x} + \N{y})^s$ and the objective function is allowed to grow like $\N{x}^\ell$ from below and like $\N{x}^u$ from above. For the sake of simplicity, we restrict ourselves to the case $s = 1$, $\ell = u = 2$ and refer to \cite[Section 2.1]{Gerber2025Mean-fieldSampling} for the general case.
\end{remark}
In general, assumptions \eqref{A1}–\eqref{A3} do not imply that 
$f$ is globally Lipschitz or bounded from above; hence McKean’s theorem \cite[Theorem 3.1]{Chaintron2022PropagationApplications} is not directly applicable to prove propagation of chaos by coupling for \eqref{Pre:CBOSDE}. However, throughout this thesis, we shall demonstrate that these assumptions are sufficient to establish proofs of propagation of chaos by appropriately adapting Sznitman’s original argument \cite[p. 26 - 28]{Chaintron2022PropagationApplications}. To this end, we will require several auxiliary results that follow from the assumptions imposed on the objective functions.

We state some lemmas from \cite{Carrillo2018AnMethod} and \cite{Gerber2025Mean-fieldSampling} that will be of great use to us.
The first lemma is from \cite[Lemma 3.1]{Carrillo2018AnMethod}. The proof follows the same line of argument as in the cited reference and is reproduced here for the sake of completeness.\footnote{In~\cite{Carrillo2018AnMethod}, the lemma is stated under the stronger local Lipschitz condition $\big| f(x) - f(y)\big| \leq L_f(\N{x} + \N{y})\N{x-y}$ in place of \eqref{A1}. The proof, however, uses neither \eqref{A1} nor the stronger Lipschitz condition, so the lemma remains valid under our assumptions.}
\begin{lemma}[{{\cite[Lemma 3.1]{Carrillo2018AnMethod}}}]
\label{CarrL1}
 Let f fulfill (\ref{A1}), (\ref{A2}) and $\rho \in \mathcal{P}_2(\mathbb{R}^d)$ with $\mathbb{E}_\rho \bigeck{\N{x}^2} \leq M_2$. \\
  Then
  \begin{align*}
      \frac{\oa(x^*)}{\Gnorm{\rho}} \leq e^{\alpha C_2(1+M_2)}.
  \end{align*}
\end{lemma}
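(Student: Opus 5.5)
The bound follows from Jensen's inequality applied to the convex function $t \mapsto e^{-\alpha t}$. First rewrite the quotient as
\begin{align*}
    \frac{\oa(x^*)}{\Gnorm{\rho}} = \frac{e^{-\alpha f(x^*)}}{\int_{\mathbb{R}^d} e^{-\alpha f(x)}\, d\rho(x)} = \biggrou{\int_{\mathbb{R}^d} e^{-\alpha (f(x) - f(x^*))}\, d\rho(x)}^{-1}.
\end{align*}
So it suffices to bound the integral $\int e^{-\alpha(f-f(x^*))}\,d\rho$ from below by $e^{-\alpha C_2(1+M_2)}$.

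Since $\rho$ is a probability measure and $t\mapsto e^{-\alpha t}$ is convex, Jensen's inequality gives
\begin{align*}
    \int_{\mathbb{R}^d} e^{-\alpha (f(x) - f(x^*))}\, d\rho(x) \geq \exp\biggrou{-\alpha \int_{\mathbb{R}^d} \bigrou{f(x) - f(x^*)}\, d\rho(x)}.
\end{align*}
Here the inner integral is well defined in $[0,\infty]$, since $f - f(x^*) \geq 0$ because $x^*$ is the global minimizer.

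Next insert \eqref{A2}: $f(x)-f(x^*) \leq C_2(1+\N{x}^2)$. Integrating against $\rho$ and using $\mathbb{E}_\rho\bigeck{\N{x}^2} \leq M_2$ yields $\int (f-f(x^*))\,d\rho \leq C_2(1+M_2)<\infty$. Because $t\mapsto e^{-\alpha t}$ is decreasing, this gives the lower bound $\Gnorm{\rho}/\oa(x^*) \geq e^{-\alpha C_2(1+M_2)}$. Taking reciprocals gives the claim.

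The only delicate point is justifying Jensen's inequality for a possibly non-integrable function. This is handled as noted above: the integrand is nonnegative and its integral is finite by \eqref{A2}. As the footnote indicates, \eqref{A1} plays no role, apart from guaranteeing that $f$ is continuous and hence measurable.
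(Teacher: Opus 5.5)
Your proof is correct and follows the paper's argument exactly: rewrite the quotient as $\bigl(\int e^{-\alpha(f(x)-f(x^*))}\,d\rho(x)\bigr)^{-1}$, apply Jensen's inequality to the convex map $t\mapsto e^{-\alpha t}$, and then use \eqref{A2} together with the second-moment bound. Your added remark on why Jensen's inequality is applicable is a harmless refinement that the paper leaves implicit: the integrand $f-f(x^*)$ is nonnegative, and its $\rho$-integral is finite by \eqref{A2}.
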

\begin{proof}
   The proof follows as a direct consequence of Jensen's inequality combined with assumption \eqref{A2} imposed on the objective function $f$. We begin by invoking Jensen's inequality to obtain the estimate
    \begin{align*}
        \frac{\oa(x^*)}{\Gnorm{\rho}} = \frac{e^{-\alpha f(x^*)}}{\int_{\mathbb{R}^d} e^{-\alpha f(x)} d\rho(x)} &= \biggrou{\int_{\mathbb{R}^d} e^{-\alpha (f(x)- f(x^*))} d\rho(x)}^{-1}\\ \text{(Jensen)} \quad &\leq \biggrou{e^{- \alpha \int_{\mathbb{R}^d} f(x) - f(x^*) d\rho(x)}}^{-1}.
    \end{align*}
We now rearrange and use \eqref{A2}:
\begin{align*}
    \biggrou{e^{- \alpha \int_{\mathbb{R}^d} f(x) - f(x^*) d\rho(x)}}^{-1} = e^{ \alpha \int_{\mathbb{R}^d} f(x) - f(x^*) d\rho(x)} &\leq e^{ \alpha \int_{\mathbb{R}^d} C_2(1 + \N{x}^2) d\rho(x)}\\ &= e^{ \alpha C_2 \bigrou{1+ \int_{\mathbb{R}^d} \N{x}^2 d\rho(x)}}.
\end{align*}
Finally, we conclude by using the moment bound of $\rho$:
\begin{align*}
    e^{ \alpha C_2 \bigrou{1+ \int_{\mathbb{R}^d} \N{x}^2 d\rho(x)}} \leq e^{\alpha C_2 (1+M_2)}.
\end{align*}
\end{proof}

\begin{lemma}[{{\cite[Proposition 3.1 (a)]{Gerber2025Mean-fieldSampling}}}]
Let $p \geq 1$, let $f$ fulfill \eqref{A1}, \eqref{A2} and \eqref{A3}, and let $\mu, \nu \in \mathcal{P}_p(\mathbb{R}^d)$ such that there exists a $M > 0$ with $\mathbb{E}_\mu \bigeck{\N{x}^p}\leq M$ and $\mathbb{E}_\nu \bigeck{\N{x}^p} \leq M$. Then there exists a constant $C_0 = C_0(\alpha, p, M, C_1, C_2, C_3, R)$ such that
    \begin{align*}
        \N{X^\alpha(\mu) - X^\alpha(\nu) } \leq C_0W_p(\mu, \nu).
    \end{align*}
    \label{CarrL2}
\end{lemma}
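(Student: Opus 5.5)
We write $\eta_\mu := \eta^\alpha_\mu$ and $\eta_\nu := \eta^\alpha_\nu$, and use the splitting
\begin{align*}
X^\alpha(\mu)-X^\alpha(\nu) = \int x\,d(\eta_\mu-\eta_\nu)(x) .
\end{align*}
Fix an optimal coupling $\pi\in\Pi(\mu,\nu)$ for $W_p(\mu,\nu)$. Writing $X^\alpha$ as a ratio and adding and subtracting, we get
\begin{align*}
X^\alpha(\mu)-X^\alpha(\nu) = \frac{\int\!\!\int \bigl(x\oa(x)-y\oa(y)\bigr)\,d\pi(x,y)}{\Gnorm{\mu}} + \int y\oa(y)\,d\nu(y)\,\Bigl(\frac{1}{\Gnorm{\mu}}-\frac{1}{\Gnorm{\nu}}\Bigr).
\end{align*}
The second term equals $X^\alpha(\nu)\,\bigl(\Gnorm{\nu}-\Gnorm{\mu}\bigr)/\Gnorm{\mu}$. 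It therefore suffices to bound the following three quantities in terms of $M$ only:
\begin{itemize}
\item[(i)] $\N{X^\alpha(\nu)}$ uniformly;
\item[(ii)] $\oa(x^*)/\Gnorm{\mu}$, which is handled by Lemma \ref{CarrL1};
\item[(iii)] $\int\!\!\int \N{x\oa(x)-y\oa(y)} + \abs{\oa(x)-\oa(y)}\,d\pi$, normalized by $\oa(x^*)$, by a constant times $W_p$.
\end{itemize}

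For (iii), we use the mean value theorem and \eqref{A1}:
\begin{align*}
\abs{\oa(x)-\oa(y)} \leq \alpha C_1 \oa(x^*)\,(1+\N{x}+\N{y})\N{x-y}\, e^{-\alpha(\min(f(x),f(y))-f(x^*))}.
\end{align*}
We then invoke \eqref{A3}: the factor $e^{-\alpha C_3\min(\N{x},\N{y})^2}$, combined with the polynomial weights $(1+\N{x}+\N{y})^2$, would be uniformly bounded if the minimum were replaced by both norms. In general only $\min(\N{x},\N{y})$ is controlled, so the polynomial factor in the larger of the two norms is not absorbed. I would split the integration domain into the region where $\N{x}$ and $\N{y}$ are comparable and its complement. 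On the complement, $\N{x-y}\gtrsim\max(\N{x},\N{y})$, so the extra polynomial factor can be traded for powers of $\N{x-y}$. Hölder's inequality with exponents $p$ and $p'$ then gives bounds of the form $C\,W_p(\mu,\nu)$, possibly with an additional factor of a $W_p$-power that is bounded via the moments, since $W_p\leq 2M^{1/p}$. Because $\N{x-y}^k\leq C M\,\N{x-y}$ is false pointwise, the clean approach is instead to use the elementary bound
\begin{align*}
\N{x\oa(x)-y\oa(y)} \leq \N{x-y}\oa(x) + \N{y}\,\abs{\oa(x)-\oa(y)},
\end{align*}
together with the interpolation $\abs{\oa(x)-\oa(y)}\leq \oa(x^*)\min\bigl(1,\alpha C_1(1+\N{x}+\N{y})\N{x-y}\bigr)$. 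The minimum by $1$, together with $\min(1,a)\leq a^{\theta}$, then allows Hölder to close with only $p$th moments.

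For (i), we need $\N{X^\alpha(\nu)}\leq C(M)$ with only $p$th moments, where $p\geq1$ is possibly small. I would split $\int\N{x}\oa\,d\nu$ into the parts over $B_{R'}(x^*)$ and its complement. The ball contributes at most $R'+\N{x^*}$ times $\Gnorm{\nu}$. Outside the ball, \eqref{A3} gives $\oa(x)\leq \oa(x^*)e^{-\alpha C_3\N{x}^2}$, so $\N{x}\oa(x)/\oa(x^*)$ is bounded, and Lemma \ref{CarrL1}, or a variant of it controlling $\Gnorm{\nu}$ from below via Markov's inequality on a ball with $p$th moments, finishes the estimate.

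The main obstacle is (ii) for $p<2$: Lemma \ref{CarrL1} requires second moments. I would replace it by a lower bound on $\Gnorm{\nu}$ obtained by restricting to the ball $B_r(0)$ with $r^p=2M$, which has $\nu$-mass at least $1/2$ by Markov's inequality. Using \eqref{A2} on this ball gives $\Gnorm{\nu}\geq \tfrac12\,\oa(x^*)\,e^{-\alpha C_2(1+r^2)}$.
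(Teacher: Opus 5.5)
\textbf{Verdict: there is a genuine gap in step (iii).} The route you settle on there does not give a bound linear in $W_p$.

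(The paper does not prove this lemma; it imports it from Gerber et al.) Your steps (i) and (ii) are fine. In particular, the Markov argument $\mu(B_r(0))\ge \tfrac12$ with $r^p=2M$, combined with \eqref{A2}, is a correct substitute for Lemma \ref{CarrL1} when $p<2$. Step (i) also follows from Corollary \ref{GerberPropA4Corollary}. So (ii) is not the main obstacle; (iii) is.

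\textbf{Why your (iii) fails.}
\begin{itemize}
\item With $\theta<1$, the inequality $\min(1,a)\le a^\theta$ leaves you with integrals of the form $\int\!\!\int(\dots)\N{x-y}^\theta\,d\pi$. Already for $\mu=\delta_0$, $\nu=\delta_\epsilon$ this is of order $\epsilon^\theta=W_p^\theta\gg W_p$. So you only get H\"older continuity, not a Lipschitz bound.
\item With $\theta=1$, the term $\N{y}\,\abs{\oa(x)-\oa(y)}$ becomes $\int\!\!\int \N{y}(1+\N{x}+\N{y})\N{x-y}\,d\pi$. H\"older then needs moments of order $2p/(p-1)$. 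This closes only for $p\ge 3$: it needs fourth moments for $p=2$ and is hopeless for $p=1$.
\item The root cause is the decomposition $\N{x-y}\oa(x)+\N{y}\abs{\oa(x)-\oa(y)}$. It attaches the undamped factor $\N{y}$ to a difference of weights, instead of to the weight of the point where $f$ is smaller. This discards the Gaussian decay supplied by \eqref{A3}.
\end{itemize}

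\textbf{The missing idea.} Under \eqref{A1} and \eqref{A3}, both $\oa$ and $g(x):=x\oa(x)$ are globally Lipschitz with constant $L\,\oa(x^*)$. Here is how to see it.
\begin{itemize}
\item Given $x,y$, let $z\in\{x,y\}$ satisfy $f(z)\le f(w)$, where $w$ is the other point. Since $e^{-a}-e^{-b}\le e^{-a}\min\{1,b-a\}$ for $a\le b$, and $\N{w}\le\N{z}+\N{x-y}$, we get
\[
\abs{\oa(x)-\oa(y)}\le \oa(z)\min\bigl\{1,\alpha C_1(1+2\N{z}+\N{x-y})\N{x-y}\bigr\}.
\]
\item For $g$, write $g(z)-g(w)=(z-w)\oa(w)+z\bigl(\oa(z)-\oa(w)\bigr)$, so the undamped norm $\N{z}$ multiplies the larger weight.
\item If $\N{x-y}\le 1$, use the second entry of the minimum. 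If $\N{x-y}>1$, use the entry $1<\N{x-y}$.
\item Either way, both differences are bounded by $\max\{1,2\alpha C_1\}\,\sup_{z}(1+\N{z})^2\oa(z)\,\N{x-y}+\oa(x^*)\N{x-y}$.
\item The supremum is at most a constant times $\oa(x^*)$. Outside $B_R(x^*)$ this holds because $\oa(z)\le\oa(x^*)e^{-\alpha C_3\N{z}^2}$; inside the ball it is trivial.
\end{itemize}

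Consequently both numerator differences in your decomposition are bounded by $L\,\oa(x^*)\,W_1(\mu,\nu)\le L\,\oa(x^*)\,W_p(\mu,\nu)$. Together with your (i) and (ii), this proves the lemma for every $p\ge1$. Moments then enter only through (i) and (ii), exactly where your Markov argument handles them.
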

Lemma \ref{CarrL2} provides a stability estimate for the distance between the consensus points of two probability measures with finite $p$th moments. In Chapters \ref{chap:PropGoodSets} and \ref{chap:PropChaUnrestricted}, we will see that such estimates play a vital role in the proofs of propagation of chaos.
The following lemma is a generalization of {\cite[Lemma 3.3]{Carrillo2018AnMethod}}.
\begin{lemma}[{{\cite[Proposition A.4]{Gerber2025Mean-fieldSampling}}}]
    Let $p\geq 1$, let f fulfill (\ref{A1}), (\ref{A2}), (\ref{A3}) and let $\rho \in \mathcal{P}_p(\mathbb{R}^d)$ and define the consensus measure $\eta^\alpha := \frac{\oa \rho}{\Gnorm{\rho}}$\footnote{$\eta^\alpha$ is a probability measure on $\mathbb{R}^d$, since $0 < \oa \leq \oa(x^*)$ and hence $0 < \Gnorm{\rho} < \infty$.}. Then there exists a constant $b_p>0$ depending only on $(\alpha, p, C_2, C_3, R)$ such that
   \begin{align*}
       \mathbb{E}_{\eta^\alpha} \bigeck{\N{x}^p} \leq b_p\,  \mathbb{E}_\rho \bigeck{\N{x}^p}.
   \end{align*}
    \label{GerberPropA4}
\end{lemma}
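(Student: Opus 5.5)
The plan is to write the moment of the consensus measure as a ratio and to split $\mathbb{R}^d$ into a ball around $x^*$, where the weights are harmless, and its complement, where \eqref{A3} makes the weights exponentially small. By definition,
\begin{align*}
    \mathbb{E}_{\eta^\alpha}\bigeck{\N{x}^p} = \frac{\int_{\mathbb{R}^d} \N{x}^p e^{-\alpha(f(x)-f(x^*))}\,d\rho(x)}{\int_{\mathbb{R}^d} e^{-\alpha(f(x)-f(x^*))}\,d\rho(x)},
\end{align*}
where both numerator and denominator have been multiplied by $1/\oa(x^*)$. The numerator will be bounded from above by a multiple of $\mathbb{E}_\rho\bigeck{\N{x}^p}$, and the denominator from below by a positive constant.

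For the numerator, take the radius $R$ from \eqref{A3}.
\begin{itemize}
    \item On $B_R(x^*)$, the weight $e^{-\alpha(f-f(x^*))}$ is at most $1$, since $f\ge f(x^*)$. This part therefore contributes at most $\int_{B_R(x^*)}\N{x}^p\,d\rho$.
    \item On the complement, \eqref{A3} gives the weight $\le e^{-\alpha C_3\N{x}^2}\le 1$. So this part also contributes at most $\int \N{x}^p\,d\rho$.
\end{itemize}
Hence the numerator is $\le \mathbb{E}_\rho\bigeck{\N{x}^p}$ trivially. The substance of the lemma is therefore the lower bound on the denominator.

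The denominator $\Gnorm{\rho}/\oa(x^*)$ is the hard part. It can be arbitrarily small if $\rho$ puts its mass far from $x^*$, unless this is tied to the moment. Lemma \ref{CarrL1} gives $\Gnorm{\rho}/\oa(x^*)\ge e^{-\alpha C_2(1+M_2)}$, but that needs second moments, not $p$th moments. A constant depending only on $(\alpha,p,C_2,C_3,R)$ with a linear dependence on $\mathbb{E}_\rho\bigeck{\N{x}^p}$ forces a case distinction. Write $m:=\mathbb{E}_\rho\bigeck{\N{x}^p}$ and fix a large radius $r$, to be chosen in terms of $\|x^*\|$ and $R$.
\begin{itemize}
    \item \emph{Case A: $\rho(B_r(0))\ge 1/2$.} By \eqref{A2} the weight on $B_r$ is $\ge e^{-\alpha C_2(1+r^2)}$. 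So the denominator is at least $\tfrac12 e^{-\alpha C_2(1+r^2)}$, and then $\mathbb{E}_{\eta^\alpha}\bigeck{\N{x}^p}\le 2e^{\alpha C_2(1+r^2)}\, m$.
    \item \emph{Case B: $\rho(B_r(0))<1/2$.} Then $m\ge r^p/2$, so $\rho$ has a large moment. Here I would not lower-bound the denominator. Instead, I would bound the ratio directly by splitting the numerator at a radius $s$.
\end{itemize}

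For Case B, the points with $\N{x}\le s$ contribute at most $s^p$ to $\mathbb{E}_{\eta^\alpha}\bigeck{\N{x}^p}$, because $\eta^\alpha$ is a probability measure. For the points with $\N{x}>s$, I would compare with the mass where $f$ is smaller, using \eqref{A3} ($f-f(x^*)\ge C_3\N{x}^2$ outside $B_R(x^*)$) against \eqref{A2} on the region that carries $\rho$'s mass. A cleaner alternative route is to choose $s$ proportional to $m^{1/p}$. Then $s^p\lesssim m$, and the tail beyond $s$ carries relative weight controlled by $e^{-\alpha C_3 s^2}$ against the weight of the bulk, whose location is also at scale $\lesssim m^{1/p}$ by Markov.

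I expect this comparison to be the main obstacle, since \eqref{A2} and \eqref{A3} both scale quadratically. I would try first to follow the argument of \cite[Proposition A.4]{Gerber2025Mean-fieldSampling}. Failing that, I would use Jensen as in Lemma \ref{CarrL1} restricted to the conditional measure on $\{\N{x}\le 2^{1/p}m^{1/p}\}$, which has $\rho$-mass $\ge 1/2$ by Markov. Combining the cases then yields $b_p$.
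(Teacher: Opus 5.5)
Your outline is sound, but it is not yet a proof. The numerator bound and Case A are routine, and Case B, the only step with real content, is left as ``the main obstacle'' with two fallback plans and no estimate. The worry behind it is also misplaced. That \eqref{A2} and \eqref{A3} grow at the same quadratic rate is exactly what lets a split radius $s \propto m^{1/p}$ close the argument with an $m$-independent constant.

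Write $w := e^{-\alpha(f-f(x^*))}$, $m := \mathbb{E}_\rho\bigeck{\N{x}^p}$ and $\kappa := \max\{1, C_2/C_3\}$. Markov gives $\rho(\{\N{x}\le (2m)^{1/p}\})\ge \tfrac12$, and \eqref{A2} applied pointwise on that set (no Jensen needed) gives
\begin{align*}
  \int w\,d\rho \;\ge\; \tfrac12\, e^{-\alpha C_2\left(1+(2m)^{2/p}\right)}.
\end{align*}
Take $r := \N{x^*}+R$ in your case split and $s := \sqrt{\kappa}\,(2m)^{1/p}$.

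In Case B you have $2m > r^p$, so $s > \N{x^*}+R$. Hence every $x$ with $\N{x}>s$ lies outside $B_R(x^*)$, and there \eqref{A3} gives $w(x)\le e^{-\alpha C_3 s^2}\le e^{-\alpha C_2 (2m)^{2/p}}$. Consequently
\begin{align*}
  \mathbb{E}_{\eta^\alpha}\bigeck{\N{x}^p}
  &\le s^p + \frac{\int_{\{\N{x}>s\}}\N{x}^p w\,d\rho}{\int w\,d\rho} \\
  &\le 2\kappa^{p/2} m + \frac{e^{-\alpha C_2 (2m)^{2/p}}\, m}{\tfrac12\, e^{-\alpha C_2\left(1+(2m)^{2/p}\right)}}
  = \bigl(2\kappa^{p/2} + 2e^{\alpha C_2}\bigr)\, m .
\end{align*}
The exponentials cancel precisely because both bounds are quadratic. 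Combined with your Case A bound, $b_p := \max\bigl\{2e^{\alpha C_2(1+r^2)},\, 2\kappa^{p/2}+2e^{\alpha C_2}\bigr\}$ works. This also explains why $r$ must be at least $\N{x^*}+R$: that is what lets \eqref{A3} act on the entire tail.

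This $b_p$ depends on $\N{x^*}$, not only on $(\alpha,p,C_2,C_3,R)$, and that dependence cannot be avoided under \eqref{A2}--\eqref{A3}. For example, take $\N{x^*}=L$ and $f(x) := \min\{1,\N{x-x^*}/R\}\,(C_2+C_3\N{x}^2)$. Let $\rho := (1-\epsilon)\delta_z+\epsilon\,\delta_{x^*}$ with $\N{z}^2 = p\log L/(\alpha C_3)$ and $\epsilon := \oa(z)$. Then $\mathbb{E}_{\eta^\alpha}\bigeck{\N{x}^p}\ge L^p/2$ while $\mathbb{E}_\rho\bigeck{\N{x}^p}\lesssim (\log L)^{p/2}$. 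So the stated dependence should be read as including $f$ itself, or equivalently $C_1$, which bounds $\N{x^*}$ via \eqref{A1} and \eqref{A3}. Your proof is fine in that respect.
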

\begin{corollary}
    Let $p \geq 1$, let $f$ fulfill (\ref{A1}), (\ref{A2}), (\ref{A3}), and let $\rho \in \mathcal{P}_p(\mathbb{R}^d)$ with $\eta^\alpha := \frac{\oa \rho}{\Gnorm{\rho}}$. Then there exists a constant $b_p>0$ depending only on $(\alpha, p, C_2, C_3, R)$ such that
    \begin{align*}
       \N{X^\alpha(\rho)}^p \leq  b_p\,  \mathbb{E}_\rho \bigeck{\N{x}^p}.
    \end{align*}
    \label{GerberPropA4Corollary}
\end{corollary}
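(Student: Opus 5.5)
The corollary follows directly from Lemma \ref{GerberPropA4} combined with Jensen's inequality. The plan is to write the consensus point as the mean of the consensus measure, bound its norm by the $p$th moment of that measure, and then transfer the moment bound back to $\rho$.

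First, by definition $X^\alpha(\rho) = \mathbb{E}_{\eta^\alpha}[x] = \int_{\mathbb{R}^d} x \, d\eta^\alpha(x)$. Here $\eta^\alpha$ is a genuine probability measure, because $0 < \oa \leq \oa(x^*)$ gives $0<\Gnorm{\rho}<\infty$, as noted in the footnote to Lemma \ref{GerberPropA4}.

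Next we check that $\eta^\alpha$ has a finite $p$th moment. Its density with respect to $\rho$ is bounded by $\oa(x^*)/\Gnorm{\rho}$, so finiteness follows from $\rho \in \mathcal{P}_p(\mathbb{R}^d)$; in fact it is also immediate from Lemma \ref{GerberPropA4}. In particular the integral defining $X^\alpha(\rho)$ is well defined.

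Since $p \geq 1$, the map $y \mapsto \N{y}^p$ is convex on $\mathbb{R}^d$, as the composition of the convex norm with the nondecreasing convex function $t\mapsto t^p$ on $[0,\infty)$. Jensen's inequality for the probability measure $\eta^\alpha$ therefore yields
\begin{align*}
    \N{X^\alpha(\rho)}^p = \N{\int_{\mathbb{R}^d} x \, d\eta^\alpha(x)}^p \leq \int_{\mathbb{R}^d} \N{x}^p \, d\eta^\alpha(x) = \mathbb{E}_{\eta^\alpha}\bigeck{\N{x}^p}.
\end{align*}

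Finally, applying Lemma \ref{GerberPropA4} to the right-hand side gives $\mathbb{E}_{\eta^\alpha}\bigeck{\N{x}^p} \leq b_p \, \mathbb{E}_\rho\bigeck{\N{x}^p}$. The constant $b_p$ is the same one as in the lemma, so it depends only on $(\alpha, p, C_2, C_3, R)$. Chaining the two displays proves the claim.

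The only delicate points are that $\eta^\alpha$ is a probability measure and that $X^\alpha(\rho)$ is well defined; both were settled above. All substantive work is contained in Lemma \ref{GerberPropA4}.
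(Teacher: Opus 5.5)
Your proposal is correct and matches the paper's proof essentially line for line: you write $X^\alpha(\rho)=\mathbb{E}_{\eta^\alpha}[x]$, apply Jensen's inequality to the convex map $x\mapsto \N{x}^p$, and finish with Lemma \ref{GerberPropA4}, which supplies the constant $b_p$. Your extra checks that $\eta^\alpha$ is a probability measure with finite $p$th moment are harmless additions that the paper leaves implicit.
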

\begin{proof}
    First, we use the definition of the consensus point:
    \begin{align*}
        \N{X^\alpha(\rho)}^p
        = \N{\int x \frac{\oa(x)}{\Gnorm{\rho}} d \rho(x)}^p
        = \N{\mathbb{E}_{\eta^\alpha} [x]}^p.
    \end{align*}
    Since $\N{\cdot}^p$ is convex, Jensen's inequality and Lemma \ref{GerberPropA4} with exponent $p$ yield
    \begin{align*}
        \N{\mathbb{E}_{\eta^\alpha} [x]}^p
        \leq \mathbb{E}_{\eta^\alpha} \bigeck{\N{x}^p}
        \leq b_p\, \mathbb{E}_\rho \bigeck{\N{x}^p}.
    \end{align*}
\end{proof}
An important step in proofs of propagation of chaos for CBO methods is to ensure convergence of the consensus point associated with the empirical measure of the mean-field particles to that of the mean-field distribution, namely
\begin{align}
    \mathbb{E} \biggeck{ \N{X^\alpha(\orho_t^N) - X^\alpha(\orho_t)}^p} \xrightarrow[]{N \rightarrow \infty} 0. \label{Pre:EmpiricalConsensusConvergencePostulation}
\end{align}
A qualitative proof of this is fairly straightforward if the mean-field measure has uniformly bounded moments, i.e. $\mathbb{E}_{\overline{\rho}_t}[\N{x}^p] \leq M_p$ for all $t \in [0,T]$. Lemma \ref{GerberStability2} in Chapter \ref{chap:PropChaUnrestricted}, applied with $\mu = \orho_t$, then yields
\begin{align}
    \mathbb{E} \biggeck{ \N{X^\alpha(\orho_t^N) - X^\alpha(\orho_t)}^p} \leq C^p \mathbb{E} \Bigeck{W_p^p(\orho_t^N, \orho_t)}\xrightarrow[]{N \rightarrow \infty} 0, \label{Pre:EmpiricalConsensusConvergencePostulationWasserstein}
\end{align}
where the convergence follows from the almost sure convergence $W_p(\orho_t^N, \orho_t) \to 0$, see \cite[Proposition 2.2.6]{Panaretos2020AnSpace}, together with the uniform integrability of $W_p^p(\orho^N_t, \orho_t) \leq 2^{p-1}\bigrou{\mathbb{E}_{\orho^N_t}[\N{x}^p] + M_p}$, which holds since the empirical moments converge in $L^1$ by the strong law of large numbers. 


However, several important CBO results require a quantitative propagation of chaos estimate. A notable example is \cite[Theorem 3.8]{Fornasier2024Consensus-BasedGlobally}. Here, the authors combine convergence of the mean-field distribution to a global minimizer with a quantitative mean-field estimate in order to prove global convergence in probability of a numerical CBO scheme. We recall that the rate of this quantitative mean-field estimate depends on the rate of convergence in \eqref{Pre:EmpiricalConsensusConvergencePostulation}. At first glance, this does not seem to pose a serious problem if there exists a $q>p$ such that the $q$th moments of the mean-field measure and the empirical measure of the mean-field particles are both uniformly bounded by some $M_q>0$. In that case, we could combine \eqref{Pre:EmpiricalConsensusConvergencePostulationWasserstein} with Lemma \ref{Pre:ConvEmp} to obtain a quantitative bound of the form
\begin{align*}
   \mathbb{E}\biggeck{\N{X^\alpha(\orho^N_t) - X^\alpha(\orho_t)}^p}  \leq  C M_q^{p/q} \begin{cases}
             N^{-\frac{1}{2}} + N^{- \frac{q-p}{q}} \quad &\text{if } p> \frac{d}{2} \text{ and } q \neq 2p, \\
             N^{-\frac{1}{2}}\log (1+ N) + N^{- \frac{q-p}{q}} \quad &\text{if } p = \frac{d}{2} \text{ and } q \neq 2p, \\
             N^{- \frac{p}{d}} + N^{- \frac{q-p}{q}} \quad &\text{if } p \in (0, \frac{d}{2}) \\ & \, \, \, \text{ and } q \neq \frac{d}{d-p},
         \end{cases}
\end{align*}
where $C>0$ depends only on $(p, q, d, \alpha, C_1, M_p, M_q)$. The problem, however, is that these bounds are far from optimal. For example, for $d \gg p$, the convergence rate deteriorates as the dimension $d$ increases. The main issue is that Lemma \ref{Pre:ConvEmp} is a general bound on the Wasserstein distance between an empirical measure of i.i.d.\ samples from a distribution and the distribution itself. Therefore, it does not exploit any CBO-specific assumptions such as \eqref{A1}, \eqref{A2}, or \eqref{A3}. In the next lemma, we do exactly that, which yields a much better quantitative result at rate $\mathcal{O}\bigrou{N^{-p/2}}$.
We will generalize a lemma that was first introduced in \cite[Lemma 3.1]{Fornasier2020Consensus-basedLimit} and was also used in \cite[Proposition 3.11, Lemma A2]{Fornasier2024Consensus-BasedGlobally} to ensure propagation of chaos in Wasserstein-2 distance at rate $\mathcal{O}(N^{-1/2})$. Our lemma will generalize the previous results for the $p$th power of the distance. Due to its length, the proof is postponed to Chapter \ref{cha:app1}, Section \ref{Ch:ConsBoundProof}.
\begin{lemma}
    Let $\nu \in \mathcal{P}_p(\mathbb{R}^d)$ for $p \geq 2$ and let for every $N \in \mathbb{N}$, $\overbar{X}^1, ..., \overbar{X}^N$ be independent, identically distributed random variables such that $\overbar{X}^i \sim \nu$ and let $\overbar{\nu}^N = \frac{1}{N} \sum^N_{i=1} \delta_{\overline{X}^i}$ denote their empirical measure. Furthermore, let there exist a $M_p > 0$ such that $\mathbb{E}_\nu \bigeck{\N{x}^p} \leq M_p$. For some $M>0$, define the event
    \begin{align*}
        \Omega_M := \bigg\{ \omega \in \Omega : \frac{1}{N} \sum_{i=1}^N \N{\overbar{X}^i}^p \leq M \bigg\}.
    \end{align*}
    Then, there exists a constant $C_{\mathrm{EM}} >0$ independent of $N$ such that
    \begin{align*}
        \mathbb{E} \biggeck{ \N{X^\alpha(\overbar{\nu}^N) - X^\alpha(\nu)}^p \mathbbm{1}_{\Omega_M}} \leq C_{\mathrm{EM}} N^{-\frac{p}{2}}.
    \end{align*}
\label{CH:ConsBound}
\end{lemma}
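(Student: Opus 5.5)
The plan is to write the consensus point difference as a ratio of centered empirical averages and then use moment bounds for sums of i.i.d. mean-zero vectors (Marcinkiewicz--Zygmund / Rosenthal, or BDG for discrete martingales). Set $\omega_\alpha = \oa$ and define the centered vectors
\begin{align*}
Y_i := (\overbar{X}^i - X^\alpha(\nu))\,\oa(\overbar{X}^i), \qquad \mathbb{E}[Y_i] = 0,
\end{align*}
where the mean is zero by the definition of $X^\alpha(\nu)$. A direct computation then gives
\begin{align*}
X^\alpha(\overbar{\nu}^N) - X^\alpha(\nu) = \frac{\frac{1}{N}\sum_{i=1}^N Y_i}{\frac{1}{N}\sum_{i=1}^N \oa(\overbar{X}^i)} = \frac{\frac{1}{N}\sum_{i=1}^N Y_i}{\Gnorm{\overbar{\nu}^N}}.
\end{align*}

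\textbf{Step 1 (denominator on $\Omega_M$).} On $\Omega_M$, Jensen/Hölder with $p\ge 2$ gives $\frac1N\sum_i\N{\overbar{X}^i}^2 \le M^{2/p}$. Lemma \ref{CarrL1} applied to the measure $\overbar{\nu}^N$ then yields
\begin{align*}
\frac{1}{\Gnorm{\overbar{\nu}^N}} \le \frac{e^{\alpha C_2(1+M^{2/p})}}{\oa(x^*)} = e^{\alpha f(x^*)}\, e^{\alpha C_2(1+M^{2/p})}.
\end{align*}
This bound is deterministic. Consequently,
\begin{align*}
\mathbb{E}\Bigl[\N{X^\alpha(\overbar{\nu}^N) - X^\alpha(\nu)}^p \mathbbm{1}_{\Omega_M}\Bigr] \le C(\alpha, M)\, \mathbb{E}\Bigl[\N{\tfrac1N\textstyle\sum_i Y_i}^p\Bigr],
\end{align*}
and we simply drop the indicator on the right-hand side.

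\textbf{Step 2 (numerator).} The $Y_i$ are i.i.d. and centered. Applying the Marcinkiewicz--Zygmund inequality coordinatewise, or BDG (Lemma \ref{Burkholder}) to the discrete martingale $S_n=\sum_{i\le n}Y_i$ embedded as a piecewise-constant martingale, gives
\begin{align*}
\mathbb{E}\Bigl[\N{\textstyle\sum_i Y_i}^p\Bigr] \le C_p\,\mathbb{E}\Bigl[\bigl(\textstyle\sum_i\N{Y_i}^2\bigr)^{p/2}\Bigr] \le C_p N^{p/2-1}\sum_i\mathbb{E}\N{Y_i}^p = C_p N^{p/2}\,\mathbb{E}\N{Y_1}^p.
\end{align*}
The middle inequality uses Jensen, valid since $p/2\ge1$. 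Dividing by $N^p$ produces the rate $N^{-p/2}$.

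\textbf{Step 3 (moment of $Y_1$).} Since $0<\oa\le \oa(x^*)\le 1$, we have
\begin{align*}
\mathbb{E}\N{Y_1}^p \le 2^{p-1}\bigl(M_p + \N{X^\alpha(\nu)}^p\bigr) \le 2^{p-1}(1+b_p)M_p
\end{align*}
by Corollary \ref{GerberPropA4Corollary}. This is finite and independent of $N$. Combining the three steps gives
\begin{align*}
C_{\mathrm{EM}} = C_p\,2^{p-1}(1+b_p)M_p\,e^{p\alpha(f(x^*)+C_2(1+M^{2/p}))}.
\end{align*}

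The main obstacle is Step 2: stating a clean vector-valued moment inequality for i.i.d. sums using only the tools available in the paper. If BDG via the martingale embedding feels awkward, I would instead apply the scalar Marcinkiewicz--Zygmund inequality to each coordinate and use $\N{y}^p\le d^{p/2-1}\sum_l|y_l|^p$, which only costs a factor depending on $d$. A second, minor point is that Lemma \ref{CarrL1} requires a second-moment bound, whereas $\Omega_M$ controls the $p$th moment. The reduction in Step 1 handles this because $p\ge2$.
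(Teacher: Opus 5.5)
Your proof is correct and streamlines the paper's argument in two ways.

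The paper writes $X^\alpha(\bar\nu^N)-X^\alpha(\nu)=S_1+S_2$. Here $S_1$ is the fluctuation of $\frac1N\sum_i\bar X^i\oa(\bar X^i)$ divided by $\Gnorm{\bar\nu^N}$, and $S_2=X^\alpha(\nu)\bigl(\Gnorm{\nu}-\Gnorm{\bar\nu^N}\bigr)/\Gnorm{\bar\nu^N}$. It then applies Rosenthal's inequality (Corollary \ref{RosenthalCorr}) twice, to the centered families $A_i=\bar X^i\oa(\bar X^i)-\int x\oa\,d\nu$ and $B_i=\oa(\bar X^i)-\int\oa\,d\nu$.

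Your $Y_i$ equals $A_i-X^\alpha(\nu)B_i$, so you make the same decomposition at the level of the summands, before taking norms. This gives an exact identity with a single fluctuation term and avoids the extra factor $2^{p-1}$ and the second moment inequality. The bound $\N{X^\alpha(\nu)}^p\le b_pM_p$ then enters only once, through $\mathbb{E}\N{Y_1}^p$.

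You also replace Rosenthal by a square-function (Marcinkiewicz--Zygmund) bound followed by Jensen. This is weaker in general but costs nothing here: for $p\ge 2$, both terms in Rosenthal's maximum are dominated by $N^{p/2}\mathbb{E}\N{Y_1}^p$ anyway. In exchange, the paper's split keeps the numerator and normalization fluctuations visibly separate, with the latter handled by the trivial bound $|B_i|\le 2\oa(x^*)$.

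One citation caveat: Lemma \ref{Burkholder} is stated for continuous local martingales, so a piecewise-constant embedding of $\sum_{i\le n}Y_i$ does not literally fall under it. You can instead use:
\begin{itemize}
\item the discrete-time Burkholder inequality;
\item the coordinatewise Marcinkiewicz--Zygmund inequality, with a $d$-dependent constant like the paper's $C_{p,d}$;
\item or Theorem \ref{Rosenthal} from the appendix, which directly gives $\mathbb{E}\N{\sum_i Y_i}^p\le C_{p,d}N^{p/2}\mathbb{E}\N{Y_1}^p$.
\end{itemize}
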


\chapter{Propagation of Chaos Restricted on Good Sets}
\label{chap:PropGoodSets}
This chapter analyzes three interacting particle models that arise in the context of consensus-based optimization: the $\delta$-\textbf{CBO} scheme
\cite{Fornasier2026FromConvergence}, the \textbf{consensus freezing scheme} (CF) \cite{Fornasier2026FromConvergence}, and the \textbf{consensus hopping scheme} (CH) \cite{Riedl2023GradientNeed}. Together, these three schemes form a hierarchy of decreasing structural similarity to the classical CBO dynamics: the $\delta$-CBO scheme operates in continuous time with a live consensus point, differing from classical CBO only in that the diffusion coefficient is a fixed constant rather than depending on the distance to the consensus; the freezing scheme operates in continuous time but holds the consensus point fixed over short intervals, reducing the within-interval dynamics to an Ornstein–Uhlenbeck process; and the hopping scheme operates entirely in discrete time by forming consensus at each step on a Gaussian distribution centered at the previous consensus iterate.

Building on the mean-field theoretic framework established in Chapter \ref{chap:prelim}, the central goal of this chapter is to prove quantitative propagation of chaos for all three schemes in Wasserstein-$p$
distance, uniformly over finite time horizons, on a suitable high-probability set 
$\Omega_M$. Under this restricted formulation, the estimates are obtained at rate $\mathcal O(N^{-1/2})$. Despite their structural differences, the three analyses share a common proof architecture consisting of three steps: uniform-in-$N$ moment bounds for the particle trajectories, a high-probability argument ensuring that moments of the empirical measures remain bounded on a set \begin{align*}
    \Omega_M := \bigg\{ \omega \in \Omega : \suptT \,  \frac{1}{N} \Sum{i}{N} \max \Big\{ \N{X^i_t}^p , \N{\overbar{X}^i_t}^p \Big \} \leq M \bigg\},
\end{align*} with $\mathbb{P}(\Omega_M) \geq 1 - C/M \xrightarrow[]{M \to \infty} 1$
 and a coupling argument that quantifies the pathwise distance between the interacting particle system and its mean-field counterpart, conditioned on $\Omega_M$.
The tools used to execute this program differ across the three schemes, reflecting their distinct structures. For the $\delta
$-CBO scheme, the continuous-time dynamics are handled via Grönwall's lemma, in a proof that is the closest in spirit to the classical CBO propagation of chaos arguments of \cite{Fornasier2024Consensus-BasedGlobally, Gerber2025Mean-fieldSampling}, of which it is in essence a simplification. For the freezing scheme, the explicit strong solution of the Ornstein–Uhlenbeck dynamics enables precise moment control at each time interval, while the coupling proof exploits the closed-form distance evolution between particle and mean-field trajectories. For the hopping scheme, the discrete-time setting does not require tools from stochastic analysis; moment estimates instead rely on direct probabilistic inequalities and properties of Gaussian random vectors.

Section \ref{ch:Delta} treats the $\delta$-CBO scheme, Section \ref{ch:MFF} the consensus freezing scheme, and Section \ref{ch:MFH} the consensus hopping scheme.

\section{The $\delta$-CBO Scheme}
\label{ch:Delta}
In this section, we analyze the $\delta$-\textbf{CBO} scheme, a variant of consensus-based optimization introduced in \cite{Fornasier2026FromConvergence}, whose dynamics remain closest in structure to the classical CBO model \eqref{Pre:CBOSDE}. The essential difference is that the diffusion term no longer depends on either the individual particle’s distance to the consensus or on the consensus itself, but is instead governed by a fixed scaling parameter. As a consequence, the $\delta$-CBO system can be viewed as a simplified version of the classical CBO dynamics.

We first introduce the interacting particle system and its corresponding mean-field limit, and construct a coupled system with an independent particle system sharing the same initial condition and the same driving Brownian motions. Due to the similarity with the classical CBO framework, the moment estimates and probabilistic bounds required for the analysis follow from straightforward adaptations of existing arguments as in \cite{Fornasier2024Consensus-BasedGlobally, Gerber2025Mean-fieldSampling}. In particular, we establish uniform-in-finite-time bounds on particle moments and high-probability uniform control of empirical measure moments, which allow us to restrict analysis to high probability trajectories where the system is well-behaved, which allows us to use Lemma \ref{CarrL2}. On these good events, we prove propagation of chaos for the $\delta$-CBO dynamics by coupling, obtaining quantitative convergence rates toward the mean-field limit. More precisely, we show that the expected pathwise error between the interacting particle system and its mean-field counterpart is of order $\mathcal{O}\bigrou{N^{-p/2}}$, which implies convergence in Wasserstein-$p$ distance at rate $\mathcal{O}\bigrou{N^{-1/2}}$.
\vspace{.5 cm}\newline
We consider the $\delta$-CBO dynamics as introduced in \cite{Fornasier2026FromConvergence} for the particle system  $\bigrou{\mathcal{X}_t^N }_t = (X^1_t, ..., X^N_t)_{t \geq 0 }$ by
\begin{align}
    dX^i_t = - s \cdot \lambda (X^i_t - X^\alpha(\rho^N_{t}))dt + \sqrt{s}\delta dB^i_t,  \quad X^i_0 \sim \orho_0, \quad i = 1, ..., N,
    \label{Delta:Scheme}
\end{align}
with parameters $s, \lambda, \delta >0$.
We claim the mean-field limit to be
\begin{align}
    d\overbar{X}_t = - s \cdot \lambda (\overbar{X}_t - X^\alpha(\orho_{t}))dt + \sqrt{s}\delta dB_t,
    \label{Delta:MFSDE}
\end{align}
where $\orho_{t} = \text{Law}(\overbar{X}_{t})$. The system we want to couple with (\ref{Delta:Scheme}) is the independent particle system $\bigrou{\overbar{\mathcal{X}}_t^N }_t = (\overbar{X}^1_t, ..., \overbar{X}^N_t)_{t \geq 0 }$ defined by
\begin{align}
    d\overbar{X}^i_t = - s \cdot \lambda (\overbar{X }^i_t - X^\alpha(\orho_{t}))dt + \sqrt{s}\delta dB^i_t,  \quad \overbar X_0^i = X_0^i \, \, \text{  a.s.}, \quad i = 1, ..., N. 
    \label{Delta:MFScheme}
\end{align}\newline
The existence of strong solutions for \eqref{Delta:Scheme}, \eqref{Delta:MFSDE} and \eqref{Delta:MFScheme} follows directly from \cite[Theorem 2.2]{Gerber2025Mean-fieldSampling} and \cite[Theorem 2.3]{Gerber2025Mean-fieldSampling}, respectively. Because of this, throughout this section, if not otherwise stated, the probabilities and expectations will be taken with respect to the probability space 
\begin{align*}
    \ProbStd
:=
\Bigg(
(\mathbb{R}^d)^N \times \big(C_0([0,\infty);\mathbb{R}^d)\big)^N,\,
\mathcal{B}\Big((\mathbb{R}^d)^N \times \big(C_0([0,\infty);\mathbb{R}^d)\big)^N\Big),\,
\orho_0^{\otimes N}\otimes \mathbb{W}^{\otimes N}
\Bigg).
\end{align*}
\vspace{.3 cm}
\newline
Propagation of chaos by coupling has already been proven (\cite[Lemma 3.10, Proposition 3.11]{Fornasier2024Consensus-BasedGlobally} with a probabilistic argument and \cite[Theorem 2.6]{Gerber2025Mean-fieldSampling} more general) for the CBO method defined by 
\begin{align}
    dX^i_t = -\lambda (X^i_t - X^\alpha(\rho^N_{t}))dt + \sigma \N{X^i_t - X^\alpha(\rho^N_t)} dB^i_t,  \quad X^i_0 \sim \orho_0, \quad i = 1, ..., N.
    \label{CBOGlob}
\end{align}
We note that in terms of analyzing the moments and propagation of chaos, the $\delta$-CBO system (\ref{Delta:Scheme}) is just a simpler version of (\ref{CBOGlob}) as the diffusion depends only on constants. Consequently, the proofs of the boundedness of the moments of the scheme, as well as of the high-probability uniform bounds of the moments of the empirical measures, reduce to straightforward adaptations of the arguments established in \cite[Lemma 3.5]{Gerber2025Mean-fieldSampling} and  \cite[ Lemma 3.10]{Fornasier2024Consensus-BasedGlobally}, respectively. The same goes for the actual proof of propagation of chaos, Theorem \ref{Delta:Thrm} being a simpler version of the proof of \cite[Proposition 3.11]{Fornasier2024Consensus-BasedGlobally}.

\subsection{Propagation of Chaos for the $\delta$-CBO Scheme}
As described above, this section is devoted to proving propagation of chaos for the $\delta$-CBO scheme described in \eqref{Delta:Scheme}. In order to prove propagation of chaos, we need moment bounds, uniformly over finite time horizons, not only of the interacting particle system (\ref{Delta:Scheme}) and the mean-field particle system (\ref{Delta:MFScheme}) but also of their respective empirical measures. We cannot directly prove the boundedness of the moments of the empirical measures uniformly in $N$. This constitutes a great problem for proving propagation of chaos, as we could not use vital results such as Lemma \ref{CarrL2} and Lemma \ref{CH:ConsBound}. A workaround is to rely on a probabilistic argument. More precisely, we want to ensure that the empirical measures have $p$th moments uniformly over $[0, T]$ and uniformly in $N$ with a probability approaching one. We will closely follow the steps used in \cite[Lemma 3.10, Proposition 3.11]{Fornasier2024Consensus-BasedGlobally}. Subsequently, we prove propagation of chaos by coupling, conditioned on trajectories along which the empirical measures have uniformly bounded moments. The proof of propagation of chaos for $\delta$-CBO will be the one closest to the classical approach described in Section \ref{Pre:ProvingPropChaos}.
\begin{lemma}
    Let $T > 0$ be the final time. Consider the interacting particle systems $\IPS{T}$ and $\MIPS{T}$ defined by the SDE systems \eqref{Delta:Scheme} and \eqref{Delta:MFScheme}, respectively, with initial data following the distribution $\orho_0 \in \mathcal{P}_p(\mathbb{R}^d)$. Then there exist constants $C_{\mathrm{D}} > 0$ and $\overbar{C}_{\mathrm{D}} > 0$ independent of $N$ such that for every $N \in \mathbb{N}$ and every $i \leq N$
    \begin{align*}
         \mathbb{E} \biggeck{\suptT \N{X_t^i}^p} \leq C_{\mathrm{D}} \quad \text{and} \quad \mathbb{E} \biggeck{\suptT \N{\overbar{X}_t^i}^p} \leq \overbar{C}_{\mathrm{D}}.
    \end{align*}
    \label{NPDelta:UniformMoment}
\end{lemma}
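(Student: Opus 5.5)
We would follow the standard moment argument for CBO-type SDEs (as in \cite[Lemma 3.5]{Gerber2025Mean-fieldSampling}), simplified by the constant diffusion. We write \eqref{Delta:Scheme} in integral form,
\begin{align*}
X^i_t = X^i_0 - s\lambda\int_0^t \bigrou{X^i_\tau - X^\alpha(\rho^N_\tau)}\,d\tau + \sqrt{s}\,\delta B^i_t .
\end{align*}
Taking $\sup_{t\le T}$, raising to the $p$th power and using $\N{a+b+c}^p\le 3^{p-1}(\N{a}^p+\N{b}^p+\N{c}^p)$ together with Hölder's inequality in time, we get
\begin{align*}
\mathbb{E}\biggeck{\suptT\N{X^i_t}^p} \le 3^{p-1}\biggrou{\mathbb{E}\bigeck{\N{X^i_0}^p} + (s\lambda)^p T^{p-1}\, 2^{p-1}\int_0^T \mathbb{E}\bigeck{\N{X^i_\tau}^p} + \mathbb{E}\bigeck{\N{X^\alpha(\rho^N_\tau)}^p}\,d\tau + s^{p/2}\delta^p\,\mathbb{E}\biggeck{\suptT\N{B^i_t}^p}}.
\end{align*}
The initial term is finite since $\orho_0\in\mathcal{P}_p(\mathbb{R}^d)$. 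For the Brownian term we use Lemma \ref{Burkholder} with $\langle B\rangle_T=T$ (Example \ref{BrownianExample}), applied componentwise, to obtain a bound $C(p,d)T^{p/2}$. This step is routine and is precisely where the constant diffusion simplifies matters compared with \eqref{CBOGlob}.

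The key step is controlling the consensus term uniformly in $N$. Corollary \ref{GerberPropA4Corollary} gives
\begin{align*}
\N{X^\alpha(\rho^N_\tau)}^p \le b_p\,\frac{1}{N}\sum_{j=1}^N\N{X^j_\tau}^p,
\end{align*}
with $b_p$ depending only on $(\alpha,p,C_2,C_3,R)$ and hence not on $N$. Taking expectations and using the exchangeability of the particles, which holds since the initial data are i.i.d., the Brownian motions are independent, and the system is symmetric under relabeling, we get $\mathbb{E}\N{X^\alpha(\rho^N_\tau)}^p\le b_p\,\mathbb{E}\N{X^i_\tau}^p$. Combining this with the previous display yields
\begin{align*}
\phi(t)\le c_1 + c_2\int_0^t\phi(\tau)\,d\tau, \qquad \phi(t):=\mathbb{E}\biggeck{\underset{\tau\in[0,t]}{\sup}\N{X^i_\tau}^p},
\end{align*}
with $c_1,c_2$ independent of $N$. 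Grönwall's lemma (Lemma \ref{GronwallLemma}) then gives $C_{\mathrm{D}}:=c_1e^{c_2T}$.

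Lemma \ref{GronwallLemma} requires $\phi$ to be bounded a priori, and we expect this to be the main technical obstacle. We would handle it with a localization argument. Introduce the stopping times $\tau_n:=\inf\{t: \frac1N\sum_j\N{X^j_t}\ge n\}$ and run the argument for the stopped processes. Up to $\tau_n$ the drift is bounded, so $\phi_n$ is finite. The bound is uniform in $n$, and Fatou's lemma lets us send $n\to\infty$. Since strong solutions exist by \cite[Theorem 2.2]{Gerber2025Mean-fieldSampling}, $\tau_n\to\infty$ almost surely.

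For the mean-field copies \eqref{Delta:MFScheme} the argument is the same. The consensus term becomes the deterministic quantity $\N{X^\alpha(\orho_\tau)}^p\le b_p\,\mathbb{E}_{\orho_\tau}\N{x}^p=b_p\,\mathbb{E}\N{\overbar{X}^i_\tau}^p$, again by Corollary \ref{GerberPropA4Corollary}, because $\overbar{X}^i_\tau\sim\orho_\tau$. The same Grönwall and localization argument then yields $\overbar{C}_{\mathrm{D}}$, which depends only on $T,p,d,s,\lambda,\delta,b_p$ and $M_p(\orho_0)$.
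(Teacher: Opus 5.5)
Your proposal is correct and follows essentially the paper's proof: integral form, Hölder in time, BDG for the Brownian term, Corollary \ref{GerberPropA4Corollary} plus exchangeability to bound the consensus term by the $p$th moment of particle $i$, then Grönwall. Your localization via the label-symmetric stopping time $\tau_n$ is a real improvement, since the paper applies Lemma \ref{GronwallLemma} without checking its a priori boundedness hypothesis, and the symmetry of $\tau_n$ is exactly what keeps the exchangeability step valid for the stopped processes.

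The one point that does not transfer verbatim is the mean-field part. Stopping $\overbar{X}^i$ does not stop the deterministic term $X^\alpha(\orho_\tau)$, so its bound $b_p\,\mathbb{E}\N{\overbar{X}^i_\tau}^p$ stays unlocalized. There you should instead get finiteness of $\mathbb{E}\bigl[\sup_{t\le T}\N{\overbar{X}^i_t}^p\bigr]$ from the local boundedness of $t\mapsto X^\alpha(\orho_t)$ (well-posedness of \eqref{Delta:MFSDE} in $\mathcal{P}_p(\mathbb{R}^d)$) and a pathwise Grönwall estimate. After that, your Grönwall argument gives $\overbar{C}_{\mathrm{D}}$.
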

\begin{proof}
We present the proof only for the interacting particle system $\IPS{T}$. The proof for $\MIPS{T}$ proceeds analogously.
Fix $\tiT \in [0, T]$. We begin by computing for an arbitrary $i \leq N$ on a path $\omega \in \Omega$
\begin{align*}
    \suptTit \N{X_t^i} &= \suptTit \N{X^i_0 + \int^t_0 - s \lambda (X^i_\tau - X^\alpha(\rho^N_\tau))\, d\tau + \sqrt{s} \delta \int^t_0 d B^i_\tau}\\ 
    &\leq \N{X^i_0} + s\lambda \int^{\tiT}_0\N{X^i_\tau} \, d\tau + s\lambda \int^{\tiT}_0 \N{X^\alpha(\rho^N_\tau) } \, d\tau + \sqrt{s}\delta \suptTit \N{B^i_t}.
\end{align*}
We use $(a+b+c+d)^p \leq 4^{p-1} (a^p+b^p+c^p+d^p)$ on the latter to estimate the $p$th moment
\begin{align*}
    &\suptTit \N{X_t^i}^p \\  \leq \quad &4^{p-1} \Biggrou{\N{X^i_0}^p + s^p\lambda^p \biggrou{\int^{\tiT}_0\N{X^i_\tau} \, d\tau}^p + s^p\lambda^p \biggrou{\int^{\tiT}_0 \N{X^\alpha(\rho^N_\tau) } \, d\tau}^p + s^{\frac{p}{2}}\delta^p \suptTit \N{B^i_t}^p}.
\end{align*}
Using Hölders inequality on the two integral terms and taking expectations on both sides leaves us with
\begin{align*}
    \mathbb{E} \biggeck{\suptTit \N{X_t^i}^p} &\leq 4^{p-1} \Biggrou{ \mathbb{E} \biggeck{\N{X^i_0}^p} + s^p\lambda^p {\tiT}^{p-1} \int^{\tiT}_0 \mathbb{E} \Bigeck{\N{X^i_\tau}^p} \, d\tau   \\ & \quad + s^p\lambda^p {\tiT}^{p-1} \int^{\tiT}_0 \mathbb{E} \Bigeck{\N{X^\alpha(\rho^N_\tau) }^p} \, d\tau + s^{\frac{p}{2}}\delta^p \mathbb{E} \Biggeck{\suptTit \N{B^i_t}^p}}.
\end{align*}
We use Lemma \ref{Burkholder} in order to bound the Brownian motion part with a constant $C_{\mathrm{BDG}}$ that depends only on $p$, i.e.
\begin{align*}
    \mathbb{E} \Biggeck{\suptTit \N{B^i_t}^p} \leq C_{\mathrm{BDG}}(d{\tiT})^{\frac{p}{2}}.
    \end{align*}
Together with absorbing the constants into one constant $C = C(s, \lambda, \delta, {\tiT}, d, p)$ yields
\begin{align}
    \mathbb{E} \biggeck{\suptTit \N{X_t^i}^p} \leq C \Biggrou{ \mathbb{E} \biggeck{\N{X^i_0}^p} + \int^{\tiT}_0 \mathbb{E} \Bigeck{\N{X^i_\tau}^p} \, d\tau + \int^{\tiT}_0 \mathbb{E} \Bigeck{\N{X^\alpha(\rho^N_\tau) }^p} \, d\tau + 1}.
    \label{NPDelta:B1}
\end{align}
We can see that we are almost there to use Grönwall's lemma to finish the proof. But first, we need to get a bound on the consensus point part. We use Corollary \ref{GerberPropA4Corollary} to estimate
\begin{align*}
    \int^{\tiT}_0 \mathbb{E} \Bigeck{\N{X^\alpha(\rho^N_\tau) }^p} \, d\tau &\leq \int^{\tiT}_0 b_p \mathbb{E} \Bigeck{ \mathbb{E}_{\rho^N_\tau} \bigeck{\N{x}^p}} \, d \tau.\\
\end{align*}
Combining the latter with (\ref{NPDelta:B1}) gives us for a constant $C = C(s, \lambda, \delta, {\tiT}, d, p, b_p)$\footnote{Note that we abuse the notation here in the way that $C$ displays a constant that just depends on model parameters.}
\begin{align*}
    &\mathbb{E} \biggeck{\suptTit \N{X_t^i}^p} \\ \leq \, & C \Biggrou{ 1+ \mathbb{E} \biggeck{\N{X^i_0}^p} + \int^{\tiT}_0 \mathbb{E} \Bigeck{\N{X^i_\tau}^p} \, d\tau + \int^{\tiT}_0 \mathbb{E} \Bigeck{ \mathbb{E}_{\rho^N_\tau} \bigeck{\N{x}^p}} \, d\tau } \\
    \leq \, &C \Biggrou{ 1+ \mathbb{E} \biggeck{\N{X^i_0}^p} + \int^{\tiT}_0 \mathbb{E} \Biggeck{ \underset{s\in [0, \tau]}{\sup}\N{X^i_s}^p} \, d\tau + \int^{\tiT}_0 \mathbb{E} \Biggeck{ \underset{s\in [0, \tau]}{\sup}\mathbb{E}_{\rho^N_s} \bigeck{\N{x}^p}} \, d\tau }\\
    \leq \, &C \Biggrou{ 1+ \mathbb{E} \biggeck{\N{X^i_0}^p} + 2\int^{\tiT}_0 \mathbb{E} \Biggeck{ \underset{s\in [0, \tau]}{\sup}\N{X^i_s}^p} \, d\tau },
\end{align*}
where we used in the last line that
\begin{align}
     \mathbb{E} \Biggeck{ \underset{s\in [0, \tau]}{\sup}\mathbb{E}_{\rho^N_s} \bigeck{\N{x}^p}} =  \mathbb{E} \Biggeck{ \underset{s\in [0, \tau]}{\sup}\frac{1}{N} \sum^N_{j=1} \N{X^j_s}^p} \leq \mathbb{E} \Biggeck{ \frac{1}{N} \sum^N_{j=1} \underset{s\in [0, \tau]}{\sup} \N{X^j_s}^p} = \mathbb{E} \Biggeck{\underset{s\in [0, \tau]}{\sup} \N{X^i_s}^p}. \label{NPDeltaEmpB1}
\end{align}
Lastly, an application of Grönwall's lemma (Lemma \ref{GronwallLemma}) yields
\begin{align}
     \mathbb{E} \biggeck{\suptT \N{X_t^i}^p} \leq C\biggrou{1+ \mathbb{E} \biggeck{\N{X^i_0}^p}}e^{2CT} = \colon C_{\mathrm{D}},
\end{align}
with $C_{\mathrm{D}} = C(s, \lambda, \delta, T, d, p, b_p)$.
\end{proof}
\begin{remark}
    The auxiliary step (\ref{NPDeltaEmpB1}) immediately gives us a uniform-in-time bound for the expected $p$th moment of the empirical measure of $\IPS{T}$:
    \begin{align}
        \mathbb{E} \Biggeck{ \underset{t\in [0, T]}{\sup}\mathbb{E}_{\rho^N_t} \bigeck{\N{x}^p}} \leq \mathbb{E} \biggeck{\suptT \N{X_t^i}^p} \leq C_{\mathrm{D}}.
    \end{align}
\end{remark}
The results of Lemma \ref{NPDelta:UniformMoment} will help us to prove the high-probability statement on the bounded moments of the empirical measures of the particle systems \eqref{Delta:Scheme} and \eqref{Delta:MFScheme}.
\begin{lemma}
   Let $T >0$ be a fixed final time. Consider the interacting particle systems $\IPS{T}$ and $\MIPS{T}$ defined by the SDE systems \eqref{Delta:Scheme} and \eqref{Delta:MFScheme}, respectively, with initial data following the distribution $\orho_0 \in \mathcal{P}_p(\mathbb{R}^d)$. For a fixed \(M>0\), define the set \begin{align*}
       \Omega_M := \bigg\{ \omega \in \Omega : \suptT \,  \frac{1}{N} \Sum{i}{N} \max \Big\{ \N{X^i_t}^p , \N{\overbar{X}^i_t}^p \Big \} \leq M \bigg\}.
   \end{align*} Then there exists a constant $C>0$ independent of $N$ such that
    \begin{align*}
        \mathbb{P}(\Omega_M) \geq 1 - \frac{C}{M}.
    \end{align*}
    \label{MFD:Moment}
\end{lemma}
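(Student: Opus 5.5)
The plan is to pass to the complement and apply Markov's inequality. Write
\begin{align*}
    Y := \suptT \, \frac{1}{N} \Sum{i}{N} \max \Big\{ \N{X^i_t}^p , \N{\overbar{X}^i_t}^p \Big\},
\end{align*}
so that $\Omega \setminus \Omega_M = \{Y > M\}$. Since $Y \geq 0$, Markov's inequality gives $\mathbb{P}(\Omega\setminus\Omega_M) \leq \mathbb{E}[Y]/M$. It therefore suffices to show that $\mathbb{E}[Y] \leq C$ for a constant $C$ independent of $N$. We first check that $\Omega_M$ is an event. The paths are continuous, so the supremum may be taken over rational times, and $Y$ is then measurable.

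Next, bound the maximum by the sum, $\max\{a,b\} \leq a+b$ for $a,b \geq 0$, and pull the supremum inside the average:
\begin{align*}
    Y \leq \frac{1}{N} \Sum{i}{N} \biggrou{\suptT \N{X^i_t}^p + \suptT \N{\overbar{X}^i_t}^p}.
\end{align*}
This is the same step as in \eqref{NPDeltaEmpB1}. Taking expectations and using linearity, each summand is bounded by Lemma \ref{NPDelta:UniformMoment}, which yields
\begin{align*}
    \mathbb{E}[Y] \leq \frac{1}{N}\Sum{i}{N}\bigrou{C_{\mathrm{D}} + \overbar{C}_{\mathrm{D}}} = C_{\mathrm{D}} + \overbar{C}_{\mathrm{D}} =: C.
\end{align*}
Both $C_{\mathrm{D}}$ and $\overbar{C}_{\mathrm{D}}$ are independent of $N$ and of $i$, so the averaging over $N$ particles produces no $N$-dependence. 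The conclusion is $\mathbb{P}(\Omega_M) \geq 1 - C/M$.

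I do not expect a real obstacle here, since all the analytic work sits in Lemma \ref{NPDelta:UniformMoment}. The one point that needs care is that the moment bounds must be pathwise, with the supremum inside the expectation. A pointwise-in-time bound would not control the supremum over $t$ of the empirical moment. Lemma \ref{NPDelta:UniformMoment} provides exactly this pathwise form, so the argument goes through.
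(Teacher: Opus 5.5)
Your proposal is correct and follows the paper's proof essentially step for step: Markov's inequality on the complement, bounding the maximum by the sum, moving the supremum inside the average, and applying the pathwise moment bounds of Lemma \ref{NPDelta:UniformMoment}. The only difference is cosmetic: your constant is $C_{\mathrm{D}}+\overbar{C}_{\mathrm{D}}$, whereas the paper uses $2\max\{C_{\mathrm{D}},\overbar{C}_{\mathrm{D}}\}$.
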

\begin{proof}
   We obtain with Markov's inequality 
\begin{align*}
        \mathbb{P}(\Omega^c_M) &\leq \mathbb{P}\Biggrou{\suptT \frac{1}{N}\Sum{i}{N}\Bigrou{\N{X^i_t}^p + \N{\overbar{X}^i_t}^p} > M} \\
        &= \mathbb{P}\Biggrou{\suptT \Sum{i}{N}\Bigrou{\N{X^i_t}^p + \N{\overbar{X}^i_t}^p} > NM}\\
        &\leq \frac{\mathbb{E} \bigg[ \suptT \Sum{i}{N}\N{X^i_t}^p\bigg]}{NM} + \frac{\mathbb{E} \bigg[ \suptT \Sum{i}{N}\N{ \overbar{X}^i_t}^p \bigg]}{NM}\\
        &\leq \frac{\Sum{i}{N}\mathbb{E} \bigg[ \suptT \N{X^i_t}^p \bigg]}{NM} + \frac{\Sum{i}{N}\mathbb{E} \bigg[ \suptT \N{ \overbar{X}^i_t}^p \bigg ]}{NM}\\
        &\leq \frac{\maxu{i}{N}\, \mathbb{E} \bigg[ \suptT \N{X^i_t}^p \bigg ]}{M} + \frac{\maxu{i}{N}\, \mathbb{E} \biggeck{ \suptT \N{ \overbar{X}^i_t}^p }}{M}.
\end{align*}
We use Lemma \ref{NPDelta:UniformMoment} to obtain 
\begin{align*}
   \underset{1 \leq i \leq N}{\max}\, \mathbb{E} \Biggeck{ \suptT \N{X^i_t}^p } \leq C_{\mathrm{D}}, \quad \quad 
   \underset{1 \leq i \leq N}{\max}\, \mathbb{E} \Biggeck{ \suptT \N{\overbar{X}^i_t}^p} \leq \overbar{C}_{\mathrm{D}}
\end{align*}
We set $\frac{C}{2} \coloneqq \max \Big \{ C_{\mathrm{D}}, \overbar{C}_{\mathrm{D}} \Big \}$ and get
\begin{align*}
    \frac{\underset{1 \leq i \leq N}{\max}\, \mathbb{E} \bigg[ \suptT \N{X^i_t}^p \bigg ]}{M} + \frac{\underset{1 \leq i \leq N}{\max}\, \mathbb{E} \biggeck{ \suptT \N{ \overbar{X}^i_t}^p }}{M} \leq \frac{C}{M}.
\end{align*}
Therefore, we have 
\begin{align*}
    \mathbb{P}(\Omega_M) \geq 1 - \frac{C}{M}.
\end{align*}
\end{proof}
Lemma \ref{MFD:Moment} ensures that $\mathbb{P}(\Omega_M) \to 1$ as $M \to \infty$, so restricting to $\Omega_M$ is a mild condition. The key benefit of this restriction is that both empirical measures $\rho^N_t$ and $\orho^N_t$ have $p$th moments deterministically bounded by $M$, uniformly in $t$ and $N$. So, restricting ourselves to $\Omega_M$ allows us to apply Lemma \ref{CarrL2}, which provides the Wasserstein stability estimate needed to prove the main result of this section.
\begin{theorem}
    Let $\mathcal{X}_T^N$ and $\overbar{\mathcal{X}}_T^N$ be defined by the $\delta$-CBO systems (\ref{Delta:Scheme}) and (\ref{Delta:MFScheme}), respectively, both sharing the same initial data $\mathcal{X}^N_0 = \overbar{\mathcal{X}}^N_0$ almost surely, with common distribution $\orho_0 \in \mathcal{P}_p(\mathbb{R}^d)$ for some $p \geq 2$. For a fixed \(M>0\), define the set
    \begin{align*}
       \Omega_M := \bigg\{ \omega \in \Omega : \suptT \,  \frac{1}{N} \Sum{i}{N} \max \Big\{ \N{X^i_t}^p , \N{\overbar{X}^i_t}^p \Big \} \leq M \bigg\}.
   \end{align*} 
   Then there exists a $\varepsilon^{\mathrm{rest}}_{\mathrm{D}}(N, T)>0$ with $\varepsilon^{\mathrm{rest}}_{\mathrm{D}}(N, T) \xrightarrow{N \rightarrow \infty} 0$ such that for every $N \in \mathbb{N}$
    \begin{align*}
        \underset{1 \leq i \leq N}{\max} \,  \mathbb{E} \biggeck{ \suptT \,\N{ X^i_t - \overbar{X}^i_t}^p \mathbbm{1}_{\Omega_M}} \leq \varepsilon^{\mathrm{rest}}_{\mathrm{D}}(N, T).
    \end{align*}
    Consequently, if $\mathbb{P}(\Omega_M)>0$, then
    \begin{align*}
        \underset{1 \leq i \leq N}{\max} \,  \mathbb{E} \biggeck{ \suptT \,\N{ X^i_t - \overbar{X}^i_t}^p \Big | \Omega_M } \leq \frac{\varepsilon^{\mathrm{rest}}_{\mathrm{D}}(N, T)}{\mathbb{P}(\Omega_M)}.
    \end{align*}
    \label{Delta:Thrm}
\end{theorem}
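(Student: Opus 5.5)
The key observation is that in \eqref{Delta:Scheme} and \eqref{Delta:MFScheme} the diffusion coefficient is the same constant $\sqrt{s}\delta$ and the Brownian motions are shared, so the noise cancels in the difference. Setting $Z^i_t := X^i_t - \overbar{X}^i_t$ and using $Z^i_0 = 0$ a.s., we get pathwise
\begin{align*}
    Z^i_t = -s\lambda \int_0^t \Bigrou{Z^i_\tau - \bigrou{X^\alpha(\rho^N_\tau) - X^\alpha(\orho_\tau)}}\, d\tau .
\end{align*}
No stochastic integral (and hence no Burkholder--Davis--Gundy inequality) is needed. I would therefore work $\omega$-wise and only take expectations at the end. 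This sidesteps the usual difficulty that $\mathbbm{1}_{\Omega_M}$ is not adapted: I simply multiply the pathwise inequality by the indicator.

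\textbf{Step 1 (pathwise bound).} Apply the triangle inequality, take $\sup_{t\in[0,\tiT]}$, raise to the $p$th power and use H\"older. This gives
\begin{align*}
    \sup_{t \in [0,\tiT]} \N{Z^i_t}^p \mathbbm{1}_{\Omega_M} \leq C \int_0^{\tiT} \Bigrou{\N{Z^i_\tau}^p + \N{X^\alpha(\rho^N_\tau) - X^\alpha(\orho^N_\tau)}^p + \N{X^\alpha(\orho^N_\tau) - X^\alpha(\orho_\tau)}^p}\mathbbm{1}_{\Omega_M}\, d\tau ,
\end{align*}
with $C = C(s,\lambda,p,T)$. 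Here I have inserted the empirical measure $\orho^N_\tau$ of the mean-field copies, as in \eqref{Pre:CBOPropchaosConsEstimate1}.

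\textbf{Step 2 (the two consensus terms).} On $\Omega_M$, both $\rho^N_\tau$ and $\orho^N_\tau$ have $p$th moments at most $M$ for every $\tau \in [0,T]$. Lemma \ref{CarrL2} together with Example \ref{WasserBD} then gives
\begin{align*}
    \N{X^\alpha(\rho^N_\tau) - X^\alpha(\orho^N_\tau)}^p \mathbbm{1}_{\Omega_M} \leq C_0^p\, \frac1N \sum_{j=1}^N \N{Z^j_\tau}^p \mathbbm{1}_{\Omega_M}.
\end{align*}
For the second term I would bound $\mathbbm{1}_{\Omega_M} \leq \mathbbm{1}_{\{\frac1N\sum_j \N{\overbar X^j_\tau}^p \leq M\}}$, an event depending only on the i.i.d.\ sample $\overbar X^1_\tau,\dots,\overbar X^N_\tau \sim \orho_\tau$. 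Lemma \ref{CH:ConsBound} with $\nu = \orho_\tau$ then yields $\mathbb{E}\bigeck{\N{X^\alpha(\orho^N_\tau) - X^\alpha(\orho_\tau)}^p \mathbbm{1}_{\Omega_M}} \leq C_{\mathrm{EM}} N^{-p/2}$.

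\textbf{Step 3 (main obstacle: uniformity of $C_{\mathrm{EM}}$ in $\tau$).} The constant $C_{\mathrm{EM}}$ depends on the moment bound $M_p$ of $\nu$ (and on $M$, $\alpha$, $p$ and the constants in \eqref{A1}--\eqref{A3}). I would take $M_p := \overbar{C}_{\mathrm{D}}$ from Lemma \ref{NPDelta:UniformMoment}, since $\mathbb{E}_{\orho_\tau}[\N{x}^p] \leq \mathbb{E}[\sup_{t\le T}\N{\overbar X^1_t}^p] \leq \overbar{C}_{\mathrm{D}}$ for all $\tau$. This makes $C_{\mathrm{EM}}$ uniform in $\tau\in[0,T]$, and it has to be checked against the proof of Lemma \ref{CH:ConsBound} in the appendix.

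\textbf{Step 4 (Gr\"onwall).} Take expectations and use exchangeability of the coupled pairs $(X^j,\overbar X^j)$: the law of the system is symmetric and $\Omega_M$ is permutation invariant, so $\frac1N\sum_j \mathbb{E}[\N{Z^j_\tau}^p\mathbbm{1}_{\Omega_M}] = \mathbb{E}[\N{Z^i_\tau}^p\mathbbm{1}_{\Omega_M}]$. With $\phi(\tiT) := \mathbb{E}\bigeck{\sup_{t\le\tiT}\N{Z^i_t}^p\mathbbm{1}_{\Omega_M}}$, which is bounded by Lemma \ref{NPDelta:UniformMoment}, we obtain
\begin{align*}
    \phi(\tiT) \leq C(1+C_0^p)\int_0^{\tiT}\phi(\tau)\,d\tau + C\,T\,C_{\mathrm{EM}}N^{-p/2}.
\end{align*}
Lemma \ref{GronwallLemma} then gives $\varepsilon^{\mathrm{rest}}_{\mathrm{D}}(N,T) := C\,T\,C_{\mathrm{EM}}N^{-p/2}e^{C(1+C_0^p)T} \to 0$, and this bound is independent of $i$. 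The conditional statement follows from $\mathbb{E}[\,\cdot \mid \Omega_M] = \mathbb{E}[\,\cdot\,\mathbbm{1}_{\Omega_M}]/\mathbb{P}(\Omega_M)$.
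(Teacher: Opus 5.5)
Your proposal is correct and follows essentially the same route as the paper: the shared noise cancels pathwise, you insert $\orho^N_\tau$, you apply Lemma \ref{CarrL2} with Example \ref{WasserBD} and exchangeability on $\Omega_M$, you apply Lemma \ref{CH:ConsBound} via $\mathbbm{1}_{\Omega_M}\le \mathbbm{1}_{\{\frac1N\sum_j\|\overbar{X}^j_\tau\|^p\le M\}}$, and you close with Gr\"onwall. The differences are minor: the paper first solves the linear equation by variation of constants and then discards the factor $e^{-s\lambda(t-\tau)}$, whereas you keep the $-s\lambda Z^i$ drift and absorb it into Gr\"onwall; your Step 3 also states explicitly the uniformity of $C_{\mathrm{EM}}$ in $\tau$ (via $M_p=\overbar{C}_{\mathrm{D}}$), which the paper uses without comment.
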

\begin{proof}
We fix some $N \in \mathbb{N}$, $i \leq N$. Since both particle systems share the same Brownian motion, 
the diffusion terms cancel exactly in the difference 
\begin{align*}
dF^i_t := d(X^i_t - \overbar{X}^i_t) = -s \lambda (X^i_t -\overbar{X}^i_t)dt + s\lambda (X^\alpha(\rho^N_t) - X^\alpha(\orho_t))dt, \quad \forall t \in [0, T]
\end{align*}
leaving the deterministic 
linear integral equation
\begin{align*}
    F^i_t = s\lambda \int_0^t e^{-s\lambda(t-\tau)} D_\tau\, d\tau, 
    \quad D_\tau := X^\alpha(\rho^N_\tau) - X^\alpha(\orho_\tau),
\end{align*}
where we used $F^i_0 = X^i_0 - \overbar{X}^i_0 = 0$ almost 
surely by coupling of the initial distributions. Since $e^{-s\lambda(t-\tau)} \leq 1$ 
for all $\tau \leq t$, we can bound on each path
\begin{align*}
    \N{F^i_t} \leq s\lambda \int_0^t \N{D_\tau}\, d\tau 
    \leq s\lambda \int_0^T \N{D_\tau}\, d\tau,
\end{align*}
We fix ${\tiT} \in [0, T]$ and take the $p$th power and supremum over $t \in [0,{\tiT}]$:
\begin{align*}
    \suptTit \N{F^i_t}^p 
    \leq \biggrou{s\lambda \int_0^{\tiT} \N{D_\tau}\, d\tau}^p 
    \leq (s\lambda)^p {\tiT}^{p-1} \int_0^{\tiT} \N{D_\tau}^p\, d\tau.
\end{align*}
where we applied Hölder's inequality in the last step. Multiplying by $\mathbbm{1}_{\Omega_M}$, taking 
expectations and applying the Fubini-Tonelli theorem yields
\begin{align}
    \mathbb{E}\biggeck{\suptTit \N{F^i_t}^p \mathbbm{1}_{\Omega_M}} 
    \leq (s\lambda)^p {\tiT}^{p-1} \int_0^{\tiT} 
    \mathbb{E}\bigeck{\N{D_\tau}^p \mathbbm{1}_{\Omega_M}}\, d\tau. \label{RestrictedDelta Bound1}
\end{align}
We now expand $\N{D_\tau}^p$ by inserting the empirical 
mean-field measure $\overbar{\rho}^N_\tau$ and applying 
the inequality $\N{a + b}^p \leq 2^{p-1}\Biggrou{\N{a}^p + \N{b}^p}$:
\begin{align*}
    \mathbb{E}\bigeck{\N{D_\tau}^p \mathbbm{1}_{\Omega_M}} 
    &= \mathbb{E}\Bigeck{\N{X^\alpha(\rho^N_\tau) 
    - X^\alpha(\orho_\tau)}^p \mathbbm{1}_{\Omega_M}} \\
    &\leq 2^{p-1}\mathbb{E}\Bigeck{\N{X^\alpha(\rho^N_\tau) 
    - X^\alpha(\overbar{\rho}^N_\tau)}^p \mathbbm{1}_{\Omega_M}} 
    + 2^{p-1}\mathbb{E}\Bigeck{\N{X^\alpha(\overbar{\rho}^N_\tau) 
    - X^\alpha(\orho_\tau)}^p \mathbbm{1}_{\Omega_M}}.
\end{align*}
We recognize that on $\Omega_M$, we can apply Lemma \ref{CarrL2} and 
Example \ref{WasserBD} alongside with the exchangeability of the particles to the first term:
\begin{align}
    \mathbb{E}\Bigeck{\N{X^\alpha(\rho^N_\tau) 
    - X^\alpha(\overbar{\rho}^N_\tau)}^p \mathbbm{1}_{\Omega_M}} 
    &\leq C_0^p\, \mathbb{E}\Bigeck{W_p^p(\rho^N_\tau, 
    \overbar{\rho}^N_\tau)\mathbbm{1}_{\Omega_M}} \notag \\
    &\leq C_0^p\, \mathbb{E}\biggeck{ \frac{1}{N} \sum^N_{j=1}\N{X^j_\tau 
    - \overbar{X}^j_\tau}^p \mathbbm{1}_{\Omega_M}} \notag \\
    &= C_0^p\, \mathbb{E}\biggeck{\N{X^i_\tau 
    - \overbar{X}^i_\tau}^p \mathbbm{1}_{\Omega_M}} \notag \\
    &\leq C_0^p\, \mathbb{E}\biggeck{\underset{s \in [0,\tau]}
    {\sup}\N{F^i_s}^p \mathbbm{1}_{\Omega_M}}. \label{Restricted Delta Est1}
\end{align}
For every $\tau \in [0,T]$, define
\begin{align*}
\overbar{\Omega}_{M,\tau}
:=
\left\{
\omega \in \Omega :
\frac{1}{N}\sum_{i=1}^N \N{\overbar{X}^i_\tau}^p \le M
\right\}.
\end{align*}
Since
\begin{align*}
\Omega_M \subseteq
\left\{
\omega \in \Omega :
\sup_{t \in [0,T]} \frac{1}{N}\sum_{i=1}^N \N{\overbar{X}^i_t}^p \le M
\right\}
\subseteq \overbar{\Omega}_{M,\tau},
\end{align*}
we have
\begin{align*}
\mathbbm{1}_{\Omega_M} \le \mathbbm{1}_{\overline{\Omega}_{M,\tau}}.
\end{align*}
Therefore, in the second term, we can invoke Lemma \ref{CH:ConsBound}:
\begin{align}
    \mathbb{E}\Bigeck{\N{X^\alpha(\overbar{\rho}^N_\tau) 
    - X^\alpha(\orho_\tau)}^p \mathbbm{1}_{\Omega_M}} \leq \mathbb{E}\Bigeck{\N{X^\alpha(\overbar{\rho}^N_\tau) 
    - X^\alpha(\orho_\tau)}^p \mathbbm{1}_{\overline{\Omega}_{M, \tau}}}
    \leq C_{\mathrm{EM}} N^{-\frac{p}{2}}. \label{Restricted Delta Est2}
\end{align}
Combining \eqref{Restricted Delta Est1} and \eqref{Restricted Delta Est2} gives:
\begin{align*}
    \mathbb{E}\bigeck{\N{D_\tau}^p \mathbbm{1}_{\Omega_M}} 
    \leq 2^{p-1} C_0^p\, 
    \mathbb{E}\biggeck{\underset{s \in [0,\tau]}{\sup}
    \N{F^i_s}^p \mathbbm{1}_{\Omega_M}} 
    + 2^{p-1} C_{\mathrm{EM}} N^{-\frac{p}{2}}.
\end{align*}
Putting the latter into \eqref{RestrictedDelta Bound1} yields
\begin{align*}
    \mathbb{E}\biggeck{\suptTit \N{F^i_t}^p \mathbbm{1}_{\Omega_M}} 
    &\leq (s\lambda)^p {\tiT}^{p-1} \int_0^{\tiT} 
    \biggrou{2^{p-1} C_0^p\, 
    \mathbb{E}\biggeck{\underset{s \in [0,\tau]}{\sup}
    \N{F^i_s}^p \mathbbm{1}_{\Omega_M}} 
    + 2^{p-1} C_{\mathrm{EM}} N^{-\frac{p}{2}}} d\tau \\
    &= (2{\tiT})^{p-1}(s\lambda C_0)^p  
    \int_0^{\tiT} \mathbb{E}\biggeck{\underset{s \in [0,\tau]}{\sup}
    \N{F^i_s}^p \mathbbm{1}_{\Omega_M}}\, d\tau 
    + 2^{p-1}(s\lambda {\tiT})^p C_{\mathrm{EM}} N^{-\frac{p}{2}}.
\end{align*}
The above bound allows us to apply Grönwall's 
Lemma (Lemma \ref{GronwallLemma}) to ultimately get
\begin{align*}
    \mathbb{E}\biggeck{\suptT\N{X^i_t - \overbar{X}^i_t}^p \mathbbm{1}_{\Omega_M}} \leq 2^{p-1}(s\lambda T)^p C_{\mathrm{EM}} N^{-\frac{p}{2}} e^{2^{p-1}(s\lambda T C_0)^p} =: \varepsilon^{\mathrm{rest}}_{\mathrm{D}}(N, T).
\end{align*}
Finally, if $\mathbb{P}(\Omega_M)>0$, then
\begin{align*}
    \mathbb{E}\biggeck{\suptT\N{X^i_t - \overbar{X}^i_t}^p \Big| \Omega_M}
    =
    \frac{\mathbb{E}\biggeck{\suptT\N{X^i_t - \overbar{X}^i_t}^p \mathbbm{1}_{\Omega_M}}}{\mathbb{P}(\Omega_M)}
    \leq \frac{\varepsilon^{\mathrm{rest}}_{\mathrm{D}}(N,T)}{\mathbb{P}(\Omega_M)}.
\end{align*}
\end{proof}
Restricted to $\Omega_M$, Theorem \ref{Delta:Thrm} implies pathwise propagation of chaos by coupling in the sense of Definition \ref{Pre:ChaosCoup}, with expectations replaced by expectations on $\Omega_M$. In particular, by Example \ref{WasserBD},
\begin{align*}
    \mathbb{E}\Bigeck{W_p^p\bigrou{\rho^N_t, \orho^N_t}\mathbbm{1}_{\Omega_M}} \leq \varepsilon^{\mathrm{rest}}_{\mathrm{D}}(N,T) \quad \forall t \in [0,T],
\end{align*}
so the empirical measures of the interacting and the mean-field particle systems are close on $\Omega_M$, at rate $\mathcal{O}\bigrou{N^{-1/2}}$ in Wasserstein-$p$ distance.

\begin{remark}
    Note that this rate is independent of the exponent $p$. As long as the initial distribution has finite $p$th moments, i.e. $\orho_0 \in \mathcal P_p(\mathbb R^d)$ with $p\ge 2$, we obtain propagation of chaos in Wasserstein-$p$ distance at rate $\mathcal O(N^{-1/2})$.
\end{remark}

\section{The Consensus Freezing Scheme}
\label{ch:MFF}
In this section, we study the \textbf{consensus freezing scheme} (CF), a CBO variant recently proposed in \cite{Fornasier2026FromConvergence} in which the consensus point is held fixed over short time intervals. More precisely, the time horizon $[0, T]$ is partitioned into intervals $[t_j, t_{j+1})$ of equal length $\Delta t$. Within each interval, the particles evolve toward a frozen consensus determined at the beginning of that interval. This modification yields a dynamics that is particularly suited to theoretical analysis: on each interval, the particle evolution is governed by an Ornstein–Uhlenbeck process, which admits an explicit strong solution and allows for direct control of moments and distances between particle trajectories.

We show that the consensus freezing scheme admits a natural mean-field limit. Using a coupling argument and exploiting the closed-form structure of the Ornstein–Uhlenbeck dynamics, we establish propagation of chaos uniformly over finite time horizons. In particular, we obtain quantitative convergence bounds of order $\mathcal{O}(N^{- 1/2})$ in Wasserstein-$p$ distance, restricted to high-probability good sets where suitable moment bounds for the empirical measures hold.
To prove this result, we first derive uniform moment estimates for the particle system and its mean-field counterpart, then establish probabilistic bounds ensuring that empirical measures have sufficient uniform moment bounds, and finally use these estimates within a coupling framework to conclude propagation of chaos for the consensus freezing dynamics.
\vspace{.5 cm}
 
Let $[0, T]$ be the time interval of interest being partitioned into $J$ sub-intervals of equal length $\Delta t$, i.e. 
\begin{align*} 
    \bigcup_{0 \leq j \leq J-1} [t_{j}, t_{j+1}] = [0, T], \quad t_{j+1} - t_j = \Delta t \quad \forall j \in [J-1].
\end{align*}
We define the $N$-particle system $\bigrou{\mathcal{X}_t^N }_t$ by the following system of stochastic differential equations for fixed $0 \leq j \leq J-1$.
\begin{align}
     dX^i_t = - s \cdot \lambda (X^i_t - X^\alpha(\rho^N_{t_j}))dt + \sqrt{s}\delta dB^i_t,  \quad \forall t \in [t_j, t_{j+1}), \quad X^i_0 \sim \orho_0, \quad  i = 1, ..., N.
     \label{CF:CFscheme}
\end{align}
We claim the mean-field limit to be described by the natural choice 
\begin{align}
     d\overbar{X}_t = - s \cdot \lambda (\overbar{X}_t - X^\alpha(\orho_{t_j}))dt + \sqrt{s}\delta dB_t \quad \forall t \in [t_j, t_{j+1}),
     \label{CF:MFscheme}
\end{align}
where $\orho_t = \text{Law}(\overbar{X}_t) \in \PRd$ and an initial $\overbar{X}_0 \sim \orho_0$.
One can easily see that for every $i$, at each time interval $[t_j, t_{j+1})$, the stochastic differential equation \eqref{CF:CFscheme} is an Ornstein-Uhlenbeck process and, as such, admits a unique strong solution.
\newline
Again, we want to show propagation of chaos by coupling. We define the $N$-particle system $\bigrou{\overbar{\mathcal{X}}^N_t}_t$ by the following system of stochastic differential equations: 
\begin{align}
     d\overbar{X}^i_t = - s \cdot \lambda (\overbar{X}^i_t - X^\alpha(\orho_{t_j}))dt + \sqrt{s}\delta dB^i_t \quad \forall t \in [t_j, t_{j+1}), \quad \overbar{X}^i_0 = X^i_0 \, \, \text{ a.s.}, \quad  i = 1, ..., N, 
     \label{CF:MFIPS}
\end{align}
where $\orho_t = \text{Law}(\overbar{X}_t)$ is the distribution of (\ref{CF:MFscheme}). 
\vspace{.5 cm}
\newline
Throughout this section, if not otherwise stated, the probabilities and expectations will be taken with respect to the probability space 
\begin{align*}
    \ProbStd
:=
\Bigg(
(\mathbb{R}^d)^N \times \big(C_0([0,\infty);\mathbb{R}^d)\big)^N,\,
\mathcal{B}\Big((\mathbb{R}^d)^N \times \big(C_0([0,\infty);\mathbb{R}^d)\big)^N\Big),\,
\orho_0^{\otimes N}\otimes \mathbb{W}^{\otimes N}
\Bigg),
\end{align*}
which is a valid choice given that the above SDEs admit strong solutions.
 \subsection{Propagation of Chaos for the Consensus Freezing Scheme}
 As for the $\delta$-CBO scheme, we will show propagation of chaos using a coupling argument. This also means that we once again need uniformly bounded moments of the empirical measures of the interacting particle systems. Analogously to the $\delta$-CBO case, our analysis will be based on a probabilistic argument. However, as in the $\delta$-CBO case, we must first establish appropriate moment bounds for the interacting particle systems. For both the proof of boundedness of the moments of the stochastic process induced by the scheme and the proof of propagation of chaos, we can leverage the fact that the SDEs underlying the consensus-freezing scheme are Ornstein–Uhlenbeck processes. Consequently, they admit explicit strong solutions in closed form, which can be computed and analyzed directly. Furthermore, within each freezing interval $[t_j, t_{j+1})$, the process is well behaved and has uniformly bounded moments. Moreover, we will see in Theorem \ref{MFF: Thrm} that the estimation for propagation of chaos by coupling will rely just on the differences at the points where the consensus changes, i.e.
 \begin{align*}
      \mathbb{E} \biggeck{ \supttj \,\N{ X^i_t - \overbar{X}^i_t}^p} \leq \mathbb{E} \biggeck{ \N{ X^i_{t_{j}} - \overbar{X}^i_{t_{j}}}^p + \N{ X^i_{t_{j+1}} - \overbar{X}^i_{t_{j+1}}}^p}.
 \end{align*}
 We are therefore left with handling the moments of the empirical measures where the consensus point changes. This will leave us with considering the event
 \begin{align*}
     \Omega_M := \bigg\{ \omega \in \Omega : \supjJ \,  \frac{1}{N} \Sum{i}{N} \max \Big\{ \N{X^i_{t_j}}^p , \N{\overbar{X}^i_{t_j}}^p \Big \} \leq M \bigg\}.
 \end{align*}
 \begin{lemma}
    Consider the particle system $\IPS{T}$ and $\MIPS{T}$ defined by \eqref{CF:CFscheme} and \eqref{CF:MFIPS}, respectively, with initial data following the distribution $\rho_0 \in \mathcal{P}_p(\mathbb{R}^d)$ and let $J$ be the number of freezing intervals such that $[0, T] = \bigcup_{0 \leq j \leq J-1} [t_j, t_{j+1}]$. Then there exist constants $C_{\mathrm{F}} >0 $ and $\overbar{C}_{\mathrm{F}} >0 $ independent of $N$ such that for every $N \in \mathbb{N}$ and every $i \leq N$
    \begin{align*}
        \mathbb{E} \biggeck{ \supjJ \N{X^i_{t_j}}^p} \leq C_{\mathrm{F}} \quad \text{and} \quad \mathbb{E} \biggeck{ \supjJ \N{\overbar{X}^i_{t_j}}^p} \leq \overbar{C}_{\mathrm{F}}.
    \end{align*}
    \label{NP:CFLemma}
\end{lemma}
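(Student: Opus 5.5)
We will exploit the explicit Ornstein–Uhlenbeck solution on each freezing interval. For $t \in [t_j, t_{j+1})$, writing $m_j := X^\alpha(\rho^N_{t_j})$, we have
\begin{align*}
    X^i_{t} = e^{-s\lambda (t-t_j)} X^i_{t_j} + \bigrou{1 - e^{-s\lambda (t-t_j)}} m_j + \sqrt{s}\delta \int_{t_j}^{t} e^{-s\lambda(t-\tau)}\, dB^i_\tau .
\end{align*}
Letting $t \uparrow t_{j+1}$ (the paths are continuous) and setting $a := e^{-s\lambda \Delta t} \in (0,1)$ and $G^i_j := \sqrt{s}\delta \int_{t_j}^{t_{j+1}} e^{-s\lambda(t_{j+1}-\tau)}\, dB^i_\tau$, we obtain the recursion
\begin{align*}
    X^i_{t_{j+1}} = a X^i_{t_j} + (1-a) m_j + G^i_j .
\end{align*}
The $G^i_j$ are i.i.d. in $i$ and $j$ with law $\mathcal{N}\bigrou{0, \tfrac{s\delta^2}{2s\lambda}(1-a^2) I_d}$, and $G^i_j$ is independent of $\mathcal{F}_{t_j}$.

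Since the supremum runs only over the finitely many grid points $j = 0, \dots, J$, it suffices to bound $\mathbb{E}\bigeck{\N{X^i_{t_j}}^p}$ for each $j$ and then use
\begin{align*}
    \mathbb{E}\biggeck{\supjJ \N{X^i_{t_j}}^p} \leq \sum_{j=0}^J \mathbb{E}\bigeck{\N{X^i_{t_j}}^p},
\end{align*}
so that the resulting constant depends on $J$ (equivalently on $T/\Delta t$) but not on $N$.

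For the single-time bounds, we take $p$th powers in the recursion and use $(a+b+c)^p \leq 3^{p-1}(a^p+b^p+c^p)$:
\begin{align*}
    \mathbb{E}\bigeck{\N{X^i_{t_{j+1}}}^p} \leq 3^{p-1}\Bigrou{\mathbb{E}\bigeck{\N{X^i_{t_j}}^p} + \mathbb{E}\bigeck{\N{m_j}^p} + \mathbb{E}\bigeck{\N{G^i_j}^p}}.
\end{align*}
The Gaussian term is a constant $c_G(p,d,s,\lambda,\delta,\Delta t)$. For the consensus term, Corollary \ref{GerberPropA4Corollary} gives $\N{m_j}^p \leq b_p \frac{1}{N}\sum_{k=1}^N \N{X^k_{t_j}}^p$. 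By exchangeability of the particles (the initial data are i.i.d. and the dynamics are symmetric), taking expectations yields $\mathbb{E}\bigeck{\N{m_j}^p} \leq b_p \mathbb{E}\bigeck{\N{X^i_{t_j}}^p}$. Hence, with $y_j := \mathbb{E}\bigeck{\N{X^i_{t_j}}^p}$,
\begin{align*}
    y_{j+1} \leq 3^{p-1}(1+b_p)\, y_j + 3^{p-1} c_G .
\end{align*}
A discrete Grönwall (simple induction) argument starting from $y_0 = \mathbb{E}_{\orho_0}\bigeck{\N{x}^p} < \infty$ then gives $y_j \leq K^j(y_0 + c')$ with $K$ independent of $N$. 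Summing over $j \leq J$ produces $C_{\mathrm{F}}$. Rather than invoking exchangeability, one can equivalently sum over $i$ and work with $\frac1N\sum_i \mathbb{E}\bigeck{\N{X^i_{t_j}}^p}$ directly; the same induction applies.

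For the mean-field system $\MIPS{T}$, the same recursion holds with $m_j$ replaced by the deterministic point $X^\alpha(\orho_{t_j})$. By Corollary \ref{GerberPropA4Corollary}, $\N{X^\alpha(\orho_{t_j})}^p \leq b_p \mathbb{E}_{\orho_{t_j}}\bigeck{\N{x}^p} = b_p\, \mathbb{E}\bigeck{\N{\overbar{X}^i_{t_j}}^p}$, which gives the identical recursion and hence $\overbar{C}_{\mathrm{F}}$.

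The main obstacle I anticipate is a well-posedness point: we need $\orho_{t_j} \in \mathcal{P}_p$ and the consensus points to be well defined and finite at each step. This follows inductively from the same recursion, since finiteness of $y_j$ implies finiteness of $y_{j+1}$. If a $J$-independent constant were desired, one would instead exploit the contraction $a<1$ via Minkowski's inequality in $L^p$, but the statement only requires independence of $N$, so the cruder bound suffices.
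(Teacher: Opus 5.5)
Your proof is correct, but it takes a different route from the paper's.

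\emph{The paper's route.} The paper runs the recursion directly on the expected running maximum $\mathbb{E}\bigl[\sup_{0\le j\le J}\|X^i_{t_j}\|^p\bigr]$. It applies convexity of $\|\cdot\|^p$ to the convex combination $e^{-s\lambda\Delta t}X^i_{t_{j-1}}+(1-e^{-s\lambda\Delta t})X^\alpha(\rho^N_{t_{j-1}})$. It then controls the supremum of the consensus term via Lemma \ref{GerberPropA4} together with $\sup_j\frac1N\sum_m\le\frac1N\sum_m\sup_j$. Because the supremum sits inside the recursion, the paper needs the Gaussian maximal inequality of Lemma \ref{Pre:LemmaNormalVectors} to bound $\mathbb{E}\bigl[\sup_{j\le l}\|Z^i_j\|^p\bigr]\lesssim (d\log(2ld))^{p/2}$.

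\emph{Your route.} You bound each grid-time moment $y_j$ separately and pay for the supremum only at the end, via $\sup_{j\le J}\le\sum_{j\le J}$. This needs only the $p$th moment of a single Gaussian and no maximal inequality. The cost is an extra factor $J+1$, which is immaterial: the statement only asks for independence of $N$, and both constants already carry geometric sums in $J$.

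\emph{What the paper's version buys.} It gets the sharper factor $a_s=2^{p-1}\bigl(e^{-s\lambda\Delta t}+b_p(1-e^{-s\lambda\Delta t})\bigr)$ in place of your $3^{p-1}(1+b_p)$. Its constant also has an $s\to\infty$ limit that reproduces exactly the hopping constant of Lemma \ref{NP_CHLemma}, which is the point of the remark after that lemma. You could recover this by splitting as $2^{p-1}\bigl(\|aX+(1-a)m\|^p+\|G\|^p\bigr)$ and using convexity on the first term.

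One caveat concerns only your closing aside. Minkowski's inequality in $L^p$ does not yield a contraction here, because necessarily $b_p\ge 1$: take $\rho=\delta_y$, for which $\eta^\alpha=\delta_y$. Hence the recursion factor $a+(1-a)b_p^{1/p}$ is at least $1$. $J$-independent moment bounds require additional structure, such as the large-$\alpha$ argument behind Theorem \ref{thm:global_bound}. This does not affect your proof of the lemma.
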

\begin{proof}
We will prove the statement just for the particle system $\IPS{T}$. The corresponding result for $\MIPS{T}$ follows the same line of argument.

    Fix $N \in \mathbb{N}$, $i \leq N, j < J$ and consider the time interval $[t_j, t_{j+1})$ and assume an initial condition $X^i_{t_j}$. Then we can clearly see that 
    \begin{align*}
         dX^i_t = - s \cdot \lambda (X^i_t - X^\alpha(\rho^N_{t_j}))dt + \sqrt{s}\delta dB^i_t,  \quad \forall t \in [t_j, t_{j+1})
    \end{align*}
is an Ornstein-Uhlenbeck process with the strong solution
\begin{align}
    X^i_t = X_{t_j}^i e^{- s \lambda (t-t_j)} + X^\alpha(\rho^N_{t_j})(1- e^{-s \lambda (t-t_j)}) +  \sqrt{s}\delta\int^t_{t_j} e^{-s \lambda (t- \tau)} dB^i_\tau.
    \label{OU:Strong}
\end{align}
Therefore, at each interval endpoint $t_j$, the process is determined as follows
\begin{align}
    X^i_{t_j} &= X^i_{t_{j-1}} e^{-s\lambda (t_j - t_{j-1})} + X^{\alpha}(\rho^N_{t_{j-1}})(1- e^{-s\lambda (t_j - t_{j-1})}) + \sqrt{s} \delta \int^{t_j}_{t_{j-1}} e^{-s\lambda (t_j - \tau)}dB_\tau^i.
    \label{OUStrong2}
\end{align}
For every $j \in \{1, ..., J \}$, we define 
\begin{align}
    Z^i_j := \int^{t_j}_{t_{j-1}} e^{-s\lambda (t_j - \tau)}dB_\tau^i.
    \label{CFMomentZ_jDefinition}
\end{align}
Note that $Z^i_j$ is a Brownian motion integral, with a square integrable integrand over its time horizon. This means that $Z^i_j$ is a Gaussian random vector with zero mean and variance
\begin{align*}
     \mathrm{Cov}(Z_j^i)
    = \mathbb{E}\big[ Z_j^i (Z_j^i)^\top \big] 
    &= \mathbb{E} \Biggeck{
    \biggrou{\int_{t_{j-1}}^{t_j} e^{-s\lambda (t_j-\tau)}\, dB_\tau^i}
    \biggrou{\int_{t_{j-1}}^{t_j} e^{-s\lambda (t_j-\tau)}\, dB_\tau^i}^\top
    }\\
    &= \mathbb{E} \Biggeck{
    \biggrou{\int_{0}^{\Delta t} e^{-s\lambda (\Delta t -\tau)}\, dB_\tau^i}
    \biggrou{\int_{0}^{\Delta t} e^{-s\lambda (\Delta t -\tau)}\, dB_\tau^i}^\top
    }.
\end{align*}
By time-homogeneity and Itô isometry, this becomes
\begin{align*}
    \mathbb{E} \Biggeck{
    \biggrou{\int_{0}^{\Delta t} e^{-s\lambda (\Delta t -\tau)}\, dB_\tau^i}
    \biggrou{\int_{0}^{\Delta t} e^{-s\lambda (\Delta t -\tau)}\, dB_\tau^i}^\top
    }
    = \int_0^{\Delta t} e^{-2s\lambda(\Delta t-\tau)}\, d\tau \, I_d
    = \frac{1-e^{-2s\lambda \Delta t}}{2s\lambda}\, I_d.
\end{align*}
Therefore,
\[
Z_j^i \sim \mathcal N\bigg(0,\frac{1-e^{-2s\lambda \Delta t}}{2s\lambda}\, I_d\bigg).
\]
Let us now shift our attention to our actual goal which is bounding the moments uniformly. We begin by rewriting \eqref{OUStrong2} using the definition of $Z^i_j$ and the fact that $t_j - t_{j-1} = \Delta t$:
\begin{align}
    X^i_{t_j} = X^i_{t_{j-1}} e^{-s\lambda \Delta t} + X^{\alpha}(\rho^N_{t_{j-1}})(1- e^{-s\lambda \Delta t}) + \sqrt{s} \delta Z^i_j.
    \label{OUStrong3}
\end{align}
We use \eqref{OUStrong3}, which holds for $j \geq 1$, to derive a straightforward first bound, treating the initial time separately:
\begin{align}
    \mathbb{E} \biggeck{ \supjJ \N{X^i_{t_j}}^p} &\leq \mathbb{E} \Bigeck{\N{X^i_0}^p} + \mathbb{E} \biggeck{ \supr{1}{j}{J} \N{X^i_{t_{j-1}} e^{-s\lambda \Delta t} + X^{\alpha}(\rho^N_{t_{j-1}})(1- e^{-s\lambda \Delta t}) + \sqrt{s} \delta Z^i_j}^p} \notag \\
    &\leq \mathbb{E} \Bigeck{\N{X^i_0}^p} + 2^{p-1} \Biggrou{\mathbb{E} \biggeck{ \supr{1}{j}{J} \N{X^i_{t_{j-1}} e^{-s\lambda \Delta t} + X^{\alpha}(\rho^N_{t_{j-1}})(1- e^{-s\lambda \Delta t})}^p }\notag  \\ & \quad + \mathbb{E} \biggeck{ \supr{1}{j}{J} \N{\sqrt{s} \delta Z^i_j}^p}}.
    \label{CF:NPBoundHopping}
\end{align}
We begin by treating the first expectation. Since $e^{-s\lambda \Delta t} \in (0,1)$ and the map
$x \mapsto \N{x}^p$ is convex, we can estimate
\begin{align}
    &\mathbb{E} \biggeck{ \supr{1}{j}{J} \N{X^i_{t_{j-1}} e^{-s\lambda \Delta t} + X^{\alpha}(\rho^N_{t_{j-1}})(1- e^{-s\lambda \Delta t})}^p } \notag \\
    &\leq
    \mathbb{E} \Biggeck{
    \supr{1}{j}{J}
    \biggrou{
    e^{-s\lambda \Delta t}\N{X^i_{t_{j-1}}}^p
    +
    (1- e^{-s\lambda \Delta t})\N{X^\alpha(\rho^N_{t_{j-1}})}^p
    }}
    \notag \\
    &\leq
    e^{-s\lambda \Delta t}
    \mathbb{E} \biggeck{ \supr{1}{j}{J} \N{X^i_{t_{j-1}}}^p}
    +
    (1- e^{-s\lambda \Delta t})
    \mathbb{E} \biggeck{\supr{1}{j}{J} \N{X^{\alpha}(\rho^N_{t_{j-1}})}^p}.
    \label{FrMoment:1stExpbound1}
\end{align}
We now write $\mathbb{E} \biggeck{\supr{1}{j}{J} \N{X^{\alpha}(\rho^N_{t_{j-1}})}^p}$ in terms of $\mathbb{E} \biggeck{ \supr{1}{j}{J} \N{X^i_{t_{j-1}}}^p}$. \newline
We start by computing
\begin{align}
   \mathbb{E} \biggeck{ \supr{1}{j}{J} \N{X^\alpha(\rho^N_{t_{j-1}})}^p} = \mathbb{E} \Biggeck{\supr{1}{j}{J} \N{\int \frac{x \oa(x)}{\Gnorm{\rho^N_{t_{j-1}}}} d\rho^N_{t_{j-1}}(x)}^p}. \label{FrMomentConsensus}
\end{align}
We define the random measure $\eta^\alpha_{t_{j-1}} := \frac{\oa \rho^N_{t_{j-1}}}{\Gnorm{\rho^N_{t_{j-1}}}}$\footnote{$\eta^\alpha_{t_{j-1}}$ is measurable in the sense of Proposition \ref{Pre:PropEmpMeas}} which is a probability measure on the paths on which $\rho^N_{t_{j-1}}$ is a probability measure. Therefore, with Jensen's inequality, we can estimate:
\begin{align}
    \mathbb{E} \Biggeck{\supr{1}{j}{J} \N{\int \frac{x \oa(x)}{\Gnorm{\rho^N_{t_{j-1}}}} d\rho^N_{t_{j-1}}(x)}^p} &= \mathbb{E} \Biggeck{\supr{1}{j}{J} \N{\int x \,  d\eta^\alpha_{t_{j-1}}(x)}^p} \notag \\ &\leq \mathbb{E} \Biggeck{\supr{1}{j}{J} \int \N{x}^p \,  d\eta^\alpha_{t_{j-1}}(x)}. \label{FrMomentBound1}
\end{align}
With Lemma \ref{GerberPropA4}, we get a constant $b_p>0$ such that
\begin{align}
    \mathbb{E} \Biggeck{\supr{1}{j}{J} \int \N{x}^p \,  d\eta^\alpha_{t_{j-1}}(x)} =  \mathbb{E} \Biggeck{\supr{1}{j}{J} \,  \mathbb{E}_{\eta^\alpha_{t_{j-1}}} \bigeck{\N{x}^p}} \leq \mathbb{E} \biggeck{b_p \, \supr{1}{j}{J}\, \mathbb{E}_{\rho^N_{t_{j-1}}} \bigeck{\N{x}^p}}. \label{FrMomentBound2}
\end{align}
We use \eqref{FrMomentBound1} and \eqref{FrMomentBound2} to obtain for \eqref{FrMomentConsensus}
\begin{align}
    \mathbb{E} \biggeck{ \supjJ \N{X^\alpha(\rho^N_{t_{j-1}})}^p} \leq \mathbb{E} \biggeck{b_p\,  \supjJ\, \mathbb{E}_{\rho^N_{t_{j-1}}} \bigeck{\N{x}^p}} = b_p \, \mathbb{E} \biggeck{\supjJ\, \mathbb{E}_{\rho^N_{t_{j-1}}} \bigeck{\N{x}^p}}.
    \label{NP:FrMoment2}
\end{align}
Writing out the expectation with respect to $\rho^N_{t_{j-1}}$ yields
\begin{align*}
    \mathbb{E} \biggeck{\supr{1}{j}{J}\, \mathbb{E}_{\rho^N_{t_{j-1}}} \bigeck{\N{x}^p}}
    = \mathbb{E} \biggeck{\supr{1}{j}{J}\, \frac{1}{N}\sum^N_{m=1} \N{X^m_{t_{j-1}}}^p}
    &\leq \mathbb{E} \biggeck{ \frac{1}{N}\sum^N_{m=1} \supr{1}{j}{J} \,  \N{X^m_{t_{j-1}}}^p} \\
    &= \mathbb{E} \biggeck{\underset{0 \leq j \leq J-1}{\sup} \,  \N{X^i_{t_{j}}}^p}, 
\end{align*}
where we used the invariance of the expectation under permutations of the particle index in the last equality.
We combine the latter with \eqref{NP:FrMoment2} to get our estimate for the expected $p$th moment of the consensus point.
\begin{align*}
    \mathbb{E} \biggeck{ \supr{1}{j}{J} \N{X^\alpha(\rho^N_{t_{j-1}})}^p} \leq b_p \mathbb{E} \biggeck{\underset{0 \leq j \leq J-1}{\sup} \,  \N{X^i_{t_{j}}}^p}.
\end{align*}
Now, we can use the latter on our estimate \eqref{FrMoment:1stExpbound1} to further compute
\begin{align}
    &\mathbb{E} \biggeck{ \supr{1}{j}{J} \N{X^i_{t_{j-1}} e^{-s\lambda \Delta t} + X^{\alpha}(\rho^N_{t_{j-1}})(1- e^{-s\lambda \Delta t})}^p } \notag \\
    &\leq e^{-s\lambda \Delta t}
    \mathbb{E} \biggeck{ \supr{1}{j}{J} \N{X^i_{t_{j-1}}}^p}+(1- e^{-s\lambda \Delta t}) \mathbb{E} \biggeck{\supr{1}{j}{J} \N{X^{\alpha}(\rho^N_{t_{j-1}})}^p} \notag \\
    &\leq \biggrou{e^{-s\lambda \Delta t}+ b_p(1- e^{-s\lambda \Delta t})} \mathbb{E} \biggeck{  \underset{0 \leq j \leq J-1}{\sup} \N{X^i_{t_{j}}}^p}.
\end{align}
We combine the latter with \eqref{CF:NPBoundHopping} and set
\begin{align*}
    a_s := 2^{p-1}\Bigrou{e^{-s\lambda \Delta t}+ b_p(1- e^{-s\lambda \Delta t})}
\end{align*}
to get the estimate
\begin{align*}
    \mathbb{E} \biggeck{\supjJ \N{X^i_{t_j}}^p}
    \leq \mathbb{E} \Bigeck{\N{X^i_0}^p} + a_s\, \mathbb{E} \biggeck{  \underset{0 \leq j \leq J-1}{\sup} \N{X^i_{t_{j}}}^p} + 2^{p-1} s^{\frac{p}{2}}\delta^p\, \mathbb{E} \biggeck{ \supr{1}{j}{J} \N{Z^i_j}^p}.
\end{align*}
Since this estimate holds for every number of freezing intervals, we can iterate it and obtain
\begin{align*}
    \mathbb{E} \biggeck{\supjJ \N{X^i_{t_j}}^p}
    \leq \mathbb{E} \Bigeck{\N{X^i_0}^p} \sum^J_{l=0} a_s^l
    + 2^{p-1} s^{\frac{p}{2}}\delta^p \sum^{J}_{l = 1} a_s^{J-l}\, \mathbb{E} \biggeck{ \underset{1 \leq j \leq l}{\sup} \N{Z^i_j}^p}.
\end{align*}
Since $Z^i_1, ..., Z^i_J$ are i.i.d.\ normal random vectors with zero mean, we can use Lemma \ref{Pre:LemmaNormalVectors} to get a constant $C_p >0$ depending only on $p$ such that for every $l \leq J$
\begin{align*}
    \mathbb{E} \biggeck{ \underset{1 \leq j \leq l}{\sup} \N{Z^i_j}^p} \leq \biggrou{\frac{1- e^{- 2 s \lambda \Delta t}}{2s \lambda}}^{\frac{p}{2}} C_p \Bigrou{d\log (2ld)}^{\frac{p}{2}}.
\end{align*}
This leads to our final estimate
\begin{align*}
    \mathbb{E} \biggeck{\supjJ \N{X^i_{t_j}}^p}
    &\leq \mathbb{E} \Bigeck{ \N{X^i_0}^p} \sum^J_{l=0} a_s^l
    + 2^{p-1}\delta^p \biggrou{\frac{1- e^{- 2s\lambda \Delta t}}{2 \lambda}}^{\frac{p}{2}} C_p \sum^{J}_{l = 1} \Bigrou{d\log (2ld)}^{\frac{p}{2}} a_s^{J-l}\\
    &=: C_{\mathrm{F}}(s).
\end{align*}
\end{proof}
Analogously to Section \ref{ch:Delta}, we will now proceed to prove the high-probability uniform boundedness of the empirical moments of the systems \eqref{CF:CFscheme} and \eqref{CF:MFscheme} using the results of Lemma \ref{NP:CFLemma}.
\begin{lemma}
   Let $T >0$ be a fixed final time, and let the interval $[0, T]$ be divided by $0= t_0 < t_1 <...<t_J = T$ with uniform step size $\Delta t$. Let $\big( \mathcal{X}^N_t \big)_{t \in [0, T]} \text{ and } \big( \overbar{\mathcal{X}}^N_t \big)_{t \in [0, T]}$ be defined by \eqref{CF:CFscheme} and \eqref{CF:MFIPS}, respectively, with initial data following the distribution $\rho_0 \in \mathcal{P}_p(\mathbb{R}^d)$. For a fixed \(M>0\), define the set
   \begin{align*}
       \Omega_M := \bigg\{ \omega \in \Omega : \supjJ \,  \frac{1}{N} \Sum{i}{N} \max \Big\{ \N{X^i_{t_j}}^p , \N{\overbar{X}^i_{t_j}}^p \Big \} \leq M \bigg\}.
   \end{align*} Then there exists a constant $C>0$ independent of $N$ such that
    \begin{align*}
        \mathbb{P}(\Omega_M) \geq 1 - \frac{C}{M}.
    \end{align*}
    \label{MFF:Moment}
\end{lemma}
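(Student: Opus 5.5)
The plan mirrors the proof of Lemma \ref{MFD:Moment}, with the supremum over $t \in [0,T]$ replaced by the supremum over the grid points $t_0, \dots, t_J$. The whole argument is a Markov-inequality bound on the complement $\Omega_M^c$, fed by the uniform-in-$N$ grid moment bounds of Lemma \ref{NP:CFLemma}.

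First, I bound the maximum by the sum, $\max\{a,b\} \leq a+b$. This gives the inclusion
\begin{align*}
\Omega_M^c \subseteq \bigg\{ \supjJ \, \Sum{i}{N} \Bigrou{\N{X^i_{t_j}}^p + \N{\overbar{X}^i_{t_j}}^p} > NM \bigg\}.
\end{align*}
Markov's inequality then bounds $\mathbb{P}(\Omega_M^c)$ by $\frac{1}{NM}\,\mathbb{E}\bigeck{\supjJ \Sum{i}{N}(\N{X^i_{t_j}}^p + \N{\overbar{X}^i_{t_j}}^p)}$.

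Next, I exchange supremum and sum via $\sup_j \sum_i a_{ij} \leq \sum_i \sup_j a_{ij}$, and use linearity of expectation. This bounds $\mathbb{P}(\Omega_M^c)$ by
\begin{align*}
\frac{1}{M}\Bigrou{\maxu{i}{N}\,\mathbb{E}\biggeck{\supjJ \N{X^i_{t_j}}^p} + \maxu{i}{N}\,\mathbb{E}\biggeck{\supjJ \N{\overbar{X}^i_{t_j}}^p}}.
\end{align*}

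Finally, Lemma \ref{NP:CFLemma} bounds the two maxima by $C_{\mathrm{F}}$ and $\overbar{C}_{\mathrm{F}}$, respectively. Both constants are independent of $N$: they depend only on $p, d, s, \lambda, \delta, \Delta t, J, b_p$ and the $p$th moment of $\rho_0$. Setting $C := C_{\mathrm{F}} + \overbar{C}_{\mathrm{F}}$ yields $\mathbb{P}(\Omega_M) \geq 1 - C/M$.

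There is no serious obstacle here. The only point to check is that the constants from Lemma \ref{NP:CFLemma} really do not depend on $N$ or on the particle index $i$. The $N$-dependence of the consensus point was removed there through Lemma \ref{GerberPropA4} and exchangeability, and the Gaussian maxima were taken only over the $J$ time steps, never over particles, so this holds. Measurability of $\Omega_M$ is immediate, since the supremum is over finitely many grid times.
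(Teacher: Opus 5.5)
Your proposal is correct and follows essentially the same route as the paper: you bound $\mathbb{P}(\Omega_M^c)$ with Markov's inequality after replacing the maximum by the sum and exchanging supremum and sum, as in Lemma~\ref{MFD:Moment}, and the grid moment bounds of Lemma~\ref{NP:CFLemma} supply the $N$-independent constants. The only difference is cosmetic: you take $C = C_{\mathrm{F}} + \overbar{C}_{\mathrm{F}}$, whereas the paper sets $C/2 := \max\{C_{\mathrm{F}}, \overbar{C}_{\mathrm{F}}\}$.
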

\begin{proof}
In the same way as in Lemma \ref{MFD:Moment}, we obtain with Markov's inequality 
\begin{align*}
    \mathbb{P}(\Omega^c_M) \leq \frac{\maxu{i}{N}\, \mathbb{E} \biggeck{ \supjJ \N{X^i_{t_j}}^p }}{M} + \frac{\maxu{i}{N}\, \mathbb{E} \biggeck{ \supjJ \N{ \overbar{X}^i_{t_j}}^p} }{M}.
\end{align*}
We use Lemma \ref{NP:CFLemma} to obtain 
\begin{align*}
    \maxu{i}{N}\, \mathbb{E} \biggeck{ \supjJ \N{X^i_{t_j}}^p } \leq C_{\mathrm{F}} \quad \text{and} \quad \maxu{i}{N}\, \mathbb{E} \biggeck{ \supjJ \N{\overbar{X}^i_{t_j}}^p } \leq \overbar{C}_{\mathrm{F}}.
\end{align*}
We set $\frac{C}{2} \coloneqq \max \Big \{ C_{\mathrm{F}}, \overbar{C}_{\mathrm{F}} \Big \}$ and get
\begin{align*}
    \frac{\maxu{i}{N}\, \mathbb{E} \biggeck{ \supjJ \N{X^i_{t_j}}^p }}{M} + \frac{\maxu{i}{N}\, \mathbb{E} \biggeck{ \supjJ \N{ \overbar{X}^i_{t_j}}^p }}{M} \leq \frac{C}{M}.
\end{align*}
\end{proof}
\vspace{.5 cm}
As in Section \ref{ch:MFH}, we now proceed to prove propagation of chaos via the coupling method restricted to $\Omega_M$.  
Before we state and prove our main statement, we use some basic ODE theory to get our problem in a more convenient form.
Let $t \in [t_j, t_{j+1})$ for some $j$ and $i \leq N$. Computing the distances between the $i$th particle and the $i$th mean-field sample yields
\begin{align}
    d(X_t^i - \overbar{X}^i_t) = -s \lambda  ( X^i_t - \overbar{X}^i_t) dt + s \lambda  ( X^\alpha(\rho^N_{t_j}) - X^\alpha(\orho_{t_j})) dt.
    \label{Fr:dist}
\end{align}
By defining $F^{(i)}(t) := X^i_t - \overbar{X}^i_t$ and $D_j := X^\alpha(\rho^N_{t_j}) - X^\alpha(\orho_{t_j})$ we can rewrite (\ref{Fr:dist}) as follows:
\begin{align*}
    \frac{d}{dt}F^{(i)}(t) = -s \lambda F^{(i)}(t) + s \lambda D_j,
\end{align*}
which can be solved very simply as an ODE over the time horizon $[t_j, t_{j+1})$ with the solution
\begin{align}
    F^{(i)}(t) = D_j + (F^{(i)}(t_j) - D_j)e^{-s \lambda(t-t_j)} = e^{-s \lambda(t-t_j)}F^{(i)}(t_j) + D_j(1-e^{-s \lambda(t-t_j)}).
    \label{CF:DistFunction}
\end{align}
Now we want to use this to construct an iterative scheme that represents the distance at each freezing time point:
\begin{align*}
    E^{(i)}_{j+1}:=X^i_{t_{j+1}} - \overbar{X}^i_{t_{j+1}} = F^{(i)}(t_{j+1}).
\end{align*}
 We now substitute our original variables in this and noting that $\Delta t  = t_{j+1} - t_j$ for every $j$, we get the following: 
 \begin{align}
     E^{(i)}_{j+1} = F^{(i)}(t_j + \Delta t) = e^{-\lambda s \Delta t}E^{(i)}_j + D_j(1-e^{-\lambda s \Delta t}) = gE_j^{(i)} + D_j(1-g),
     \label{MFF:DistanceScheme}
 \end{align}
 with $g \coloneqq e^{-s \lambda  \Delta t}$.
 \vspace{.2 cm}

 With these tools in the bag, we are now ready to prove the main result of this section, the propagation of chaos for the consensus freezing scheme.
\begin{theorem}
    Let $T >0$ be a fixed final time, and let the interval $[0, T]$ be divided by $0= t_0 < t_1 <...<t_J = T$ with uniform step size $\Delta t$. Let $\mathcal{X}^N_T \text{ and } \overbar{\mathcal{X}}^N_T$ be defined by \eqref{CF:CFscheme} and \eqref{CF:MFIPS}, respectively, both sharing the same initial data $\mathcal{X}^N_0 = \overbar{\mathcal{X}}^N_0$ almost surely, with common distribution $\orho_0 \in \mathcal{P}_p(\mathbb{R}^d)$ for some $p \geq 2$. For a fixed \(M>0\), define the set
    \begin{align*}
     \Omega_M := \bigg\{ \omega \in \Omega : \supjJ \,  \frac{1}{N} \Sum{i}{N} \max \Big\{ \N{X^i_{t_j}}^p , \N{\overbar{X}^i_{t_j}}^p \Big \} \leq M \bigg\}.
 \end{align*}
    Then there exists a $\varepsilon^{\mathrm{rest}}_{\mathrm{F}}(N, T) >0$ with $\varepsilon^{\mathrm{rest}}_{\mathrm{F}}(N, T) \xrightarrow[]{N \rightarrow \infty}0$ such that for every $N \in \mathbb{N}$ and every $i \leq N$
    \begin{align*}
           \mathbb{E} \biggeck{ \suptT \,\N{ X^i_t - \overbar{X}^i_t}^p \mathbbm{1}_{\Omega_M} } \leq \varepsilon^{\mathrm{rest}}_{\mathrm{F}}(N, T).
    \end{align*}
    Consequently, if $\mathbb{P}(\Omega_M)>0$, then for every $N \in \mathbb{N}$ and every $i \leq N$
    \begin{align*}
           \mathbb{E} \biggeck{ \suptT \,\N{ X^i_t - \overbar{X}^i_t}^p \Big| \Omega_M } \leq \frac{\varepsilon^{\mathrm{rest}}_{\mathrm{F}}(N,T)}{\mathbb{P}(\Omega_M)}.
    \end{align*}
    \label{MFF: Thrm}
\end{theorem}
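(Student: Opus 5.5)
The proof uses the explicit distance recursion \eqref{MFF:DistanceScheme}, $E^{(i)}_{j+1} = gE^{(i)}_j + (1-g)D_j$ with $E^{(i)}_0=0$, instead of a Grönwall argument in continuous time.

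\textbf{Step 1: reduce the supremum to the grid points.} By \eqref{CF:DistFunction}, on each interval $[t_j,t_{j+1})$ the difference $F^{(i)}(t)$ is a convex combination of $E^{(i)}_j$ and $D_j$. Moreover $D_j = (E^{(i)}_{j+1}-gE^{(i)}_j)/(1-g)$. Hence
\begin{align*}
\suptT \N{F^{(i)}(t)}^p \leq \supjJ \N{E^{(i)}_j}^p + \max_{0\le j\le J-1}\N{D_j}^p.
\end{align*}
Here $J=T/\Delta t$ is fixed and independent of $N$. It therefore suffices to bound $\mathbb{E}[\N{E^{(i)}_j}^p\mathbbm{1}_{\Omega_M}]$ and $\mathbb{E}[\N{D_j}^p\mathbbm{1}_{\Omega_M}]$ for each $j\le J$. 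The supremum over $j$ can then be bounded by the sum over $j$, at the cost of a factor depending only on $J$.

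\textbf{Step 2: bound the consensus error.} Split
\begin{align*}
D_j = \bigl(X^\alpha(\rho^N_{t_j})-X^\alpha(\orho^N_{t_j})\bigr) + \bigl(X^\alpha(\orho^N_{t_j})-X^\alpha(\orho_{t_j})\bigr).
\end{align*}
On $\Omega_M$ both empirical measures at $t_j$ have $p$th moments at most $M$. Lemma \ref{CarrL2}, Example \ref{WasserBD} and exchangeability then give
\begin{align*}
\mathbb{E}\bigl[\N{X^\alpha(\rho^N_{t_j})-X^\alpha(\orho^N_{t_j})}^p\mathbbm{1}_{\Omega_M}\bigr] \leq C_0^p\, \mathbb{E}\bigl[\N{E^{(i)}_j}^p\mathbbm{1}_{\Omega_M}\bigr].
\end{align*}
For the second term, note that $\Omega_M\subseteq\overbar{\Omega}_{M,t_j}$, where $\overbar{\Omega}_{M,t_j}$ is the event that the empirical $p$th moment of the mean-field particles at $t_j$ is at most $M$. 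The particles $\overbar{X}^i_{t_j}$ are i.i.d.\ with law $\orho_{t_j}$, and $\orho_{t_j}$ has bounded $p$th moment by Lemma \ref{NP:CFLemma}. Lemma \ref{CH:ConsBound} with $\nu=\orho_{t_j}$ therefore yields the bound $C_{\mathrm{EM}}N^{-p/2}$. One takes $C_{\mathrm{EM}}$ as the maximum over the finitely many $j$.

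\textbf{Step 3: iterate the recursion.} Set $a_j := \mathbb{E}[\N{E^{(i)}_j}^p\mathbbm{1}_{\Omega_M}]$. Convexity of $\N{\cdot}^p$ applied to the recursion gives
\begin{align*}
a_{j+1} \leq g a_j + (1-g)2^{p-1}\bigl(C_0^p a_j + C_{\mathrm{EM}}N^{-p/2}\bigr).
\end{align*}
Write $K:=g+(1-g)2^{p-1}C_0^p$. Since $a_0=0$, a discrete Grönwall (induction) argument gives
\begin{align*}
a_j \leq (1-g)2^{p-1}C_{\mathrm{EM}}N^{-p/2}\sum_{l=0}^{j-1}K^l.
\end{align*}
The same step bounds $\mathbb{E}[\N{D_j}^p\mathbbm{1}_{\Omega_M}]$ by $2^{p-1}(C_0^pa_j+C_{\mathrm{EM}}N^{-p/2})$. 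Combining with Step 1 gives $\varepsilon^{\mathrm{rest}}_{\mathrm{F}}(N,T) = C(J,p,C_0,C_{\mathrm{EM}})N^{-p/2}$, which tends to $0$. The conditional version follows by dividing by $\mathbb{P}(\Omega_M)$, exactly as in Theorem \ref{Delta:Thrm}.

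\textbf{Main obstacle.} The delicate point is Step 2. Lemma \ref{CH:ConsBound} requires a uniform $p$th moment bound on $\orho_{t_j}$, which follows from Lemma \ref{NP:CFLemma} for $\overbar{X}^i_{t_j}$. It also requires that the indicator be dominated by the single-time event $\overbar{\Omega}_{M,t_j}$. Since $\Omega_M$ controls only the grid times, which are exactly the times where consensus points are evaluated, this domination holds.
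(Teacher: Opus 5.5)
Your proof is correct and follows the paper's argument. You split $D_j$ exactly as the paper does, using Lemma \ref{CarrL2} on $\Omega_M$ and Lemma \ref{CH:ConsBound} on $\overbar{\Omega}_{M,t_j}\supseteq\Omega_M$, and iterate the same affine recursion $a_{j+1}\le (g+2^{p-1}(1-g)C_0^p)\,a_j+2^{p-1}(1-g)C_{\mathrm{EM}}N^{-p/2}$ from $a_0=0$. The one deviation is Step 1: you bound the segment on $[t_j,t_{j+1})$ by its endpoints $E^{(i)}_j$ and $D_j$, whereas the paper notes that the trajectory only travels from $E^{(i)}_j$ to $E^{(i)}_{j+1}$, so the supremum there equals $\max\{\|E^{(i)}_j\|,\|E^{(i)}_{j+1}\|\}$. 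Your version therefore carries an extra $\sum_j\mathbb{E}[\|D_j\|^p\mathbbm{1}_{\Omega_M}]$ term without the factor $1-g$, which worsens the constant but not the rate $N^{-p/2}$.
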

\begin{proof}
Let $j \in \{0, ..., J-1 \}$ be an arbitrary index and fix $i \leq N$. Our strategy is to first find an estimate for the term 
\begin{align*}
    \mathbb{E} \biggeck{ \supttj \N{X^i_{t} - \overbar{X}^i_{t} }^p \mathbbm{1}_{\Omega_M} },
\end{align*}
deduce from that a bound over the whole time horizon for
\begin{align*}
    \mathbb{E} \biggeck{ \suptT \N{X^i_{t} - \overbar{X}^i_{t} }^p \mathbbm{1}_{\Omega_M} }.
\end{align*}
Before we prove the actual statement, we need to clarify some things about the supremum of the difference term. The crucial simplification in the freezing scheme is that, within each interval $[t_j,t_{j+1})$, the two coupled particles evolve with frozen consensus points. Hence, the pathwise error on the whole interval is reduced to the error at the interval endpoints. We recall the function $F^{(i)}(t)$ \eqref{CF:DistFunction} that satisfies
\begin{align*}
   X^i_t - \overbar{X}^i_t =  F^{(i)}(t) = D_j + (F^{(i)}(t_j) - D_j)e^{-s \lambda(t-t_j)} \quad \forall t \in [t_j, t_{j+1}). 
\end{align*}
With slight abuse of the notation, we compute without writing expectations down:
\begin{align*}
\supttj \N{F^{(i)}(t)} &= \supttj \N{D_j + (F^{(i)}(t_j) - D_j)e^{-s \lambda(t-t_j)}}\\ &= \underset{t \in [0, \Delta t)}{\sup}  \N{D_j + (F^{(i)}(t_j) - D_j)e^{-s \lambda t}} \\
&= \underset{t \in [e^{- s \lambda\Delta t}, 1]}{\sup}  \N{D_j + (F^{(i)}(t_j) - D_j)t} \\ &= \underset{t \in [e^{- s \lambda \Delta t}, 1]}{\sup}  \N{tF^{(i)}(t_j) + (1-t) D_j}.
\end{align*}
We define $h \colon [e^{- s \lambda\Delta t}, 1] \rightarrow \mathbb{R}^d, \quad t \mapsto tF^{(i)}(t_j) + (1-t) D_j$. This is a line segment between $F^{(i)}(t_j)$ and $D_j$. Together with the convexity of the norms, this results in $t \mapsto \N{h(t)}$ being convex. Therefore, on the closed interval $[e^{-s \lambda\Delta t}, 1]$, as a continuous convex function, $t \mapsto \N{h(t)}$ attains its maximum at the endpoints, i.e.
\begin{align*}
    \underset{t \in [e^{- s \lambda\Delta t}, 1]}{\sup}  \N{tF^{(i)}(t_j) + (1-t) D_j}  = \max \Big \{ \N{F^{(i)}(t_j)}, \N{e^{-s \lambda \Delta t}F^{(i)}(t_j) + (1-e^{-s \lambda\Delta t}) D_j} \Big \}
\end{align*}
This means that
\begin{align}
    &\mathbb{E} \biggeck{ \supttj \N{X^i_{t} - \overbar{X}^i_{t} }^p \mathbbm{1}_{\Omega_M} } \notag \\ = \, &   \mathbb{E} \biggeck{\max \Big \{ \N{F^{(i)}(t_j)}^p, \N{e^{-s \lambda\Delta t}F^{(i)}(t_j) + (1-e^{-s \lambda\Delta t}) D_j}^p \Big \} \mathbbm{1}_{\Omega_M} } \notag \\
     = \, &\mathbb{E} \biggeck{\max \Big \{ \N{E^{(i)}_j}^p, \N{E^{(i)}_{j+1}}^p \Big \} \mathbbm{1}_{\Omega_M} }, \notag \\
     \label{CF:2Cases}
\end{align}
where $E^{(i)}_j$ is defined in (\ref{MFF:DistanceScheme}).
We deduce for every $j \in \{0,\dots,J-1\}$ that
\begin{align}
    \mathbb{E} \biggeck{ \supttj \N{X^i_{t} - \overbar{X}^i_{t} }^p \mathbbm{1}_{\Omega_M} }
    &=
    \mathbb{E} \biggeck{\max \Big \{ \N{E^{(i)}_j}^p, \N{E^{(i)}_{j+1}}^p \Big \} \mathbbm{1}_{\Omega_M} } \notag\\
    &\leq
    \mathbb{E} \biggeck{\Big(\N{E^{(i)}_j}^p + \N{E^{(i)}_{j+1}}^p\Big)\mathbbm{1}_{\Omega_M}}.
    \label{FreezingRestrIntervalSupBound}
\end{align}
Hence it is enough to estimate the quantities
\begin{align*}
A_j := \mathbb{E}\biggeck{\N{E^{(i)}_j}^p \mathbbm{1}_{\Omega_M}}, \qquad j=0,\dots,J.
\end{align*}
Since $E^{(i)}_0 = 0$, we have
\begin{align*}
A_0 = 0.
\end{align*}
We notice that $e^{-s\lambda \Delta t} \in (0, 1)$ and that $x \mapsto \N{x}^p$ is convex.
So for fixed $j \leq J-1$ and $i \leq N$, 
\begin{align}
    A_{j+1}
    &= \mathbb{E} \biggeck{\N{gE^{(i)}_j + D_j(1-g)}^p \mathbbm{1}_{\Omega_M} } \notag\\
    &\leq g \mathbb{E} \biggeck{ \N{E_j^{(i)}}^p \mathbbm{1}_{\Omega_M} }
    + (1-g) \mathbb{E} \Bigeck{ \N{D_j}^p \mathbbm{1}_{\Omega_M} }.
    \label{FreezingRestrTheoremBound1'}
\end{align}
We now proceed to make a suitable bound for $\mathbb{E} \Bigeck{ \N{D_j}^p \mathbbm{1}_{\Omega_M} }$.
As before,
\begin{align}
    \mathbb{E} \Bigeck{ \N{D_j}^p \mathbbm{1}_{\Omega_M} }
    &=
    \mathbb{E}\biggeck{ \N{X^\alpha(\rho^N_{t_j}) - X^\alpha(\overbar{\rho}_{t_j})}^p \mathbbm{1}_{\Omega_M}} \notag\\
    &\leq 2^{p-1}\Biggrou{\mathbb{E}\biggeck{ \N{X^\alpha(\rho^N_{t_j}) - X^\alpha(\overbar{\rho}^N_{t_j})}^p \mathbbm{1}_{\Omega_M}}
    +\mathbb{E} \biggeck{ \N{X^\alpha(\overbar{\rho}^N_{t_j}) - X^\alpha (\orho_{t_j})}^p \mathbbm{1}_{\Omega_M}}}.
    \label{FreezingRestrTheoremBound2'}
\end{align}
On the left hand side of the sum, we invoke Lemma \ref{CarrL2}, Example \ref{WasserBD}, and exchangeability:
\begin{align*}
    \mathbb{E}\biggeck{ \N{X^\alpha(\rho^N_{t_j}) - X^\alpha(\overbar{\rho}^N_{t_j})}^p \mathbbm{1}_{\Omega_M}}
    &\leq C_0^p \mathbb{E}\biggeck{ W_p^p(\rho^N_{t_j}, \overbar{\rho}^N_{t_j}) \mathbbm{1}_{\Omega_M}}\\
    &\leq C_0^p \mathbb{E}\biggeck{\frac{1}{N} \sum^N_{i=1}\N{X^i_{t_j} - \overbar{X}^i_{t_j}}^p \mathbbm{1}_{\Omega_M}}\\
    &= C_0^p \mathbb{E}\biggeck{ \N{X^i_{t_j} - \overbar{X}^i_{t_j}}^p \mathbbm{1}_{\Omega_M}}
    \\
    &= C_0^p A_j.
\end{align*}
As before, the right-hand side of \eqref{FreezingRestrTheoremBound2'} is bounded by Lemma \ref{CH:ConsBound}:
\begin{align*}
    \mathbb{E} \biggeck{ \N{X^\alpha(\overbar{\rho}^N_{t_j}) - X^\alpha (\orho_{t_j})}^p \mathbbm{1}_{\Omega_M}}
    \leq C_{\mathrm{EM}} N^{-\frac{p}{2}}.
\end{align*}
Therefore,
\begin{align*}
    \mathbb{E} \Bigeck{ \N{D_j}^p \mathbbm{1}_{\Omega_M} }
    \leq 2^{p-1}\Biggrou{C_0^p A_j + C_{\mathrm{EM}} N^{-\frac{p}{2}}}.
\end{align*}
Putting this into \eqref{FreezingRestrTheoremBound1'} yields
\begin{align}
    A_{j+1}
    &\leq
    \bigrou{g + 2^{p-1}(1-g) C_0^p} A_j
    + 2^{p-1}(1-g) C_{\mathrm{EM}} N^{-\frac{p}{2}}.
    \label{FreezingRestrRecursion}
\end{align}
Define
\begin{align*}
a_F := g + 2^{p-1}(1-g) C_0^p,
\qquad
b_F := 2^{p-1}(1-g) C_{\mathrm{EM}}.
\end{align*}
Then \eqref{FreezingRestrRecursion} becomes
\begin{align*}
A_{j+1} \leq a_F A_j + b_F N^{-p/2}.
\end{align*}
Since $A_0=0$, a straightforward induction gives
\begin{align}
    A_j
    \leq b_F N^{-\frac{p}{2}} \sum_{l=0}^{j-1} a_F^l,
    \qquad j=1,\dots,J.
    \label{FreezingRestrExplicitAj}
\end{align}
Using \eqref{FreezingRestrIntervalSupBound}, we get for each $j=0,\dots,J-1$
\begin{align*}
    \mathbb{E} \biggeck{ \supttj \N{X^i_{t} - \overbar{X}^i_{t} }^p \mathbbm{1}_{\Omega_M} }
    \leq A_j + A_{j+1}.
\end{align*}
Hence
\begin{align*}
    \mathbb{E} \biggeck{\suptT \,\N{ X^i_t - \overbar{X}^i_t}^p \mathbbm{1}_{\Omega_M} }
    &\leq \sum_{j=0}^{J-1} \mathbb{E} \biggeck{ \supttj \N{X^i_{t} - \overbar{X}^i_{t} }^p \mathbbm{1}_{\Omega_M} } \\
    &\leq \sum_{j=0}^{J-1}(A_j + A_{j+1})
    \leq 2\sum_{j=1}^{J} A_j \\
    &\leq 2 b_F N^{-\frac{p}{2}} \sum_{j=1}^{J}\sum_{l=0}^{j-1} a_F^l \\
    &= 2 b_F N^{-\frac{p}{2}} \sum_{l=0}^{J-1} (J-l) a_F^l
    =: \varepsilon^{\mathrm{rest}}_{\mathrm{F}}(N,T).
\end{align*}
Thus
\begin{align}
    \varepsilon^{\mathrm{rest}}_{\mathrm{F}}(N,T)
    =
    2\cdot 2^{p-1}(1-g) C_{\mathrm{EM}}
    N^{-\frac{p}{2}} \sum_{l=0}^{J-1}(J-l)
    \Big(g + 2^{p-1}(1-g) C_0^p\Big)^l
    .
    \label{FreezingRestrExplicitEpsilon}
\end{align}
Finally, if $\mathbb{P}(\Omega_M)>0$, then by conditional expectation,
\begin{align*}
    \mathbb{E} \biggeck{ \suptT \,\N{ X^i_t - \overbar{X}^i_t}^p \Big| \Omega_M }
    =
    \frac{\mathbb{E} \biggeck{ \suptT \,\N{ X^i_t - \overbar{X}^i_t}^p \mathbbm{1}_{\Omega_M} }}{\mathbb{P}(\Omega_M)}
    \leq \frac{\varepsilon^{\mathrm{rest}}_{\mathrm{F}}(N,T)}{\mathbb{P}(\Omega_M)}.
\end{align*}
\end{proof}
Theorem \ref{MFF: Thrm} directly implies pathwise propagation of chaos by coupling as stated in Definition \ref{Pre:ChaosCoup}, with expectations replaced by expectations on $\Omega_M$. In particular, by Example \ref{WasserBD},
\begin{align*}
    \mathbb{E}\Bigeck{W_p^p\bigrou{\rho^N_t, \orho^N_t}\mathbbm{1}_{\Omega_M}} \leq \varepsilon^{\mathrm{rest}}_{\mathrm{F}}(N,T) \quad \forall t \in [0,T].
\end{align*}
The speed of convergence is given by the $p$th root of our estimate $\varepsilon^{\mathrm{rest}}_{\mathrm{F}}(N,T)$ and is therefore $\mathcal{O}\bigrou{N^{-1/2}}$.

\section{The Consensus Hopping Scheme}
\label{ch:MFH}
In this section, we study a discrete-time interacting particle model known as the \textbf{consensus hopping scheme} (CH), originally introduced in \cite{Riedl2023GradientNeed}. This scheme provides a simple mechanism for modeling consensus formation by iteratively updating a state according to the center of a Gaussian distribution around the previous iterate. The scheme admits a natural probabilistic formulation and can be understood as the mean-field limit of a suitably defined interacting particle system.

We first introduce the hopping dynamics and the associated particle system and then establish the uniform moment bounds needed for the propagation of chaos analysis in Lemma \ref{NP_CHLemma}. In Theorem \ref{MFH: Thrm}, using the coupling method adapted to the discrete-time setting, we prove propagation of chaos uniformly over finite time horizons restricted to $\Omega_M$ with a quantitative rate of order $\mathcal{O}\bigrou{N^{-1/2}}$ in Wasserstein-$p$ distance (up to constants depending on the time horizon and model parameters).
\vspace{.2 cm}

The original definition of the hopping scheme is as follows: 
\begin{align}
    & x^{HS}_k = X^\alpha (\rho_{k}), \quad \rho_{k} = \mathcal{N}(x^{HS}_{k-1}, \sigma^2 I_d), \quad \text{for } 1 \leq k \in \mathbb{N},\\
    & x^{HS}_0 = x_0, \notag \label{HoppingSchemeOriginal}
\end{align}
which defines a time-discrete deterministic process on $\mathbb{R}^d$.
We claim that this can be understood as the mean-field limit of the following $N$-particle system $(\mathcal{X}^N_k)_k = (X^1_k, ...X_k^N)_k$:
\begin{align}
   & X^i_k = X^\alpha(\rho^N_{k-1}) + \eta^i_k, \quad \text{for } i \leq N, k \in \mathbb{N}
    \label{CH:Scheme} \\
   &\mathcal{X}^N_0 = (X^1_0,...,X^N_0) \sim \orho_0^{\otimes N}, \notag
\end{align}
where $\eta^i_k\sim \mathcal{N}(0, \sigma^2 I_d)$ are independent, identically distributed random vectors and $\rho^N_{k} := \mu_{\mathcal{X}^N_k}$ for every number of particles $N$, at every time step $k$. The probability space for analyzing the paths of the IPS defined in (\ref{CH:Scheme}) is 
\begin{align*}
\Big(
(\mathbb{R}^d)^N \times ((\mathbb{R}^d)^N)^{\mathbb N},
\mathcal B\big((\mathbb{R}^d)^N \times ((\mathbb{R}^d)^N)^{\mathbb N}\big),
\orho_0^{\otimes N} \otimes \big(\mathcal N(0,\sigma^2I_d)^{\otimes N}\big)^{\otimes \mathbb N}
\Big).
\end{align*}
We use the coupling method introduced in Definition \ref{Pre:ChaosCoup} to prove propagation of chaos. We define the $N$-particle system $\bigrou{\overbar{\mathcal{X}}^N_k}_k = (\overbar{X}^1_k, ..., \overbar{X}^N_k)_k$ by the following scheme:
\begin{align}
    &\overbar{X}^i_k = X^\alpha(\rho_{k-1}) + \eta^i_k \quad \text{for } i \leq N, k \in \mathbb{N} \label{CH:MFScheme} \\ 
    &\overbar{\mathcal{X}}^N_0 = (\overbar{X}^1_0,...,\overbar{X}^N_0) \sim \orho_0^{\otimes N}, \quad \overbar{\mathcal{X}}^N_0 = \mathcal{X}^N_0 \,\, \text{ a.s.}, \notag   
\end{align}
where we point out that $(\mathcal{X}^N_k)_k$ and $(\overbar{\mathcal{X}}^N_k)_k$ share the same initial condition and that for all $i \leq N$ at each time step $k$, $X^i_k$ and $\overbar{X}^i_k$ share the same random vector $\eta^i_k$.
Throughout this section, if not otherwise stated, the probabilities and expectations will be taken with respect to the probability space 
\begin{align*}
    \ProbStd := \Big(
(\mathbb{R}^d)^N \times ((\mathbb{R}^d)^N)^{\mathbb N},
\mathcal B\big((\mathbb{R}^d)^N \times ((\mathbb{R}^d)^N)^{\mathbb N}\big),
\orho_0^{\otimes N} \otimes \big(\mathcal N(0,\sigma^2I_d)^{\otimes N}\big)^{\otimes \mathbb N}
\Big).
\end{align*}
\subsection{Propagation of Chaos for the Hopping Scheme}
For the hopping scheme, the proof of the propagation of chaos will be different from that described in Section \ref{Pre:ProvingPropChaos}. This is because we are dealing with an interacting particle system that is discrete in time. We therefore do not require tools from stochastic analysis, but rather standard inequalities for countable sets of random variables. We see directly from \eqref{CH:Scheme} that the proof for propagation of chaos reduces to the problem of estimating \eqref{Pre:ConsensusDistProblem} described in Section \ref{Pre:ProvPropChaosCBO}, i.e.
\begin{align*}
    \mathbb{E}\biggeck{ \supu{k}{K}\N{X^\alpha(\rho^N_k) - X^\alpha(\orho_k)}^p} \leq \varepsilon(N, K) \xrightarrow[]{N \rightarrow \infty} 0 . 
\end{align*}
In order to prove propagation of chaos, we need moment bounds, uniformly over finite time horizons, not only for the interacting particle system (\ref{CH:Scheme}) and the mean-field process (\ref{CH:MFScheme}) but also for their respective empirical measures. We will first show that the empirical measures of the hopping particle system (\ref{CH:Scheme}) and of the mean-field system (\ref{CH:MFScheme}) have uniformly bounded moments with high probability. We then show that, given the uniform boundedness of the moments of the empirical measures, we can obtain a bound sufficient to verify Definition \ref{Pre:ChaosCoup}.\vspace{.2 cm}

We begin by showing that the interacting particle system \eqref{CH:Scheme} has finite $p$th moments, uniformly in finite time.
\begin{lemma}
     Consider the particle systems $\IPS{K}$ and $\MIPS{K}$ defined by \eqref{CH:Scheme} and \eqref{CH:MFScheme}, respectively, with initial data following the distribution $\orho_0 \in \mathcal{P}_p(\mathbb{R}^d)$ and a final time $K \in \mathbb{N}_{>0}$. Then there exist constants $C_{\mathrm{H}} >0$ and $\overbar{C}_{\mathrm{H}} >0$, both independent of $N$, such that for every $N \in \mathbb{N}$ and every $i \leq N$
    \begin{align*}
        \mathbb{E} \biggeck{ \supu{k}{K} \N{X^i_k}^p} \leq C_{\mathrm{H}} \quad \text{and} \quad \mathbb{E} \biggeck{ \supu{k}{K} \N{\overbar{X}^i_k}^p} \leq \overbar{C}_{\mathrm{H}}.
    \end{align*}
    \label{NP_CHLemma}
\end{lemma}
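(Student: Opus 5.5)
We mimic the proof of Lemma \ref{NP:CFLemma}, which is easier here because the hopping scheme has no memory term: by \eqref{CH:Scheme}, each particle at step $k\geq 1$ is just the previous consensus point plus fresh Gaussian noise. We prove the bound for $\IPS{K}$ only; the bound for $\MIPS{K}$ follows the same argument with $\rho^N_{k-1}$ replaced by the deterministic measure $\rho_{k-1}$.

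We treat $k=0$ separately and use $\N{a+b}^p\leq 2^{p-1}(\N{a}^p+\N{b}^p)$:
\begin{align*}
    \mathbb{E}\biggeck{\supu{k}{K}\N{X^i_k}^p} \leq \mathbb{E}\Bigeck{\N{X^i_0}^p} + 2^{p-1}\mathbb{E}\biggeck{\supr{1}{k}{K}\N{X^\alpha(\rho^N_{k-1})}^p} + 2^{p-1}\mathbb{E}\biggeck{\supr{1}{k}{K}\N{\eta^i_k}^p}.
\end{align*}
The noise term is bounded by Lemma \ref{Pre:LemmaNormalVectors}, since $\eta^i_1,\dots,\eta^i_K$ are i.i.d.\ $\mathcal{N}(0,\sigma^2I_d)$. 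This gives the bound $\sigma^pC_p(d\log(2Kd))^{p/2}$, which is independent of $N$.

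For the consensus term we apply Corollary \ref{GerberPropA4Corollary} pathwise, as in \eqref{NP:FrMoment2}. We then pull the supremum inside the empirical average and use exchangeability of the particles:
\begin{align*}
    \mathbb{E}\biggeck{\supr{1}{k}{K}\N{X^\alpha(\rho^N_{k-1})}^p} \leq b_p\,\mathbb{E}\biggeck{\frac{1}{N}\sum_{m=1}^N\underset{0\leq k\leq K-1}{\sup}\N{X^m_k}^p} = b_p\,\mathbb{E}\biggeck{\underset{0\leq k\leq K-1}{\sup}\N{X^i_k}^p}.
\end{align*}
Setting $S_K:=\mathbb{E}\bigl[\sup_{0\leq k\leq K}\N{X^i_k}^p\bigr]$, we obtain the recursion
\begin{align*}
    S_K\leq \mathbb{E}\Bigeck{\N{X^i_0}^p} + 2^{p-1}b_p S_{K-1} + 2^{p-1}\sigma^pC_p(d\log(2Kd))^{p/2}.
\end{align*}
This holds for every horizon $K$, so we can iterate it down to $S_0=\mathbb{E}[\N{X^i_0}^p]$. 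The result is an explicit finite bound of the form
\begin{align*}
    C_{\mathrm{H}}=\mathbb{E}\Bigeck{\N{X^i_0}^p}\sum_{l=0}^K(2^{p-1}b_p)^l+2^{p-1}\sigma^pC_p\sum_{l=1}^K(2^{p-1}b_p)^{K-l}(d\log(2ld))^{p/2},
\end{align*}
which depends only on $K$, $p$, $d$, $\sigma$, $b_p$ and $M_p(\orho_0)$, and not on $N$.

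The only delicate point is that the recursion must be closed in terms of quantities that are independent of $N$. A priori, the consensus point of $\rho^N_{k-1}$ couples all $N$ particles. Corollary \ref{GerberPropA4Corollary} together with exchangeability reduces this to the single-particle quantity $S_{K-1}$, so no $N$-dependence enters. We also need $S_{K-1}<\infty$ to make the recursion meaningful, and this follows by induction on $K$ starting from $\orho_0\in\mathcal{P}_p(\mathbb{R}^d)$.
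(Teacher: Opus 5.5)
Your proposal is correct and follows the paper's proof essentially step for step: you split off $k=0$, bound the noise term via Lemma \ref{Pre:LemmaNormalVectors}, and control the consensus term by Jensen plus Lemma \ref{GerberPropA4} (i.e.\ Corollary \ref{GerberPropA4Corollary}) and exchangeability. You then iterate the same recursion $A_K \leq A_0 + 2^{p-1}b_p A_{K-1} + 2^{p-1}B_K$ down to exactly the paper's explicit constant $C_{\mathrm{H}}$, with your finiteness remark on $S_{K-1}$ being a harmless detail the paper leaves implicit.
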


\begin{proof}
We show the proof only for the hopping particle system \eqref{CH:Scheme}. The proof for \eqref{CH:MFScheme} follows the same line of argument.
We fix $i \leq N$. Since $X^i_k = X^\alpha(\rho^N_{k-1}) + \eta^i_k$ holds only for $k \geq 1$, we separate the initial time and use the definition of the hopping scheme \eqref{CH:Scheme}:
    \begin{align}
        \mathbb{E} \biggeck{ \supu{k}{K} \N{X^i_k}^p}
        &\leq \mathbb{E} \Bigeck{\N{X^i_0}^p} + \mathbb{E} \biggeck{ \sup_{1 \leq k \leq K} \N{X^\alpha(\rho^N_{k-1}) + \eta^i_k}^p} \notag \\
        &\leq \mathbb{E} \Bigeck{\N{X^i_0}^p} + 2^{p-1} \biggrou{ \mathbb{E} \biggeck{ \sup_{1 \leq k \leq K} \N{X^\alpha(\rho^N_{k-1})}^p} + \mathbb{E} \biggeck{ \sup_{1 \leq k \leq K} \N{\eta^i_k}^p}}.
        \label{NP:Hopping_Moment1}
    \end{align}
The second term on the right hand side is straightforward to estimate, as it is the expected supremum of i.i.d. Gaussian random vectors, for which we can use Lemma \ref{Pre:LemmaNormalVectors}. 

We now turn to the first term. We compute
\begin{align}
   \mathbb{E} \biggeck{ \sup_{1 \leq k \leq K} \N{X^\alpha(\rho^N_{k-1})}^p}
   = \mathbb{E} \Biggeck{\sup_{1 \leq k \leq K} \N{\int \frac{x \oa(x)}{\Gnorm{\rho^N_{k-1}}} \, d\rho^N_{k-1}(x)}^p}.
   \label{HoppingMomentConsensus}
\end{align}
We define the random measure
\begin{align*}
\eta^\alpha_{k-1} := \frac{\oa \rho^N_{k-1}}{\Gnorm{\rho^N_{k-1}}},
\end{align*}
which is measurable by Proposition \ref{Pre:PropEmpMeas}. Since $\eta^\alpha_{k-1}$ is a probability measure whenever $\rho^N_{k-1}$ is a probability measure, Jensen's inequality yields
\begin{align}
    \mathbb{E} \Biggeck{\sup_{1 \leq k \leq K} \N{\int \frac{x \oa(x)}{\Gnorm{\rho^N_{k-1}}} d\rho^N_{k-1}(x)}^p}
    &= \mathbb{E} \Biggeck{\sup_{1 \leq k \leq K} \N{\int x \, d\eta^\alpha_{k-1}(x)}^p} \notag \\
    &\leq \mathbb{E} \Biggeck{\sup_{1 \leq k \leq K} \int \N{x}^p \, d\eta^\alpha_{k-1}(x)}.
    \label{HoppingMomentBound1}
\end{align}
Using Lemma \ref{GerberPropA4} with exponent $p$, we obtain
\begin{align}
    \mathbb{E} \Biggeck{\sup_{1 \leq k \leq K} \int \N{x}^p \, d\eta^\alpha_{k-1}(x)}
    = \mathbb{E} \Biggeck{\sup_{1 \leq k \leq K} \mathbb{E}_{\eta^\alpha_{k-1}} \bigeck{\N{x}^p}}
    \leq \mathbb{E} \biggeck{b_p \, \sup_{1 \leq k \leq K} \mathbb{E}_{\rho^N_{k-1}} \bigeck{\N{x}^p}}.
    \label{HoppingMomentBound2}
\end{align}
Combining \eqref{HoppingMomentBound1} and \eqref{HoppingMomentBound2}, we get from \eqref{HoppingMomentConsensus}
\begin{align}
    \mathbb{E} \biggeck{ \sup_{1 \leq k \leq K} \N{X^\alpha(\rho^N_{k-1})}^p}
    \leq b_p \mathbb{E} \biggeck{\sup_{1 \leq k \leq K}\, \mathbb{E}_{\rho^N_{k-1}} \bigeck{\N{x}^p}}.
    \label{NP:HoppingMoment2}
\end{align}
Writing out the expectation with respect to $\rho^N_{k-1}$ gives
\begin{align*}
    \mathbb{E} \biggeck{\sup_{1 \leq k \leq K}\, \mathbb{E}_{\rho^N_{k-1}} \bigeck{\N{x}^p}}
    &= \mathbb{E} \biggeck{\sup_{1 \leq k \leq K}\, \frac{1}{N}\sum^N_{j=1} \N{X^j_{k-1}}^p} \\
    &\leq \mathbb{E} \biggeck{ \frac{1}{N}\sum^N_{j=1} \sup_{1 \leq k \leq K} \N{X^j_{k-1}}^p} \\
    &= \frac{1}{N}\sum_{j=1}^N \mathbb{E} \biggeck{\supu{k}{K-1} \N{X^j_k}^p} \\
    &= \mathbb{E} \biggeck{\supu{k}{K-1} \N{X^i_k}^p},
\end{align*}
where we used exchangeability of the particles in the last step.
Hence
\begin{align*}
    \mathbb{E} \biggeck{ \sup_{1 \leq k \leq K} \N{X^\alpha(\rho^N_{k-1})}^p}
    \leq b_p \mathbb{E} \biggeck{\supu{k}{K-1} \N{X^i_k}^p}.
\end{align*}
Inserting this into \eqref{NP:Hopping_Moment1}, we arrive at
\begin{align*}
     \mathbb{E} \biggeck{ \supu{k}{K} \N{X^i_k}^p}
     \leq \mathbb{E} \Bigeck{\N{X^i_0}^p} + 2^{p-1} \biggrou{b_p \mathbb{E} \biggeck{\supu{k}{K-1} \N{X^i_k}^p} + \mathbb{E} \biggeck{ \sup_{1 \leq k \leq K} \N{\eta^i_k}^p}}.
\end{align*}
Now define
\begin{align*}
A_K := \mathbb{E} \biggeck{ \supu{k}{K} \N{X^i_k}^p},
\qquad
B_K := \mathbb{E} \biggeck{ \sup_{1 \leq k \leq K} \N{\eta^i_k}^p},
\end{align*}
so that $A_0 = \mathbb{E}\bigeck{\N{X^i_0}^p}$. Then the previous estimate reads
\begin{align*}
A_K \leq A_0 + 2^{p-1} b_p\, A_{K-1} + 2^{p-1} B_K.
\end{align*}
Iterating this recursion, or equivalently arguing by induction on $K$, yields
\begin{align}
     A_K
     \leq \mathbb{E} \Bigeck{\N{X^i_0}^p} \sum^K_{l=0} (2^{p-1}b_p)^l
     + 2^{p-1} \sum^K_{l = 1} B_l (2^{p-1}b_p)^{K-l}.
\end{align}
By Lemma \ref{Pre:LemmaNormalVectors} there exists a constant $C_p >0$ depending only on $p$ such that for every $l \leq K$
\begin{align*}
    B_l
    = \mathbb{E} \biggeck{ \sup_{1 \leq k \leq l} \N{\eta^i_k}^p}
    \leq \sigma^pC_p \Bigrou{d\log (2ld)}^{\frac{p}{2}}.
\end{align*}
Therefore,
\begin{align}
     \mathbb{E} \biggeck{ \supu{k}{K} \N{X^i_k}^p}
     \leq \mathbb{E} \Bigeck{\N{X^i_0}^p} \sum^K_{l=0} (2^{p-1}b_p)^l
     + 2^{p-1} \sigma^pC_p \sum^K_{l = 1} \Bigrou{d\log (2ld)}^{\frac{p}{2}} (2^{p-1}b_p)^{K-l}
     =:C_{\mathrm{H}}.
     \label{HoppingMomentFinal estiamte}
\end{align}
We note that $C_{\mathrm{H}}$ is independent of $N$, which concludes the proof.
\end{proof}
\begin{remark}
    Recall the consensus freezing moment estimate of Lemma \ref{NP:CFLemma} for $J = K$, with $a_s = 2^{p-1}\bigrou{e^{-s\lambda \Delta t}+ b_p(1- e^{-s\lambda \Delta t})}$, and pass to the limit $s \to \infty$. Since $a_s \to 2^{p-1}b_p$ and $e^{-2s\lambda\Delta t} \to 0$, we obtain
    \begin{align*}
        &\mathbb{E} \biggeck{\supjJ \N{X^i_{t_j}}^p}\\
        \leq  \, & \mathbb{E} \Bigeck{ \N{X^i_0}^p} \sum^K_{l=0} a_s^l
        + 2^{p-1}\delta^p \biggrou{\frac{1- e^{- 2s\lambda \Delta t}}{2 \lambda}}^{\frac{p}{2}} C_p \sum^{K}_{l = 1} \Bigrou{d\log (2ld)}^{\frac{p}{2}} a_s^{K-l} \\
        \xrightarrow[]{s \to \infty }\, & \mathbb{E} \Bigeck{\N{X^i_0}^p} \sum^K_{l=0} (2^{p-1}b_p)^l
        + 2^{p-1}  C_p \biggrou{\frac{\delta}{\sqrt{2 \lambda}}}^{p}\sum^K_{l = 1}  \Bigrou{d\log (2ld)}^{\frac{p}{2}} (2^{p-1}b_p)^{K-l}.
    \end{align*}
This is exactly the moment estimate \eqref{HoppingMomentFinal estiamte} of the consensus hopping scheme with variance $\sigma^2 = \frac{\delta^2}{2 \lambda}$. This connection aligns with the convergence of the freezing scheme to the hopping scheme for $s \to \infty$, proven in Theorem \ref{thm:FStoHS}, which is stated in Section \ref{section:globalconvergence}.
\end{remark}
Now we can use the uniform in time moment bounds of the particles to construct a lemma that ensures us that the probability of the moments of the empirical measures of the systems \eqref{CH:Scheme} and \eqref{CH:MFScheme} being bounded by $M$ converges to one as $M$ approaches infinity.
\begin{lemma}
   Let $K \in \mathbb{N}$ be the final time. Let $\mathcal{X}^N_K \text{ and } \overbar{\mathcal{X}}^N_K$ be the interacting particle systems of the finite particle system (\ref{CH:Scheme}) and the mean-field particle system (\ref{CH:MFScheme}), respectively, with initial data following the distribution $\orho_0 \in \mathcal{P}_p(\mathbb{R}^d)$. For a fixed \(M>0\), define the set \begin{align*}
       \Omega_M := \bigg\{ \omega \in \Omega : \supu{k}{K} \frac{1}{N} \Sum{i}{N} \max \Big\{ \N{X^i_k}^p , \N{\overbar{X}^i_k}^p \Big \} \leq M \bigg\}.
   \end{align*}Then there exists a constant $C >0$ independent of $N$ such that
    \begin{align*}
        \mathbb{P}(\Omega_M) \geq 1 - \frac{C}{M}.
    \end{align*}
    \label{CH:Moment}
\end{lemma}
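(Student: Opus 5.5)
The proof mirrors Lemma \ref{MFD:Moment} and Lemma \ref{MFF:Moment}: bound $\mathbb{P}(\Omega_M^c)$ with Markov's inequality, then control the resulting expectation with the uniform-in-time moment bounds of Lemma \ref{NP_CHLemma}.

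First, since $\max\{a,b\} \leq a+b$ for nonnegative $a,b$, the complement satisfies
\begin{align*}
    \Omega_M^c \subseteq \bigg\{ \supu{k}{K} \Sum{i}{N} \Bigrou{\N{X^i_k}^p + \N{\overbar{X}^i_k}^p} > NM \bigg\}.
\end{align*}
Markov's inequality then gives
\begin{align*}
    \mathbb{P}(\Omega_M^c) \leq \frac{1}{NM}\, \mathbb{E}\biggeck{\supu{k}{K} \Sum{i}{N} \Bigrou{\N{X^i_k}^p + \N{\overbar{X}^i_k}^p}}.
\end{align*}

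Next, we interchange supremum and sum by the elementary bound $\sup_k \sum_i a_{i,k} \leq \sum_i \sup_k a_{i,k}$. Together with linearity of expectation, this yields
\begin{align*}
    \mathbb{P}(\Omega_M^c) \leq \frac{1}{M}\biggrou{\maxu{i}{N}\, \mathbb{E}\biggeck{\supu{k}{K}\N{X^i_k}^p} + \maxu{i}{N}\, \mathbb{E}\biggeck{\supu{k}{K}\N{\overbar{X}^i_k}^p}}.
\end{align*}

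Finally, Lemma \ref{NP_CHLemma} bounds these two terms by $C_{\mathrm{H}}$ and $\overbar{C}_{\mathrm{H}}$, respectively, both independent of $N$. Setting $C := 2\max\{C_{\mathrm{H}}, \overbar{C}_{\mathrm{H}}\}$ (or simply $C_{\mathrm{H}} + \overbar{C}_{\mathrm{H}}$) gives $\mathbb{P}(\Omega_M) \geq 1 - C/M$.

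There is no genuine obstacle. The only point needing care is that the supremum is taken over the finitely many steps $0,\dots,K$, including $k=0$, which is exactly the range covered by Lemma \ref{NP_CHLemma}. That lemma's bounds are uniform in $N$ and hold for every $i \leq N$, so the constant $C$ is independent of $N$, as claimed.
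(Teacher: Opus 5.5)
Your proposal is correct and matches the paper's proof essentially step for step. The paper likewise bounds $\max\{a,b\}\le a+b$, applies Markov's inequality to the complement, exchanges supremum and sum, bounds the particle average by the maximum over $i$, and invokes Lemma~\ref{NP_CHLemma}, obtaining $C = 2\max\{C_{\mathrm{H}},\overbar{C}_{\mathrm{H}}\}$ exactly as you do.
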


\begin{proof}
    As in Lemma \ref{MFD:Moment} and Lemma \ref{MFF:Moment}, we compute with Markov's inequality: 
    \begin{align*}
        \mathbb{P}(\Omega^c_M) \leq \frac{\maxu{i}{N}\, \mathbb{E} \bigg[ \supu{k}{K} \N{X^i_k}^p \bigg ]}{M} + \frac{\maxu{i}{N}\, \mathbb{E} \biggeck{ \supu{k}{K} \N{ \overbar{X}^i_k}^p }}{M}.
    \end{align*}
Lemma \ref{NP_CHLemma} immediately gives us the bounds we need:
\begin{align*}
    \maxu{i}{N}\, \mathbb{E} \biggeck{ \supu{k}{K} \N{ X^i_k}^p } \leq C_{\mathrm{H}} , \quad \quad \maxu{i}{N}\, \mathbb{E} \biggeck{ \supu{k}{K} \N{ \overbar{X}^i_k}^p } \leq \overbar{C}_{\mathrm{H}}.
\end{align*}
We set $\frac{C}{2} \coloneqq \max \Big \{ C_{\mathrm{H}}, \overbar{C}_{\mathrm{H}} \Big \}$ and get
\begin{align*}
    \frac{\maxu{i}{N}\, \mathbb{E} \bigg[ \supu{k}{K} \N{X^i_k}^p \bigg ]}{M} + \frac{\maxu{i}{N}\, \mathbb{E} \bigg[ \supu{k}{K} \N{ \overbar{X}^i_k}^p \bigg]}{M} \leq \frac{C}{M}.
\end{align*}
\end{proof}
\vspace{0.5 cm}

Lemma \ref{CH:Moment} ensures that the following main statement on propagation of chaos for the hopping scheme is meaningful, since we condition it on $\Omega_M$. As stated above, we want to show propagation of chaos by coupling uniformly over a finite time horizon, i.e. the pathwise version of Definition \ref{Pre:ChaosCoup}.
\begin{theorem}
    Let $K \in \mathbb{N}, N\in \mathbb{N}$. Let $\mathcal{X}^N_K \text{ and } \overbar{\mathcal{X}}^N_K$ be the interacting particle systems of the finite particle system (\ref{CH:Scheme}) and the mean-field particle system (\ref{CH:MFScheme}), respectively, both sharing the same initial data $\mathcal{X}^N_0 = \overbar{\mathcal{X}}^N_0$ almost surely, with common distribution $\orho_0 \in \mathcal{P}_p(\mathbb{R}^d)$ for some $p \geq 2$. For a fixed \(M>0\), define
    \begin{align*}
       \Omega_M := \bigg\{ \omega \in \Omega : \supu{k}{K} \frac{1}{N} \Sum{i}{N} \max \Big\{ \N{X^i_k}^p , \N{\overbar{X}^i_k}^p \Big \} \leq M \bigg\}.
   \end{align*}
    Then there exists a $\varepsilon^{\mathrm{rest}}_{\mathrm{H}}(N, K)>0$ with $\varepsilon^{\mathrm{rest}}_{\mathrm{H}}(N, K) \xrightarrow[]{N \rightarrow \infty}0$ such that for every $N \in \mathbb{N}$ and every $i \leq N$
    \begin{align*}
         \mathbb{E} \biggeck{ \supu{k}{K} \,\N{ X^i_k - \overbar{X}^i_k}^p \mathbbm{1}_{\Omega_M} } \leq \varepsilon^{\mathrm{rest}}_{\mathrm{H}}(N, K).
    \end{align*}
    Consequently, if $\mathbb{P}(\Omega_M)>0$, then for every $N \in \mathbb{N}$ and every $i \leq N$
    \begin{align*}
         \mathbb{E} \biggeck{ \supu{k}{K} \,\N{ X^i_k - \overbar{X}^i_k}^p \Big| \Omega_M } \leq \frac{\varepsilon^{\mathrm{rest}}_{\mathrm{H}}(N, K)}{\mathbb{P}(\Omega_M)}.
    \end{align*}
    \label{MFH: Thrm}
\end{theorem}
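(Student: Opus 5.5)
We follow the template of Theorems \ref{Delta:Thrm} and \ref{MFF: Thrm}, which simplifies here because the coupled systems share the same noise $\eta^i_k$. Subtracting \eqref{CH:MFScheme} from \eqref{CH:Scheme}, the Gaussian increments cancel and for every $k \geq 1$ and $i \leq N$
\begin{align*}
    X^i_k - \overbar{X}^i_k = X^\alpha(\rho^N_{k-1}) - X^\alpha(\orho_{k-1}) =: D_{k-1},
\end{align*}
while $X^i_0 - \overbar{X}^i_0 = 0$ almost surely. The error is therefore the same for all particles, and the whole problem reduces to controlling the consensus distances $D_k$ on $\Omega_M$.

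We set $A_k := \mathbb{E}\bigeck{\N{X^i_k - \overbar{X}^i_k}^p \mathbbm{1}_{\Omega_M}}$, so $A_0 = 0$ and $A_{k+1} = \mathbb{E}\bigeck{\N{D_k}^p \mathbbm{1}_{\Omega_M}}$. We split $D_k$ by inserting the empirical measure $\orho^N_k$ of the mean-field particles and use $\N{a+b}^p \leq 2^{p-1}(\N{a}^p + \N{b}^p)$, exactly as in \eqref{FreezingRestrTheoremBound2'}.
\begin{itemize}
    \item \textbf{First term.} On $\Omega_M$ both $\rho^N_k$ and $\orho^N_k$ have $p$th moments bounded by $M$ for every $k \leq K$. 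Lemma \ref{CarrL2}, Example \ref{WasserBD} and exchangeability then give
    \begin{align*}
        \mathbb{E}\bigeck{\N{X^\alpha(\rho^N_k) - X^\alpha(\orho^N_k)}^p \mathbbm{1}_{\Omega_M}} \leq C_0^p A_k.
    \end{align*}
    \item \textbf{Second term.} Since $\Omega_M \subseteq \{\frac{1}{N}\sum_i \N{\overbar{X}^i_k}^p \leq M\}$ and the $\overbar{X}^i_k$ are i.i.d.\ with law $\orho_k$, Lemma \ref{CH:ConsBound} applies with $\nu = \orho_k$. This gives the bound $C_{\mathrm{EM}} N^{-p/2}$.
\end{itemize}
Together these yield the recursion
\begin{align*}
    A_{k+1} \leq 2^{p-1}C_0^p A_k + 2^{p-1}C_{\mathrm{EM}} N^{-p/2}.
\end{align*}
Induction from $A_0 = 0$ gives $A_k \leq 2^{p-1}C_{\mathrm{EM}} N^{-p/2}\sum_{l=0}^{k-1}(2^{p-1}C_0^p)^l$.

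For the supremum over $k \leq K$, the time set is finite, so we bound $\sup_{k\leq K}\N{\cdot}^p \leq \sum_{k=1}^K \N{\cdot}^p$. This gives
\begin{align*}
    \mathbb{E}\biggeck{\supu{k}{K}\N{X^i_k-\overbar{X}^i_k}^p\mathbbm{1}_{\Omega_M}} \leq \sum_{k=1}^K A_k \leq 2^{p-1}C_{\mathrm{EM}}N^{-p/2}\sum_{l=0}^{K-1}(K-l)(2^{p-1}C_0^p)^l =: \varepsilon^{\mathrm{rest}}_{\mathrm{H}}(N,K),
\end{align*}
which is of order $N^{-p/2}$. The conditional statement then follows by dividing by $\mathbb{P}(\Omega_M)$, as in the previous theorems.

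The step needing most care is checking that Lemma \ref{CH:ConsBound} is applicable at every $k$. This requires $\orho_k \in \mathcal{P}_p$ with a bound $M_p$ independent of $N$. We obtain it from Lemma \ref{NP_CHLemma}, since $\mathbb{E}_{\orho_k}\bigeck{\N{x}^p} \leq \overbar{C}_{\mathrm{H}}$; note that $C_{\mathrm{EM}}$ may then depend on $K$, which is harmless. Alternatively, $\orho_k = \mathcal{N}(X^\alpha(\orho_{k-1}),\sigma^2 I_d)$ for $k \geq 1$ is Gaussian, and Corollary \ref{GerberPropA4Corollary} bounds its mean by $b_p\,\mathbb{E}_{\orho_{k-1}}\bigeck{\N{x}^p}$. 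We must also confirm that the constant $C_0$ of Lemma \ref{CarrL2} depends only on $M$ and model parameters, so that the recursion constants are independent of $N$.
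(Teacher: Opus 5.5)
Your proposal is correct and follows the paper's proof essentially step for step. It uses the same split through $\orho^N_k$, Lemma \ref{CarrL2} with Example \ref{WasserBD} on the first term, Lemma \ref{CH:ConsBound} on the second via $\Omega_M \subseteq \overbar{\Omega}_{M,k}$, the recursion $A_{k+1} \leq 2^{p-1}C_0^p A_k + 2^{p-1}C_{\mathrm{EM}}N^{-p/2}$ with $A_0=0$, and the bound of the supremum by the sum, arriving at exactly the paper's $\varepsilon^{\mathrm{rest}}_{\mathrm{H}}(N,K)$. Your explicit check that $\mathbb{E}_{\orho_k}\bigl[\N{x}^p\bigr] \leq \overbar{C}_{\mathrm{H}}$ (Lemma \ref{NP_CHLemma}) makes $C_{\mathrm{EM}}$ independent of $N$ uniformly in $k \leq K$, a point the paper uses only implicitly.
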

\begin{proof}
To start, we bound the expected distance for fixed $i\leq N$ and $k \leq K$:
\begin{align}
   &\mathbb{E} \biggeck{ \N{X^i_{k+1} - \overbar{X}^i_{k+1}}^p \mathbbm{1}_{\Omega_M}} \notag \\
   = \, &\mathbb{E} \biggeck{ \N{X^\alpha(\rho^N_k) - X^\alpha(\overbar{\rho}_k)}^p \mathbbm{1}_{\Omega_M} } \notag\\
   \leq \, & 2^{p-1} \biggrou{\mathbb{E} \biggeck{\N{X^\alpha(\rho^N_k) - X^\alpha(\overbar{\rho}^N_k)}^p \mathbbm{1}_{\Omega_M}} + \mathbb{E} \biggeck{\N{X^\alpha(\overbar{\rho}^N_k) - X^\alpha(\overbar{\rho}_k)}^p \mathbbm{1}_{\Omega_M}}}.
     \label{HoppingDistEst1}
\end{align}
Here, $\overbar{\rho}^N_k$ denotes the empirical measure of the mean-field finite particle system at time $k$.
On the first part of this sum, we can invoke Lemma \ref{CarrL2} and then Example \ref{WasserBD} together with the exchangeability of the particles:
\begin{align*}
    \mathbb{E} \biggeck{\N{X^\alpha(\rho^N_k) - X^\alpha(\overbar{\rho}^N_k)}^p \mathbbm{1}_{\Omega_M}}
    \leq C_0^p  \mathbb{E} \Bigeck{W_p^p(\rho^N_k, \overbar{\rho}^N_k) \mathbbm{1}_{\Omega_M}}
    \leq C_0^p \mathbb{E} \biggeck{ \N{X^i_{k} - \overbar{X}^i_{k}}^p \mathbbm{1}_{\Omega_M} }.
\end{align*}
Thus, we can rewrite \eqref{HoppingDistEst1} as
\begin{align*}
    & \mathbb{E} \biggeck{ \N{X^i_{k+1} - \overbar{X}^i_{k+1}}^p \mathbbm{1}_{\Omega_M}}
    \\ \leq \,  &2^{p-1} \biggrou{ C_0^p \mathbb{E} \biggeck{ \N{X^i_{k} - \overbar{X}^i_{k}}^p \mathbbm{1}_{\Omega_M} } + \mathbb{E} \biggeck{\N{X^\alpha(\overbar{\rho}^N_k) - X^\alpha(\overbar{\rho}_k)}^p \mathbbm{1}_{\Omega_M}}}.
\end{align*}
We note that the constant $C_0$ arising from Lemma \ref{CarrL2} depends only on $(\alpha, p, M, C_1, C_2, C_3, R)$, and in particular not on $N$.
Now, define
\begin{align*}
A_k := \mathbb{E} \biggeck{ \N{X^i_{k} - \overbar{X}^i_{k}}^p \mathbbm{1}_{\Omega_M} }.
\end{align*}
Then the previous estimate becomes
\begin{align*}
A_{k+1}
\leq 2^{p-1} C_0^p A_k
+ 2^{p-1}\mathbb{E} \biggeck{\N{X^\alpha(\overbar{\rho}^N_k) - X^\alpha(\overbar{\rho}_k)}^p \mathbbm{1}_{\Omega_M}}.
\end{align*}
Iterating this recursion down to time $0$ yields
\begin{align}
    A_{k+1}
    &\leq \bigrou{2^{p-1}C_0^p}^{k+1} A_0
    + 2^{p-1}\sum_{l=0}^{k}  \bigrou{2^{p-1}C_0^p}^{k-l} \mathbb{E} \biggeck{\N{X^\alpha(\overbar{\rho}^N_l) - X^\alpha(\overbar{\rho}_l)}^p \mathbbm{1}_{\Omega_M}} \notag\\
    &= 2^{p-1}\sum_{l=0}^{k}  \bigrou{2^{p-1}C_0^p}^{k-l} \mathbb{E} \biggeck{\N{X^\alpha(\overbar{\rho}^N_l) - X^\alpha(\overbar{\rho}_l)}^p \mathbbm{1}_{\Omega_M}},
    \label{HoppQMF}
\end{align}
where we used $A_0=0$ in the last step, since both systems share the same initial condition. It remains to bound the expectation involving the empirical mean-field measure $\overbar{\rho}^N_k$ and the mean-field law $\overbar{\rho}_k$.
For every $k \leq K$, define
\begin{align*}
\overbar{\Omega}_{M,k}
:=
\left\{
\omega \in \Omega :
\frac{1}{N}\sum_{i=1}^N \N{\overbar{X}^i_k}^p \le M
\right\}.
\end{align*}
Since
\begin{align*}
\Omega_M \subseteq
\left\{
\omega \in \Omega :
\sup_{0 \leq l \leq K} \frac{1}{N}\sum_{i=1}^N \N{\overbar{X}^i_l}^p \le M
\right\}
\subseteq \overbar{\Omega}_{M,k},
\end{align*}
we have
\begin{align*}
\mathbbm{1}_{\Omega_M} \le \mathbbm{1}_{\overline{\Omega}_{M,k}}.
\end{align*}
Hence we can use Lemma \ref{CH:ConsBound} in \eqref{HoppQMF} to obtain
\begin{align*}
    \mathbb{E} \biggeck{\N{X^\alpha(\overbar{\rho}^N_k) - X^\alpha(\overbar{\rho}_k)}^p \mathbbm{1}_{\Omega_M}}
    \leq
    \mathbb{E} \biggeck{\N{X^\alpha(\overbar{\rho}^N_k) - X^\alpha(\overbar{\rho}_k)}^p \mathbbm{1}_{\overline{\Omega}_{M,k}}}
    \leq C_{\mathrm{EM}} N^{-\frac{p}{2}}.
\end{align*}
Inserting this estimate into \eqref{HoppQMF} yields for every $k \leq K$:
\begin{align*}
    \mathbb{E} \biggeck{ \N{X^i_{k+1} - \overbar{X}^i_{k+1}}^p \mathbbm{1}_{\Omega_M} }
    &\leq 2^{p-1}\sum_{l=0}^{k}  \bigrou{2^{p-1}C_0^p}^{k-l}
    \mathbb{E} \biggeck{\N{X^\alpha(\overbar{\rho}^N_l) - X^\alpha(\overbar{\rho}_l)}^p \mathbbm{1}_{\Omega_M}} \\
    &\leq 2^{p-1} C_{\mathrm{EM}} N^{-\frac{p}{2}}
    \sum_{l=0}^{k}  \bigrou{2^{p-1}C_0^p}^{k-l}.
\end{align*}
Define
\begin{align*}
a_H := 2^{p-1}C_0^p,
\qquad
b_H := 2^{p-1}C_{\mathrm{EM}}.
\end{align*}
Then the previous estimate reads
\begin{align*}
    \mathbb{E} \biggeck{ \N{X^i_{k+1} - \overbar{X}^i_{k+1}}^p \mathbbm{1}_{\Omega_M} }
    \leq b_H N^{-\frac{p}{2}} \sum_{l=0}^{k} a_H^{\,k-l}
    = b_H N^{-\frac{p}{2}} \sum_{l=0}^{k} a_H^{\,l}.
\end{align*}
Since $X_0^i$ and $ \overbar{X}_0^i$ share the same initial condition, we have
\begin{align*}
\mathbb{E} \Bigeck{\N{X_0^i-\overbar{X}_0^i}^p \mathbbm{1}_{\Omega_M}}=0,
\end{align*}
and therefore
\begin{align*}
    \mathbb{E} \biggeck{ \supu{k}{K}
    \N{ X^i_k - \overbar{X}^i_k}^p \mathbbm{1}_{\Omega_M}}
    &\leq \sum_{k=1}^{K}\mathbb{E} \biggeck{
    \N{ X^i_k - \overbar{X}^i_k}^p \mathbbm{1}_{\Omega_M}} \\
    &\leq b_H N^{-\frac{p}{2}}
    \sum_{k=1}^{K}\sum_{l=0}^{k-1} a_H^{\,l} \\
    &= b_H N^{-\frac{p}{2}}
    \sum_{l=0}^{K-1}(K-l)a_H^{\,l}.
\end{align*}
Hence, we take
\begin{align}
    \varepsilon^{\mathrm{rest}}_{\mathrm{H}}(N,K)
    :=
    2^{p-1}C_{\mathrm{EM}} N^{-\frac{p}{2}}
    \Bigg(\sum_{l=0}^{K-1}(K-l)(2^{p-1}C_0^p)^l\Bigg).
    \label{MFH:RestPoCBound}
\end{align}
Finally, if $\mathbb{P}(\Omega_M)>0$, then by the definition of conditional expectation,
\begin{align*}
    \mathbb{E} \biggeck{ \supu{k}{K}
    \N{ X^i_k - \overbar{X}^i_k}^p \Big| \Omega_M}
    =
    \frac{\mathbb{E} \biggeck{ \supu{k}{K}
    \N{ X^i_k - \overbar{X}^i_k}^p \mathbbm{1}_{\Omega_M}}}{\mathbb{P}(\Omega_M)}
    \leq \frac{\varepsilon^{\mathrm{rest}}_{\mathrm{H}}(N,K)}{\mathbb{P}(\Omega_M)}.
\end{align*}
This concludes the proof of Theorem \ref{MFH: Thrm}.
\end{proof}
The conclusion of Theorem \ref{MFH: Thrm} proves propagation of chaos, in the sense of Definition \ref{Pre:ChaosCoup}, restricted to the event set $\Omega_M$, for the consensus hopping scheme, with convergence rate of order $\mathcal{O}\bigrou{N^{-1/2}}$.

\begin{remark}
Recall the propagation-of-chaos estimate for the freezing scheme \eqref{FreezingRestrExplicitEpsilon} with $J=K$ freezing intervals and pass to the limit $s \longrightarrow \infty$ (which in this case means $g \longrightarrow 0$):
   \begin{align*}
    \mathbb{E} \biggeck{\suptT \,\N{ X^i_t - \overbar{X}^i_t}^p  \mathbbm{1}_{\Omega_M}} 
    &\leq 2\cdot 2^{p-1}(1-g) C_{\mathrm{EM}}N^{-\frac{p}{2}}
    \sum_{l=0}^{K-1}(K-l)\bigrou{g+ 2^{p-1}(1-g)C_0^p}^{l}\\
    &\xrightarrow[]{s \to \infty}
    2\cdot 2^{p-1} C_{\mathrm{EM}}N^{-\frac{p}{2}}
    \sum_{l=0}^{K-1}(K-l)\bigrou{2^{p-1}C_0^p}^{l}.
    \end{align*}
By \eqref{MFH:RestPoCBound}, the corresponding estimate for the hopping scheme is
\begin{align*}
    \mathbb{E} \biggeck{\supu{k}{K} \N{ X^i_k - \overbar{X}^i_k}^p \mathbbm{1}_{\Omega_M}}
    \leq 2^{p-1} C_{\mathrm{EM}} N^{-\frac{p}{2}}
    \sum_{l=0}^{K-1}(K-l)\bigrou{2^{p-1}C_0^p}^{l}.
\end{align*}
Thus, the two estimates have the same structure, differing only in the appearance of the factor $2$ in the freezing case. This reflects the convergence of the freezing scheme to the hopping scheme as $s \to \infty$, proven in Theorem \ref{thm:FStoHS}.
\end{remark}

\chapter{Unrestricted Propagation of Chaos} \label{chap:PropChaUnrestricted}
In Chapter \ref{chap:PropGoodSets}, we could show propagation of chaos for all three schemes at rate $\mathcal{O}\bigrou{N^{-1/2}}$. However, we had to restrict ourselves to a set of good events of the form
\begin{align*}
    \Omega_M := \bigg\{ \omega \in \Omega : \suptT \,  \frac{1}{N} \Sum{i}{N} \max \Big\{ \N{X^i_t}^p , \N{\overbar{X}^i_t}^p \Big \} \leq M \bigg\}.
\end{align*}
Although we could prove that $\mathbb{P}(\Omega_M) \xrightarrow[]{M \rightarrow \infty} 1$, it would be more satisfying to have an unrestricted estimate. In this chapter, we demonstrate that achieving this objective is indeed possible. We will prove that for bounded moments of sufficiently large order, we can prove propagation of chaos for all three schemes at rate $\mathcal{O}(N^{-1/2})$. At first glance, the results of this chapter are strictly better than those of Chapter \ref{chap:PropGoodSets}. However, this improvement comes with a price: we can only achieve $\mathcal{O}(N^{-1/2})$ in Wasserstein-$p$ distance if we have bounded moments of order $q \geq p^2 + p$ as opposed to only needing bounded moments of order $p$ as in the restricted case. We will see that this already has consequences in widely regarded cases, such as propagation of chaos in Wasserstein-$2$ distance with bounded moments of order $4$. Here, the restricted approach achieves $\mathcal{O}(N^{-1/2})$, while the unrestricted rate is slower with $\mathcal{O}(N^{-1/4})$.

To achieve unrestricted propagation of chaos, we use a technique from \cite{Gerber2025Mean-fieldSampling}, where the authors split the expected distance between the consensus points of the empirical measures as follows.
\begin{align}
    &\mathbb{E} \Bigeck{ \N{X^\alpha(\rho^N_\tau) - X^\alpha (\overbar{\rho}^N_\tau)}^p} \notag \\
    =\, &\mathbb{E} \Bigeck{ \N{X^\alpha(\rho^N_\tau) - X^\alpha (\overbar{\rho}^N_\tau)}^p \mathbbm{1}_{\Omega_{N, \tau}}}
    + \mathbb{E} \Bigeck{ \N{X^\alpha(\rho^N_\tau) - X^\alpha (\overbar{\rho}^N_\tau) }^p  \mathbbm{1}_{\Omega \setminus \Omega_{N, \tau}}}, \label{NP:Split1}
\end{align}
where
\begin{align*}
    \Omega_{N, t} := \bigg \{ \omega \in \Omega: \frac{1}{N} \sum^N_{i=1} \N{\overbar{X}^i_t}^p < M \bigg \},
\end{align*}
for $M>0$ being chosen so that $\mathbb{E}\biggeck{\suptT \N{\overbar{X}^i_t}^p}<M$ for every $i \leq N$.
Introducing this set gives us the advantage that we can now exploit the improved stability estimate on the good event $\Omega_{N,t}$, while its complement $\Omega \setminus \Omega_{N,t}$ is a rare event. Using the independence of the mean-field particles together with Lemma \ref{GerberLemma2.5}, we obtain $\mathbb{P}(\Omega \setminus \Omega_{N, t}) \xrightarrow[]{N \rightarrow \infty}0 $. Combined with Hölder's inequality and Lemma \ref{GerberPropA4}, we can estimate the second term on the right-hand side of \eqref{NP:Split1} by
\begin{align*}
    \mathbb{E} \Bigeck{ \N{X^\alpha(\rho^N_\tau) - X^\alpha (\overbar{\rho}^N_\tau) }^p  \mathbbm{1}_{\Omega \setminus \Omega_{N, \tau}}} \leq C \, \mathbb{P}(\Omega \setminus \Omega_{N, \tau})^{\frac{q-p}{q}} \xrightarrow[]{N \rightarrow \infty}0,
\end{align*}
as $C$ is independent of $N$. 
\newline
Thus, we are left with bounding the distance between the consensus points on
\begin{align*}
    \Omega_{N, t} = \bigg \{ \omega \in \Omega: \frac{1}{N} \sum^N_{i=1} \N{\overbar{X}^i_t}^p < M \bigg \},
\end{align*}
which is a slightly worse set to deal with than
\begin{align*}
    \Omega_M = \bigg\{ \omega \in \Omega : \suptT \,  \frac{1}{N} \Sum{i}{N} \max \Big\{ \N{X^i_t}^p , \N{\overbar{X}^i_t}^p \Big \} \leq M \bigg\},
\end{align*}
as we do not have the assurance that the moments of the empirical measures $\rho^N_t$ are uniformly bounded by $M$. We see that being able to make an estimate with  $\mathbb{P}(\Omega \setminus \Omega_{N, t}) \xrightarrow[]{N \rightarrow \infty}0 $ comes at the price of not being able to use Lemma \ref{CarrL2} as we cannot ensure the uniform boundedness of the empirical $p$th moments of the system $\IPS{T}$. A workaround for this is provided by Lemma \ref{GerberStability2}, which gives the same statement as Lemma \ref{CarrL2}, but relies only on the uniform boundedness of one measure. This means that we are able to control the distance between the consensus points of the empirical measure by the distance of two particles as we did in Chapter \ref{chap:PropGoodSets}:
\begin{align*}
    \mathbb{E} \Bigeck{ \N{X^\alpha(\rho^N_\tau) - X^\alpha (\overbar{\rho}^N_\tau)}^p \mathbbm{1}_{\Omega_{N, \tau}}} &\leq C^p_0 \mathbb{E} \Bigeck{W_p^p (\rho^N_\tau, \orho^N_\tau )} \leq C_0^p\frac{1}{N} \Sum{i}{N} \mathbb{E} \biggeck{\N{X^i_\tau - \overbar{X}^i_\tau}^p} \notag \\ &\leq C_0^p\maxu{i}{N} \,  \mathbb{E} \biggeck{\N{X^i_\tau - \overbar{X}^i_\tau}^p} = C_0^p\mathbb{E} \biggeck{\N{X^i_\tau - \overbar{X}^i_\tau}^p},
\end{align*}
where we used the exchangeability of the particle labels in the last equality. From this point onward, we are able to use the same lines of arguments as in Chapter \ref{chap:PropGoodSets} to achieve a propagation of chaos estimate. We see that the limiting factor that slows down our convergence rate is in
\begin{align}
    \mathbb{E} \Bigeck{ \N{X^\alpha(\rho^N_\tau) - X^\alpha (\overbar{\rho}^N_\tau) }^p  \mathbbm{1}_{\Omega \setminus \Omega_{N, \tau}}} \leq C \, \mathbb{P}(\Omega \setminus \Omega_{N, \tau})^{\frac{q-p}{q}} \leq C_1 N^{-\frac{q-p}{2p}}
\end{align}
The explicit convergence rate for $\mathbb{P}(\Omega \setminus \Omega_{N, \tau})$ is given by Lemma \ref{GerberLemma2.5} and is $\mathcal{O}\bigrou{N^{-q/2p}}$. Therefore, the convergence rate for $\mathbb{P}(\Omega \setminus \Omega_{N, \tau})^{\frac{q-p}{q}}$ is $\mathcal{O}\Bigrou{N^{-\frac{q-p}{2p}}}$. Inserting the above estimates into \eqref{NP:Split1} gives us the following estimate for the distance between the consensus points of the finite particle systems:
\begin{align}
    &\mathbb{E} \Bigeck{ \N{X^\alpha(\rho^N_\tau) - X^\alpha (\overbar{\rho}^N_\tau)}^p} \notag \\
    =\, &\mathbb{E} \Bigeck{ \N{X^\alpha(\rho^N_\tau) - X^\alpha (\overbar{\rho}^N_\tau)}^p \mathbbm{1}_{\Omega_{N, \tau}}} + \mathbb{E} \Bigeck{ \N{X^\alpha(\rho^N_\tau) - X^\alpha (\overbar{\rho}^N_\tau) }^p  \mathbbm{1}_{\Omega \setminus \Omega_{N, \tau}}} \notag \\
    \leq \,  &C_0^p\mathbb{E} \biggeck{\N{X^i_\tau - \overbar{X}^i_\tau}^p} + C_1 N^{-\frac{q-p}{2p}} \label{NP:Split2}
\end{align}
Note that we can, as in the previous sections, estimate the convergence of the mean-field approximations of $\orho^N_\tau$ towards $\orho_\tau$ with the improved Lemma \ref{GerberLemma3.7}:
\begin{align*}
    \mathbb{E} \biggeck{\N{X^\alpha(\orho^N_\tau) - X^\alpha(\orho_\tau)}^p} \leq C_2 N^{-\frac{p}{2}}.
\end{align*}
Combined with \eqref{NP:Split2}, this yields the following for the distance estimate between the consensus of the interacting particle system and the mean-field limit:
\begin{align*}
    \mathbb{E} \biggeck{\N{X^\alpha(\rho^N_\tau) - X^\alpha(\orho_\tau)}^p} &\leq 2^{p-1}\biggrou{\mathbb{E} \biggeck{\N{X^\alpha(\rho^N_\tau) - X^\alpha(\orho^N_\tau)}^p}+ \mathbb{E} \biggeck{\N{X^\alpha(\orho^N_\tau) - X^\alpha(\orho_\tau)}^p}}\\
    &\leq 2^{p-1} \Bigrou{C_2 N^{-\frac{p}{2}} + C_0^p\mathbb{E} \biggeck{\N{X^i_\tau - \overbar{X}^i_\tau}^p} + C_1 N^{-\frac{q-p}{2p}}}\\
    &\lesssim \mathbb{E} \biggeck{\N{X^i_\tau - \overbar{X}^i_\tau}^p} + N^{- \min \{ \frac{p}{2}, \frac{q-p}{2p}\}}.
\end{align*}
For the remaining estimation parts, we can use the same lines of reasoning as in the propagation of chaos results of Chapter \ref{chap:PropGoodSets}, which ultimately yields unrestricted propagation of chaos at rate $\mathcal{O} \bigrou{ N^{- \min \{ \frac{p}{2}, \frac{q-p}{2p}\}}}$.

\section{Preliminaries Revisited}
We now present some refinements of selected results from Section \ref{Pre:ToolsForMeanField} that we will need to prove unrestricted propagation of chaos. 
We begin with the most novel statement of those added in this section. As previously hinted, this is an improvement of the original Wasserstein stability estimate for the consensus point, Lemma \ref{CarrL2}, which only relies on one of the two measures to have an explicit bound on its $p$th moment. We also include the proof for the sake of clarity.
\begin{lemma}[{{\cite[Corollary 3.3]{Gerber2025Mean-fieldSampling}}}]
  Let $p \geq 1$, let $f$ fulfill \eqref{A1}, \eqref{A2} and \eqref{A3}, and let $\mu, \nu \in \mathcal{P}_p(\mathbb{R}^d)$ be such that there exists $M > 0$ with
    \begin{align*}
    \mathbb{E}_\mu \bigeck{\N{x}^p} \leq M.
    \end{align*}
    Then there exists a constant $C = C(\alpha, p, M, C_1, C_2, C_3, R)$ such that
    \begin{align*}
        \N{X^\alpha(\mu) - X^\alpha(\nu)} \leq C W_p(\mu, \nu).
    \end{align*}
\label{GerberStability2}
\end{lemma}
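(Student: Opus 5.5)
The plan is to reduce the statement to Lemma \ref{CarrL2} by splitting on the size of $W_p(\mu,\nu)$. The point is that a bound on the $p$th moment of $\mu$, together with control of $W_p(\mu,\nu)$, automatically controls the $p$th moment of $\nu$. Take any coupling $\pi\in\Pi(\mu,\nu)$ and realize it by a pair of random variables $(X,Y)$ with $X\sim\mu$, $Y\sim\nu$. By Minkowski's inequality in $L^p$,
\begin{align*}
    \mathbb{E}_\nu\bigeck{\N{y}^p}^{1/p} \leq \mathbb{E}\bigeck{\N{X}^p}^{1/p} + \mathbb{E}\bigeck{\N{X-Y}^p}^{1/p}.
\end{align*}
Taking the infimum over couplings gives $\mathbb{E}_\nu\bigeck{\N{y}^p}^{1/p} \leq M^{1/p} + W_p(\mu,\nu)$.

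\emph{Case $W_p(\mu,\nu) < 1$.} Here $\mathbb{E}_\nu\bigeck{\N{y}^p} \leq (M^{1/p}+1)^p =: M'$, and also $\mathbb{E}_\mu\bigeck{\N{x}^p} \leq M \leq M'$. Both measures then satisfy the hypothesis of Lemma \ref{CarrL2} with the common bound $M'$. That lemma gives
\begin{align*}
    \N{X^\alpha(\mu)-X^\alpha(\nu)} \leq C_0(\alpha,p,M',C_1,C_2,C_3,R)\, W_p(\mu,\nu).
\end{align*}
Since $M'$ depends only on $M$ and $p$, this constant has the required dependence.

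\emph{Case $W_p(\mu,\nu)\geq 1$.} Here the linear bound follows from a crude size estimate of both consensus points. By Corollary \ref{GerberPropA4Corollary}, $\N{X^\alpha(\mu)} \leq b_p^{1/p} M^{1/p}$. The same corollary applied to $\nu$ gives
\begin{align*}
    \N{X^\alpha(\nu)} \leq b_p^{1/p}\bigrou{M^{1/p}+W_p(\mu,\nu)}.
\end{align*}
Hence
\begin{align*}
    \N{X^\alpha(\mu)-X^\alpha(\nu)} \leq b_p^{1/p}\bigrou{2M^{1/p}+W_p(\mu,\nu)} \leq b_p^{1/p}\bigrou{2M^{1/p}+1}\,W_p(\mu,\nu),
\end{align*}
where the last step uses $1 \leq W_p(\mu,\nu)$. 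The final constant is $C := \max\{C_0(\alpha,p,M',\dots),\, b_p^{1/p}(2M^{1/p}+1)\}$, which depends only on $(\alpha,p,M,C_1,C_2,C_3,R)$.

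The step needing the most care is the second case. It requires that the constant $b_p$ of Lemma \ref{GerberPropA4} depends only on $(\alpha,p,C_2,C_3,R)$ and not on any moment bound of the measure it is applied to. This is how the lemma is stated, so it can be used for $\nu$ even though $\nu$ has no a priori moment bound. If this uniformity were unavailable, the fallback would be a direct argument. One would write
\begin{align*}
    X^\alpha(\nu)-X^\alpha(\mu) = \frac{1}{\Gnorm{\nu}}\int\!\!\int \Bigrou{(y-X^\alpha(\mu))\oa(y) - (x-X^\alpha(\mu))\oa(x)}\,d\pi(x,y)
\end{align*}
and bound the weight differences using \eqref{A1}. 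This route, however, also needs a lower bound on $\Gnorm{\nu}$, which Lemma \ref{CarrL1} supplies from the moment bound on $\nu$ obtained above. So the same Minkowski estimate would be used either way.
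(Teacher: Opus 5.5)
Your proof is correct, and it reaches the result by a direct route rather than the paper's proof by contradiction. The paper argues by contradiction in three steps:
\begin{itemize}
\item it takes sequences $\mu_i,\nu_i$ with $\mathbb{E}_{\mu_i}[\|x\|^p]\le M$ and ratio $\|X^\alpha(\mu_i)-X^\alpha(\nu_i)\|/W_p(\mu_i,\nu_i)\ge i$;
\item it uses Lemma~\ref{CarrL2} to exclude bounded $p$th moments of $\nu_i$, so these moments must blow up;
\item it combines the reverse triangle inequality $W_p(\mu_i,\nu_i)\ge W_p(\nu_i,\delta_0)-M^{1/p}$ with the growth bound $\|X^\alpha(\rho)\|\le K\,W_p(\rho,\delta_0)$ (Lemma~\ref{GerberPropA4} at first moments plus Jensen) to show that the ratio has $\limsup\le K$.
\end{itemize}

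You use the same two ingredients, glued by the same triangle inequality for $W_p$ through $\delta_0$, which is your Minkowski step. One is Lemma~\ref{CarrL2} in the regime where the moments of $\nu$ are controlled. The other is linear growth of the consensus point, via Corollary~\ref{GerberPropA4Corollary}, in the complementary regime. The difference is that you split directly on $W_p(\mu,\nu)<1$ versus $W_p(\mu,\nu)\ge 1$.

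Your version has two advantages:
\begin{itemize}
\item It gives an explicit constant, $\max\{C_0(\alpha,p,(M^{1/p}+1)^p,C_1,C_2,C_3,R),\,b_p^{1/p}(2M^{1/p}+1)\}$, whereas the contradiction argument only yields existence.
\item It avoids the implicit subsequence step in the paper's passage from ``the moments of $\nu_i$ are not bounded'' to ``$\mathbb{E}_{\nu_i}[\|x\|^p]\to\infty$''.
\end{itemize}

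Both proofs rely in the same way on the fact that the constant in Lemma~\ref{GerberPropA4} does not depend on any moment bound of the measure it is applied to. The paper also applies it to $\nu_i$ with unbounded moments, so your fallback paragraph is not needed.
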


\begin{proof}
Define $\eta_\rho^\alpha := \frac{\oa \rho}{\Gnorm{\rho}}$ for $\rho \in \PRd$. By Lemma \ref{GerberPropA4}, there exists a constant $K>0$ such that for any $\rho \in \mathcal{P}_p(\mathbb{R}^d)$,
\begin{align}
    \N{X^\alpha(\rho)}
    = \N{\mathbb{E}_{\eta^\alpha_\rho}[x]}
    \leq \mathbb{E}_{\eta^\alpha_\rho}[\N{x}]
    \leq K \mathbb{E}_\rho \bigeck{\N{x}}
    \leq K \mathbb{E}_\rho \bigeck{\N{x}^p}^{\frac{1}{p}}
    = K W_p(\rho, \delta_0).
    \label{Corollary33Stability}
\end{align}
We assume that the claim is false. Then for every $i \in \mathbb{N}$ there exist measures $\mu_i, \nu_i \in \mathcal{P}_p(\mathbb{R}^d)$ such that
\begin{align*}
\mathbb{E}_{\mu_i} \bigeck{\N{x}^p} \leq M
\end{align*}
and
\begin{align}
    \frac{\N{X^\alpha(\mu_i) - X^\alpha(\nu_i)}}{W_p(\mu_i, \nu_i)} \geq i.
    \label{Corollary33Contra}
\end{align}
In particular, this implies that
$
W_p(\mu_i,\nu_i) \xrightarrow[]{i\to\infty} 0
$
cannot hold. Note that if $(\mathbb{E}_{\nu_i} \bigeck{\N{x}^p})_i$ were bounded, then both $(\mu_i)_i$ and $(\nu_i)_i$ would have uniformly bounded $p$th moments, and Lemma \ref{CarrL2} would yield
\begin{align*}
\N{X^\alpha(\mu_i)-X^\alpha(\nu_i)} \leq C' W_p(\mu_i,\nu_i)
\end{align*}
for some constant $C' >0$ independent of $i$, contradicting \eqref{Corollary33Contra}. Hence
\begin{align*}
\mathbb{E}_{\nu_i} \bigeck{\N{x}^p} \xrightarrow[]{i \rightarrow \infty} \infty.
\end{align*}
Since
\begin{align*}
W_p^p(\nu_i,\delta_0)=\mathbb{E}_{\nu_i}\bigeck{\N{x}^p},
\end{align*}
it follows that
\begin{align*}
W_p(\nu_i,\delta_0)\xrightarrow[]{i\to\infty}\infty.
\end{align*}
By the inverse triangle inequality, for every $i \in \mathbb{N}$,
\begin{align}
    W_p(\mu_i, \nu_i)
    \geq W_p(\nu_i, \delta_0) - W_p(\mu_i, \delta_0)
    \geq W_p(\nu_i, \delta_0) - M^{1/p}.
    \label{Corollary33Est1}
\end{align}
Moreover, for every $i$,
\begin{align}
    \N{X^\alpha(\mu_i) - X^\alpha(\nu_i)}
    &\leq \N{X^\alpha(\mu_i)} + \N{X^\alpha(\nu_i)} \notag\\
    &\leq K\big(M^{1/p} + W_p(\nu_i, \delta_0)\big),
    \label{Corollary33Est2}
\end{align}
where we used \eqref{Corollary33Stability} in the last inequality. Combining \eqref{Corollary33Est1} and \eqref{Corollary33Est2}, we obtain
\begin{align*}
    \underset{i \rightarrow \infty}{\limsup}
    \frac{\N{X^\alpha(\mu_i) - X^\alpha(\nu_i)}}{W_p(\mu_i, \nu_i)}
    &\leq
    \underset{i \rightarrow \infty}{\limsup}
    \frac{\N{X^\alpha(\mu_i) - X^\alpha(\nu_i)}}{W_p(\nu_i, \delta_0) - M^{1/p}} \\
    &\leq
    K \underset{i \rightarrow \infty}{\limsup}
    \frac{W_p(\nu_i, \delta_0) + M^{1/p}}{W_p(\nu_i, \delta_0) - M^{1/p}}
    = K,
\end{align*}
which contradicts \eqref{Corollary33Contra}. This completes the proof.
\end{proof}
\vspace{.3 cm}
The following Lemma provides a generalization of Lemma \ref{CH:ConsBound} where we do not need an explicit bound on the moments of the empirical measure.
\begin{lemma}[{{\cite[Lemma 3.7]{Gerber2025Mean-fieldSampling}}}]
    Let $0< p < q$ and  $\nu \in \mathcal{P}_q(\mathbb{R}^d)$ with $\mathbb{E}_\nu \bigeck{\N{x}^q} \leq M_q$. Let $\overbar{\nu}^N$ denote the empirical measure of $N$ independent random variables $\overbar{X}_1, ..., \overbar{X}_N \sim \nu$. Then there exists a constant $C_{\mathrm{EM}}>0$, that only depends on $(f, \alpha, p, q, M_q)$ such that for every $N \in \mathbb{N}$
    \begin{align*}
         \mathbb{E} \biggeck{ \N{X^\alpha(\overbar{\nu}^N) - X^\alpha(\nu)}^p} \leq C_{\mathrm{EM}} N^{-\frac{p}{2}}.
    \end{align*}
\label{GerberLemma3.7}
\end{lemma}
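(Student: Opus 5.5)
Our plan is to split on a good event on which the empirical $p$th moment is controlled, and to handle the complement by Hölder's inequality. For $M := 2M_q^{p/q} + 1 > \mathbb{E}\bigeck{\N{\overbar{X}_1}^p}$ (recall $\mathbb{E}_\nu[\N{x}^p] \leq M_q^{p/q}$ by Jensen) we set
\begin{align*}
    \Omega_M := \bigg\{ \omega \in \Omega : \frac{1}{N}\sum_{i=1}^N \N{\overbar{X}_i}^p \leq M \bigg\},
\end{align*}
and write
\begin{align*}
    \mathbb{E}\bigeck{\N{X^\alpha(\overbar{\nu}^N)-X^\alpha(\nu)}^p}
    = \mathbb{E}\bigeck{\N{X^\alpha(\overbar{\nu}^N)-X^\alpha(\nu)}^p \mathbbm{1}_{\Omega_M}}
    + \mathbb{E}\bigeck{\N{X^\alpha(\overbar{\nu}^N)-X^\alpha(\nu)}^p \mathbbm{1}_{\Omega\setminus\Omega_M}}.
\end{align*}
For $p \geq 2$, the first term is bounded by $C_{\mathrm{EM}}N^{-p/2}$ directly by Lemma \ref{CH:ConsBound}. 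For $0<p<2$, we would first apply Lemma \ref{CH:ConsBound} with exponent $2$, which requires $q>2$; if $q\le 2$ we instead rerun the proof of Lemma \ref{CH:ConsBound} with exponent $p$. Jensen's inequality then gives $\mathbb{E}[\,\cdot\,^p\mathbbm{1}_{\Omega_M}] \leq (\mathbb{E}[\,\cdot\,^2\mathbbm{1}_{\Omega_{M'}}])^{p/2}$ on the corresponding event.

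For the complement term, we use Corollary \ref{GerberPropA4Corollary} with exponent $q$ to get
\begin{align*}
    \N{X^\alpha(\overbar{\nu}^N)-X^\alpha(\nu)}^q \leq 2^{q-1} b_q \bigrou{\mathbb{E}_{\overbar{\nu}^N}[\N{x}^q] + M_q},
\end{align*}
whose expectation is at most $2^q b_q M_q$. Hölder's inequality with exponents $q/p$ and $q/(q-p)$ then yields
\begin{align*}
    \mathbb{E}\bigeck{\N{\cdot}^p\mathbbm{1}_{\Omega\setminus\Omega_M}} \leq (2^q b_q M_q)^{p/q}\,\mathbb{P}(\Omega\setminus\Omega_M)^{(q-p)/q}.
\end{align*}
Lemma \ref{GerberLemma2.5} applied to $Z_i=\N{\overbar{X}_i}^p$ with $r=q/p$ gives $\mathbb{P}(\Omega\setminus\Omega_M)\leq CN^{-q/(2p)}$ whenever $r\geq 2$. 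This produces the rate $N^{-(q-p)/(2p)}$.

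\textbf{Main obstacle.} This rate beats $N^{-p/2}$ only when $q \geq p^2+p$, so the naive split does not give the claimed $N^{-p/2}$ for general $q>p$. To close the gap, we would avoid the moment event altogether and linearize the consensus point directly. Write
\begin{align*}
    X^\alpha(\overbar{\nu}^N)-X^\alpha(\nu)=\frac{A_N - X^\alpha(\nu)B_N}{B_N},
\end{align*}
where
\begin{align*}
    A_N := \frac{1}{N}\sum_{i=1}^N \overbar{X}_i\,\omega_\alpha(\overbar{X}_i), \qquad B_N := \frac{1}{N}\sum_{i=1}^N \omega_\alpha(\overbar{X}_i).
\end{align*}
The numerator is an average of centered i.i.d.\ vectors $(\overbar{X}_i - X^\alpha(\nu))\omega_\alpha(\overbar{X}_i)$. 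Since $0<\omega_\alpha\leq \omega_\alpha(x^*)$, these have finite moments of all orders up to $q$, and by (A3) $\N{x}\omega_\alpha(x)$ is in fact bounded. The Marcinkiewicz–Zygmund (or Rosenthal) inequality therefore gives numerator moments of order $N^{-1/2}$ to every power.

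The denominator $B_N$ is bounded below by a positive constant except on the event $\{B_N < \tfrac12\Gnorm{\nu}\}$. Because $\omega_\alpha$ is bounded, Hoeffding's inequality makes this event exponentially rare. On it, we again bound $\N{X^\alpha(\overbar{\nu}^N)-X^\alpha(\nu)}^p$ via Corollary \ref{GerberPropA4Corollary} and Hölder, and exponential smallness absorbs any power of $N$. The step we expect to require the most care is verifying that the constants depend only on $(f,\alpha,p,q,M_q)$, in particular the lower bound on $\Gnorm{\nu}$. We would obtain this from Lemma \ref{CarrL1}, using $\mathbb{E}_\nu[\N{x}^2]\lesssim 1+M_q$ if $q\geq 2$. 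If $q<2$, we would instead use Markov's inequality to see that $\nu(B_r(0))\geq 1/2$ for some $r=r(M_q)$, and then bound $\omega_\alpha$ from below on $B_r(0)$ via (A2).
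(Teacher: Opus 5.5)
Your linearization argument is a correct, self-contained proof. It is the part that matters: the opening moment-event split can be dropped. Your diagnosis of that split is right, since its complement is only polynomially rare (Lemma~\ref{GerberLemma2.5}), and this is exactly the bottleneck that produces the rate $N^{-\min\{p/2,(q-p)/(2p)\}}$ in Chapter~\ref{chap:PropChaUnrestricted}.

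The thesis gives no proof of this lemma; it is imported from Gerber et al. The natural comparison is therefore with the thesis's proof of the restricted version, Lemma~\ref{CH:ConsBound}. There, the difference is split into two pieces: a centered sum of $\overline{X}^i\omega_\alpha(\overline{X}^i)$ divided by the empirical normalizer, and $X^\alpha(\nu)$ times the relative error of the normalizer. Both pieces are controlled by Rosenthal. The empirical normalizer $\|\omega_\alpha\|_{L^1(\overline{\nu}^N)}$ is bounded below via Lemma~\ref{CarrL1}, using the empirical second-moment bound that is only available on $\Omega_M$; this is why the indicator cannot be removed there.

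You instead merge both pieces into the single centered average $\frac{1}{N}\sum_i(\overline{X}_i-X^\alpha(\nu))\omega_\alpha(\overline{X}_i)$, whose summands are bounded thanks to (A3). You then bound the denominator below on $\{B_N\ge\frac12\|\omega_\alpha\|_{L^1(\nu)}\}$, whose complement is exponentially rare by Hoeffding because $0<\omega_\alpha\le\omega_\alpha(x^*)$. The $q$th moments enter only in the H\"older step on that exponentially rare event. This buys the full rate $N^{-p/2}$ for every $0<p<q$ with no conditioning, and with constants depending only on $(f,\alpha,p,q,M_q)$. Your Markov-inequality treatment of the normalizer lower bound for $q<2$ is what makes this dependence work.

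Two small points should be made explicit. First, bounding the summands requires a bound on $\|X^\alpha(\nu)\|$. This follows from $\|X^\alpha(\nu)\|\le\sup_x\|x\|\omega_\alpha(x)/\|\omega_\alpha\|_{L^1(\nu)}$ together with the same normalizer lower bound you derive. Second, for $q<1$ you cannot use Corollary~\ref{GerberPropA4Corollary}, which is stated for exponents at least $1$. However, $X^\alpha(\overline{\nu}^N)$ is a convex combination of the samples, so $\|X^\alpha(\overline{\nu}^N)\|^q\le\sum_i\|\overline{X}_i\|^q$. The resulting factor $NM_q$ is still absorbed by the exponentially small probability.
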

With the above refinements, we now have the tools we need to prove unrestricted propagation of chaos. Following the same order as in Chapter \ref{chap:PropGoodSets}, we start with the $\delta$-CBO scheme.

\section{$\delta$-CBO}\label{Section:Unrestricted:Delta}
We recall the $\delta$-CBO dynamics for the particle system $\bigrou{\mathcal{X}_t^N }_t = (X^1_t, ..., X^N_t)_{t \geq 0 }$:
\begin{align*}
    dX^i_t = - s \cdot \lambda (X^i_t - X^\alpha(\rho^N_{t}))dt + \sqrt{s}\delta dB^i_t,  \quad X^i_0 \sim \orho_0, \quad i = 1, ..., N,
\end{align*}
and the coupled mean-field particle system
 $\bigrou{\overbar{\mathcal{X}}_t^N }_t = (\overbar{X}^1_t, ..., \overbar{X}^N_t)_{t \geq 0 }$ defined by
\begin{align*}
    d\overbar{X}^i_t = - s \cdot \lambda (\overbar{X }^i_t - X^\alpha(\orho_{t}))dt + \sqrt{s}\delta dB^i_t, \quad \overbar{X}^i_0 = X^i_0 \, \, \text{ a.s.},  \quad i = 1, ...,N,
\end{align*}
where $\orho_t$ is the distribution of the mean-field process
\begin{align*}
    d\overbar{X}_t = - s \cdot \lambda (\overbar{X}_t - X^\alpha(\orho_{t}))dt + \sqrt{s}\delta dB_t, \quad X_0 \sim \orho_0.
\end{align*}
The following theorem follows the steps from \cite[Theorem 2.6] {Gerber2025Mean-fieldSampling}. We use the split stated at the beginning of this chapter alongside the original argument from Sznitman's proof of McKean's theorem \cite[Theorem 3.1]{Chaintron2022PropagationApplications} to derive a propagation of chaos result at rate $\mathcal{O}\Bigrou{N^{- \min \big \{ \frac{1}{2}, \frac{q-p}{2p^2} \big \} }}$.
\begin{theorem}
Let $T>0$ be the final time. Consider the interacting particle system $\IPS{T}$ defined by the SDE system (\ref{Delta:Scheme}) and the coupled mean-field system $\MIPS{T}$ defined by (\ref{Delta:MFScheme}), both sharing the same initial data $\mathcal{X}^N_0 = \overbar{\mathcal{X}}^N_0$ almost surely, with common distribution $\orho_0 \in \mathcal{P}_q(\mathbb{R}^d)$ for some $q \geq 2$. Then, for every $ 1 \leq p \leq q/2$, there exists a $\varepsilon_{\mathrm{D}}^{\mathrm{unrest}}(N, T)>0$ with $\varepsilon_{\mathrm{D}}^{\mathrm{unrest}}(N, T) \xrightarrow[]{N \rightarrow \infty}0$ such that for every $N \in \mathbb{N}$ and every $i \leq N$
    \begin{align*}
          \mathbb{E} \biggeck{\suptT \,\N{ X^i_t - \overbar{X}^i_t}^p} \leq \varepsilon_{\mathrm{D}}^{\mathrm{unrest}}(N, T).
    \end{align*}
    \label{NPDelta:Thrm}
\end{theorem}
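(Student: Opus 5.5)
We follow the strategy of Theorem \ref{Delta:Thrm} without the indicator $\mathbbm{1}_{\Omega_M}$, replacing the restricted stability estimate by the split sketched at the beginning of this chapter. Since the Brownian motions cancel, the difference $F^i_t := X^i_t-\overbar{X}^i_t$ again satisfies $F^i_t = s\lambda\int_0^t e^{-s\lambda(t-\tau)}D_\tau\,d\tau$ with $D_\tau := X^\alpha(\rho^N_\tau)-X^\alpha(\orho_\tau)$. Exactly as in the proof of Theorem \ref{Delta:Thrm}, Hölder and Fubini--Tonelli give, for every $\tiT\in[0,T]$,
\begin{align*}
\mathbb{E}\biggeck{\suptTit\N{F^i_t}^p}\le (s\lambda)^p\tiT^{p-1}\int_0^{\tiT}\mathbb{E}\bigeck{\N{D_\tau}^p}\,d\tau .
\end{align*}
We then split $\N{D_\tau}^p\le 2^{p-1}\bigrou{\N{X^\alpha(\rho^N_\tau)-X^\alpha(\orho^N_\tau)}^p+\N{X^\alpha(\orho^N_\tau)-X^\alpha(\orho_\tau)}^p}$. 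The second term is handled by Lemma \ref{GerberLemma3.7}, which requires uniformly bounded $q$th moments of $\orho_\tau$. These follow from Lemma \ref{NPDelta:UniformMoment} applied with exponent $q$, since $\orho_0\in\mathcal P_q$ and $\orho_\tau=\Law(\overbar X^i_\tau)$. This gives a bound $C_{\mathrm{EM}}N^{-p/2}$ uniformly in $\tau$.

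For the first term we fix $M>\sup_\tau\mathbb{E}\bigeck{\N{\overbar X^1_\tau}^p}$, which is finite by Lemma \ref{NPDelta:UniformMoment}, and split along $\Omega_{N,\tau}=\{\frac1N\sum_j\N{\overbar X^j_\tau}^p<M\}$. On $\Omega_{N,\tau}$ the measure $\orho^N_\tau$ has $p$th moment bounded by $M$. Lemma \ref{GerberStability2} together with Example \ref{WasserBD} and exchangeability then give the bound $C^p\,\mathbb{E}\bigeck{\sup_{r\le\tau}\N{F^i_r}^p}$; no bound on the moments of $\rho^N_\tau$ is needed.

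On the complement we apply Hölder with exponents $q/p$ and $q/(q-p)$:
\begin{align*}
\mathbb{E}\bigeck{\N{X^\alpha(\rho^N_\tau)-X^\alpha(\orho^N_\tau)}^p\mathbbm{1}_{\Omega\setminus\Omega_{N,\tau}}}\le \mathbb{E}\bigeck{\N{X^\alpha(\rho^N_\tau)-X^\alpha(\orho^N_\tau)}^q}^{p/q}\,\mathbb{P}(\Omega\setminus\Omega_{N,\tau})^{(q-p)/q}.
\end{align*}
To bound the first factor, we use Corollary \ref{GerberPropA4Corollary} with exponent $q$, which gives $\N{X^\alpha(\rho)}^q\le b_q\mathbb{E}_\rho\N{x}^q$. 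Together with Lemma \ref{NPDelta:UniformMoment} with exponent $q$ for both systems, this yields a constant independent of $N$. For the second factor, Lemma \ref{GerberLemma2.5} applies to $Z_j=\N{\overbar X^j_\tau}^p$. These are i.i.d. with finite $r$th moment for $r=q/p\ge2$, which is exactly where the hypothesis $p\le q/2$ enters. It gives $\mathbb{P}(\Omega\setminus\Omega_{N,\tau})\le CN^{-q/(2p)}$. Hence the whole term is $\lesssim N^{-(q-p)/(2p)}$.

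Collecting the pieces, we obtain
\begin{align*}
\mathbb{E}\biggeck{\suptTit\N{F^i_t}^p}\le c_1\int_0^{\tiT}\mathbb{E}\biggeck{\sup_{r\le\tau}\N{F^i_r}^p}d\tau+c_2\bigrou{N^{-p/2}+N^{-(q-p)/(2p)}}.
\end{align*}
The function $\phi(\tiT)=\mathbb{E}\bigeck{\sup_{t\le\tiT}\N{F^i_t}^p}$ is bounded by Lemma \ref{NPDelta:UniformMoment}, so Lemma \ref{GronwallLemma} applies. It yields $\varepsilon^{\mathrm{unrest}}_{\mathrm D}(N,T)=c_2T\bigrou{N^{-p/2}+N^{-(q-p)/(2p)}}e^{c_1T}$, which tends to $0$ because $q>p$.

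The main obstacle is the constant of Lemma \ref{GerberLemma2.5}. We must check that it is uniform in $\tau\in[0,T]$, since the law of $Z_j$ changes with $\tau$. The first thing to try is to revisit that lemma's proof, presumably Markov's inequality combined with a Rosenthal/Marcinkiewicz--Zygmund bound for the centered sum. That argument gives a constant depending only on $\sup_\tau\mathbb{E}\N{\overbar X_\tau}^q$ and on the gap $M-\sup_\tau\mathbb{E}\N{\overbar X_\tau}^p>0$, both of which are controlled uniformly in $\tau$. A similar check is needed for the uniformity in $\tau$ of $C_{\mathrm{EM}}$ in Lemma \ref{GerberLemma3.7}; it depends only on $M_q$, so that check is straightforward.
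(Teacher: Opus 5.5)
Your proposal is correct and follows the paper's proof essentially step by step: the Hölder--Fubini reduction to $\int_0^{\tilde T}\mathbb{E}[\|D_\tau\|^p]\,d\tau$, the split along $\Omega_{N,\tau}$ with Lemma \ref{GerberStability2} on the good set and Hölder plus Lemma \ref{GerberLemma2.5} on the complement, Lemma \ref{GerberLemma3.7} for the empirical-versus-law consensus term, and Grönwall. Your ``main obstacle'' does not arise in the paper, because it uses the $\tau$-independent i.i.d.\ variables $Z_j:=\sup_{t\in[0,T]}\|\overline{X}^j_t\|^p$ (with $\mathbb{E}[Z_1^{q/p}]<\infty$ by Lemma \ref{NPDelta:UniformMoment} with exponent $q$) and $M>\mathbb{E}[Z_1]$; then $\frac1N\sum_j\|\overline{X}^j_\tau\|^p\le\frac1N\sum_j Z_j$, and a single application of Lemma \ref{GerberLemma2.5} gives a constant uniform in $\tau$, so your workaround of reopening that lemma's proof (Markov plus a Marcinkiewicz--Zygmund bound) would also work but is unnecessary.
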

\begin{proof}
For fixed ${\tiT} \in [0, T], i \leq N$, with the exact same reasoning as in Theorem \ref{Delta:Thrm}, we get:
\begin{align}
    \mathbb{E}\biggeck{\suptTit \N{F^i_t}^p} 
    \leq (s\lambda)^p {\tiT}^{p-1} \int_0^{\tiT} 
    \mathbb{E}\bigeck{\N{D_\tau}^p }\, d\tau,
    \label{NPDeltaThrm:Bound1}
\end{align}
where $F^i_t = X^i_t - \overbar{X}^i_t$ and $D_t = X^\alpha(\rho^N_t) -X^\alpha (\orho_t)$ for every $t \in [0, {\tiT}]$.   
We now expand $\N{D_\tau}^p$ by inserting the empirical mean-field measure $\overbar{\rho}^N_\tau$ and applying the triangle inequality:
\begin{align}
    \mathbb{E}\bigeck{\N{D_\tau }^p} 
    &= \mathbb{E}\Bigeck{\N{X^\alpha(\rho^N_\tau) 
    - X^\alpha(\orho_\tau) }^p} \notag \\
    &\leq 2^{p-1}\mathbb{E}\Bigeck{\N{X^\alpha(\rho^N_\tau) 
    - X^\alpha(\overbar{\rho}^N_\tau) }^p} 
    + 2^{p-1}\mathbb{E}\Bigeck{\N{X^\alpha(\overbar{\rho}^N_\tau) 
    - X^\alpha(\orho_\tau)}^p}.
    \label{NPDeltaThrm:Bound2}
\end{align}
In order to bound the expectation of the two empirical measures in (\ref{NPDeltaThrm:Bound2}), we introduce 
\begin{align*}
    \Omega_{N, t} := \bigg \{ \omega \in \Omega: \frac{1}{N} \sum^N_{i=1} \N{\overbar{X}^i_t}^p < M \bigg \}
    \quad \text{and} \quad
    Z_i := \suptT \, \N{\overbar{X}^i_t}^p,
\end{align*}
where $M> \mathbb{E}\bigeck{Z_i}$ is fixed. The uniform boundedness of expectations of $Z_i$ follows from Lemma \ref{NPDelta:UniformMoment}. This allows us to split as follows: 
\begin{align}
    &\mathbb{E} \Bigeck{ \N{X^\alpha(\rho^N_\tau) - X^\alpha (\overbar{\rho}^N_\tau)}^p} \notag \\
    = \, &\mathbb{E} \Bigeck{ \N{X^\alpha(\rho^N_\tau) - X^\alpha (\overbar{\rho}^N_\tau)}^p  \mathbbm{1}_{\Omega_{N, \tau}}}
    + \mathbb{E} \Bigeck{ \N{X^\alpha(\rho^N_\tau) - X^\alpha (\overbar{\rho}^N_\tau)}^p  \mathbbm{1}_{\Omega \setminus \Omega_{N, \tau}}}.
\end{align}
The estimation of the first part is straightforward, as we can directly use the improved stability estimate of Lemma \ref{GerberStability2} and Example \ref{WasserBD} to get
\begin{align}
    \mathbb{E} \Bigeck{ \N{X^\alpha(\rho^N_\tau) - X^\alpha (\overbar{\rho}^N_\tau)}^p \, \mathbbm{1}_{\Omega_{N, \tau}}}
    \leq C^p_0 \maxu{i}{N} \, \mathbb{E} \biggeck{\N{X^i_\tau - \overbar{X}^i_\tau}^p}
    = C^p_0 \mathbb{E} \biggeck{\N{X^i_\tau - \overbar{X}^i_\tau}^p}
    \notag \\
    \leq C^p_0 \mathbb{E} \biggeck{ \underset{s \in [0, \tau]}{\sup}\N{X^i_s - \overbar{X}^i_s}^p}.
    \label{NPDeltaThrm:Bound3}
\end{align}
To the second summand, we apply Hölder's inequality with conjugate exponents $\frac{q}{p}$ and $\frac{q}{q-p}$:
\begin{align}
    \mathbb{E} \Bigeck{ \N{X^\alpha(\rho^N_\tau) - X^\alpha (\overbar{\rho}^N_\tau) }^p\mathbbm{1}_{\Omega \setminus \Omega_{N, \tau}}}
    \leq \mathbb{E} \Bigeck{ \N{X^\alpha(\rho^N_\tau) - X^\alpha (\overbar{\rho}^N_\tau)}^q}^{\frac{p}{q}} 
    \, \mathbb{P}(\Omega \setminus \Omega_{N, \tau})^{\frac{q-p}{q}}.
    \label{NPDeltaProbabilitysplit}
\end{align}
We note that almost surely
\begin{align*}
\frac{1}{N}\sum_{i=1}^N \N{\overbar{X}^i_\tau}^p \leq \frac{1}{N}\sum_{i=1}^N Z_i,
\end{align*}
and therefore
\begin{align*}
\mathbb{P}(\Omega \setminus \Omega_{N,\tau})
=
\mathbb{P}\bigg(\frac{1}{N}\sum_{i=1}^N \N{\overbar{X}^i_\tau}^p \ge M\bigg)
\leq
\mathbb{P}\biggrou{\frac{1}{N}\sum_{i=1}^N Z_i \ge M}.
\end{align*}
By Lemma \ref{GerberLemma2.5} with $r = q/p$ and the fact that $\mathbb{E}\Bigeck{Z_1^{q/p}}<\infty$, there exists $K>0$ independent of $N$ such that
\begin{align*}
    \mathbb{P}(\Omega \setminus \Omega_{N, \tau})
    \leq K N^{- \frac{q}{2p}}.
\end{align*}
It remains to bound
\begin{align}
    \mathbb{E} \Bigeck{ \N{X^\alpha(\rho^N_\tau) - X^\alpha (\overbar{\rho}^N_\tau)}^q}
    \leq 2^{q-1}\biggrou{\mathbb{E} \Bigeck{ \N{X^\alpha(\rho^N_\tau)}^q} + \mathbb{E} \Bigeck{\N{ X^\alpha (\overbar{\rho}^N_\tau)}^q}}.
    \label{NPDeltaTwoCOnsensus Bound 1}
\end{align}
Using Corollary \ref{GerberPropA4Corollary} with exponent $q$, we get
\begin{align*}
    \mathbb{E} \Bigeck{ \N{X^\alpha(\rho^N_\tau)}^q}
    \leq b_q \, \mathbb{E} \Bigeck{\mathbb{E}_{\rho^N_\tau} \bigeck{\N{x}^q}}
    = b_q \, \mathbb{E} \Biggeck{\frac{1}{N} \sum^N_{i=1} \N{X^i_\tau}^q}
    = b_q \, \mathbb{E} \Bigeck{\N{X^i_\tau}^q}
    \leq b_q \, C_\mathrm{D},
\end{align*}
and analogously
\begin{align*}
    \mathbb{E} \Bigeck{ \N{X^\alpha(\overbar{\rho}^N_\tau)}^q}
    \leq b_q \, \overbar{C}_\mathrm{D},
\end{align*}
where the constants $C_D$ and $\overbar{C}_D$ arise from Lemma \ref{NPDelta:UniformMoment} with $q$ as the order of moment.
Thus,
\begin{align*}
    \mathbb{E} \Bigeck{ \N{X^\alpha(\rho^N_\tau) - X^\alpha (\overbar{\rho}^N_\tau)}^q}
    \leq 2^{q-1} b_q (C_\mathrm{D} + \overbar{C}_\mathrm{D}).
\end{align*}
Hence \eqref{NPDeltaProbabilitysplit} yields
\begin{align}
     \mathbb{E} \Bigeck{ \N{X^\alpha(\rho^N_\tau) - X^\alpha (\overbar{\rho}^N_\tau) }^p \mathbbm{1}_{\Omega \setminus \Omega_{N, \tau}}}
     \leq \Big(2^{q-1} b_q (C_\mathrm{D} + \overbar{C}_\mathrm{D})\Big)^{\frac{p}{q}} K^{\frac{q-p}{q}} N^{-\frac{q-p}{2p}}
     =: C N^{-\frac{q-p}{2p}},
     \label{NPDeltaThrm:Bound4}
\end{align}
for a constant $C>0$ independent of $N$.
Combining (\ref{NPDeltaThrm:Bound3}) and (\ref{NPDeltaThrm:Bound4}) yields
\begin{align}
    \mathbb{E} \Bigeck{ \N{X^\alpha(\rho^N_\tau) - X^\alpha (\overbar{\rho}^N_\tau)}^p}
    \leq C^p_0 \mathbb{E} \biggeck{ \underset{s \in [0, \tau]}{\sup}\N{X^i_s - \overbar{X}^i_s}^p}
    + C N^{-\frac{q-p}{2p}}.
    \label{NPDeltaThrm:T1Bound}
\end{align}
For the second term of \eqref{NPDeltaThrm:Bound2}, we use Lemma \ref{GerberLemma3.7}:
\begin{align}
    \mathbb{E} \biggeck{ \N{X^\alpha (\overbar{\rho}^N_\tau) - X^\alpha (\orho_\tau)}^p}
    \leq C_{\mathrm{EM}} N^{-\frac{p}{2}}.
    \label{NPDelta BoundEM}
\end{align}
Hence
\begin{align*}
  \mathbb{E}\bigeck{\N{D_\tau }^p}
  \leq 2^{p-1} \Biggrou{
  C^p_0 \mathbb{E} \biggeck{ \sup_{s \in [0, \tau]}\N{X^i_s - \overbar{X}^i_s}^p}
  + C N^{-\frac{q-p}{2p}}
  + C_{\mathrm{EM}} N^{-\frac{p}{2}}
  }.
\end{align*}
Define
\begin{align*}
r_* := \min \bigg \{ \frac{p}{2}, \frac{q-p}{2p} \bigg \}.
\end{align*}
Then
\begin{align*}
  \mathbb{E}\bigeck{\N{D_\tau }^p}
  \leq 2^{p-1} \Biggrou{
  C^p_0 \mathbb{E} \biggeck{ \sup_{s \in [0, \tau]}\N{X^i_s - \overbar{X}^i_s}^p}
  + (C+C_{\mathrm{EM}}) N^{-r_*}
  }.
\end{align*}
Putting this into \eqref{NPDeltaThrm:Bound1}:
\begin{align*}
    \mathbb{E}\biggeck{\suptTit \N{F^i_t}^p} 
    &\leq (s\lambda)^p {\tiT}^{p-1} \int_0^{\tiT}
    \mathbb{E}\bigeck{\N{D_\tau}^p }\, d\tau \\
    &\leq (2\tiT)^{p-1}(s\lambda  C_0)^p \int_0^{\tiT} 
    \mathbb{E} \biggeck{ \sup_{s \in [0, \tau]}\N{X^i_s - \overbar{X}^i_s}^p}\, d\tau \\
    & \quad + 2^{p-1} (s\lambda {\tiT})^p  (C + C_{\mathrm{EM}}) N^{-r_*}.
\end{align*}
Applying Grönwall’s lemma (Lemma \ref{GronwallLemma}) yields
\begin{align*}
     \mathbb{E} \biggeck{\suptT \,\N{ X^i_t - \overbar{X}^i_t}^p}
     \leq 2^{p-1} (s\lambda T)^p  (C + C_{\mathrm{EM}})
     e^{2^{p-1}(s\lambda T C_0)^p}
     N^{-r_*}.
\end{align*}
Defining
\begin{align*}
\varepsilon^{\mathrm{unrest}}_{\mathrm{D}}(N,T)
:=
2^{p-1} (s\lambda T)^p (C + C_{\mathrm{EM}}) e^{2^{p-1}(s\lambda T C_0)^p}
N^{- r_* },
\end{align*}
concludes the proof.
\end{proof}
\begin{remark}
    If $\orho_0$ has bounded moments of order $q \geq p^2 +p$, this constitutes for every $p \geq 1$ propagation of chaos at rate $\mathcal{O}\bigrou{N^{-1/2}}$. This is the rate stated in \cite[Theorem 2.6]{Gerber2025Mean-fieldSampling} for classical CBO under the assumption of bounded moments of all orders.
\end{remark}

\section{Freezing}
We recall from Section \ref{ch:MFF} the $N$-particle system $\bigrou{\mathcal{X}_t^N }_t$ by the following system of stochastic differential equations for fixed $j$.
\begin{align*}
     dX^i_t = - s \cdot \lambda (X^i_t - X^\alpha(\rho^N_{t_j}))dt + \sqrt{s}\delta dB^i_t,  \quad \forall t \in [t_j, t_{j+1}), \quad X^i_0 \sim \orho_0, \quad  i = 1, ..., N,
\end{align*}
and the coupled mean-field particle system $\bigrou{\overbar{\mathcal{X}}^N_t}_t$ defined by: 
\begin{align*}
     d\overbar{X}^i_t = - s \cdot \lambda (\overbar{X}^i_t - X^\alpha(\orho_{t_j}))dt + \sqrt{s}\delta dB^i_t \quad \forall t \in [t_j, t_{j+1}), \quad \overbar{X}^i_0 = X^i_0 \, \, \text{ a.s.}, \quad  i = 1, ..., N, 
\end{align*}
where $\orho_t$ is the distribution of the mean-field limit process 
\begin{align*}
     d\overbar{X}_t = - s \cdot \lambda (\overbar{X}_t - X^\alpha(\orho_{t_j}))dt + \sqrt{s}\delta dB_t \quad \forall t \in [t_j, t_{j+1}).
\end{align*}
Also, we want to recall the distance function and scheme that we derived in \eqref{CF:DistFunction} and \eqref{MFF:DistanceScheme}, respectively.
We have $F^{(i)}(t) = X^i_t - \overbar{X}^i_t$ and $D_j = X^\alpha(\rho^N_{t_j}) - X^\alpha(\orho_{t_j})$ and therefore:
\begin{align*}
    F^{(i)}(t) = e^{-s \lambda(t-t_j)}F^{(i)}(t_j) + D_j(1-e^{-s \lambda(t-t_j)}),
\end{align*}
and
\begin{align*}
    E^{(i)}_{j+1} = F^{(i)}(t_j + \Delta t) = gE_j^{(i)} + D_j(1-g),
\end{align*}
with $g = e^{-s \lambda \Delta t}$.
\begin{theorem}
Let $T>0$ be the final time. Consider the interacting particle system $\IPS{T}$ defined by the SDE system (\ref{CF:CFscheme}) and the coupled mean-field system $\MIPS{T}$ defined by (\ref{CF:MFIPS}), both sharing the same initial data $\mathcal{X}^N_0 = \overbar{\mathcal{X}}^N_0$ almost surely, with common distribution $\orho_0 \in \mathcal{P}_q(\mathbb{R}^d)$ for some $q \geq 2$. Then, for every $1 \leq p \leq q/2$, there exists a $\varepsilon^{\mathrm{unrest}}_{\mathrm{F}}(N, T) >0$ with $\varepsilon^{\mathrm{unrest}}_{\mathrm{F}}(N, T) \xrightarrow[]{N \rightarrow \infty}0$ such that for every $N \in \mathbb{N}$ and every $i \leq N$
    \begin{align*}
          \mathbb{E} \biggeck{\suptT \,\N{ X^i_t - \overbar{X}^i_t}^p} \leq \varepsilon^{\mathrm{unrest}}_{\mathrm{F}}(N, T).
    \end{align*}
    \label{NPFreezing:Thrm}
\end{theorem}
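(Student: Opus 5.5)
The proof combines the freezing-scheme structure of Theorem \ref{MFF: Thrm} with the unrestricted consensus-point splitting used in Theorem \ref{NPDelta:Thrm}. Fix $i \leq N$. As in the proof of Theorem \ref{MFF: Thrm}, the convexity argument for the line segment $t \mapsto tF^{(i)}(t_j) + (1-t)D_j$ reduces the supremum over each freezing interval to its endpoints. This gives
\begin{align*}
\mathbb{E}\biggeck{\suptT \N{X^i_t - \overbar{X}^i_t}^p} \leq \sum_{j=0}^{J-1} \bigrou{A_j + A_{j+1}} \leq 2\sum_{j=1}^J A_j, \qquad A_j := \mathbb{E}\Bigeck{\N{E^{(i)}_j}^p},
\end{align*}
with $A_0 = 0$. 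From the distance scheme \eqref{MFF:DistanceScheme} and convexity of $\N{\cdot}^p$ (valid since $g \in (0,1)$), we get
\begin{align*}
A_{j+1} \leq g A_j + (1-g)\,\mathbb{E}\bigeck{\N{D_j}^p}.
\end{align*}
The task therefore reduces to an unrestricted bound of the form $\mathbb{E}\bigeck{\N{D_j}^p} \leq 2^{p-1}\bigrou{C_0^p A_j + (C + C_{\mathrm{EM}}) N^{-r_*}}$ with $r_* = \min\{p/2, (q-p)/(2p)\}$.

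To obtain this bound, first split $D_j$ through the empirical mean-field measure $\orho^N_{t_j}$. The term $\mathbb{E}\bigeck{\N{X^\alpha(\orho^N_{t_j}) - X^\alpha(\orho_{t_j})}^p}$ is bounded directly by Lemma \ref{GerberLemma3.7} with $C_{\mathrm{EM}}N^{-p/2}$. This needs $\sup_j \mathbb{E}_{\orho_{t_j}}\bigeck{\N{x}^q} < \infty$, which follows from Lemma \ref{NP:CFLemma} applied with $q$ in place of $p$; that is allowed because $\orho_0 \in \mathcal{P}_q$.

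For the interaction term, introduce the set $\Omega_{N,j} := \{\frac1N\sum_m \N{\overbar{X}^m_{t_j}}^p < M\}$, with $Z_m := \sup_{0\le j\le J}\N{\overbar{X}^m_{t_j}}^p$ and $M > \mathbb{E}[Z_1]$ fixed. On $\Omega_{N,j}$, Lemma \ref{GerberStability2} applies with $\mu = \orho^N_{t_j}$, and Example \ref{WasserBD} together with exchangeability then gives the bound $C_0^p A_j$. On the complement, apply Hölder with exponents $q/p$ and $q/(q-p)$. The $q$th moment of the consensus difference is controlled by Corollary \ref{GerberPropA4Corollary} with exponent $q$, combined with Lemma \ref{NP:CFLemma} for both systems with moment order $q$. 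This gives a bound independent of $N$. For the probability factor, note that $\mathbb{E}[Z_1^{q/p}] < \infty$ and the $Z_m$ are i.i.d., so Lemma \ref{GerberLemma2.5} with $r = q/p \geq 2$ (this is where $p \leq q/2$ enters) yields $\mathbb{P}(\Omega\setminus\Omega_{N,j}) \leq KN^{-q/(2p)}$. The complement term is therefore $\mathcal{O}(N^{-(q-p)/(2p)})$.

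Inserting this bound gives the recursion $A_{j+1} \leq a_F A_j + b'_F N^{-r_*}$, where $a_F = g + 2^{p-1}(1-g)C_0^p$ and $b'_F = 2^{p-1}(1-g)(C + C_{\mathrm{EM}})$. Induction from $A_0 = 0$ then gives
\begin{align*}
\mathbb{E}\biggeck{\suptT \N{X^i_t-\overbar{X}^i_t}^p} \leq 2 b'_F N^{-r_*}\sum_{l=0}^{J-1}(J-l)a_F^l =: \varepsilon^{\mathrm{unrest}}_{\mathrm{F}}(N,T).
\end{align*}

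The main point requiring care is the moment bookkeeping. The pathwise bound of Lemma \ref{NP:CFLemma} is stated for order $p$, so one must check that its proof goes through verbatim at order $q$: Lemma \ref{GerberPropA4} holds for any exponent, and Lemma \ref{Pre:LemmaNormalVectors} applies for $q \geq 2$. This yields $\mathbb{E}\bigeck{\sup_j \N{\overbar{X}^i_{t_j}}^q} < \infty$, which is exactly what $Z_1^{q/p}$ needs. The constant $C_0$ from Lemma \ref{GerberStability2} depends only on $M$, so it is independent of $j$ and $N$.
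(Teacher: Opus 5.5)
Your proposal is correct and follows essentially the same route as the paper's proof. Both arguments use the endpoint reduction from Theorem \ref{MFF: Thrm}, the recursion $A_{j+1}\le gA_j+(1-g)\mathbb{E}[\N{D_j}^p]$, the split of the interaction term on $\Omega_{N,t_j}$ (Lemma \ref{GerberStability2} on the set, H\"older plus Lemma \ref{GerberLemma2.5} on the complement) and Lemma \ref{GerberLemma3.7} for the mean-field term, arriving at the same bound $2b_F N^{-r_*}\sum_{l=0}^{J-1}(J-l)a_F^l$. Two points you state explicitly are used only implicitly in the paper: Lemma \ref{NP:CFLemma} must be invoked at moment order $q$, and $p\le q/2$ is what ensures $r=q/p\ge 2$ in Lemma \ref{GerberLemma2.5}.
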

\begin{proof}
    We let $j \in \{ 0, ..., J-1 \}$ be an arbitrary index and fix an $i \leq N$. With the same reasoning as in Theorem \ref{MFF: Thrm}, for every $j \in \{0,\dots,J-1\}$ we have
\begin{align}
    \mathbb{E} \biggeck{ \supttj \N{X^i_{t} - \overbar{X}^i_{t} }^p}
    &=
    \mathbb{E} \biggeck{\max \Big \{ \N{E^{(i)}_j}^p, \N{E^{(i)}_{j+1}}^p \Big \}} \notag\\
    &\leq
    \mathbb{E} \biggeck{\N{E^{(i)}_j}^p + \N{E^{(i)}_{j+1}}^p }.
    \label{NP:CFIntervalSupBound}
\end{align}
Hence it is enough to estimate
\begin{align*}
A_j := \mathbb{E}\biggeck{\N{E^{(i)}_j}^p}, \qquad j=0,\dots,J.
\end{align*}
Since $E^{(i)}_0=0$ almost surely, we have $A_0=0$.

Using \eqref{MFF:DistanceScheme} alongside with the convexity of $x \mapsto \N{x}^p$, we estimate
\begin{align}
     A_{j+1}
     &= \mathbb{E} \biggeck{\N{gE^{(i)}_{j} + D_{j}(1-g)}^p} \notag\\
    &\leq 
    g A_j + (1-g)\mathbb{E}\bigeck{\N{D_{j}}^p}.
    \label{NP:CFBound1}
\end{align}
To bound the distance between the two consensus points
\begin{align*}
    \mathbb{E}\bigeck{\N{D_{j}}^p} = \mathbb{E}\biggeck{\N{X^\alpha(\rho^N_{t_j}) - X^\alpha(\orho_{t_j})}^p},
\end{align*}
we use once again the same triangle inequality argument by employing the empirical measure of the mean-field particle system:
\begin{align}
    \mathbb{E}\biggeck{\N{X^\alpha(\rho^N_{t_j}) - X^\alpha(\orho_{t_j})}^p}
    \leq 2^{p-1} \Biggrou{\mathbb{E}\biggeck{\N{X^\alpha(\rho^N_{t_j}) - X^\alpha(\overbar{\rho}^N_{t_j})}^p} + \mathbb{E}\biggeck{\N{X^\alpha(\overbar{\rho}^N_{t_j}) - X^\alpha(\orho_{t_j})}^p}}.
    \label{NP:CFConsDecomp1}
\end{align}
For the distance between the consensus points of the two empirical measures, we use the decomposition described in the introduction of this chapter:
\begin{align}
    &\mathbb{E} \Bigeck{ \N{X^\alpha(\rho^N_{t_j}) - X^\alpha (\overbar{\rho}^N_{t_j})}^p} \notag \\
    =\, &\mathbb{E} \Bigeck{ \N{X^\alpha(\rho^N_{t_j}) - X^\alpha (\overbar{\rho}^N_{t_j})}^p \mathbbm{1}_{\Omega_{N, {t_j}}}} + \mathbb{E} \Bigeck{ \N{X^\alpha(\rho^N_{t_j}) - X^\alpha (\overbar{\rho}^N_{t_j})}^p  \mathbbm{1}_{\Omega \setminus \Omega_{N, {t_j}}}},
    \label{NP:CFConsSplit2}
\end{align}
where
\begin{align*}
    \Omega_{N, t} := \bigg \{ \omega \in \Omega: \frac{1}{N} \sum^N_{i=1} \N{\overbar{X}^i_t}^p < M \bigg \}.
\end{align*}
On $\Omega_{N,t_j}$, the empirical mean-field measure $\overbar{\rho}^N_{t_j}$ satisfies $\mathbb{E}_{\overline{\rho}^N_{t_j}}\bigeck{\N{x}^p} \le M$, 
so Lemma \ref{GerberStability2} applies and yields
\begin{align}
    \mathbb{E} \Bigeck{ \N{X^\alpha(\rho^N_{t_j}) - X^\alpha (\overbar{\rho}^N_{t_j})}^p \, \mathbbm{1}_{\Omega_{N, {t_j}}}}
    &\leq C^p_0  \mathbb{E} \biggeck{ \frac{1}{N} \sum^N_{i=1}\N{X^i_{t_j} - \overbar{X}^i_{t_j}}^p} \notag \\ 
    &= C^p_0  \mathbb{E} \biggeck{\N{X^i_{t_j} - \overbar{X}^i_{t_j}}^p} \notag \\
    &= C^p_0 A_j.
    \label{NP:CFConsBound1}
\end{align}
On $\Omega \setminus \Omega_{N, t_j}$, we make use of Hölder's inequality with conjugate exponents $\frac{q}{p}$ and $\frac{q}{q-p}$ to separate the indicator function: 
\begin{align}
    \mathbb{E} \Bigeck{ \N{X^\alpha(\rho^N_{t_j}) - X^\alpha (\overbar{\rho}^N_{t_j}) }^p \mathbbm{1}_{\Omega \setminus \Omega_{N, {t_j}}}} 
    &\leq \mathbb{E} \Bigeck{ \N{X^\alpha(\rho^N_{t_j}) - X^\alpha (\overbar{\rho}^N_{t_j})}^q}^{\frac{p}{q}} \, \mathbb{P}(\Omega \setminus \Omega_{N, {t_j}})^{\frac{q-p}{q}}.
    \label{NPFreezingProbability split}
\end{align}
We define the random variables $Z_1, ..., Z_N$ by $Z_i := \supjJ \, \N{\overbar{X}^i_{t_j}}^p$ for every $i \leq N$.
Note that $\mathbb{E}\Bigeck{\big |Z_1 \big |^{q/p}} < \infty$ by the regularity of the solution of (\ref{CF:MFIPS}) introduced by Lemma \ref{NP:CFLemma}. Therefore, by Lemma \ref{GerberLemma2.5} with $r = q/p$, for $M> 0$ such that $\mathbb{E}\bigeck{ |Z_1 |} < M$, there exists a constant $K > 0$ independent of $N$ such that for every $N \in \mathbb{N}$
\begin{align*}
    \mathbb{P}(\Omega \setminus \Omega_{N, {t_j}})
    = \mathbb{P}\biggrou{\frac{1}{N} \sum^N_{i=1} \N{\overbar{X}^i_{t_j}}^p \geq M}
    \leq \mathbb{P} \biggrou{\frac{1}{N} \sum^N_{i=1} Z_i \geq M}
    \leq K N^{- \frac{q}{2p}}.
\end{align*}
This means that we only have to control $\mathbb{E} \Bigeck{ \N{X^\alpha(\rho^N_{t_j}) - X^\alpha (\overbar{\rho}^N_{t_j})}^q}$ by a constant independent of $N$. Therefore, we can use the very straightforward bound
\begin{align}
    \mathbb{E} \Bigeck{ \N{X^\alpha(\rho^N_{t_j}) - X^\alpha (\overbar{\rho}^N_{t_j})}^q}
    \leq 2^{q-1}\biggrou{\mathbb{E} \Bigeck{ \N{X^\alpha(\rho^N_{t_j})}^q} + \mathbb{E} \Bigeck{\N{ X^\alpha (\overbar{\rho}^N_{t_j})}^q}}.
    \label{NPFRTwoCOnsensus Bound 1}
\end{align}
We can use Corollary \ref{GerberPropA4Corollary} with exponent $q$ to control
\begin{align*}
    \mathbb{E} \Bigeck{ \N{X^\alpha(\rho^N_{t_j})}^q}
    \leq b_q \, \mathbb{E} \Bigeck{\mathbb{E}_{\rho^N_{t_j}} \bigeck{\N{x}^q}}
    = b_q \, \mathbb{E} \Biggeck{\frac{1}{N} \sum^N_{i=1} \N{X^i_{t_j}}^q}
    = b_q \, \mathbb{E} \Bigeck{\N{X^i_{t_j}}^q}
    \leq b_q \, C_\mathrm{F},
\end{align*}
where the last inequality follows from Lemma \ref{NP:CFLemma}. Analogously, we get
\begin{align*}
    \mathbb{E} \Bigeck{ \N{X^\alpha(\orho^N_{t_j})}^q} \leq b_q \,  \overbar{C}_\mathrm{F}.
\end{align*}
We stress that both bounds are independent of $N$. 
We move on to combine the latter estimates with \eqref{NPFRTwoCOnsensus Bound 1}:
\begin{align*}
    \mathbb{E} \Bigeck{ \N{X^\alpha(\rho^N_{t_j}) - X^\alpha (\overbar{\rho}^N_{t_j})}^q}
    \leq 2^{q-1} b_q (C_\mathrm{F} + \overbar{C}_\mathrm{F}).
\end{align*}
Thus, we can write estimate \eqref{NPFreezingProbability split} as follows:
 \begin{align}
     \mathbb{E} \Bigeck{ \N{X^\alpha(\rho^N_{t_j}) - X^\alpha (\overbar{\rho}^N_{t_j}) }^p\mathbbm{1}_{\Omega \setminus \Omega_{N, {t_j}}}}
     &\leq \mathbb{E} \Bigeck{ \N{X^\alpha(\rho^N_{t_j}) - X^\alpha (\overbar{\rho}^N_{t_j})}^q}^{\frac{p}{q}} \, \mathbb{P}(\Omega \setminus \Omega_{N, {t_j}})^{\frac{q-p}{q}} \notag \\
     &\leq \Big(2^{q-1} b_q (C_F + \overbar{C}_F)\Big)^{\frac{p}{q}} K^{\frac{q-p}{q}} N^{-\frac{q-p}{2p}}
     =: C N^{-\frac{q-p}{2p}}.
     \label{NP:CFConsBound2}
 \end{align}
Putting \eqref{NP:CFConsBound1} and \eqref{NP:CFConsBound2} into \eqref{NP:CFConsSplit2} yields
\begin{align}
    \mathbb{E} \Bigeck{ \N{X^\alpha(\rho^N_{t_j}) - X^\alpha (\overbar{\rho}^N_{t_j})}^p}
    \leq C^p_0 A_j + C N^{-\frac{q-p}{2p}}.
    \label{NP:CFConsBound3}
\end{align}
We now move on to making an effort to obtain a useful estimate of the distance between the consensus of the empirical mean-field measure $\overbar{\rho}^N_{t_j}$ and the actual mean-field measure $\orho_{t_j}$.
We can directly apply Lemma \ref{GerberLemma3.7} to obtain
\begin{align*}
    \mathbb{E} \biggeck{ \N{X^\alpha (\overbar{\rho}^N_{t_j}) - X^\alpha (\orho_{{t_j}})}^p} \leq C_{\mathrm{EM}} N^{-\frac{p}{2}}.
\end{align*}
As in the proof of Theorem \ref{MFF: Thrm}, we point out that the above bound is independent of $j$.
Finally, we can combine the latter with \eqref{NP:CFConsBound3} to get for \eqref{NP:CFConsDecomp1}
\begin{align*}
     \mathbb{E}\biggeck{\N{X^\alpha(\rho^N_{t_j}) - X^\alpha(\orho_{t_j})}^p}
     &\leq 2^{p-1} \Biggrou{\mathbb{E}\biggeck{\N{X^\alpha(\rho^N_{t_j}) - X^\alpha(\overbar{\rho}^N_{t_j})}^p} + \mathbb{E}\biggeck{\N{X^\alpha(\overbar{\rho}^N_{t_j}) - X^\alpha(\orho_{t_j})}^p}}\\
     &\leq 2^{p-1} \Biggrou{C^p_0 A_j + C N^{-\frac{q-p}{2p}} + C_{\mathrm{EM}} N^{-\frac{p}{2}}}\\
     &\leq 2^{p-1} \Biggrou{C^p_0 A_j + (C+C_{\mathrm{EM}}) N^{- \min \big \{ \frac{p}{2}, \frac{q-p}{2p} \big \} }}.
\end{align*}
We insert the above in \eqref{NP:CFBound1} and obtain
\begin{align}
    A_{j+1}
    &\leq g A_j + (1-g) 2^{p-1} \Biggrou{C^p_0 A_j + (C + C_{\mathrm{EM}}) N^{- \min \big \{ \frac{p}{2}, \frac{q-p}{2p} \big \} }} \notag \\
    &= \Bigrou{g + (1-g)2^{p-1}C^p_0}A_j + 2^{p-1} (1-g) (C+ C_{\mathrm{EM}}) N^{- \min \big \{ \frac{p}{2}, \frac{q-p}{2p} \big \} }.
    \label{NP:CFRecursion}
\end{align}
Define
\begin{align*}
    a_F &:= g + 2^{p-1}(1-g) C_0^p,\\
    b_F &:= 2^{p-1}(1-g) (C + C_{\mathrm{EM}}),\\
    r_* &:= \min \bigg \{ \frac{p}{2}, \frac{q-p}{2p} \bigg \}.
\end{align*}
Then \eqref{NP:CFRecursion} becomes
\begin{align*}
A_{j+1} \leq a_F A_j + b_F N^{-r_*}.
\end{align*}
Since $A_0=0$, we obtain by induction
\begin{align}
    A_{j+1}
    \leq b_F N^{-r_*} \sum^j_{l = 0} a_F^l.
    \label{NP:CF:Thrm:ItEstimate}
\end{align}
We note that by \eqref{NP:CFIntervalSupBound},
\begin{align*}
    \mathbb{E} \biggeck{\suptT \,\N{ X^i_t - \overbar{X}^i_t}^p}
    &= \mathbb{E} \biggeck{ \supu{j}{J-1} \,  \supttj \,\N{ X^i_t - \overbar{X}^i_t}^p} \notag\\
    &\leq \sum_{j=0}^{J-1} \mathbb{E} \biggeck{\supttj \,\N{ X^i_t - \overbar{X}^i_t}^p} \notag\\
    &\leq \sum_{j=0}^{J-1} \big(A_j + A_{j+1}\big) \notag\\
    &\leq 2\sum_{j=1}^{J} A_j.
\end{align*}
Finally, combining the latter together with \eqref{NP:CF:Thrm:ItEstimate} gives us
\begin{align}
    & \mathbb{E} \biggeck{\suptT \,\N{ X^i_t - \overbar{X}^i_t}^p} \notag \\
    \leq \,  &2\sum_{j=1}^{J} A_j \notag\\
    \leq \,  &2 b_F N^{-r_*} \sum_{j=1}^{J}\sum^ {j-1}_{l = 0} a_F^l \notag\\
    = \,  &2 b_F N^{-r_*} \sum_{l = 0}^{J-1} (J-l)a_F^l \notag\\
    = \,  &2 \cdot 2^{p-1}(1-g) (C + C_{\mathrm{EM}}) N^{-r_*} \sum_{l = 0}^{J-1} (J-l)(g + 2^{p-1}(1-g) C_0^p)^l \notag\\
    =: \,  &\varepsilon^{\mathrm{unrest}}_\mathrm{F}(N, T).
    \label{NPFreezing:PoCBound}
\end{align}
Since $i \leq N$ was chosen to be arbitrary, the proof is complete.
\end{proof}
Theorem \ref{NPFreezing:Thrm} yields unrestricted propagation of chaos at rate $\mathcal{O}\bigrou{N^{- \min  \{ 1/2, (q-p)/(2p^2) \} }}$ for every $1 \leq p \leq q/2$. Again, $q \geq p^2 + p$ implies propagation of chaos at the rate $\mathcal{O}(N^{-1/2})$.
\section{Hopping}
We turn to proving unrestricted propagation of chaos for the consensus hopping scheme. We recall the interacting particle system  $(\mathcal{X}^N_k)_k$ from \eqref{CH:Scheme} which is defined by
\begin{align*}
   & X^i_k = X^\alpha(\rho^N_{k-1}) + \eta^i_k, \quad \text{for } i \leq N, \quad k \in \mathbb{N}, \\
   &\mathcal{X}^N_0 = (X^1_0,...,X^N_0) \sim \orho_0^{\otimes N}, \notag
\end{align*}
and the corresponding coupled mean-field system $(\overbar{\mathcal{X}}^N_k)_k$ from \eqref{CH:MFScheme}:
\begin{align*}
   & \overbar{X}^i_k = X^\alpha(\orho_{k-1}) + \eta^i_k, \quad \text{for } i \leq N, \quad k \in \mathbb{N}, \\
   &\overbar{\mathcal{X}}^N_0 = (\overbar{X}^1_0,...,\overbar{X}^N_0) \sim \orho_0^{\otimes N},
   \quad \overbar{\mathcal{X}}^N_0 = \mathcal{X}^N_0 \,\, \text{ a.s. }.
\end{align*}
\begin{theorem}
 Let $K \in \mathbb{N}_{>0}$ be the final time. Consider the interacting particle system $\IPS{K}$ defined by the hopping scheme \eqref{CH:Scheme} and the coupled mean-field system $\MIPS{K}$ defined by \eqref{CH:MFScheme}, both sharing the same initial data $\mathcal{X}^N_0 = \overbar{\mathcal{X}}^N_0$ almost surely, with common distribution $\orho_0 \in \mathcal{P}_q(\mathbb{R}^d)$ for some $q \geq 2$. Then for every $1 \leq p \leq q/2$, there exists a $\varepsilon_\mathrm{H}^{\mathrm{unrest}}(N, K)>0$ with $\varepsilon_\mathrm{H}^{\mathrm{unrest}}(N, K) \xrightarrow[]{N \rightarrow \infty}0$ such that for every $N \in \mathbb{N}$ and every $i \leq N$
    \begin{align*}
          \mathbb{E} \biggeck{\supu{k}{K} \,\N{ X^i_k - \overbar{X}^i_k}^p} \leq \varepsilon_\mathrm{H}^{\mathrm{unrest}}(N, K).
    \end{align*}
    \label{NPHopping:Thrm}   
\end{theorem}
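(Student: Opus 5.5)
The proof will combine the discrete-time recursion from Theorem \ref{MFH: Thrm} with the unrestricted splitting already used in Theorems \ref{NPDelta:Thrm} and \ref{NPFreezing:Thrm}.

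\textbf{Recursion.} Fix $i \leq N$ and set $A_k := \mathbb{E}\bigeck{\N{X^i_k - \overbar{X}^i_k}^p}$. Then $A_0 = 0$. Since both systems share $\eta^i_{k+1}$, the noise cancels and
\begin{align*}
A_{k+1} = \mathbb{E}\bigeck{\N{X^\alpha(\rho^N_k) - X^\alpha(\orho_k)}^p}.
\end{align*}
Inserting $\orho^N_k$ and using $\N{a+b}^p \leq 2^{p-1}(\N{a}^p + \N{b}^p)$ splits this into two terms.

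\textbf{Mean-field sampling term.} The term $\mathbb{E}\bigeck{\N{X^\alpha(\orho^N_k) - X^\alpha(\orho_k)}^p}$ is bounded by $C_{\mathrm{EM}} N^{-p/2}$ via Lemma \ref{GerberLemma3.7}. This needs $\orho_k \in \mathcal{P}_q$ with a $q$th moment bounded uniformly in $k \leq K$. That follows from Lemma \ref{NP_CHLemma} applied with exponent $q$; it only needs $\orho_0 \in \mathcal{P}_q$.

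\textbf{Interacting term.} For $\mathbb{E}\bigeck{\N{X^\alpha(\rho^N_k) - X^\alpha(\orho^N_k)}^p}$, I split on
\begin{align*}
\Omega_{N,k} := \Big\{ \tfrac{1}{N}\textstyle\sum_{j} \N{\overbar{X}^j_k}^p < M \Big\},
\end{align*}
where $M > \mathbb{E}\bigeck{Z_1}$ and $Z_j := \sup_{0 \leq l \leq K} \N{\overbar{X}^j_l}^p$.

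On $\Omega_{N,k}$, the measure $\orho^N_k$ has $p$th moment at most $M$. Lemma \ref{GerberStability2} with $\mu = \orho^N_k$, then Example \ref{WasserBD} and exchangeability, give the bound $C_0^p A_k$.

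On the complement, Hölder with exponents $q/p$ and $q/(q-p)$ gives
\begin{align*}
\mathbb{E}\bigeck{\N{X^\alpha(\rho^N_k) - X^\alpha(\orho^N_k)}^q}^{p/q}\, \mathbb{P}(\Omega\setminus\Omega_{N,k})^{(q-p)/q}.
\end{align*}
\begin{itemize}
\item The $q$th moment of the consensus difference is bounded by $2^{q-1} b_q (C_{\mathrm{H}} + \overbar{C}_{\mathrm{H}})$. This uses Corollary \ref{GerberPropA4Corollary} with exponent $q$, exchangeability, and Lemma \ref{NP_CHLemma} with exponent $q$.
\item Since $\mathbb{E}\bigeck{Z_1^{q/p}} < \infty$, Lemma \ref{GerberLemma2.5} with $r = q/p \geq 2$ (this is where $p \leq q/2$ enters) gives $\mathbb{P}(\Omega\setminus\Omega_{N,k}) \leq K' N^{-q/(2p)}$.
\end{itemize}
Together this term is $\leq C N^{-(q-p)/(2p)}$.

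\textbf{Closing the recursion.} With $r_* = \min\{p/2, (q-p)/(2p)\}$ we get
\begin{align*}
A_{k+1} \leq a_H A_k + b_H N^{-r_*}, \qquad a_H = 2^{p-1} C_0^p, \quad b_H = 2^{p-1}(C + C_{\mathrm{EM}}).
\end{align*}
Induction from $A_0 = 0$ gives $A_k \leq b_H N^{-r_*} \sum_{l=0}^{k-1} a_H^l$. Bounding the supremum over $k \leq K$ by the sum,
\begin{align*}
\varepsilon_{\mathrm{H}}^{\mathrm{unrest}}(N,K) := b_H N^{-r_*} \sum_{l=0}^{K-1} (K-l)\, a_H^l,
\end{align*}
which tends to $0$ as $N \to \infty$.

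\textbf{Main obstacle.} The delicate point is the interacting term on $\Omega_{N,k}$. We control only the moments of $\orho^N_k$ there, not those of $\rho^N_k$, so Lemma \ref{CarrL2} is unavailable. The one-sided Lemma \ref{GerberStability2} is exactly what makes this step work, and its constant $C_0$ must depend only on $M$, not on $N$.

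A secondary check is that the moment bounds of Lemma \ref{NP_CHLemma} for exponent $q$ are uniform in $N$. This must also hold for the supremum over $k$ inside $Z_j$, which the lemma provides, so that $K'$ is independent of $N$.
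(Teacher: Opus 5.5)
Your proposal is correct and follows the paper's proof essentially step for step. It uses the same one-step recursion for $A_k$ and the same split on $\Omega_{N,k}$, handled by Lemma~\ref{GerberStability2} on the good set and by H\"older plus Lemma~\ref{GerberLemma2.5} (with $r=q/p\geq 2$) on the complement; it treats the sampling term with Lemma~\ref{GerberLemma3.7}, and it arrives at the identical final bound $b_H N^{-r_*}\sum_{l=0}^{K-1}(K-l)a_H^l$.
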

\begin{proof}
We fix $i \leq N$ and $k \leq K$ and use the definition of \eqref{CH:Scheme} and \eqref{CH:MFScheme} together with a basic inequality to obtain
\begin{align}
    \mathbb{E} \biggeck{\N{ X^i_{k+1} - \overbar{X}^i_{k+1}}^p}
    &= \mathbb{E} \biggeck{\N{X^\alpha(\rho^N_k) - X^\alpha(\orho_k)}^p}\notag \\
    &\leq 2^{p-1} \biggrou{\underbrace{\mathbb{E} \biggeck{\N{X^\alpha(\rho^N_k) - X^\alpha(\overbar{\rho}^N_k)}^p}}_{=: S_1} + \underbrace{\mathbb{E} \biggeck{\N{X^\alpha(\overbar{\rho}^N_k) - X^\alpha(\orho_k)}^p}}_{=: S_2}}.
\end{align}
$S_2$ can be estimated by exploiting the relation between the mean-field empirical measure $\overbar{\rho}^N_k$ and $\orho_k$. 
The estimation of $S_1$ is the crucial part that requires the decomposition described in the introduction of this chapter. We will use the same line of argument as in Theorems \ref{NPDelta:Thrm} and \ref{NPFreezing:Thrm}. We start by introducing
\begin{align*}
    \Omega_{N, k} := \bigg \{ \omega \in \Omega: \frac{1}{N} \sum^N_{i=1} \N{\overbar{X}^i_k}^p < M \bigg \}
\end{align*}
in order to split 
\begin{align}
    &\mathbb{E} \biggeck{\N{X^\alpha(\rho^N_k) - X^\alpha(\overbar{\rho}^N_k)}^p} \notag \\
    = \, &\mathbb{E} \biggeck{\N{X^\alpha(\rho^N_k) - X^\alpha(\overbar{\rho}^N_k)}^p \mathbbm{1}_{\Omega_{N, k}}} + \mathbb{E} \biggeck{\N{X^\alpha(\rho^N_k) - X^\alpha(\overbar{\rho}^N_k)}^p \mathbbm{1}_{\Omega \setminus \Omega_{N, k}}}. 
\end{align}
In the first summand, we can directly apply Lemma \ref{GerberStability2} and Example \ref{WasserBD} together with the exchangeability of the particles to get the estimate
\begin{align}
    \mathbb{E} \biggeck{\N{X^\alpha(\rho^N_k) - X^\alpha(\overbar{\rho}^N_k)}^p \mathbbm{1}_{\Omega_{N, k}}}
    \leq C^p_0 \mathbb{E} \Bigeck{W^p_p (\rho^N_k, \overbar{\rho}^N_k)}
    &\leq C^p_0 \underset{1 \leq i \leq N}{\max}\mathbb{E} \Bigeck{\N{X^i_k - \overbar{X}^i_k}^p} \notag \\
    &\leq  C^p_0 \mathbb{E} \Bigeck{\N{X^i_k - \overbar{X}^i_k}^p}.
    \label{NP:CH:BoundComplement}
\end{align}
To the second summand, we apply Hölder's inequality with conjugate exponents $\frac{q}{p}$ and $\frac{q}{q-p}$:
\begin{align}
    \mathbb{E} \Bigeck{ \N{X^\alpha(\rho^N_k) - X^\alpha (\overbar{\rho}^N_k)}^p\mathbbm{1}_{\Omega \setminus \Omega_{N, k}}}
    \leq \mathbb{E} \Bigeck{ \N{X^\alpha(\rho^N_k) - X^\alpha (\overbar{\rho}^N_k)}^q}^{\frac{p}{q}} \, \mathbb{P}(\Omega \setminus \Omega_{N, k})^{\frac{q-p}{q}}.
    \label{NPHoppingProbability split}
\end{align}
For the probability part, we can make use of Lemma \ref{GerberLemma2.5}. We note that by defining $Z_i := \supu{k}{K} \, \N{\overbar{X}^i_k}^p$, it immediately follows that $\mathbb{E}\Bigeck{\big |Z_1 \big |^{q/p}} < \infty$ by the regularity of (\ref{CH:MFScheme}) justified by Lemma \ref{NP_CHLemma}. Therefore, by Lemma \ref{GerberLemma2.5} with $r = q/p$, for $M> 0$ such that $\mathbb{E}\bigeck{ |Z_1 |} < M$, there exists a constant $K_0 > 0$ independent of $N$ such that for every $N \in \mathbb{N}$
\begin{align*}
    \mathbb{P}(\Omega \setminus \Omega_{N, k})
    = \mathbb{P}\biggrou{\frac{1}{N} \sum^N_{i=1} \N{\overbar{X}^i_k}^p \geq M}
    \leq \mathbb{P} \biggrou{\frac{1}{N} \sum^N_{i=1} Z_i \geq M}
    \leq K_0 N^{- \frac{q}{2p}}.
\end{align*}
It remains to control $\mathbb{E} \Bigeck{ \N{X^\alpha(\rho^N_k) - X^\alpha (\overbar{\rho}^N_k)}^q}$ by a constant independent of $N$. As in the preceding proofs of this chapter, we can use the very straightforward bound
\begin{align}
    \mathbb{E} \Bigeck{ \N{X^\alpha(\rho^N_k) - X^\alpha (\overbar{\rho}^N_k)}^q}
    \leq 2^{q-1}\biggrou{\mathbb{E} \Bigeck{ \N{X^\alpha(\rho^N_k)}^q} + \mathbb{E} \Bigeck{\N{ X^\alpha (\overbar{\rho}^N_k)}^q}}.
    \label{NPCHTwoCOnsensus Bound 1}
\end{align}
Corollary \ref{GerberPropA4Corollary} with exponent $q$ allows us to control
\begin{align*}
    \mathbb{E} \Bigeck{ \N{X^\alpha(\rho^N_k)}^q}
    \leq b_q \, \mathbb{E} \Bigeck{\mathbb{E}_{\rho^N_k} \bigeck{\N{x}^q}}
    = b_q \, \mathbb{E} \Biggeck{\frac{1}{N} \sum^N_{i=1} \N{X^i_k}^q}
    = b_q \, \mathbb{E} \Bigeck{\N{X^i_k}^q}
    \leq b_q \, C_\mathrm{H},
\end{align*}
where the last inequality follows from Lemma \ref{NP_CHLemma}. Analogously, we get
\begin{align*}
    \mathbb{E} \Bigeck{ \N{X^\alpha(\orho^N_k)}^q} \leq b_q \,  \overbar{C}_\mathrm{H}.
\end{align*}
We stress that both bounds are independent of $N$. 
We move on to combine the latter estimates with \eqref{NPCHTwoCOnsensus Bound 1}:
\begin{align*}
    \mathbb{E} \Bigeck{ \N{X^\alpha(\rho^N_k) - X^\alpha (\overbar{\rho}^N_k)}^q}
    \leq 2^{q-1} b_q (C_\mathrm{H} + \overbar{C}_\mathrm{H}).
\end{align*}
Thus, we can write estimate \eqref{NPHoppingProbability split} as follows:
 \begin{align}
     \mathbb{E} \Bigeck{ \N{X^\alpha(\rho^N_k) - X^\alpha (\overbar{\rho}^N_k) }^p\mathbbm{1}_{\Omega \setminus \Omega_{N, k}}}
     &\leq \mathbb{E} \Bigeck{ \N{X^\alpha(\rho^N_k) - X^\alpha (\overbar{\rho}^N_k)}^q}^{\frac{p}{q}} \, \mathbb{P}(\Omega \setminus \Omega_{N, k})^{\frac{q-p}{q}} \notag \\
     &\leq \Big(2^{q-1} b_q (C_\mathrm{H} + \overbar{C}_\mathrm{H})\Big)^{\frac{p}{q}} K_0^{\frac{q-p}{q}} N^{-\frac{q-p}{2p}}
     =: C N^{-\frac{q-p}{2p}}.
     \label{NP:CH:BoundOnSet}
 \end{align}
 The combination of \eqref{NP:CH:BoundComplement} and \eqref{NP:CH:BoundOnSet} yields
 \begin{align*}
    S_1 =  \mathbb{E} \Bigeck{ \N{X^\alpha(\rho^N_k) - X^\alpha (\overbar{\rho}^N_k)}^p}
    \leq C^p_0 \mathbb{E} \Bigeck{\N{X^i_k - \overbar{X}^i_k}^p} + C N^{-\frac{q-p}{2p}}.
 \end{align*}
For $S_2$, we can directly use Lemma \ref{GerberLemma3.7} to obtain a constant $C_{\mathrm{EM}}>0$ independent of $N$ such that
\begin{align*}
    S_2 = \mathbb{E} \biggeck{\N{X^\alpha(\overbar{\rho}^N_k) - X^\alpha(\orho_k)}^p} \leq C_{\mathrm{EM}} N^{-\frac{p}{2}}.
\end{align*}
Define
\begin{align*}
r_* := \min \bigg\{ \frac{p}{2}, \frac{q-p}{2p} \bigg\},
\qquad
a_H := 2^{p-1}C_0^p,
\qquad
b_H := 2^{p-1}(C+C_{\mathrm{EM}}).
\end{align*}
Then we obtain
\begin{align*}
    \mathbb{E} \biggeck{\N{ X^i_{k+1} - \overbar{X}^i_{k+1}}^p}
    &\leq 2^{p-1} \Bigrou{S_1 + S_2} \\
    &\leq 2^{p-1} \Biggrou{C^p_0 \mathbb{E} \Bigeck{\N{X^i_k - \overbar{X}^i_k}^p} + C N^{-\frac{q-p}{2p}} + C_{\mathrm{EM}} N^{-\frac{p}{2}}} \\
    &\leq a_H \mathbb{E} \Bigeck{\N{X^i_k - \overbar{X}^i_k}^p}
    + b_H N^{-r_*}.
\end{align*}
Since $X_0^i=\overbar{X}_0^i$ almost surely, iterating this recursion yields
\begin{align}
    \mathbb{E} \biggeck{\N{ X^i_{k+1} - \overbar{X}^i_{k+1}}^p}
    \leq b_H N^{-r_*}\sum^{k}_{l = 0} a_H^l.
    \label{NP:CH:Thrm:FinalEstimate}
\end{align}
Hence
\begin{align*}
    \mathbb{E} \biggeck{\supu{k}{K} \,\N{ X^i_k - \overbar{X}^i_k}^p}
    &\leq \sum_{k=1}^{K}\mathbb{E} \biggeck{\N{ X^i_k - \overbar{X}^i_k}^p} \\
    &\leq b_H N^{-r_*}\sum_{k=1}^{K}\sum^{k-1}_{l = 0} a_H^l \\
    &= b_H N^{-r_*}\sum_{l=0}^{K-1}(K-l)a_H^l \\
    &= 2^{p-1}(C+C_{\mathrm{EM}})N^{-r_*}
    \sum_{l=0}^{K-1}(K-l)(2^{p-1}C_0^p)^l
    =: \varepsilon_\mathrm{H}^{\mathrm{unrest}}(N,K).
\end{align*}
This concludes the proof.
\end{proof}
\begin{remark}
    We want to compare our mean-field estimate for hopping with the one obtained in the freezing case \eqref{NPFreezing:PoCBound}. Recall that for the freezing scheme we proved
    \begin{align*}
        &\mathbb{E} \biggeck{\suptT \,\N{ X^i_t - \overbar{X}^i_t}^p}\\
        \leq \, 
        &2 \cdot 2^{p-1}(1-g) (C + C_{\mathrm{EM}}) N^{-r_*}
        \sum_{l = 0}^{J-1} (J-l)\bigrou{g + 2^{p-1}(1-g) C_0^p}^l.
    \end{align*}
    If we now take $J=K$ and pass to the limit $s \longrightarrow \infty$ (which means $g \longrightarrow 0$), we obtain
    \begin{align*}
        \mathbb{E} \biggeck{\suptT \,\N{ X^i_t - \overbar{X}^i_t}^p}
        \xrightarrow[]{s \to \infty}
        2 \cdot 2^{p-1}(C + C_{\mathrm{EM}}) N^{-r_*}
        \sum_{l = 0}^{K-1} (K-l)\bigrou{2^{p-1} C_0^p}^l.
    \end{align*}
    By the hopping estimate, we have
    \begin{align*}
        \mathbb{E} \biggeck{\supu{k}{K} \,\N{ X^i_k - \overbar{X}^i_k}^p}
        \leq
        2^{p-1}(C + C_{\mathrm{EM}}) N^{-r_*}
        \sum_{l = 0}^{K-1} (K-l)\bigrou{2^{p-1} C_0^p}^l.
    \end{align*}
    Thus, the two estimates have almost the same bound, differing only by the appearance of the factor $2$ in the freezing case. This reflects the convergence of the freezing scheme to the hopping scheme as $s \to \infty$, proven in Theorem \ref{thm:FStoHS}.
\end{remark}
\begin{remark}
    We observe that in the case where the initial distribution $\orho_0$ has finite moments of order $q \geq p^2+p$, Theorems \ref{NPDelta:Thrm}, \ref{NPFreezing:Thrm}, and \ref{NPHopping:Thrm} yield propagation of chaos in Wasserstein-$p$ distance at rate $\mathcal{O}(N^{-1/2})$, which is consistent with the results in \cite{Gerber2025Mean-fieldSampling, Fornasier2024Consensus-BasedGlobally}. If $2p \leq q < p^2 +p$, the rate of convergence reduces to $\mathcal{O}(N^{-(q-p)/2p^2})$, which deteriorates as $p$ increases. Note that $q \geq 2p$ implies $\frac{q-p}{2p^2} \geq \frac{1}{2p}$, so the unrestricted rate is never worse than $\mathcal{O}(N^{-1/(2p)})$.
\end{remark}

\section{Comparison to the Restricted Case of Chapter \ref{chap:PropGoodSets}}
In this section, we briefly discuss the main differences between the propagation of chaos results in Chapter \ref{chap:PropGoodSets} and those in Chapter \ref{chap:PropChaUnrestricted}. 
We could see in Chapter \ref{chap:PropChaUnrestricted} that we are able to avoid restricting the analysis to a fixed good set by instead splitting according to bounded moments of the empirical measure of the i.i.d. mean-field samples. This gave us the opportunity to invoke Lemma \ref{GerberLemma2.5} on a mean of i.i.d. random variables, bounding the probability of the event of not having bounded $p$th moments by a term that vanishes as $N$ goes to infinity. The problem of having to deal with a possibly non-uniformly bounded sequence of empirical measures for the particle system $\IPS{T}$ is resolved by replacing the stability estimate of Lemma \ref{CarrL2} with Lemma \ref{GerberStability2}, which only requires one of the two measures to have uniformly bounded $p$th moments. But apart from that, we are still left with two estimates that differ slightly in their speed of convergence.
The theorems of Chapter \ref{chap:PropGoodSets} all provide us with a convergence speed at rate $\mathcal{O}\bigrou{N^{-1/2}}$ while those in Chapter \ref{chap:PropChaUnrestricted} give us $\mathcal{O}\bigrou{N^{-\min\left\{1/2,(q-p)/(2p^2)\right\}}}$. We note that in the original statement \cite[Theorem 2.6]{Gerber2025Mean-fieldSampling}, the authors require the initial distribution $\orho_0 \in \PRd$ to have bounded moments of all orders. This has the consequence that for any $p\geq 1$, we have the ability to choose a sufficiently large $q$ such that $\min \big \{ \frac{1}{2}, \frac{q-p}{2p^2} \big \} = \frac{1}{2}$ and therefore Theorems \ref{NPDelta:Thrm}, \ref{NPFreezing:Thrm} and \ref{NPHopping:Thrm} yield a rate of $\mathcal{O}\bigrou{N^{-1/2}}$. In this case the results of Chapter \ref{chap:PropChaUnrestricted} are strictly better than the ones in Chapter \ref{chap:PropGoodSets} as we obtain the same result without restricting ourselves to a set of specific events. But now the question about dealing with initial distributions that do not have finite moments of all orders arises. We point out that $\min \big \{ \frac{1}{2}, \frac{q-p}{2p^2} \big \} = \frac{1}{2}$ exactly requires that $q \geq p^2 + p$. Notice that this is already violated in common scenarios as those discussed in \cite{Fornasier2024Consensus-BasedGlobally, Fornasier2026FromConvergence, Carrillo2018AnMethod} to just name a few. The authors of \cite{Fornasier2024Consensus-BasedGlobally} prove propagation of chaos in Wasserstein-2 distance for the classic CBO \eqref{Pre:CBOSDE} assuming the initial distribution $\orho_0$ to have finite fourth moments. In our framework, this would constitute the case of $q=4$ and $p=2$. For the method of Chapter \ref{chap:PropGoodSets}, this would result in propagation of chaos at rate $\mathcal{O}\bigrou{N^{-1/2}}$ while for Chapter \ref{chap:PropChaUnrestricted}, this would constitute propagation of chaos at rate $\mathcal{O}\bigrou{N^{-1/4}}$. \footnote{That is, if one only assumes bounded fourth moments and no higher-order moment bounds.}
\begin{table}[h]
\centering
\begin{tabular}{c|c|c}
\hline
 Bounded Moments of Order & Restricted (Chapter \ref{chap:PropGoodSets}) & Unrestricted (Chapter \ref{chap:PropChaUnrestricted}) \\
\hline
 $p + p^2 \leq q$ & $\mathcal{O}(N^{-\frac{1}{2}})$ & $\mathcal{O}(N^{-\frac{1}{2}})$ \\
 $ 2p \leq q < p^2 +p$ & $\mathcal{O}(N^{-\frac{1}{2}})$ & $\mathcal{O} \Bigrou{N^{- \min \left \{ \frac{1}{2}, \frac{q-p}{2p^2} \right \} }}$ \\
\hline
\end{tabular}
\caption{Comparison of the convergence rates for both results.}
\end{table}
\newpage

\section{Towards Global Convergence: Freezing and Hopping}
\label{section:globalconvergence}

Let us go back to the reason why we established the propagation of chaos results in the preceding sections. Their primary application is to derive quantitative convergence estimates for the interacting particle systems toward the mean of the invariant measure of the mean-field dynamics. The global convergence of the mean-field dynamics of the $\delta$-CBO scheme \cite[Theorem 2.5, Theorem 3.15]{Fornasier2026FromConvergence}, the consensus freezing scheme \cite[Theorem 3.10]{Fornasier2026FromConvergence}, and the consensus hopping scheme \cite[Corollary 4.2]{Fornasier2026FromConvergence} have recently been established in \cite{Fornasier2026FromConvergence}, showing in the freezing case the convergence of the mean-field limit $(\orho_t)_t$ to the invariant measure $\boldsymbol\rho^\alpha_\infty = \mathcal{N}(\boldsymbol{\mu}^\alpha_\infty, \frac{\delta^2}{2\lambda}I_d)$ at a rate $\gamma^j$ for $\gamma \in (e^{-s\lambda\Delta t}, 1)$
\begin{align*}
     W_2(\orho_{t},\boldsymbol\rho_\infty^\alpha)\leq C_\alpha \gamma^j, \quad \forall t \in [t_j, t_{j+1}), \quad j \in \mathbb{N}_0, 
\end{align*}
while
\begin{align*}
     W_2(\boldsymbol\rho_\infty^\alpha, \boldsymbol{\rho}_\infty) \xrightarrow[]{\alpha \to \infty} 0 \,, 
\end{align*}
where $\boldsymbol{\rho}_\infty= \mathcal{N}\left(x^*,\frac{\delta^2}{2\lambda}I_d\right)$.

In this section, we establish a connection between the above global convergence results and the propagation of chaos results obtained in this thesis to derive meaningful convergence estimates for the interacting particle systems toward $\boldsymbol\mu_\infty^\alpha$. We do this by proving a statement in the spirit of \cite[Theorem 3.8]{Fornasier2024Consensus-BasedGlobally}. Since $\boldsymbol\rho_\infty^\alpha \to \boldsymbol\rho_\infty$ as $\alpha\to\infty$, this also links the particle system to the global minimizer $x^*$.
The key idea is simple: We use the inequality $\N{a+b+c}^2 \le 3(\N{a}^2+\N{b}^2+\N{c}^2)$ to estimate
\begin{align*}
\N{M_T^N-\boldsymbol\mu^\alpha_\infty}^2
    \le
    3\underbrace{\N{M_T^N-\overbar M_T^N}^2}_{\text{propagation of chaos}}
    +3\underbrace{\N{\overbar M_T^N-\mathbb E[\overbar X_T^1]}^2}_{\text{variance}}
    +3\underbrace{\N{\mathbb E[\overbar X_T^1]-\boldsymbol\mu^\alpha_\infty}^2}_{\text{global convergence}},
\end{align*}
where 
\begin{align*}
     M_T^N := \frac1N \sum_{i=1}^N  X_T^i, \qquad \overbar M_T^N := \frac1N \sum_{i=1}^N \overbar X_T^i
\end{align*}
are the empirical means of the finite particle system and the mean-field finite particle system, respectively.
Furthermore $\boldsymbol\mu^\alpha_\infty$ is the mean of the invariant measure $\boldsymbol\rho^\alpha_\infty$ of the mean-field dynamics. On the propagation of chaos term, if $\orho_0$ has bounded moments of order $6$, we can invoke the results from Chapter \ref{chap:PropChaUnrestricted} to bound it at rate $\mathcal{O}(N^{-1})$ over a finite time interval $[0, T]$. Using the finite second moments of the mean-field limit process $(\overbar{X}_t)_{t \in [0, T]}$, the variance term can be bounded by $S/N$ with a constant $S$ independent of $T$ and $N$. The global convergence term is entirely independent of $N$ and measures how far the mean-field dynamics has yet to converge to its invariant measure.
Combining this and our propagation of chaos results with the ideas of \cite[Theorem 3.8]{Fornasier2024Consensus-BasedGlobally} yields for the freezing scheme a bound of the form
\begin{align}
        \mathbb P\bigg(
    \N{M_T^N-\boldsymbol\mu^\alpha_\infty}^2
    \ge 3C_\alpha^2\gamma^{2J}+\varepsilon
    \bigg)
    \le \frac{C(T)}{\varepsilon N}, \label{probabibound1}
\end{align}
showing that for sufficiently large $\alpha$ and large $N$, the empirical mean of the particle system is close to $\boldsymbol{\mu}^\alpha_\infty$ with high probability.
However, this result comes with an important downside. The propagation of chaos estimates in Chapters \ref{chap:PropGoodSets} and \ref{chap:PropChaUnrestricted} are established over a finite time horizon $[0,T]$. In particular, we allow the constant $C(T)$ to grow exponentially in $T$. This poses a problem: as $T$ grows, the mean-field dynamics converges exponentially toward $\boldsymbol\rho^\alpha_\infty$, by driving $C_\alpha\gamma^J \to 0$, but simultaneously the estimate \eqref{probabibound1} requires $N \geq C(T)/(\varepsilon \xi)$ particles to maintain a probability of at most $\xi$, and this threshold grows exponentially in $T$. The result is therefore only meaningful over a fixed finite time horizon, and the number of particles required to achieve a prescribed accuracy grows exponentially with that horizon. Overcoming this limitation is precisely the motivation for uniform-in-time propagation of chaos results, as briefly discussed in Section \ref{Sec:Outlook}. We focus here on the consensus freezing scheme and present the analogous result for the hopping scheme via the connection established in Theorem \ref{thm:FStoHS}. For the $\delta$-CBO scheme, the global convergence result \cite[Theorem 2.5]{Fornasier2026FromConvergence} is formulated in relative entropy, holds up to a finite time horizon, and requires an additional assumption on $f$ near $x^*$; we therefore do not pursue the analogous statement here.

\subsection{Global Convergence in Probability: Freezing}
Before we can turn to the central result of this section, we need to state some auxiliary results. We introduce the main results from \cite{Fornasier2026FromConvergence} that lead to the global convergence of the mean-field dynamics of the consensus freezing scheme. For further details, we refer to \cite[Section 3.1]{Fornasier2026FromConvergence}.

\vspace{.2 cm}

We begin by defining the map $\mathcal{T}^\alpha$ by \begin{equation}\label{eq:T_alpha}
    \begin{split}
        \mathcal{T}^\alpha\colon\mathbb{R}^d&\to \mathbb{R}^d \\
        \mu &\mapsto X^\alpha\Big(\mathcal{N}\big(\mu,\frac{\delta^2}{2\lambda}I_d\big)\Big).
    \end{split}
\end{equation} 
The following theorem establishes the existence and uniqueness of an invariant measure for the mean-field dynamics \eqref{CF:MFscheme}. The key insight is that the map $\mathcal{T}^\alpha$ becomes a contraction on a ball around $x^*$ for sufficiently large $\alpha$. By the Banach fixed point theorem, this contraction has a unique fixed point — a Gaussian invariant measure $\boldsymbol\rho^\alpha_\infty$ centered near $x^*$. As $\alpha \to \infty$, $\boldsymbol\rho^\alpha_\infty$ converges in Wasserstein-2 distance to the measure $\boldsymbol\rho_\infty = \mathcal{N}(x^*, \frac{\delta^2}{2\lambda}I_d)$.
\begin{theorem}[{{\cite[Theorem 3.5]{Fornasier2026FromConvergence}}}]\label{thm:invariant}
    For any $r>0$ and 
    $\alpha$ sufficiently large depending on $r$,
    the map $\mathcal{T}^\alpha$ defined by \cref{eq:T_alpha} restricted to $B_r(x^*)$ is a contraction mapping with Lipschitz constant $L_\alpha$, where $L_\alpha\to 0$ as $\alpha\to\infty$.
    Furthermore, for $\alpha$ sufficiently large, the dynamics \eqref{CF:MFscheme} has a unique invariant measure $\boldsymbol\rho_\infty^\alpha$ with the property that $\boldsymbol\mu_\infty^\alpha \coloneqq \mathbb{E}_{\boldsymbol{\rho}_\infty^\alpha}[x] \in B_r(x^*)$, and $\boldsymbol{\rho}_\infty^\alpha$ is of the form
    \begin{equation}\label{eq:invariant}
        \boldsymbol\rho_\infty^\alpha=\mathcal{N}\left(\boldsymbol\mu_\infty^\alpha,\frac{\delta^2}{2\lambda}I_d\right)=\mathcal{N}\left(\mathcal{T}^\alpha(\boldsymbol\mu_\infty^\alpha),\frac{\delta^2}{2\lambda}I_d\right)=\mathcal{N}\left(X^\alpha(\boldsymbol\rho_\infty^\alpha),\frac{\delta^2}{2\lambda}I_d\right).
    \end{equation}
Furthermore it holds that
 \begin{equation}
     W_2(\boldsymbol\rho_\infty^\alpha, \boldsymbol{\rho}_\infty)\to 0\quad \mbox{ as }\alpha\to\infty \,,
 \end{equation}
 with $\boldsymbol{\rho}_\infty= \mathcal{N}\left(x^*,\frac{\delta^2}{2\lambda}I_d\right)$\,.
\end{theorem}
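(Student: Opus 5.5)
The plan is to reduce everything to the finite-dimensional map $\mathcal{T}^\alpha$ and apply Banach's fixed point theorem. The first step identifies the long-time behaviour of the frozen mean-field dynamics \eqref{CF:MFscheme}. On each interval $[t_j,t_{j+1})$ the process is an Ornstein--Uhlenbeck process with frozen centre $X^\alpha(\orho_{t_j})$, so the explicit solution \eqref{OU:Strong} gives
\begin{align*}
\overbar X_{t} = e^{-s\lambda(t-t_j)}\overbar X_{t_j} + \bigl(1-e^{-s\lambda(t-t_j)}\bigr)X^\alpha(\orho_{t_j}) + \sqrt{s}\,\delta\int_{t_j}^{t} e^{-s\lambda(t-\tau)}\,dB_\tau .
\end{align*}
The variance of the noise term is $\frac{\delta^2}{2\lambda}(1-e^{-2s\lambda(t-t_j)})I_d$, which is the same computation as for $Z^i_j$ in Lemma \ref{NP:CFLemma}. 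Hence a Gaussian $\mathcal N(\mu,\frac{\delta^2}{2\lambda}I_d)$ is invariant for the whole piecewise dynamics exactly when $\mu = X^\alpha(\mathcal N(\mu,\frac{\delta^2}{2\lambda}I_d)) = \mathcal T^\alpha(\mu)$. So the problem becomes finding a unique fixed point of $\mathcal T^\alpha$ in $B_r(x^*)$.

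The core step is showing that $\mathcal T^\alpha$ is a contraction on $B_r(x^*)$ with $L_\alpha\to 0$. Write $\sigma^2=\delta^2/(2\lambda)$ and
\begin{align*}
\mathcal T^\alpha(\mu)=\frac{\int x\, e^{-\alpha f(x)} e^{-\|x-\mu\|^2/(2\sigma^2)}dx}{\int e^{-\alpha f(x)} e^{-\|x-\mu\|^2/(2\sigma^2)}dx}.
\end{align*}
Differentiating under the integral gives $\nabla_\mu \mathcal T^\alpha(\mu)=\sigma^{-2}\operatorname{Cov}_{\eta_\mu}(x)$. Here $\eta_\mu$ is the Gibbs-type probability measure with density proportional to $e^{-\alpha f(x)-\|x-\mu\|^2/(2\sigma^2)}$. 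It then suffices to prove
\begin{align*}
\sup_{\mu\in B_r(x^*)}\operatorname{tr}\operatorname{Cov}_{\eta_\mu}\le \sup_{\mu\in B_r(x^*)}\mathbb E_{\eta_\mu}\|x-x^*\|^2\to 0 \quad\text{as }\alpha\to\infty.
\end{align*}

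This uniform concentration is the main obstacle, because \eqref{A1}--\eqref{A3} only give local continuity near $x^*$ and quadratic growth away from it. I would split the integral into $B_\varepsilon(x^*)$ and its complement.
\begin{itemize}
\item The Gaussian factor is bounded above and below uniformly for $\mu\in B_r(x^*)$ and $x$ in bounded sets.
\item The lower bound on the denominator uses the continuity in \eqref{A1}: $f\le f(x^*)+\varepsilon'$ on a small ball $B_{\varepsilon''}(x^*)$, and $f(x^*)=0$ after shifting.
\item Outside $B_\varepsilon(x^*)$, uniqueness of the minimiser together with \eqref{A3} gives $f\ge c_\varepsilon>0$, at least when $R$ is taken large enough that $B_\varepsilon(x^*)^c\cap B_R(x^*)$ is compact. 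This yields an $e^{-\alpha(c_\varepsilon-\varepsilon')}$ gain, with \eqref{A3} controlling the tails.
\item Together this gives $\mathbb E_{\eta_\mu}\|x-x^*\|^2\le \varepsilon^2+o(1)$.
\end{itemize}
The same estimate gives $\|\mathcal T^\alpha(\mu)-x^*\|\to0$ uniformly on $B_r(x^*)$, so $\mathcal T^\alpha$ maps $B_r(x^*)$ into itself for large $\alpha$. Convexity of the ball plus the mean value inequality then yields Lipschitz constant $L_\alpha=\sigma^{-2}\sup\|\operatorname{Cov}_{\eta_\mu}\|\to0$.

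Banach's theorem then gives a unique fixed point $\boldsymbol\mu^\alpha_\infty\in B_r(x^*)$, and $\boldsymbol\rho^\alpha_\infty=\mathcal N(\boldsymbol\mu^\alpha_\infty,\sigma^2 I_d)$ is invariant; the chain of equalities \eqref{eq:invariant} is just the fixed-point identity. For uniqueness, any invariant measure $\rho$ must be reproduced after a freezing step. Its law after one step is $\mathrm{Law}(g\overbar X_0)*\mathcal N\bigl((1-g)X^\alpha(\rho),\sigma^2(1-g^2)I_d\bigr)$, and iterating forces $\rho$ to be Gaussian with covariance $\sigma^2I_d$ and mean a fixed point of $\mathcal T^\alpha$. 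I expect this Gaussian-characterisation step to require some care; restricting to invariant measures with mean in $B_r(x^*)$, as the statement does, avoids global issues. Finally, $\|\boldsymbol\mu^\alpha_\infty-x^*\|\to0$ by the uniform concentration, and $W_2$ between two Gaussians with equal covariance equals the distance of their means, so $W_2(\boldsymbol\rho_\infty^\alpha,\boldsymbol\rho_\infty)\to0$.
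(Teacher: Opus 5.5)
Your proposal is correct and follows the route the paper indicates. The thesis only cites this result and summarises it as: $\mathcal{T}^\alpha$ is a contraction on a ball around $x^*$ for large $\alpha$, Banach's fixed point theorem yields the Gaussian invariant measure, and concentration of the fixed point near $x^*$ gives the $W_2$ limit. Your covariance formula $D\mathcal{T}^\alpha(\mu)=\sigma^{-2}\operatorname{Cov}_{\eta_\mu}$ and the uniform Laplace-type bound on $\mathbb{E}_{\eta_\mu}\|x-x^*\|^2$ supply exactly the missing details.

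The Gaussian-characterisation step you flag is in fact easy. Iterating the one-step identity gives $\rho=\rho_{g^n}*\mathcal{N}\bigl((1-g^n)X^\alpha(\rho),\sigma^2(1-g^{2n})I_d\bigr)$. Letting $g^n\to0$ in the characteristic function then forces $\rho=\mathcal{N}(X^\alpha(\rho),\sigma^2 I_d)$.
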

The following theorem provides global-in-time bounds for the mean-field dynamics of the consensus freezing scheme \eqref{CF:MFscheme}.
\begin{theorem}[{{\cite[Theorem 3.9]{Fornasier2026FromConvergence}}}]\label{thm:global_bound}
   Let $r \in \mathbb{N}_{\geq 2}$ and let $\orho_0 \in \mathcal{P}_r(\mathbb{R}^d)$ be an arbitrary initial measure. Let $\orho_t$ be the solution to the consensus freezing scheme \eqref{CF:MFscheme}. Then, there exist $\alpha_0 > 0$ and $S > 0$ such that for all $\alpha \geq \alpha_0$, the solution satisfies
    \begin{equation}\label{eq:second_moment_bound}
        \mathbb{E}_{\overline{\rho}_t} \big[ \N{x}^q \big] \leq S, \quad \forall t \geq 0, \quad \forall q \in \mathbb{N} \text{ with } q \leq r,
    \end{equation}
    and
    \begin{equation}\label{eq:gamma_t_alpha_bound}
        \N{X^\alpha(\orho_t)} \leq S, \quad \forall t \geq 0.
    \end{equation}
\end{theorem}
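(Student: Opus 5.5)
We need to prove Theorem thm:global_bound: uniform-in-time moment bounds of order up to $r$ for the mean-field freezing scheme, and a uniform bound on $\N{X^\alpha(\orho_t)}$, for $\alpha\ge\alpha_0$.

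The plan is to work only at the freezing times $t_j$ and exploit the explicit Ornstein--Uhlenbeck solution \eqref{OU:Strong}. For $t\in[t_j,t_{j+1})$ we have in law
\begin{align*}
\overbar X_t = e^{-s\lambda(t-t_j)}\overbar X_{t_j} + \bigl(1-e^{-s\lambda(t-t_j)}\bigr)X^\alpha(\orho_{t_j}) + \sqrt{s}\,\delta\, Z_t,
\end{align*}
where $Z_t$ is a centered Gaussian independent of $\overbar X_{t_j}$ with covariance at most $\frac{1}{2s\lambda}I_d$. Hence the moments inside each interval are controlled by a convex combination of the moments at $t_j$ and $\N{X^\alpha(\orho_{t_j})}^q$, plus a Gaussian moment $G_q$ that is uniform in $t$ and $j$. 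It therefore suffices to bound $m_j:=\mathbb{E}_{\orho_{t_j}}[\N{x}^q]$ and $\N{X^\alpha(\orho_{t_j})}$ uniformly in $j$.

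The key step is a uniform bound on the consensus point that does not grow with $m_j$. Corollary \ref{GerberPropA4Corollary} gives $\N{X^\alpha(\rho)}^q\le b_q\mathbb{E}_\rho[\N{x}^q]$, but $b_q$ may exceed $1$, so iterating it gives geometric growth in $j$. That is exactly the failure of Lemma \ref{NP:CFLemma}, and it is the main obstacle. Instead I would use the Laplace-principle type estimate underlying \cite[Section 3]{Fornasier2026FromConvergence}. For a measure $\rho$ with a Gaussian component of fixed variance $\frac{1-e^{-2s\lambda\Delta t}}{2\lambda}\delta^2$, which $\orho_{t_j}$ has for $j\ge1$, (A3) makes the weight $\omega_\alpha$ concentrate near $x^*$ as $\alpha\to\infty$. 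The lower bound on $\Gnorm{\rho}$ comes from the Gaussian mass near $x^*$, which degrades only polynomially or exponentially in the location of the centre and not in the spread. I would aim to show that there are $\alpha_0$ and $R_0$ with $\N{X^\alpha(\orho_{t_j})-x^*}\le \theta\,\bigl(\mathbb{E}_{\orho_{t_{j-1}}}\N{x}^2\bigr)^{1/2}+c$, with $\theta<1$ arbitrarily small for $\alpha\ge\alpha_0$. Equivalently, I would use the contraction property of $\mathcal T^\alpha$ from Theorem \ref{thm:invariant}, extended from a ball to a sublinear growth bound. This bound is the step I expect to be hardest, in particular the denominator estimate that is uniform over the relevant family of measures.

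With that in hand, the recursion at freezing times reads, by convexity and $\N{a+b}^q\le(1+\epsilon)^{q-1}\N{a}^q+C_\epsilon\N{b}^q$,
\begin{align*}
m_{j+1}\le (1+\epsilon)^{q-1}\Bigl(g\,m_j+(1-g)\N{X^\alpha(\orho_{t_j})}^q\Bigr)+C_\epsilon G_q,\qquad g=e^{-s\lambda\Delta t}.
\end{align*}
Inserting the consensus bound gives $\N{X^\alpha}^q\le 2^{q-1}(\theta^q m_j+c')$ after Jensen ($m_2^{q/2}\le m_q$). Choosing $\epsilon$ small and then $\alpha_0$ large makes the coefficient $(1+\epsilon)^{q-1}(g+(1-g)2^{q-1}\theta^q)<1$. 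This yields $m_j\le\max\{m_0,\,C/(1-\text{coef})\}$ uniformly in $j$, and hence uniformly in $t$ by the intra-interval formula.

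To finish, I would take $S$ as the maximum over $q\le r$ of these bounds and of the resulting bound on $\N{X^\alpha(\orho_{t_j})}$. The latter follows from the consensus estimate combined with the uniform second moments. Since $X^\alpha(\orho_t)$ is piecewise constant on the freezing intervals in the scheme, this gives \eqref{eq:gamma_t_alpha_bound}.
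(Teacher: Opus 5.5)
The thesis gives no proof of this theorem; it is quoted from \cite{Fornasier2026FromConvergence}, so there is no in-paper argument to compare against. Judged on its own, your proposal has a genuine gap exactly at the step you flag.

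The moment recursion at the freezing times is routine. All the content of the theorem sits in the consensus estimate $\|X^\alpha(\orho_{t_j})-x^*\|\le\theta(\cdot)^{1/2}+c$, with $\theta$ small and $c$ independent of $j$ and of $\alpha\ge\alpha_0$. You only announce this estimate, and the route you offer does not deliver it:
\begin{itemize}
\item Theorem \ref{thm:invariant} concerns only the Gaussian laws $\mathcal{N}(\mu,\frac{\delta^2}{2\lambda}I_d)$, and only centres $\mu\in B_r(x^*)$ with $\alpha$ large depending on $r$.
\item $\orho_{t_j}$ is not Gaussian for general $\orho_0$, and its Gaussian factor has covariance $\frac{\delta^2}{2\lambda}(1-e^{-2s\lambda t_j})I_d$.
\item A local contraction with a radius-dependent threshold is not equivalent to a global growth bound with a single $\alpha_0$. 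The radius you would need is precisely the a priori bound you are trying to prove. Indeed, Theorem \ref{thm:fsconv} takes $r=2S$ with $S$ from the present theorem, so the dependence runs the other way.
\end{itemize}
A direct Laplace-type proof of your global bound is also harder than your sketch suggests. The mass of $\orho_{t_j}$ near $x^*$ can be as small as $e^{-cM_2}$ in the second moment $M_2$, so obtaining a small $\theta$ and a uniform $c$ over the whole class requires a genuine tilting argument that is absent.

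You do not actually need a global growth bound. Since the frozen consensus points are deterministic, iterating \eqref{OU:Strong} gives $\overline{X}_t=e^{-s\lambda t}\overline{X}_0+c(t)+G_t$. Here $c(0)=0$ and $\dot c=-s\lambda\,(c-X^\alpha(\orho_{t_j}))$ on $[t_j,t_{j+1})$, so $\|c(t_j)\|\le\max_{l<j}\|X^\alpha(\orho_{t_l})\|$. The term $G_t\sim\mathcal{N}(0,\frac{\delta^2}{2\lambda}(1-e^{-2s\lambda t})I_d)$ is independent of $\overline{X}_0$. Hence all moments up to order $r$ are bounded as soon as the frozen consensus points are, and you can induct on $j$:
\begin{itemize}
\item If the earlier frozen points lie in $B_K(0)$, then $\orho_{t_j}(B_1(x^*))\ge p_0$ and $\mathbb{E}_{\orho_{t_j}}\|x-x^*\|\le M_1$, with $p_0,M_1$ depending only on $K$, $\orho_0$ and $(s,\lambda,\delta,\Delta t)$.
\item Split the consensus measure on $B_{R'}(x^*)$ and its complement, using \eqref{A1} near $x^*$ and \eqref{A3} outside. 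This yields $\|X^\alpha(\orho_{t_j})-x^*\|\le R'+e^{-\alpha}M_1/p_0$, with $R'$ depending only on $f$.
\item Fix $K:=\max\{K_0,\|x^*\|+R'+1\}$ first and choose $\alpha_0$ afterwards; this closes the induction.
\end{itemize}

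Two further points are missing from your proposal. First, $K_0$ must bound $\|X^\alpha(\orho_0)\|$ uniformly in $\alpha\ge\alpha_0$. Your estimate does not apply at $j=0$, where there is no Gaussian factor, and Corollary \ref{GerberPropA4Corollary} as stated has an $\alpha$-dependent $b_q$. An $\alpha$-uniform (affine) version is therefore needed to keep $S$ independent of $\alpha$. Second, your final sentence is wrong. Only the frozen point $X^\alpha(\orho_{t_j})$ is piecewise constant, whereas \eqref{eq:gamma_t_alpha_bound} concerns $X^\alpha(\orho_t)$ for every $t$. That requires the same estimate for all $t\ge t_1$ (where the Gaussian factor has covariance bounded below) and the $\alpha$-uniform moment bound on $[0,t_1)$.
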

The above results now make it possible to derive a global convergence result for the mean-field dynamics of the consensus freezing scheme to the invariant measure $\boldsymbol{\rho}^\alpha_\infty$ which by Theorem \ref{thm:invariant} converges to $\boldsymbol{\rho}_\infty$ in Wasserstein-2 distance as $\alpha \to \infty$.
\begin{theorem}[{{\cite[Theorem 3.10]{Fornasier2026FromConvergence}}}]\label{thm:fsconv}
    Let $\orho_{t}$ be the unique solution to the dynamics \eqref{CF:MFscheme} with the initial data $X_0\sim \orho_0 {\in \mathcal{P}_4(\mathbb{R}^d)}$ (not necessarily a Gaussian) and any $\Delta t>0$. {Let $r=2S$}, {where $S$ is a constant fulfilling \cref{eq:second_moment_bound,eq:gamma_t_alpha_bound} such that $x^* \in B_S(0)$ and $\mathbb{E}_{\rho_0}[x] \in B_S(0)$}, and assume  $\boldsymbol\rho_\infty^\alpha=\mathcal{N}(\boldsymbol\mu_\infty^\alpha,\frac{\delta^2}{2\lambda}I_d)$ is the invariant measure found in Theorem \ref{thm:invariant} {with $\boldsymbol\mu_\infty^\alpha\in B_r(x^*)$}. Then, for {any $  e^{-s \lambda \Delta t}<\gamma<1$} and for $\alpha>0$ sufficiently large it holds that
    \begin{equation}
       W_2(\orho_{t},\boldsymbol\rho_\infty^\alpha)\leq C_\alpha \gamma^j\ \quad \mbox{ for } t \in [t_j, t_{j+1}), \ j \in \mathbb{N}_{0} \,,
    \end{equation}
    for some $C_\alpha>0$. Furthermore, for any $s^\star > 0$, we may choose $C_\alpha$ and $\gamma$ uniformly over all $s\in [s^\star, +\infty)$.
\end{theorem}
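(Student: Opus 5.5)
We need to prove Theorem \ref{thm:fsconv}, the $W_2$ convergence of the consensus freezing mean field $\orho_t$ to $\boldsymbol\rho^\alpha_\infty$ at rate $\gamma^j$. The plan is to exploit the explicit Ornstein--Uhlenbeck structure on each freezing interval. Since the consensus is frozen on $[t_j,t_{j+1})$, the law of $\overbar X_t$ is explicit:
\begin{align*}
\overbar X_t = e^{-s\lambda(t-t_j)}\overbar X_{t_j} + \bigl(1-e^{-s\lambda(t-t_j)}\bigr)X^\alpha(\orho_{t_j}) + \sqrt{s}\,\delta\int_{t_j}^t e^{-s\lambda(t-\tau)}dB_\tau .
\end{align*}
The invariant measure has the same representation with $\overbar X_{t_j}$ replaced by some $Y\sim\boldsymbol\rho^\alpha_\infty$ and $X^\alpha(\orho_{t_j})$ replaced by $X^\alpha(\boldsymbol\rho^\alpha_\infty)=\boldsymbol\mu^\alpha_\infty$. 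This uses \eqref{eq:invariant} and the fact that the OU flow with centre $\boldsymbol\mu^\alpha_\infty$ preserves $\mathcal N(\boldsymbol\mu^\alpha_\infty,\frac{\delta^2}{2\lambda}I_d)$.

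We couple $\overbar X_{t_j}$ with $Y$ optimally in $W_2$ and drive both with the same Brownian motion. With $g_t:=e^{-s\lambda(t-t_j)}$, the Minkowski inequality gives
\begin{align*}
W_2(\orho_t,\boldsymbol\rho^\alpha_\infty)\le g_t\,W_2(\orho_{t_j},\boldsymbol\rho^\alpha_\infty) + (1-g_t)\,\N{X^\alpha(\orho_{t_j})-X^\alpha(\boldsymbol\rho^\alpha_\infty)}.
\end{align*}
Everything therefore reduces to a stability bound of the form
\begin{align*}
\N{X^\alpha(\orho_{t_j})-X^\alpha(\boldsymbol\rho^\alpha_\infty)}\le L'_\alpha\,W_2(\orho_{t_j},\boldsymbol\rho^\alpha_\infty),
\end{align*}
with a constant $L'_\alpha$ that becomes small for large $\alpha$. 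Given such a bound, setting $t=t_{j+1}$ and $g=e^{-s\lambda\Delta t}$ yields $w_{j+1}\le (g+(1-g)L'_\alpha)w_j$ for $w_j:=W_2(\orho_{t_j},\boldsymbol\rho^\alpha_\infty)$. For $\alpha$ large we have $g+(1-g)L'_\alpha\le\gamma$, since $\gamma>g$. Hence $w_j\le\gamma^j w_0$, and the interpolation bound above extends the estimate to all $t\in[t_j,t_{j+1})$ with $C_\alpha:=W_2(\orho_0,\boldsymbol\rho^\alpha_\infty)$ (times at most $1+L'_\alpha$). This constant is finite because $\orho_0\in\mathcal P_4$. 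Uniformity in $s\ge s^\star$ follows because $g\le e^{-s^\star\lambda\Delta t}$, so a single $\gamma\in(e^{-s^\star\lambda\Delta t},1)$ works for all such $s$.

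The main obstacle is the stability step. Lemma \ref{GerberStability2} only gives an $\alpha$-dependent constant $C$, which typically grows with $\alpha$ rather than shrinking, so it cannot close the contraction. I would instead first treat Gaussian measures exactly. For $j\ge1$, $\orho_{t_j}$ is not Gaussian unless $\orho_0$ is. However, $\orho_{t_j}$ is the convolution of the pushforward $e^{-s\lambda\Delta t}\orho_{t_{j-1}}$, shifted by $(1-g)X^\alpha(\orho_{t_{j-1}})$, with $\mathcal N\bigl(0,\frac{\delta^2}{2\lambda}(1-g^2)I_d\bigr)$.

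My first attempt is to decompose the Gaussian component and use the contraction property of $\mathcal T^\alpha$ from Theorem \ref{thm:invariant} on $B_r(x^*)$, with $r=2S$. Theorem \ref{thm:global_bound} keeps all relevant means inside this ball, and it supplies the uniform moment bounds $S$ needed to control the non-Gaussian remainder. The contribution of the non-Gaussian part is proportional to $g^j$, the weight of the initial datum, and would be absorbed into $C_\alpha\gamma^j$.

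Concretely, I would try to prove $\N{X^\alpha(\rho)-X^\alpha(\boldsymbol\rho^\alpha_\infty)}\le L'_\alpha W_2(\rho,\boldsymbol\rho^\alpha_\infty)$ for measures $\rho$ with $\mathbb E_\rho\N{x}^4\le S$ and Gaussian smoothing of fixed variance. The route would be a Laplace-principle argument: both consensus points concentrate near $x^*$, with errors controlled by fourth moments. If a clean uniform Lipschitz constant is not available, I would settle for a bound $\varepsilon_\alpha + L'_\alpha w_j$. In that case the recursion must be handled with care, because an additive constant would destroy the geometric decay, and the argument has to make any such term vanish.
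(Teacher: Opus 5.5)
\paragraph{The gap: the stability step.} Your reduction is sound. Coupling $\overline X_{t_j}$ optimally with $Y\sim\boldsymbol\rho^\alpha_\infty$, driving both with the same Brownian motion, and using that the OU flow centred at $\boldsymbol\mu^\alpha_\infty$ preserves $\boldsymbol\rho^\alpha_\infty$ gives $w_{j+1}\le g\,w_j+(1-g)\|X^\alpha(\overline\rho_{t_j})-\boldsymbol\mu^\alpha_\infty\|$, and the analogous bound on $[t_j,t_{j+1})$. Your handling of uniformity in $s\ge s^\star$ is also fine.

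The gap is the step you flag yourself, and as proposed it does not close. Nothing available gives $\|X^\alpha(\rho)-X^\alpha(\boldsymbol\rho^\alpha_\infty)\|\le L'_\alpha W_2(\rho,\boldsymbol\rho^\alpha_\infty)$ with $L'_\alpha\to0$ for non-Gaussian $\rho$. Theorem \ref{thm:invariant} gives a small Lipschitz constant only along the Gaussian translates $\mathcal N(\mu,\frac{\delta^2}{2\lambda}I_d)$, i.e.\ for the map $\mathcal T^\alpha$. Lemmas \ref{CarrL2} and \ref{GerberStability2} give constants with no smallness in $\alpha$. Nor should you expect such a bound uniformly over a class of measures with bounded moments, even Gaussian-smoothed ones. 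For large $\alpha$ the consensus point is governed by the fine structure of $\rho$ in a shrinking neighbourhood of $x^*$, and $W_2$-small perturbations can alter that structure. Your fallback with an additive constant $\varepsilon_\alpha$ destroys the geometric rate, as you note. So the argument is missing its key estimate.

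\paragraph{The missing idea: compare with the Gaussian of the same mean.} Never apply a stability estimate to $w_j$ itself. Apply it only to the distance between $\overline\rho_{t_j}$ and the Gaussian with the \emph{same mean}, which decays on its own. With $m_j:=\mathbb E_{\overline\rho_{t_j}}[x]$ and $Z\sim\mathcal N(0,I_d)$ independent of $\overline X_0$, iterating the OU formula gives
\[
\overline X_{t_j}\overset{d}{=}m_j+g^j\bigl(\overline X_0-\mathbb E[\overline X_0]\bigr)+\sqrt{1-g^{2j}}\,\frac{\delta}{\sqrt{2\lambda}}\,Z .
\]
Hence $W_2\bigl(\overline\rho_{t_j},\mathcal N(m_j,\frac{\delta^2}{2\lambda}I_d)\bigr)\le c\,g^j$, with $c$ depending only on the variance of $\overline\rho_0$ and on $d\delta^2/(2\lambda)$. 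Using $\mathcal T^\alpha(\boldsymbol\mu^\alpha_\infty)=\boldsymbol\mu^\alpha_\infty$ from \eqref{eq:invariant},
\[
\bigl\|X^\alpha(\overline\rho_{t_j})-\boldsymbol\mu^\alpha_\infty\bigr\|\le\bigl\|X^\alpha(\overline\rho_{t_j})-\mathcal T^\alpha(m_j)\bigr\|+\bigl\|\mathcal T^\alpha(m_j)-\mathcal T^\alpha(\boldsymbol\mu^\alpha_\infty)\bigr\|\le C_0(\alpha)\,c\,g^j+L_\alpha\,w_j .
\]
Here the three ingredients are as follows.
\begin{itemize}
\item $C_0(\alpha)$ is the constant of Lemma \ref{CarrL2} with $p=2$. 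It may be huge, but it is independent of $j$, since both second moments are bounded uniformly by Theorem \ref{thm:global_bound} and $\|m_j\|\le S$.
\item $L_\alpha$ is the contraction constant of $\mathcal T^\alpha$ on $B_{2S}(x^*)$.
\item The last step uses $\|m_j-\boldsymbol\mu^\alpha_\infty\|\le w_j$.
\end{itemize}
The contraction applies because $m_{j+1}=g\,m_j+(1-g)X^\alpha(\overline\rho_{t_j})$ is a convex combination of points of $B_S(0)$, by \eqref{eq:gamma_t_alpha_bound} and $\mathbb E_{\rho_0}[x]\in B_S(0)$. So $m_j\in B_S(0)\subset B_{2S}(x^*)$. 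This is exactly what the hypotheses $r=2S$ and $x^*\in B_S(0)$ are for; the thesis quotes the theorem without proof.

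\paragraph{Closing the recursion.} Your recursion now reads $w_{j+1}\le a\,w_j+(1-g)C_0(\alpha)\,c\,g^j$ with $a:=g+(1-g)L_\alpha\ge g$. The forcing decays geometrically instead of being constant. Moreover $a<\gamma$ as soon as $L_\alpha<(\gamma-g)/(1-g)$, which holds for $\alpha$ large. Therefore $w_j\le a^jw_0+C_0(\alpha)\,c\,j\,a^{j-1}\le C_\alpha\gamma^j$. The large $\alpha$-dependent stability constant only inflates $C_\alpha$. This is the precise form of your remark that the non-Gaussian part contributes a term proportional to $g^j$.
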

We can combine the above result of global convergence of the consensus freezing scheme with our mean-field approximations to derive a probabilistic global convergence result over a finite time horizon $[0, T]$.

\begin{theorem}
\label{thm:global_conv_mean_T}
Let $T > 0$, $\Delta t > 0$, $s > 0$, $\lambda > 0$, and let $J = T/\Delta t \in \mathbb{N}$ be the number of freezing steps. Let $\orho_0 \in \mathcal{P}_6(\mathbb{R}^d)$ be the common initial distribution, not necessarily Gaussian. Let $(\mathcal{X}^N_t)_t$ be the interacting particle system defined by \eqref{CF:CFscheme}, and define
\begin{align*}
M_T^N := \frac1N \sum_{i=1}^N X_T^i.
\end{align*}
Let $\boldsymbol\rho_\infty^\alpha=\mathcal{N}(\boldsymbol\mu_\infty^\alpha,\frac{\delta^2}{2\lambda}I_d)$ be the unique invariant measure from Theorem \ref{thm:invariant}. For any $\gamma \in (e^{-s\lambda\Delta t},1)$ and $\alpha$ sufficiently large, there exists a constant $C_\alpha>0$ and a constant $C(T)>0$, independent of $N$, such that for every $\varepsilon>0$ and every $N\in\mathbb N$,
\begin{align}
    \mathbb P\bigg(
    \N{M_T^N-\boldsymbol\mu^\alpha_\infty}^2
    \ge 3C_\alpha^2\gamma^{2J}+\varepsilon
    \bigg)
    \le \frac{C(T)}{\varepsilon N}.
    \label{eq:global_mean_T}
\end{align}
\end{theorem}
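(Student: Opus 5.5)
The plan is to follow the three-term decomposition announced at the beginning of this section and then apply Markov's inequality. Let $\overbar X^1_T,\dots,\overbar X^N_T$ be the coupled mean-field particles of \eqref{CF:MFIPS}, so they are i.i.d.\ with law $\orho_T$. Write $\overbar M_T^N$ for their empirical mean. Then
\begin{align*}
\N{M_T^N-\boldsymbol\mu^\alpha_\infty}^2 \le 3\N{M_T^N-\overbar M_T^N}^2+3\N{\overbar M_T^N-\mathbb E[\overbar X^1_T]}^2+3\N{\mathbb E[\overbar X^1_T]-\boldsymbol\mu^\alpha_\infty}^2 .
\end{align*}

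The third term is deterministic. The map $\rho\mapsto\mathbb E_\rho[x]$ is $1$-Lipschitz with respect to $W_1\le W_2$, so
\begin{align*}
\N{\mathbb E[\overbar X^1_T]-\boldsymbol\mu^\alpha_\infty}\le W_2(\orho_T,\boldsymbol\rho^\alpha_\infty).
\end{align*}
Since $T=t_J$, Theorem \ref{thm:fsconv} bounds this by $C_\alpha\gamma^J$. This requires $\orho_0\in\mathcal P_4$, which holds because $\orho_0\in\mathcal P_6$. Here one can either use continuity of $t\mapsto\orho_t$ at $t_J$ or apply the theorem on the interval $[t_J,t_{J+1})$; the paper's convention, with intervals closed on the left, gives the bound at $t=t_J$ directly. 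The third term is therefore at most $3C_\alpha^2\gamma^{2J}$. On the event in \eqref{eq:global_mean_T}, the first two terms must then together exceed $\varepsilon$. Markov's inequality reduces the claim to showing
\begin{align*}
3\,\mathbb E\big[\N{M_T^N-\overbar M_T^N}^2\big]+3\,\mathbb E\big[\N{\overbar M_T^N-\mathbb E[\overbar X^1_T]}^2\big]\le \frac{C(T)}{N}.
\end{align*}

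For the variance term, independence of the $\overbar X^i_T$ gives
\begin{align*}
\mathbb E\big[\N{\overbar M_T^N-\mathbb E\overbar X^1_T}^2\big]=\frac{1}{N}\operatorname{tr}\operatorname{Cov}(\overbar X^1_T)\le \frac{1}{N}\mathbb E_{\orho_T}\big[\N{x}^2\big].
\end{align*}
The second moment is bounded by $S$ through Theorem \ref{thm:global_bound} for $\alpha$ large, or alternatively by Lemma \ref{NP:CFLemma}.

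For the propagation-of-chaos term, Jensen's inequality gives
\begin{align*}
\N{M_T^N-\overbar M_T^N}^2\le\frac1N\sum_{i=1}^N\N{X^i_T-\overbar X^i_T}^2 ,
\end{align*}
so it suffices to bound $\mathbb E\N{X^i_T-\overbar X^i_T}^2$. I will apply Theorem \ref{NPFreezing:Thrm} with $p=2$ and $q=6$; this is admissible since $p\le q/2$. The rate exponent is $r_*=\min\{p/2,(q-p)/(2p)\}=\min\{1,1\}=1$, so
\begin{align*}
\mathbb E\Big[\sup_{t\in[0,T]}\N{X^i_t-\overbar X^i_t}^2\Big]\le \varepsilon^{\mathrm{unrest}}_{\mathrm{F}}(N,T)=\tilde C(T)N^{-1}.
\end{align*}
Summing the three contributions gives the claim with $C(T):=3\tilde C(T)+3S$.

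The main point requiring care is this last step. First, one must verify that the constant in Theorem \ref{NPFreezing:Thrm} is independent of $N$ and that the constant $C$ entering it is finite; this uses the $q=6$ moment bounds of Lemma \ref{NP:CFLemma}, which is exactly why $\mathcal P_6$ is assumed. Second, the supremum over $t\in[0,T]$ in that theorem must control the value at $t=T$ itself, the right endpoint, where the freezing intervals are half-open. This follows from the explicit formula \eqref{CF:DistFunction}, since $F^{(i)}$ is continuous up to $t_J$, so $\N{X^i_T-\overbar X^i_T}^2=\N{E^{(i)}_J}^2$ is bounded by $A_J$, which already appears in the estimate.
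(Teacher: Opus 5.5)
Your proposal is correct and follows the paper's proof essentially step by step. You use the same three-term split, bound the deterministic term via Theorem~\ref{thm:fsconv} at $t_J$, bound the variance term via Theorem~\ref{thm:global_bound}, and bound the coupling term via Jensen and Theorem~\ref{NPFreezing:Thrm} with $p=2$, $q=6$. The only difference is cosmetic: you apply Markov's inequality once to the sum of the two random terms, whereas the paper splits $\varepsilon$ in half and uses a union bound, so you get the slightly better constant $3\tilde C(T)+3S$ instead of $6(C_{\mathrm{PoC}}(T)+S)$.
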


\begin{proof}
Let $(\overbar X_T^1, ..., \overbar{X}^N_T)$ be the coupled mean-field particle system \eqref{CF:MFIPS} at time $T$ and define
\begin{align*}
\overbar M_T^N := \frac1N \sum_{i=1}^N \overbar X_T^i.
\end{align*}
We decompose
\begin{align*}
M_T^N-\boldsymbol\mu^\alpha_\infty
=
\big(M_T^N-\overbar M_T^N\big)
+
\big(\overbar M_T^N-\mathbb E[\overbar X_T^1]\big)
+
\big(\mathbb E[\overbar X_T^1]-\boldsymbol\mu^\alpha_\infty\big).
\end{align*}
Hence, by the inequality $\N{a+b+c}^2 \le 3(\N{a}^2+\N{b}^2+\N{c}^2)$ and taking expectations,
\begin{align*}
    \mathbb{E} \Bigeck{\N{M_T^N-\boldsymbol\mu^\alpha_\infty}^2}
    \le
    3 \mathbb{E} \Bigeck{\N{M_T^N-\overbar M_T^N}^2}
    +3\mathbb{E} \Bigeck{\N{\overbar M_T^N-\mathbb E[\overbar X_T^1]}^2}
    +3\N{\mathbb E[\overbar X_T^1]-\boldsymbol\mu^\alpha_\infty}^2.
\end{align*}
We begin with the deterministic term. Since $\orho_T=\mathrm{Law}(\overbar X_T^1)$ and $\boldsymbol\rho_\infty^\alpha=\mathcal N\bigrou{\boldsymbol\mu_\infty^\alpha,\frac{\delta^2}{2\lambda}I_d}$, Jensen's inequality yields
\begin{align*}
    \N{\mathbb E[\overbar X_T^1]-\boldsymbol\mu^\alpha_\infty}^2
    =
    \N{\int x\,d\orho_T(x)-\int y\,d \boldsymbol \rho_\infty^\alpha(y)}^2
    &\le \inf_{\pi\in\Pi(\orho_T, \boldsymbol\rho_\infty^\alpha)}
\int \N{x-y}^2\,d\pi(x,y) \\ &=
    W_2^2(\orho_T, \boldsymbol\rho_\infty^\alpha).
\end{align*}
By Theorem \ref{thm:fsconv}, for $\alpha$ sufficiently large and $\gamma\in(e^{-s\lambda\Delta t},1)$,
\begin{align*}
W_2(\orho_T,\boldsymbol \rho_\infty^\alpha)\le C_\alpha\gamma^J,
\end{align*}
and therefore
\begin{align*}
\N{\mathbb E[\overbar X_T^1]-\boldsymbol\mu^\alpha_\infty}^2
\le C_\alpha^2\gamma^{2J}.
\end{align*}
It remains to control the two random terms.
For the first term, by the convexity of $x \mapsto \N{x}^2$, Jensen's inequality gives almost surely
\begin{align*}
    \N{M_T^N-\overbar M_T^N}^2
    &=
    \N{
    \frac1N\sum_{i=1}^N (X_T^i-\overbar X_T^i)
    }^2
    \le
    \frac1N\sum_{i=1}^N \N{X_T^i-\overbar X_T^i}^2.
\end{align*}
Taking expectations and using Theorem \ref{NPFreezing:Thrm} with $p=2$ and $q=6$, we obtain
\begin{align*}
    \mathbb E \biggeck{\N{M_T^N-\overbar M_T^N}^2}
    &\le
    \frac1N\sum_{i=1}^N \mathbb E\biggeck{\N{X_T^i-\overbar X_T^i}^2}
    \le
    \varepsilon_{\mathrm{F}}^{\mathrm{unrest}}(N,T)
    =: \frac{C_{\mathrm{PoC}}(T)}{N}.
\end{align*}
Hence, by Markov's inequality,
\begin{align*}
    \mathbb P\bigg(
    3\N{M_T^N-\overbar M_T^N}^2 \ge \frac{\varepsilon}{2}
    \bigg)
    \le
    \frac{6\,C_{\mathrm{PoC}}(T)}{\varepsilon N}.
\end{align*}
For the second term, since $\overbar X_T^1,\dots,\overbar X_T^N$ are i.i.d. with mean $\mathbb E[\overbar X_T^1]$, we have

\begin{align*}
    \mathbb E\biggeck{\N{\overbar M_T^N-\mathbb E[\overbar X_T^1]}^2}
    &=
    \mathbb E\Biggeck{\N{\frac1N\sum_{i=1}^N \big(\overbar X_T^i-\mathbb E[\overbar X_T^1]\big)}^2} \\
    &=
    \frac1{N^2}\sum_{i=1}^N
    \mathbb E\biggeck{\N{\overbar X_T^i-\mathbb E[\overbar X_T^1]}^2} \\
    &=
    \frac1N\,\mathbb E\biggeck{\N{\overbar X_T^1-\mathbb E[\overbar X_T^1]}^2} \\
    &=
    \frac1N\Biggrou{\mathbb E\biggeck{\N{\overbar X_T^1}^2}
    - \N{\mathbb E[\overbar X_T^1]}^2} \\
    &\le
    \frac1N\,\mathbb E\biggeck{\N{\overbar X_T^1}^2}
    =
    \frac1N\,\mathbb E_{\overline{\rho}_T}\biggeck{\N{x}^2}.
\end{align*}
By Theorem \ref{thm:global_bound}, the second moment of $\orho_T$ is bounded uniformly in time, and therefore
\begin{align*}
\mathbb E\biggeck{\N{\overbar M_T^N-\mathbb E[\overbar X_T^1]}^2}
\le \frac{S}{N}.
\end{align*}
Again by Markov's inequality,
\begin{align*}
    \mathbb P\bigg(
    3\N{\overbar M_T^N-\mathbb E[\overbar X_T^1]}^2 \ge \frac{\varepsilon}{2}
    \bigg)
    \le
    \frac{6S}{\varepsilon N}.
\end{align*}
Combining the previous estimates, we get
\begin{align*}
    \mathbb P\bigg(
    \N{M_T^N-\boldsymbol\mu^\alpha_\infty}^2
    \ge 3C_\alpha^2\gamma^{2J}+\varepsilon
    \bigg)
    &\le
    \mathbb P\bigg(
    3\N{M_T^N-\overbar M_T^N}^2
    +
    3\N{\overbar M_T^N-\mathbb E[\overbar X_T^1]}^2
    \ge \varepsilon
    \bigg) \\
    &\le
    \mathbb P\bigg(
    3\N{M_T^N-\overbar M_T^N}^2 \ge \frac{\varepsilon}{2}
    \bigg)
    +
    \mathbb P\bigg(
    3\N{\overbar M_T^N-\mathbb E[\overbar X_T^1]}^2 \ge \frac{\varepsilon}{2}
    \bigg) \\
    &\le
    \frac{6(C_{\mathrm{PoC}}(T)+S)}{\varepsilon N}.
\end{align*}
Thus the claim follows with
\begin{align}
C(T):=6\big(C_{\mathrm{PoC}}(T)+S\big).\label{eq:global_IPS_TT}
\end{align}
\end{proof}
\begin{remark}
The estimate in Theorem \ref{thm:global_conv_mean_T} is meaningful for every fixed time horizon $T>0$, but it is not uniform in time. The bottleneck is the constant
\begin{align*}
C(T)=6\big(C_{\mathrm{PoC}}(T)+S\big),
\end{align*}
and more precisely the propagation of chaos contribution $C_{\mathrm{PoC}}(T)$. The propagation of chaos constant $C_{\mathrm{PoC}}(T)$ grows exponentially in $T$, since it is inherited from the finite-horizon propagation of chaos estimates established earlier in this thesis. Hence, although the deterministic convergence term $C_\alpha^2\gamma^{2J}$ decays exponentially fast as $T\to\infty$, the prefactor in the probabilistic bound deteriorates at the same time.
In order to guarantee
\begin{align*}
\mathbb P\bigg(
    \N{M_T^N-\boldsymbol\mu^\alpha_\infty}^2
    \le 3C_\alpha^2\gamma^{2J}+\varepsilon
\bigg)\ge 1-\xi,
\end{align*}
it is sufficient to choose
\begin{align*}
N \ge \frac{C(T)}{\varepsilon \xi}.
\end{align*}
Thus, the number of particles required to maintain a confidence level of at least $1-\xi$ grows proportionally to $C(T)$. Since $C_{\mathrm{PoC}}(T)$ grows exponentially in $T$, the same is true for the required particle number. Therefore, Theorem \ref{thm:global_conv_mean_T} does not yield a global-in-time convergence result for the interacting particle system with a fixed number of particles. If $C(T)$ were bounded uniformly in $T$, then one could choose $N=\mathcal O((\varepsilon\xi)^{-1})$ independently of the time horizon and obtain a genuine uniform-in-time high-probability estimate. Thus, the main obstruction to a full global convergence result for the interacting particle system is the lack of a propagation of chaos estimate that remains uniform in time.
\end{remark}
\begin{remark}
We might ask ourselves whether, instead of proving convergence at the level of the empirical mean, one could aim directly for a bound on
\begin{align*}
\mathbb{E} \Big[W^2_2\big(\rho_T^N,\boldsymbol\rho_\infty^\alpha\big)\Big].
\end{align*}
The triangle inequality yields
\begin{align*}
\mathbb{E} \Big[W^2_2\big(\rho_T^N,\boldsymbol\rho_\infty^\alpha\big)\Big]
\leq
2\mathbb{E} \Big[W^2_2\big(\rho_T^N,\orho_T\big)\Big]
+
2W^2_2\big(\orho_T,\boldsymbol\rho_\infty^\alpha\big),
\end{align*}
where $\orho_T$ denotes the law of the mean-field dynamics at time $T$. To estimate the first term, one may further decompose
\begin{align*}
\mathbb{E}\Big[W^2_2\big(\rho_T^N,\orho_T\big)\Big]
\leq
2\mathbb{E}\Big[W^2_2\big(\rho_T^N,\overline{\rho}_T^N\big)\Big]
+
2\mathbb{E}\Big[W^2_2\big(\overline{\rho}_T^N,\orho_T\big)\Big],
\end{align*}
where
\begin{align*}
\overline{\rho}_T^N:=\frac1N\sum_{i=1}^N\delta_{\overline X_T^i}
\end{align*}
is the empirical measure of the coupled i.i.d. mean-field particles.

The first term can be controlled by Example \ref{WasserBD} and Theorem \ref{NPFreezing:Thrm} to get a similar estimate to that of Theorem \ref{thm:global_conv_mean_T} at rate $\mathcal{O}(N^{-1/2})$. To bound the second term, we need to use Lemma \ref{Pre:ConvEmp}. In particular, for $p=2, q = 6$,
\begin{align*}
    \mathbb{E} \bigeck{W^2_2(\orho^N_T, \orho_T)} \leq C M_q^{1/3} \begin{cases}
             N^{-1/2} + N^{- 2/3} \quad &\text{if } d<4 \\
             N^{-1/2}\log (1+ N) + N^{- 2/3} \quad &\text{if } d=4, \\
             N^{-2/d} + N^{- 2/3} \quad &\text{if } d>4 \text{ and } \frac{d}{d-2} \neq 6,
         \end{cases}
\end{align*}
where $C> 0$ is a constant depending on the dimension $d$.
Hence, while we still recover the rate $\mathcal{O}(N^{-1/2})$ in low dimension, the estimate deteriorates as soon as $d>4$, where the leading term becomes $\mathcal{O}(N^{-2/d})$. Hence, the convergence rate deteriorates as the dimension $d$ increases.
For this reason, the global convergence results in this section are formulated at the level of the empirical mean rather than directly in Wasserstein distance for the empirical measure.
\end{remark}
\subsection{Global Convergence in Probability: Hopping}
As noted in the remark after the proof of Lemma \ref{NP_CHLemma} and after Theorems \ref{MFH: Thrm} and \ref{NPHopping:Thrm}, there is a close connection between the hopping and freezing schemes. Moreover, as the scaling parameter $s$ tends to infinity, the freezing scheme converges to the hopping scheme. We now show that this allows us to prove for the hopping scheme the same type of result as in Theorem \ref{thm:global_conv_mean_T}.

We begin by stating the result that proves the convergence of the mean-field dynamics of the consensus freezing scheme to the consensus hopping scheme as $s \to \infty$.
\begin{theorem}[{{\cite[Theorem 4.1]{Fornasier2026FromConvergence}}}]\label{thm:FStoHS}
    Let $x_0 \in \mathbb{R}^d$ be an arbitrary initial point. Let $\{x_k^{CH}\}_{k \in \mathbb{N}}$ be the solution to the Consensus Hopping scheme \eqref{HoppingSchemeOriginal} with initial data $x_0$ and $\tilde{\sigma} = \frac{\delta}{\sqrt{2\lambda}}$. Furthermore, for $s > 0$, let $\orho_t$ be the solution to the Consensus Freezing scheme \eqref{CF:MFscheme} with initial data $\orho_0 = \mathcal{N}(x_0, \Sigma)$, where $\Sigma = \frac{\delta^2}{2\lambda} I_d$, and using the prescribed $s$. The evolution of $\orho_t$ is given by
    \begin{equation}\label{eq:flip_scheme_solution_s}
    \begin{split}
        \orho_t &= \mathcal{N}(\mu^{\alpha}_t, \Sigma), \qquad \text{where} \\
        \mu^{\alpha}_t &= \mathcal{T}^\alpha(\mu^{\alpha}_{t_i})+e^{-s \cdot \lambda(t-t_i)}(\mu^{\alpha}_{t_i}-\mathcal{T}^\alpha(\mu^{\alpha}_{t_i})), \quad t \in (t_i, t_{i+1}].
    \end{split}
\end{equation}
    Then, for large enough $\alpha$, we have that
    \begin{equation}\label{eq:FS_to_HS}
        |\mu^{\alpha}_{t_k} - x_k^{CH}| \leq 4 R \cdot e^{-s \lambda \Delta t} \to 0 \quad \text{as } s \to \infty, \quad \forall k \in \mathbb{N},
    \end{equation}
    where $R > 0$ is a uniform bound on $\mu_t^s$ given by global-in-time bounds of the freezing scheme such that $x^* \in B_R(0)$.
    In particular, we have that
    \begin{equation}\label{eq:W2_FS_to_HS}
        W_2(\orho_{t_k}, \mathcal{N}(x_k^{CH}, \Sigma)) \leq 4 R \cdot e^{-s \lambda \Delta t} \to 0 \quad \text{as } s \to \infty,
    \end{equation}
    {uniformly with respect to $k\in \mathbb N$}.
\end{theorem}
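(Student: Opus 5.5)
The approach is to exploit that, with Gaussian initial data $\orho_0=\mathcal N(x_0,\Sigma)$, the freezing mean-field dynamics preserves Gaussianity with fixed covariance $\Sigma=\frac{\delta^2}{2\lambda}I_d$. This reduces everything to a deterministic recursion for the means, which we then compare with the hopping recursion $x^{CH}_{k+1}=\mathcal T^\alpha(x^{CH}_k)$, where $\mathcal N(x^{CH}_k,\tilde\sigma^2 I_d)=\mathcal N(x^{CH}_k,\Sigma)$.

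\textbf{Step 1 (explicit solution).} On $[t_i,t_{i+1})$ the consensus $X^\alpha(\orho_{t_i})$ is a fixed vector, so \eqref{CF:MFscheme} is an Ornstein--Uhlenbeck process with explicit solution as in \eqref{OU:Strong}. If $\overbar X_{t_i}\sim\mathcal N(\mu_{t_i},\Sigma)$, then $\overbar X_t$ is Gaussian with mean $e^{-s\lambda(t-t_i)}\mu_{t_i}+(1-e^{-s\lambda(t-t_i)})X^\alpha(\orho_{t_i})$. Its covariance is $e^{-2s\lambda(t-t_i)}\Sigma+s\delta^2\frac{1-e^{-2s\lambda(t-t_i)}}{2s\lambda}I_d=\Sigma$, since $\Sigma$ is exactly the stationary OU covariance. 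Because $X^\alpha(\orho_{t_i})=\mathcal T^\alpha(\mu_{t_i})$ by definition \eqref{eq:T_alpha}, induction over $i$ gives \eqref{eq:flip_scheme_solution_s}.

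\textbf{Step 2 (recursion for the error).} Evaluating at $t_{k+1}$ with $g=e^{-s\lambda\Delta t}$ gives $\mu_{t_{k+1}}=\mathcal T^\alpha(\mu_{t_k})+g(\mu_{t_k}-\mathcal T^\alpha(\mu_{t_k}))$. Subtracting $x^{CH}_{k+1}=\mathcal T^\alpha(x^{CH}_k)$ and setting $e_k:=\mu_{t_k}-x^{CH}_k$ with $e_0=0$, we get
\[
|e_{k+1}|\le |\mathcal T^\alpha(\mu_{t_k})-\mathcal T^\alpha(x^{CH}_k)|+g\,|\mu_{t_k}-\mathcal T^\alpha(\mu_{t_k})|.
\]
We bound the second term by $2Rg$, using the uniform bound $R$ on $\mu_t$ and on $X^\alpha(\orho_t)$ from Theorem \ref{thm:global_bound}. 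We bound the first term by $L_\alpha|e_k|$ via Theorem \ref{thm:invariant}, provided both $\mu_{t_k}$ and $x^{CH}_k$ lie in a ball $B_r(x^*)$ on which $\mathcal T^\alpha$ is $L_\alpha$-Lipschitz. Taking $r\approx 2R$ works, since $x^*\in B_R(0)$. This gives $|e_{k+1}|\le L_\alpha|e_k|+2Rg$. Choosing $\alpha$ large so that $L_\alpha\le 1/2$, induction yields $|e_k|\le 2Rg\sum_l L_\alpha^l\le 4Rg$ uniformly in $k$, which is \eqref{eq:FS_to_HS}.

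\textbf{Step 3 (Wasserstein bound).} Two Gaussians with the same covariance are coupled by translation, so $W_2(\mathcal N(a,\Sigma),\mathcal N(b,\Sigma))=|a-b|$. This gives \eqref{eq:W2_FS_to_HS} directly.

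\textbf{Main obstacle.} The hardest part is ensuring the hopping iterates $x^{CH}_k$ stay inside $B_r(x^*)$ for all $k$, so that the contraction estimate applies at every step. Global bounds are available only for the freezing means, and they must be uniform in $s$. The plan is to show $B_r(x^*)$ is invariant under $\mathcal T^\alpha$ for large $\alpha$, starting from $x_0$ within the admissible radius. For this we use $|\mathcal T^\alpha(\mu)-x^*|\le|\mathcal T^\alpha(x^*)-x^*|+L_\alpha r$ together with $\mathcal T^\alpha(x^*)\to x^*$ as $\alpha\to\infty$ by the Laplace principle. If needed, $R$ should be enlarged so that it also covers $x_0$.
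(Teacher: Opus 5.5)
Your proof is correct. The thesis does not reprove this theorem; it imports it from \cite{Fornasier2026FromConvergence}. Your recursion $|e_{k+1}|\le L_\alpha|e_k|+2Rg$ with $L_\alpha\le\tfrac12$ is exactly what produces the stated constant $4R=2R/(1-\tfrac12)$, so it is in all likelihood the cited argument.

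The step you single out as the main obstacle is easier than your invariance argument suggests. You already have $|\mu_{t_k}-x^*|\le 2R$, and the induction hypothesis gives $|e_k|\le 4Rg\le 4R$. Together these put $x^{CH}_k$ in $\overline{B}_{6R}(x^*)$ automatically. So it suffices to add this membership to the induction hypothesis and invoke Theorem~\ref{thm:invariant} once, with any radius $r>6R$, which is independent of $k$ and of $s$ as long as $R$ is. You then never need $\mathcal{T}^\alpha$ to map a ball into itself, nor the limit $\mathcal{T}^\alpha(x^*)\to x^*$.

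If you keep the invariance route, do not cite the Laplace principle in the form stated in the thesis. That statement assumes bounded $f$ and concerns the log-partition function, not the consensus point. The limit $\mathcal{T}^\alpha(x^*)\to x^*$ follows instead from Theorem~\ref{thm:invariant} itself: $\boldsymbol\mu^\alpha_\infty$ is a fixed point of $\mathcal{T}^\alpha$, so $|\mathcal{T}^\alpha(x^*)-x^*|\le(1+L_\alpha)|\boldsymbol\mu^\alpha_\infty-x^*|$. The right-hand side tends to zero because $W_2(\boldsymbol\rho^\alpha_\infty,\boldsymbol\rho_\infty)\to 0$.

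Finally, no enlargement of $R$ is needed to cover $x_0$: since $x_0=\mu_{t_0}$, it is already covered by the uniform bound on $\mu_t$.
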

A direct consequence of Theorem \ref{thm:FStoHS} is the global convergence of the consensus hopping scheme. This follows directly from the triangle inequality together with Theorems~\ref{thm:fsconv} and \ref{thm:FStoHS}.
\begin{corollary}[{{\cite[Corollary 4.2]{Fornasier2026FromConvergence}}}]\label{cor:HS_convergence}
 Let $x_0 \in \mathbb{R}^d$ be an arbitrary initial point. Let \newline $\{x_k^{CH}\}_{k \in \mathbb{N}}$ be the solution to the Consensus Hopping scheme \eqref{HoppingSchemeOriginal} with initial data $x_0$ and $\tilde{\sigma} = \frac{\delta}{\sqrt{2\lambda}}$. Moreover, for any $k\in\mathbb{N}$, let $\nu_k^\alpha=\mathcal N(x_k^{CH}, \frac{\delta^2}{2\lambda}I)$. 
 Then, for any $\gamma \in (0, 1)$ and for $\alpha$ sufficiently large, it holds that
 \begin{equation*}
        W_2(\nu_k^\alpha, \boldsymbol\rho_\infty^\alpha) \leq C_\alpha \gamma^k,
 \end{equation*}
 for some $C_\alpha > 0$ depending on $\alpha$.
\end{corollary}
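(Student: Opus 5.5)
The plan is a triangle inequality in $W_2$ that inserts the Gaussian $\orho_{t_k}$ coming from the consensus freezing mean-field dynamics, started at $\orho_0=\mathcal N(x_0,\Sigma)$ with $\Sigma=\frac{\delta^2}{2\lambda}I_d$, where $t_k=k\Delta t$. Fix $\gamma\in(0,1)$. Then, for every $s>0$,
\begin{align*}
W_2(\nu_k^\alpha,\boldsymbol\rho_\infty^\alpha)\le W_2\bigl(\mathcal N(x_k^{CH},\Sigma),\orho_{t_k}\bigr)+W_2(\orho_{t_k},\boldsymbol\rho_\infty^\alpha).
\end{align*}
Theorem \ref{thm:FStoHS} bounds the first term by $4R\,e^{-s\lambda\Delta t}$, uniformly in $k$. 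Theorem \ref{thm:fsconv} bounds the second term by $C_\alpha\gamma^k$, provided $\gamma>e^{-s\lambda\Delta t}$; this is admissible because $\mathcal N(x_0,\Sigma)\in\mathcal P_4(\mathbb R^d)$.

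The key point is to make $C_\alpha$ and $\gamma$ independent of $s$. Choose $s^\star$ so large that $e^{-s^\star\lambda\Delta t}<\gamma$, so that $e^{-s\lambda\Delta t}<\gamma$ for all $s\ge s^\star$. The last sentence of Theorem \ref{thm:fsconv} then allows $C_\alpha$, for this fixed $\gamma$, to be chosen uniformly over all $s\in[s^\star,\infty)$. Similarly, $R$ in Theorem \ref{thm:FStoHS} is a global-in-time bound on the means. This bound comes from Theorem \ref{thm:global_bound}, and I will argue that it can be taken uniform in $s\ge s^\star$; this is the step that needs checking.

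The hopping iterate $x_k^{CH}$ does not depend on $s$, since $\tilde\sigma=\delta/\sqrt{2\lambda}$ is fixed. Hence, for each $k$,
\begin{align*}
W_2(\nu_k^\alpha,\boldsymbol\rho_\infty^\alpha)\le 4R\,e^{-s\lambda\Delta t}+C_\alpha\gamma^k\qquad\text{for all } s\ge s^\star,
\end{align*}
and letting $s\to\infty$ gives $W_2(\nu_k^\alpha,\boldsymbol\rho_\infty^\alpha)\le C_\alpha\gamma^k$. Here $\alpha$ is taken large enough for Theorems \ref{thm:invariant}, \ref{thm:fsconv} and \ref{thm:FStoHS} to hold simultaneously.

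The main obstacle is the uniformity in $s$ of the constants: $C_\alpha$ and $\gamma$ in Theorem \ref{thm:fsconv}, and $R$ together with the largeness threshold on $\alpha$. The first is given explicitly by Theorem \ref{thm:fsconv}. For the others I would verify that the bounds in Theorem \ref{thm:global_bound} can be chosen independent of $s\ge s^\star$, since the frozen Ornstein--Uhlenbeck step interpolates convexly between the old mean and $\mathcal T^\alpha$.

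A fallback avoids the $s$-limit altogether. The mean-field hopping iterates are Gaussians with $x_k^{CH}=\mathcal T^\alpha(x_{k-1}^{CH})$, and $\boldsymbol\mu_\infty^\alpha$ is the fixed point of the contraction $\mathcal T^\alpha$ from Theorem \ref{thm:invariant}, with constant $L_\alpha\to0$. If the iterates stay in $B_r(x^*)$, then
\begin{align*}
W_2(\nu_k^\alpha,\boldsymbol\rho_\infty^\alpha)=\N{x_k^{CH}-\boldsymbol\mu_\infty^\alpha}\le L_\alpha^k\N{x_0-\boldsymbol\mu_\infty^\alpha},
\end{align*}
and $L_\alpha\le\gamma$ for $\alpha$ large. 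In this route the difficulty is keeping the iterates inside the contraction ball.
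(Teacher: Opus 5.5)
Your main argument is correct and is essentially the paper's own route. The paper also gets the corollary from the triangle inequality through the freezing mean-field law $\orho_{t_k}$ started at $\mathcal N(x_0,\Sigma)$: one term is bounded by Theorem \ref{thm:FStoHS}, the other by Theorem \ref{thm:fsconv} with $C_\alpha,\gamma$ uniform in $s\ge s^\star$, and then $s\to\infty$, since $x_k^{CH}$ and $\boldsymbol\rho_\infty^\alpha$ do not depend on $s$. The step you flag for $R$ is already part of Theorem \ref{thm:FStoHS}, which asserts $4R\,e^{-s\lambda\Delta t}\to 0$ as $s\to\infty$ uniformly in $k$, so it needs no separate uniformity argument.
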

As Corollary \ref{cor:HS_convergence} is the hopping version of Theorem \ref{thm:fsconv}, we can use it along with the same line of argument as for Theorem \ref{thm:global_conv_mean_T} to prove the same statement for the consensus hopping scheme.
\begin{theorem}
\label{cor:global_conv_hopping}
Let $\orho_0 \in \mathcal{P}_6(\mathbb{R}^d)$ be the common initial distribution, not necessarily Gaussian, and let the noise in \eqref{CH:Scheme} have variance $\sigma^2 = \frac{\delta^2}{2\lambda}$. Let $(\mathcal{X}^N_k)_k$ be the interacting particle system defined by \eqref{CH:Scheme}, and define
\begin{align*}
M_K^N := \frac1N \sum_{i=1}^N X_K^i.
\end{align*}
Let $\boldsymbol\rho_\infty^\alpha=\mathcal{N}(\boldsymbol\mu_\infty^\alpha,\frac{\delta^2}{2\lambda}I_d)$ be the unique invariant measure from Theorem \ref{thm:invariant}. For any $\gamma \in (0,1)$ and $\alpha$ sufficiently large, there exist constants $C_\alpha>0$ and $C(K)>0$, independent of $N$, such that for every $\varepsilon>0$, every $K \in \mathbb{N}_{>0}$ and every $N\in\mathbb N$,
\begin{align}
    \mathbb P\bigg(
    \N{M_K^N-\boldsymbol\mu^\alpha_\infty}^2
    \ge 3C_\alpha^2\gamma^{2(K-1)}+\varepsilon
    \bigg)
    \le \frac{C(K)}{\varepsilon N}.
    \label{eq:global_mean_K}
\end{align}
\end{theorem}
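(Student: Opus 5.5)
I will mirror the proof of Theorem \ref{thm:global_conv_mean_T}, replacing the freezing ingredients by their hopping counterparts. Let $(\overbar{X}^1_K,\dots,\overbar{X}^N_K)$ be the coupled mean-field system \eqref{CH:MFScheme} and set $\overbar M_K^N := \frac1N\sum_{i=1}^N \overbar X_K^i$. As before, decompose
\begin{align*}
\N{M_K^N-\boldsymbol\mu^\alpha_\infty}^2 \le 3\N{M_K^N-\overbar M_K^N}^2 + 3\N{\overbar M_K^N-\mathbb E[\overbar X_K^1]}^2 + 3\N{\mathbb E[\overbar X_K^1]-\boldsymbol\mu^\alpha_\infty}^2.
\end{align*}
The first term is handled by Jensen and Theorem \ref{NPHopping:Thrm} with $p=2$, $q=6$. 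Then $r_*=\min\{1,1\}=1$, so $\mathbb E\bigeck{\N{M_K^N-\overbar M_K^N}^2}\le \varepsilon_{\mathrm H}^{\mathrm{unrest}}(N,K)=:C_{\mathrm{PoC}}(K)/N$. Markov's inequality then gives the bound $6C_{\mathrm{PoC}}(K)/(\varepsilon N)$.

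For the variance term, the $\overbar X_K^i$ are i.i.d., so the expectation is at most $\frac1N\mathbb E\bigeck{\N{\overbar X_K^1}^2}$. Here Lemma \ref{NP_CHLemma} (with $p=2$) gives a bound $\overbar C_{\mathrm H}$ depending on $K$ but not on $N$. This suffices, since the constant $C(K)$ is allowed to depend on $K$; no uniform-in-time moment bound is needed. A second Markov inequality yields $6\overbar C_{\mathrm H}/(\varepsilon N)$.

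The deterministic term is the main obstacle, because the mean-field law $\orho_K$ of \eqref{CH:MFScheme} must be identified with an object covered by Corollary \ref{cor:HS_convergence}. For $k\ge1$ we have $\overbar X^1_k = X^\alpha(\orho_{k-1})+\eta^1_k$, so $\orho_k=\mathcal N(X^\alpha(\orho_{k-1}),\sigma^2 I_d)$ with $\sigma^2=\delta^2/(2\lambda)$. In particular, $\mathbb E[\overbar X^1_K]=X^\alpha(\orho_{K-1})$. Setting $y_1:=X^\alpha(\orho_0)$, we get $\orho_1=\mathcal N(y_1,\sigma^2 I_d)$, and then inductively $y_{k+1}=X^\alpha(\mathcal N(y_k,\sigma^2I_d))$. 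Thus, from step $1$ on, $\mathbb E[\overbar X^1_{k+1}] = y_{k+1}$ is exactly the hopping iterate $x^{CH}_k$ of \eqref{HoppingSchemeOriginal} started at $x_0:=y_1$, which is deterministic and finite by Corollary \ref{GerberPropA4Corollary}. This index shift is what produces $\gamma^{2(K-1)}$ instead of $\gamma^{2K}$.

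It follows that $\mathbb E[\overbar X^1_K]=x^{CH}_{K-1}$ is the mean of $\nu^\alpha_{K-1}=\mathcal N(x^{CH}_{K-1},\sigma^2I_d)$. By Jensen as in Theorem \ref{thm:global_conv_mean_T}, $\N{\mathbb E[\overbar X_K^1]-\boldsymbol\mu^\alpha_\infty}\le W_2(\nu^\alpha_{K-1},\boldsymbol\rho^\alpha_\infty)\le C_\alpha\gamma^{K-1}$, using Corollary \ref{cor:HS_convergence}. (Both measures are Gaussians with the same covariance, so this is in fact an equality with the mean distance.) One must check that $C_\alpha$ in Corollary \ref{cor:HS_convergence} may be chosen for the initial point $x_0=X^\alpha(\orho_0)$. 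This is fine, since $x_0$ depends only on $\orho_0$ and not on $N$ or $K$.

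Combining the three bounds by a union bound, as at the end of the proof of Theorem \ref{thm:global_conv_mean_T}, gives \eqref{eq:global_mean_K} with $C(K):=6\bigrou{C_{\mathrm{PoC}}(K)+\overbar C_{\mathrm H}}$.
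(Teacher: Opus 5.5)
Your proposal is correct and follows the paper's proof essentially step for step. It uses the same three-term decomposition, controls the coupling term by Theorem \ref{NPHopping:Thrm} with $p=2$, $q=6$, and identifies $\mathbb{E}[\overbar{X}^1_K]=x^{CH}_{K-1}$ via the hopping scheme started at $x_0=X^\alpha(\orho_0)$, which produces the shift to $\gamma^{2(K-1)}$. The only deviation is the variance term: the paper uses $\orho_K=\mathcal{N}(X^\alpha(\orho_{K-1}),\sigma^2 I_d)$ to obtain the exact, $K$-independent value $d\sigma^2/N$. Your bound via Lemma \ref{NP_CHLemma} is cruder but still admissible, since $C(K)$ is allowed to depend on $K$.
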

 
\begin{proof}
The argument is the same as in the proof of Theorem~\ref{thm:global_conv_mean_T}; the propagation of chaos term is now controlled by Theorem~\ref{NPHopping:Thrm} instead of Theorem~\ref{NPFreezing:Thrm}, and the deterministic mean-field convergence term by Corollary~\ref{cor:HS_convergence} instead of Theorem~\ref{thm:fsconv}.
Let
\begin{align*}
\overbar M_K^N := \frac1N \sum_{i=1}^N \overbar X_K^i
\end{align*}
denote the empirical mean of the coupled mean-field particle system. We decompose
\begin{align*}
M_K^N-\boldsymbol\mu^\alpha_\infty
=
\big(M_K^N-\overbar M_K^N\big)
+
\big(\overbar M_K^N-\mathbb E[\overbar X_K^1]\big)
+
\big(\mathbb E[\overbar X_K^1]-\boldsymbol\mu^\alpha_\infty\big).
\end{align*}
For the first term, Theorem \ref{NPHopping:Thrm} with $p=2$ and $q=6$ yields, exactly as in the proof of Theorem \ref{thm:global_conv_mean_T},
\begin{align*}
    \mathbb E \biggeck{\N{M_K^N-\overbar M_K^N}^2}
    \le
    \frac1N\sum_{i=1}^N \mathbb E\biggeck{\N{X_K^i-\overbar X_K^i}^2}
    \le
    \varepsilon_{\mathrm{H}}^{\mathrm{unrest}}(N,K)
    =: \frac{C_{\mathrm{PoC}}(K)}{N}.
\end{align*}
For the second term, we use that $\overbar X_K^1,\dots,\overbar X_K^N$ are i.i.d.\ with law $\orho_K = \mathcal{N}\bigrou{X^\alpha(\orho_{K-1}), \sigma^2 I_d}$, so that
\begin{align*}
\mathbb E\biggeck{\N{\overbar M_K^N-\mathbb E[\overbar X_K^1]}^2}
= \frac{1}{N}\, \mathbb E\biggeck{\N{\overbar X_K^1-\mathbb E[\overbar X_K^1]}^2}
= \frac{d\sigma^2}{N}.
\end{align*}
It remains to treat the deterministic term. By \eqref{CH:MFScheme}, for every $k \geq 1$ we have $\orho_k = \mathcal{N}(m_k, \sigma^2 I_d)$ with $m_k := X^\alpha(\orho_{k-1})$. In particular, $m_1 = X^\alpha(\orho_0)$, and since $\sigma^2 = \frac{\delta^2}{2\lambda}$, the definition \eqref{eq:T_alpha} of $\mathcal{T}^\alpha$ gives
\begin{align*}
    m_k = X^\alpha\bigrou{\mathcal{N}(m_{k-1}, \sigma^2 I_d)} = \mathcal{T}^\alpha(m_{k-1}), \quad k \geq 2.
\end{align*}
The consensus hopping scheme \eqref{HoppingSchemeOriginal} satisfies $x^{CH}_k = \mathcal{T}^\alpha(x^{CH}_{k-1})$ as well. Hence, choosing the initial point $x_0 := X^\alpha(\orho_0)$ in Corollary \ref{cor:HS_convergence}, we obtain $m_k = x^{CH}_{k-1}$ and therefore $\orho_k = \nu^\alpha_{k-1}$ for every $k \geq 1$. Jensen's inequality and Corollary \ref{cor:HS_convergence} yield
\begin{align*}
\N{\mathbb E[\overbar X_K^1]-\boldsymbol\mu^\alpha_\infty}^2
\le
W_2^2(\orho_K,\boldsymbol\rho_\infty^\alpha)
=
W_2^2(\nu^\alpha_{K-1},\boldsymbol\rho_\infty^\alpha)
\le
C_\alpha^2\gamma^{2(K-1)}.
\end{align*}
Proceeding with Markov's inequality exactly as in the proof of Theorem \ref{thm:global_conv_mean_T}, we obtain \eqref{eq:global_mean_K} with
\begin{align*}
    C(K) := 6\bigrou{C_{\mathrm{PoC}}(K) + d\sigma^2}.
\end{align*}
\end{proof}
\vspace{.3 cm}
We see that for both the freezing and hopping scheme, the convergence results of Theorems \ref{thm:global_conv_mean_T} and \ref{cor:global_conv_hopping}, combined with their respective propagation of chaos results, Theorems \ref{NPFreezing:Thrm} and \ref{NPHopping:Thrm}, yield a quantitative bound ensuring that, for sufficiently many particles, i.e. $N \geq C(T) / (\varepsilon \xi)$,\footnote{For readability, we write the condition in terms of the continuous-time variable $T$. The completely analogous statement in discrete time is obtained by replacing $T$ with $K$ and $C(T)$ with $C(K)$.} the probability of being close to the mean of the invariant measure of the dynamics $\boldsymbol{\mu}^\alpha_\infty$ is at least $1 - \xi$.
As discussed after Theorem \ref{thm:global_conv_mean_T}, the bottleneck of the probability bound is the propagation of chaos constant, namely $C_{\mathrm{PoC}}(T)$ in the freezing case and $C_{\mathrm{PoC}}(K)$ in the hopping case, which grows exponentially in the corresponding time horizon. What is still missing for a full global convergence result for the interacting particle system is a propagation-of-chaos estimate that remains uniform in time over infinite time horizons.
\newpage
\section{Outlook: Uniform-in-time Propagation of Chaos}
\label{Sec:Outlook}
The quantitative estimates derived in this thesis establish propagation of chaos
over a finite time horizon $[0, T]$, but the constants involved grow exponentially
in $T$. This is a significant limitation, since many convergence results for CBO
methods concern the long-time behavior. Key convergence results such as
\cite[Theorem 3.7]{Fornasier2024Consensus-BasedGlobally},
\cite[Theorem 4.2]{Carrillo2018AnMethod} as well as
\cite[Theorems 2.5, 3.10, 3.15 and Corollary 4.2]{Fornasier2026FromConvergence}
rely on exponential decay over time. It is therefore desirable to establish
propagation of chaos estimates that are uniform in time. That this is a
non-trivial task can already be seen from the hopping scheme. Recalling the
distance estimate \eqref{NP:CH:Thrm:FinalEstimate}, for all $i \leq N \in
\mathbb{N}$ and $k \in \mathbb{N}$,
\begin{align*}
    \mathbb{E} \biggeck{\N{ X^i_{k} - \overbar{X}^i_{k}}^p} \leq 2^{p-1}
    (C+C_{\mathrm{EM}}) N^{- \min \{ \frac{p}{2}, \frac{q-p}{2p} \} }
    \sum^{k-1}_{l = 0} (2^{p-1}C^p_0)^{l},
\end{align*}
where $C_0$ denotes the stability constant of the consensus point of Lemma
\ref{GerberStability2}. This bound converges as $k \to \infty$ if and only if
$2^{p-1}C_0^p < 1$; the factor $2^{p-1}$ is an artifact of estimating $p$-th
powers, and running the recursion on $L^p$ norms instead yields the condition
$C_0 < 1$. Uniform-in-time propagation of chaos for the hopping
scheme\footnote{The same condition holds for the freezing scheme. An analogous
condition appears for $\delta$-CBO once the estimate of Theorem \ref{Delta:Thrm}
is rederived from the variation-of-constants formula rather than from
\eqref{RestrictedDelta Bound1}, which discards the contractive factor
$e^{-s\lambda(t-\tau)}$.} therefore requires a contractive property of the
consensus point mapping.

Several recent works have made progress on uniform-in-time propagation of chaos for CBO. The authors of \cite{Huang2025Uniform-in-timeOptimization} establish pointwise uniform-in-time propagation of chaos in Wasserstein-2 distance at rate $\mathcal{O}(N^{-1/2})$ for the rescaled CBO scheme
 \begin{align*}
     dX^i_t = -\lambda (X^i_t - \kappa X^\alpha(\rho^N_t))dt + \sigma \Bigrou{\delta I_d + D\bigrou{X^i_t- \kappa X^\alpha(\rho^N_t)}} dB^i_t, \quad X^i_0 \sim \rho_0 \in \mathcal{P}_{16}(\mathbb{R}^d),
\end{align*}
with $\kappa \in (0, 1)$, $\delta \geq 0$ and the anisotropic diffusion matrix $D(x) := \mathrm{diag}\bigrou{|x_1|, \dots, |x_d|}$ for $x \in \mathbb{R}^d$. They show that for sufficiently small $\kappa$ and sufficiently large $\lambda$, there exists a constant $C>0$ independent of $N$ and $t$ such that
\begin{align*}
  \sup_{t \geq 0} \, \sup_{i \leq N} \, \mathbb{E} \biggeck{\N{X^i_t - \overbar{X}^i_t}^2} \leq C N^{-1}.
\end{align*}
The approach used is very similar to that presented in Section \ref{Section:Unrestricted:Delta}. In particular, the authors obtain the result by a Grönwall argument:
\begin{align}
    \mathbb{E} \biggeck{\N{X^i_t - \overbar{X}^i_t}^2} \leq \mathbb{E} \biggeck{\N{X^i_0 - \overbar{X}^i_0}^2} e^{- \vartheta t} + \frac{\Theta}{N}, \label{HuangBound}
\end{align}
where
\begin{align*}
    \vartheta := 2 \lambda - 2\sigma^2 - \kappa\bigrou{\lambda + L \lambda + 2\sigma^2 \kappa L}
    \quad \text{and} \quad
    \Theta := 4 \bigrou{\lambda \kappa + 2\sigma^2\kappa^2} \bigrou{K_{2}\, K_{4}}^{1/2}
\end{align*}
are constants independent of $N$ and $t$. Here, $L>0$ denotes a stability constant of the consensus point that depends on $\alpha$ and $f$, and $K_{2}, K_{4} > 0$ arise from uniform-in-time bounds on the second and fourth moments of the particle systems.
The rescaling parameter $\kappa$ allows one to choose $\kappa$ sufficiently small and $\lambda$ sufficiently large such that $\vartheta >0$, and therefore the exponential part of \eqref{HuangBound} vanishes as $t \to \infty$. Since $X^i_0 = \overbar{X}^i_0$ almost surely, the first term on the right-hand side of \eqref{HuangBound} is in fact zero.
However, this leaves open the question of uniform-in-time propagation of chaos for the classical CBO dynamics without a rescaling parameter.
A result addressing the more general setting for the classical CBO scheme
\begin{align*}
     dX^i_t = -\lambda (X^i_t - X^\alpha(\rho^N_t))dt + \sigma \N{X^i_t- X^\alpha(\rho^N_t)} dB^i_t, \quad X^i_0 \sim \rho_0, \quad i = 1, ..., N,
 \end{align*}
 was recently proposed in the preprint \cite{Gerber2026Uniform-in-timeOptimization}, where the authors establish
 \begin{align*}
    \forall t \geq 0, \quad \forall N \in \mathbb{N}, \quad \mathbb{E}\biggeck{\N{X^i_t - \overbar{X}^i_t}^2} \leq CN^{-1}.
\end{align*}
This result is of particular interest in the context of this thesis, as the proof strategy follows a Grönwall argument similar to that of \cite[Theorem 2.6]{Gerber2025Mean-fieldSampling}, adapted with several new tools that go beyond the scope of this work. In particular, the result requires the objective function $f$ to be globally Lipschitz continuous and bounded from above and below by $\overline{f}$ and $\underline{f}$, respectively, the initial distribution to have finite moments of all orders, and the noise strength $\sigma$ to lie below an explicit threshold.
\begin{align*}
    \mathbb{E} \biggeck{\frac{1}{N} \sum^N_{i = 1} \N{X^i_t - \overbar{X}^i_t}^2} \leq \Biggrou{\mathbb{E} \biggeck{\frac{1}{N} \sum^N_{i = 1} \N{X^i_0 - \overbar{X}^i_0}^2} + \frac{2c_2}{N}}e^{2c_1} = \frac{2c_2 e^{2c_1}}{N},  
\end{align*}
where $c_1, c_2>0$ are constants independent of $N$ and $t$.\newline
These recent developments show that uniform-in-time propagation of chaos is in fact achievable for classical CBO dynamics under suitable structural assumptions. A natural next step would therefore be to investigate whether similar uniform-in-time estimates can also be established for the $\delta$-CBO, CF and CH schemes considered in this thesis. In particular, it would be interesting to understand whether the piecewise and discrete-time structure of these schemes allows for analogous contractive mechanisms.
In the remainder of this section we argue that the answer is more subtle than it might appear and a naive adaption of the arguments in \cite{Gerber2026Uniform-in-timeOptimization} does in fact not work for the schemes treated in this thesis.

A central mechanism behind the uniform-in-time estimate of \cite{Gerber2026Uniform-in-timeOptimization} is the exponential decay of the centered moments of the mean-field law. This decay is a consequence of the multiplicative noise of the classical CBO dynamics, which vanishes as the particles reach consensus. For all three schemes considered here, no such decay occurs.
 
\begin{lemma}
    Let $(\overbar{X
    }_t)_{t \geq 0 }$ be the $\delta$-CBO mean-field process \eqref{Delta:MFSDE} with initial law $\orho_0 \in \mathcal{P}_2(\mathbb{R}^d)$. Then, for every $t \geq 0$,
    \begin{align}
        \mathbb{E} \biggeck{ \N{\overbar{X}_t - \mathbb{E}\Bigeck{\overbar{X}_t}}^2 } = e^{-2s\lambda t} \, \mathbb{E} \biggeck{ \N{\overbar{X}_0 - \mathbb{E}\Bigeck{\overbar{X}_0}}^2 } + \frac{d \delta^2}{2\lambda} \bigrou{1 - e^{-2 s \lambda t}} \xrightarrow[t \to \infty]{} \frac{d\delta^2}{2\lambda} \, > 0 .
        \label{Outlook:OUIdentity}
    \end{align}
    The same identity holds for the consensus freezing scheme \eqref{CF:MFscheme}, and for the consensus hopping scheme one has $\mathbb{E} \biggeck{ \N{\overbar{X}_k - \mathbb{E}\Bigeck{\overbar{X}_k}}^2 } = d\sigma^2$ for every $k \geq 1$.
    \label{Outlook:LemmaMoment}
\end{lemma}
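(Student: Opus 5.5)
The plan is to exploit that in all three mean-field dynamics the consensus point enters only through a \emph{deterministic} function of time. For $\delta$-CBO, $m_t := X^\alpha(\orho_t)$ depends only on the law $\orho_t$, not on the realization. Subtracting the mean therefore removes the interaction completely and leaves a linear Ornstein--Uhlenbeck process for the centered variable, whose variance is explicit.

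\textbf{Step 1 ($\delta$-CBO).} By Lemma \ref{NPDelta:UniformMoment} with $p=2$, $\mathbb{E}[\sup_{t\le T}\N{\overbar{X}_t}^2]<\infty$ for every $T$. Hence $\mu_t:=\mathbb{E}[\overbar{X}_t]$ is well defined. Taking expectations in the integral form of \eqref{Delta:MFSDE} kills the Itô integral, since it is a true martingale because its integrand is constant. This gives $\mu_t=\mu_0-s\lambda\int_0^t(\mu_\tau-m_\tau)\,d\tau$. Setting $Y_t:=\overbar{X}_t-\mu_t$, the terms involving $m_\tau$ cancel, and
\[
dY_t=-s\lambda Y_t\,dt+\sqrt{s}\,\delta\,dB_t,
\]
with explicit solution
\[
Y_t=e^{-s\lambda t}Y_0+\sqrt{s}\,\delta\int_0^t e^{-s\lambda(t-\tau)}\,dB_\tau .
\]

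\textbf{Step 2 (variance of the OU representation).} Expand $\mathbb{E}\N{Y_t}^2$ into three pieces.
\begin{itemize}
\item The cross term vanishes, because $Y_0=\overbar{X}_0-\mu_0$ is independent of the Brownian motion and the stochastic integral has mean zero.
\item The first term equals $e^{-2s\lambda t}\,\mathbb{E}\N{\overbar{X}_0-\mathbb{E}\overbar{X}_0}^2$.
\item By Itô's isometry in each of the $d$ coordinates, the noise term equals
\[
s\delta^2 d\int_0^t e^{-2s\lambda(t-\tau)}\,d\tau=\frac{d\delta^2}{2\lambda}\bigl(1-e^{-2s\lambda t}\bigr).
\]
\end{itemize}
This yields \eqref{Outlook:OUIdentity}. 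Letting $t\to\infty$ gives the positive limit $d\delta^2/(2\lambda)$.

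\textbf{Step 3 (freezing).} On each interval $[t_j,t_{j+1})$ the frozen point $X^\alpha(\orho_{t_j})$ is again deterministic. The computation of Steps 1--2, started at $t_j$ and using the solution formula \eqref{OU:Strong} together with the independence of $\overbar{X}_{t_j}$ from the Brownian increments after $t_j$, gives for $t\in[t_j,t_{j+1}]$:
\[
V(t)=e^{-2s\lambda(t-t_j)}V(t_j)+\frac{d\delta^2}{2\lambda}\bigl(1-e^{-2s\lambda(t-t_j)}\bigr),
\]
where $V$ is the centered second moment. By continuity of paths this extends to the endpoint $t_{j+1}$. An induction on $j$ then shows that \eqref{Outlook:OUIdentity} holds globally. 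If it holds at $t_j$, inserting it into the previous display gives
\[
e^{-2s\lambda t}V(0)+\frac{d\delta^2}{2\lambda}\Bigl(e^{-2s\lambda(t-t_j)}-e^{-2s\lambda t}+1-e^{-2s\lambda(t-t_j)}\Bigr),
\]
which is exactly the claimed formula.

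\textbf{Step 4 (hopping).} By \eqref{CH:MFScheme}, $\overbar{X}_k=X^\alpha(\orho_{k-1})+\eta_k$ with a deterministic shift. Hence $\overbar{X}_k-\mathbb{E}\overbar{X}_k=\eta_k\sim\mathcal N(0,\sigma^2 I_d)$, whose squared norm has expectation $d\sigma^2$.

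I do not expect a genuine obstacle. The only points needing care are:
\begin{itemize}
\item recognising that the consensus term is deterministic, so it is absorbed into the mean;
\item justifying the vanishing of the Itô-integral expectations and of the cross terms via square integrability (Lemmas \ref{NPDelta:UniformMoment} and \ref{NP:CFLemma}) and independence of the initial datum from the noise;
\item the bookkeeping of the induction across freezing intervals in Step 3.
\end{itemize}
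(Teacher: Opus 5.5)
Your proposal is correct and follows the paper's proof: subtract the mean so that the deterministic consensus term cancels, obtain an Ornstein--Uhlenbeck process for the centered variable, and compute its variance using independence of $Y_0$ from the noise and the It\^o isometry. The paper handles the hopping case the same way, and your explicit induction across the freezing intervals (together with the integral form of the mean equation) only spells out what the paper states in a single sentence.
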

 
\begin{proof}
    Write $m_t := \mathbb{E}\bigeck{\overbar X_t}$. Since $\orho_t$ is deterministic, so is $t \mapsto X^\alpha(\orho_t)$, and it is locally bounded by Corollary \ref{GerberPropA4Corollary}. Taking expectations in \eqref{Delta:MFSDE} therefore yields $\dot m_t = -s\lambda\bigrou{m_t - X^\alpha(\orho_t)}$, and subtracting this from \eqref{Delta:MFSDE} shows that $Y_t := \overbar X_t - m_t$ satisfies
    \begin{align*}
        dY_t = - s \lambda \, Y_t \, dt + \sqrt{s}\delta \, dB_t .
    \end{align*}
    The consensus point cancels exactly, because it enters $\overbar X_t$ and $m_t$ in the same way. Hence $Y$ is an Ornstein--Uhlenbeck process, and
    \begin{align*}
        Y_t = e^{-s\lambda t} Y_0 + \sqrt{s} \delta \int^t_0 e^{-s\lambda(t-\tau)} dB_\tau .
    \end{align*}
    Since $Y_0$ is independent of $(B_t)_{t \geq 0}$ and centered, and the stochastic integral is a centered martingale, the cross term vanishes, and the It\^o isometry gives
   \begin{align}
       \mathbb{E}\bigeck{\N{Y_t}^2} = e^{-2s\lambda t}\mathbb{E}\bigeck{\N{Y_0}^2} + s\delta^2 d \int^t_0 e^{-2s\lambda u} du,
   \end{align}
which is \eqref{Outlook:OUIdentity}. For the freezing scheme the frozen consensus point is likewise state-independent, so the same cancellation occurs on each interval and the resulting process is continuous across the $t_j$. For the hopping scheme the claim is immediate, since $\orho_k = \mathcal{N}\bigrou{X^\alpha(\orho_{k-1}), \sigma^2 I_d}$ for every $k \geq 1$ by \eqref{CH:MFScheme}.
\end{proof}
The identity \eqref{Outlook:OUIdentity} shows that the limiting spread $d\delta^2/(2\lambda)$ is independent of $f$ and of $\alpha$. Therefore, with additive noise there is no particle collapse, and
\begin{align}
    \int^\infty_0 \bigrou{\mathbb{E}_{\overline\rho_t}\bigeck{\N{x - \mathbb{E}_{\overline\rho_t}[x]}^2}}^{1/2} dt = \infty,
\end{align}
whenever $\delta > 0$. A naive adaption of the arguments of \cite{Gerber2026Uniform-in-timeOptimization} consequently does not work.
 
\subsubsection{The Coupling Error is a Translation}
\label{Sub:Outlook:Translation}
 
All three schemes considered in this thesis are driven by additive noise. The diffusion coefficient does not depend on the state of the system. Under the synchronous coupling employed throughout, the error between a particle and its mean-field counterpart is the same for every index. This is not the case for the classical CBO dynamics \eqref{Pre:CBOSDE}.
 
Consider the $\delta$-CBO scheme \eqref{Delta:Scheme} coupled with \eqref{Delta:MFScheme}, the consensus freezing scheme \eqref{CF:CFscheme} coupled with \eqref{CF:MFIPS}, or the consensus hopping scheme \eqref{CH:Scheme} coupled with \eqref{CH:MFScheme}, in each case with $\overbar X^i_0 = X^i_0$ almost surely. Then, as seen in the proofs of Theorems \ref{Delta:Thrm}, \ref{MFF: Thrm} and \ref{MFH: Thrm} there exists a process $\bigrou{F_t}_{t \geq 0}$, respectively a sequence $\bigrou{F_k}_{k \in \mathbb{N}}$, such that almost surely
\begin{align*}
    X^i_t - \overbar X^i_t = F_t \qquad \text{for all } i \leq N.
\end{align*}
    Consequently $\rho^N_t$ is the translation of $\orho^N_t$ by $F_t$, and for every $p \geq 1$
\begin{align}
    W_p\bigrou{\rho^N_t, \orho^N_t} = \N{F_t}.
    \label{Outlook:TranslateIdentity}
\end{align}
Therefore only the behavior of $X^\alpha$ along translations of a fixed measure is relevant for our analysis. This motivates the following assumption, which could serve as a starting point for future work. For $\nu \in \mathcal{P}_p(\mathbb{R}^d)$ and $z \in \mathbb{R}^d$ we write $\nu_z$ for the push-forward of $\nu$ under $x \mapsto x + z$.
 
\begin{assumption}
    There exist $c \in (0, 1]$ and a class $\mathcal{C} \subseteq \mathcal{P}_p(\mathbb{R}^d)$ containing the empirical measures $\orho^N_t$ for all $t \geq 0$ and all $N \in \mathbb{N}$, such that
    \begin{align*}
        \N{X^\alpha(\nu_z) - X^\alpha(\nu)} \leq (1 - c) \N{z} \qquad \text{for all } \nu \in \mathcal{C}, \ z \in \mathbb{R}^d .
        \label{Outlook:HypH}
    \end{align*}
    \label{Outlook:AssumptionH}
\end{assumption}
Whether Assumption \ref{Outlook:AssumptionH} holds for a sufficiently rich class $\mathcal{C}$, for instance for large $\alpha$ and measures concentrated near $x^*$, is left open for future work.

\chapter{Conclusion}
In this thesis, we have investigated propagation of chaos for three interacting particle systems arising in the context of consensus-based optimization: the consensus hopping scheme, the consensus freezing scheme, and the $\delta$-CBO scheme. For all three schemes, we established quantitative propagation of chaos estimates uniformly over finite time horizons. In the restricted setting, these estimates are obtained at the rate $\mathcal{O}(N^{-1/2})$, while in the unrestricted setting the same rate is recovered under sufficiently high moment assumptions and otherwise becomes slower.

We achieved these results following two different strategies. In Chapter \ref{chap:PropGoodSets}, propagation of chaos was established conditionally on the high-probability set $\Omega_M$, on which the empirical measures of both the interacting and mean-field particle systems have uniformly bounded $p$th moments. Although the probability of $\Omega_M$ was shown to approach one as $M$ goes to infinity, the restriction to such good trajectories left open the question of whether unconditional estimates are achievable. Chapter \ref{chap:PropChaUnrestricted} showed that unconditional estimates are indeed possible under stronger moment assumptions, following the ideas of \cite{Gerber2025Mean-fieldSampling} by splitting the expected consensus point distance across $\Omega_{N,t}$ and its complement, exploiting the independence of the mean-field particles via a law of large numbers argument to handle the ``bad'' set, and replacing the two-sided Wasserstein stability estimate of Lemma \ref{CarrL2} with the one-sided refinement of Lemma \ref{GerberStability2}, which is applied on the set $\Omega_{N,t}$.

However, this improvement comes at a price. In order to achieve convergence at rate $\mathcal{O}(N^{-1/2})$, the unrestricted approach requires bounded moments of order $q \geq p^2 + p$, whereas the restricted approach requires only moments of order $p$. This distinction already has consequences in widely studied cases: for propagation of chaos in Wasserstein-$2$
distance under bounded fourth moments, the restricted estimate achieves the rate $\mathcal{O}(N^{-1/2})$, while the unrestricted estimate degrades to $\mathcal{O}(N^{-1/4})$. 

As an application of the propagation of chaos results derived in this thesis, in Section \ref{section:globalconvergence} we demonstrated how the estimates obtained can be combined with the global convergence results of \cite{Fornasier2026FromConvergence} to derive quantitative convergence guarantees for the empirical mean of the interacting particle systems toward the mean of the invariant measure $\boldsymbol\rho^\alpha_\infty$. This yields a high-probability bound showing that the empirical mean of the particle system approximates the mean of the invariant measure up to a statistical error floor of order $\mathcal{O}(N^{-1})$, plus a deterministic term vanishing exponentially in time. An analogous result for the hopping scheme follows via the connection between the freezing and hopping schemes established in \cite[Theorem 4.1]{Fornasier2026FromConvergence}. The central limitation of both results is that the propagation of chaos constant $C_{\mathrm{PoC}}(T)$ grows exponentially in $T$.

Therefore, a natural direction for future work is to study whether propagation of chaos for the interacting particle systems can be established uniformly over infinite time horizons, possibly under suitable confinement assumptions on the objective function.


\renewcommand*{\thesection}{\Alph{section}}
\setcounter{section}{1}
\chapter{Supporting Material}
\label{cha:app1}
\section{Proof of Lemma \ref{Pre:LemmaNormalVectors}}
\label{sec:LemaNormalVectorsProof}
To prove Lemma \ref{Pre:LemmaNormalVectors}, we require some auxiliary results from the theory of high-dimensional probability for independent and identically distributed Gaussian random vectors.
\begin{lemma}[{{\cite[Proposition 2.1.2]{Vershynin2026High-DimensionalProbability}}}]
    Let $X \sim \mathcal{N}(0, 1)$. Then for every $t>0$, we have
    \begin{align*}
        \mathbb{P}(X \geq t) \leq \frac{1}{t} \cdot \frac{1}{\sqrt{2\pi}} e^{- \frac{t^2}{2}}. 
    \end{align*}
\label{Pre:SimpleGaussBound}
\end{lemma}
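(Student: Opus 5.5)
The statement is the standard Mills-ratio tail bound, and the plan is to prove it directly from the density of $X$. No earlier result of the paper is needed; the only input is the elementary observation that $u/t \geq 1$ on the region of integration.

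First I write the tail probability as an integral of the standard normal density,
\begin{align*}
    \mathbb{P}(X \geq t) = \frac{1}{\sqrt{2\pi}} \int_t^\infty e^{-\frac{u^2}{2}}\, du .
\end{align*}

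Next, since $t>0$ and $u \geq t$ throughout the domain of integration, the factor $u/t$ is at least one. Inserting it can therefore only increase the integrand:
\begin{align*}
    \int_t^\infty e^{-\frac{u^2}{2}}\, du \leq \int_t^\infty \frac{u}{t}\, e^{-\frac{u^2}{2}}\, du .
\end{align*}
This is the one step carrying any content. It is exactly what turns the integrand into an exact derivative, because $u\, e^{-u^2/2} = -\frac{d}{du} e^{-u^2/2}$.

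Finally, I evaluate the right-hand side explicitly:
\begin{align*}
    \frac{1}{t}\int_t^\infty u\, e^{-\frac{u^2}{2}}\, du = \frac{1}{t}\Bigl[-e^{-\frac{u^2}{2}}\Bigr]_{u=t}^{u=\infty} = \frac{1}{t}\, e^{-\frac{t^2}{2}} .
\end{align*}
Multiplying by $\frac{1}{\sqrt{2\pi}}$ gives the claimed bound $\mathbb{P}(X \geq t) \leq \frac{1}{t}\cdot\frac{1}{\sqrt{2\pi}} e^{-\frac{t^2}{2}}$.

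I do not expect any obstacle here. The only point to check is that the integral converges, and it does, since the integrand decays like a Gaussian.
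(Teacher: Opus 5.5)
Your proof is correct and complete. The paper gives no proof of its own here; it only cites Vershynin's Proposition 2.1.2, which proves the bound by a slightly different trick.

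There, one substitutes $u = t + y$ and factors $e^{-u^2/2} = e^{-t^2/2}e^{-ty}e^{-y^2/2}$. Discarding $e^{-y^2/2}\le 1$ bounds the tail by $\frac{1}{\sqrt{2\pi}}e^{-t^2/2}\int_0^\infty e^{-ty}\,dy = \frac{1}{t}\cdot\frac{1}{\sqrt{2\pi}}e^{-t^2/2}$. That shift argument pulls out the Gaussian factor immediately and leaves an elementary exponential integral.

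Your insertion of $u/t\ge 1$ is the classical Mills-ratio argument and is closely related to the integration by parts $\int_t^\infty e^{-u^2/2}\,du = \frac{1}{t}e^{-t^2/2} - \int_t^\infty u^{-2}e^{-u^2/2}\,du$. Applying your own trick once more to the remainder, via $u^{-2}\le u/t^3$ on $[t,\infty)$, gives the matching lower bound $\bigl(\frac{1}{t}-\frac{1}{t^3}\bigr)\frac{1}{\sqrt{2\pi}}e^{-t^2/2}$ at no extra cost. The shift argument needs a separate identity for that lower bound.

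The paper only needs the upper bound, in the union-bound step of the appendix estimate on maxima of Gaussians, so either route suffices.
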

\begin{proposition}
    Let $N\geq 2$ and $X_1, ...,X_N$ be independent, identically distributed Gaussian random variables such that $X_1\sim \mathcal{N}(0, 1)$. Then there exists a constant $C_p >0$ that depends only on $p$ such that
    \begin{align*}
        \mathbb{E}\biggeck{\maxu{i}{N} \, |X_i|^p} \leq C_p (\log N)^{\frac{p}{2}}.
    \end{align*}
    \label{Pre:LemmaNormaVariables}
\end{proposition}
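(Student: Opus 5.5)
The plan is the layer-cake (tail-integration) formula combined with a union bound, using the Gaussian tail estimate of Lemma \ref{Pre:SimpleGaussBound}. Set $Y := \max_{1\le i\le N}|X_i|$. Then
\begin{align*}
\mathbb{E}[Y^p] = \int_0^\infty p\, t^{p-1}\, \mathbb{P}(Y > t)\, dt .
\end{align*}
We fix the threshold $t_0 := \sqrt{2\log N}$ and split the integral at $t_0$. On $[0,t_0]$ we use the trivial bound $\mathbb{P}(Y>t)\le 1$, which contributes $t_0^p = (2\log N)^{p/2}$.

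On $[t_0,\infty)$ we use the union bound and symmetry. Together with Lemma \ref{Pre:SimpleGaussBound} this gives
\begin{align*}
\mathbb{P}(Y>t) \le 2N\,\mathbb{P}(X_1>t) \le \frac{2N}{t\sqrt{2\pi}}\, e^{-t^2/2}.
\end{align*}
We then substitute $t = t_0 + u$ with $u\ge 0$. Since $t^2 \ge t_0^2 + 2t_0 u + u^2$, we get $N e^{-t^2/2} \le e^{-t_0 u - u^2/2}\le e^{-u^2/2}$, using $N e^{-t_0^2/2} = 1$. This leaves
\begin{align*}
\int_{t_0}^\infty p\, t^{p-1}\,\mathbb{P}(Y>t)\,dt \le \frac{2p}{\sqrt{2\pi}}\int_0^\infty (t_0+u)^{p-2} e^{-u^2/2}\,du .
\end{align*}
To finish, we bound $(t_0+u)^{p-2} \le 2^{p-2}\big(t_0^{p-2} + u^{p-2}\big)$; this step needs $p\ge 2$. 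If $1\le p<2$, we instead use $(t_0+u)^{p-2}\le t_0^{p-2}$. The remaining Gaussian integrals are finite constants depending only on $p$, so the tail part is at most $C'_p\big(1 + t_0^{p-2}\big)$.

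The main obstacle is absorbing the additive constant into $(\log N)^{p/2}$. This is where the hypothesis $N\ge 2$ enters: it gives $\log N \ge \log 2 > 0$, so $1 \le (\log N/\log 2)^{p/2}$ and $t_0^{p-2} \le c_p (\log N)^{p/2}$ (for $p\ge2$ directly, and for $p<2$ because $t_0$ is bounded below). Collecting the pieces yields $\mathbb{E}[Y^p]\le C_p(\log N)^{p/2}$ with
\begin{align*}
C_p = 2^{p/2} + C'_p\,\big(c_p + (\log 2)^{-p/2}\big).
\end{align*}
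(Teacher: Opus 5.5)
Your proof is correct and follows the paper's argument essentially step by step. Both use the layer-cake formula, the split at $t_0=\sqrt{2\log N}$, the union bound with Lemma~\ref{Pre:SimpleGaussBound}, the shift $u=t-t_0$ using $Ne^{-t_0^2/2}=1$, the bound $(t_0+u)^{p-2}\le 2^{p-2}(t_0^{p-2}+u^{p-2})$, and absorption of lower-order terms via $N\ge 2$. The one difference is that you keep $e^{-u^2/2}$ and drop $e^{-t_0u}$, whereas the paper keeps $e^{-t_0u}$ (via $t^2-t_0^2\ge 2t_0(t-t_0)$) and gets a slightly sharper $\mathcal{O}(t_0^{p-3})$ tail from Gamma integrals. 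Both suffice, and your version also covers $1\le p<2$ explicitly.
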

\begin{proof}
We can compute straightforwardly:
\begin{align*}
    \mathbb{E}\biggeck{\maxu{i}{N} \, |X_i|^p} &= \int^\infty_0 \mathbb{P} 
    (\maxu{i}{N} \, |X_i|^p \geq x) \, dx \\
    (x = t^p) \quad &= \int^\infty_0 \mathbb{P} (\maxu{i}{N} \, |X_i| \geq t) 
    \, pt^{p-1} \, dt.
\end{align*}
We now choose to split the integral at $t_0 = \sqrt{2 \log N}$:
\begin{align}
    \int^\infty_0  \mathbb{P} (\maxu{i}{N} \, |X_i| \geq t)  pt^{p-1} \, dt 
    &\leq \int_0^{t_0}  pt^{p-1} \, dt + \int_{t_0}^\infty  
    \mathbb{P} (\maxu{i}{N} \, |X_i| \geq t)  pt^{p-1} \, dt, 
    \label{Pre:iidBound2}
\end{align}
where we used $\mathbb{P} (\maxu{i}{N} \, |X_i| \geq t) \leq 1$. 
We evaluate both integrals separately, starting with the first one:
\begin{align}
    \int_0^{t_0}  pt^{p-1} \, dt = t_0^{p} 
    \overset{(t_0 = \sqrt{2 \log N})}{=} \bigrou{2 \log N}^{\frac{p}{2}}.
    \label{Pre:iidFirstIntegralB1}
\end{align}
The analysis of the second integral is somewhat more difficult. 
We begin by applying a union bound in conjunction with the i.i.d. assumption, 
which yields
\begin{align}    
     \int^\infty_{t_0} \mathbb{P} (\maxu{i}{N} \, |X_i| \geq t)  
     pt^{p-1} \, dt \leq N \int^\infty_{t_0} 
     \mathbb{P} ( |X_1| \geq t) \, pt^{p-1} \, dt.
     \label{Pre:iidSeconintegralNBound}
\end{align}
We make use of Lemma \ref{Pre:SimpleGaussBound} to obtain 
\begin{align}
      N\int^\infty_{t_0} \mathbb{P} ( |X_1| \geq t) \, pt^{p-1} \, dt
   &\leq \frac{2N}{\sqrt{2\pi}}\int^\infty_{t_0} \frac{1}{t} \cdot 
   e^{- \frac{t^2}{2}} \cdot pt^{p-1} \, dt \notag \\
   &= \frac{2Np}{\sqrt{2\pi}}\int^\infty_{t_0} 
   e^{- \frac{t^2}{2}} t^{p-2} \, dt.
   \label{Pre:iidSeconintegralB1}
\end{align}
We further estimate the integral by inserting $e^{\frac{t_0^2}{2}}
e^{-\frac{t_0^2}{2}}$:
\begin{align}
    \int^\infty_{t_0} e^{- \frac{t^2}{2}} t^{p-2} \, dt 
    &= \int^\infty_{t_0} t^{p-2}  e^{- \frac{t^2 - t_0^2}{2}} 
    e^{-\frac{t^2_0}{2}}\, dt \notag\\
   \Bigrou{\frac{t_0^2}{2} = \log N} \quad 
   &= \frac{1}{N}\int^\infty_{t_0} t^{p-2}  
   e^{- \frac{t^2 - t_0^2}{2}}\, dt \notag\\
\Bigrou{t^2-t_0^2 \geq 2t_0(t-t_0)} \quad   
   &\leq \frac{1}{N}\int^\infty_{t_0} t^{p-2}  
   e^{- t_0(t-t_0)} \, dt \notag\\
\Bigrou{u = t - t_0} \quad   
   &= \frac{1}{N} \int^\infty_{0} (u + t_0)^{p-2}  
   e^{- t_0u} \, du \notag\\
\Bigrou{(u+t_0)^{p-2} \leq 2^{p-2} (u^{p-2} + t_0^{p-2})} \quad 
   &\leq \frac{2^{p-2}}{N}\Biggrou{ 
   \underbrace{\int^\infty_{0} u^{p-2}  e^{-t_0u} \, du}
   _{ = \frac{\Gamma(p-1)}{t_0^{p-1}}} 
   + t_0^{p-2} \underbrace{\int^\infty_{0} e^{-t_0u} \, du}
   _{= \frac{1}{t_0}}} \notag \\
&= \frac{2^{p-2}}{N} \biggrou{\frac{\Gamma(p-1)}{t_0^{p-1}} 
   + t_0^{p-3}} \notag\\
\Bigrou{t_0 = \sqrt{2 \log N}} \quad &= \frac{2^{p-2}}{N} \Biggrou{\frac{\Gamma(p-1)}{(2 \log N)^{\frac{p-1}{2}}} 
   + (2 \log N)^{\frac{p-3}{2}}}.
\label{Pre:iidSeconintegralB2}
\end{align}
We now justify that \eqref{Pre:iidSeconintegralB2} is bounded by $K_p \frac{(2\log N)^{\frac{p-3}{2}}}{N}$
for a constant $K_p$ depending only on $p$. Dividing \eqref{Pre:iidSeconintegralB2} by $\frac{(2\log N)^{\frac{p-3}{2}}}{N}$, this reduces to
\begin{align*}
     2^{p-2}\left(\frac{\Gamma(p-1)}{(2\log N)^{p-2}}
    + 1\right) \leq K_p .
\end{align*}
For $N \geq 2$ and $p \geq 2$, we have $(2\log N)^{p-2} \geq (2\log 2)^{p-2}$,
so we can choose explicitly
\begin{align*}
    K_p := 2^{p-2}\biggrou{\frac{\Gamma(p-1)}{(2\log 2)^{p-2}}+1},
\end{align*}
which is a positive constant depending only on $p$.
\newline
Inserting \eqref{Pre:iidSeconintegralB2} into \eqref{Pre:iidSeconintegralB1} 
gives us
\begin{align*}
    N\int^\infty_{t_0} \mathbb{P} ( |X_1| \geq t) \, pt^{p-1} \, dt
   \leq \frac{2Np}{\sqrt{2\pi}} \cdot \frac{K_p}{N} 
   \bigrou{2 \log N}^{\frac{p-3}{2}} 
   = \frac{2pK_p}{\sqrt{2\pi}}\bigrou{2 \log N}^{\frac{p-3}{2}}.
\end{align*}
This once again combined with \eqref{Pre:iidFirstIntegralB1} yields 
our final estimate
\begin{align*}
    \mathbb{E}\biggeck{\maxu{i}{N} \, |X_i|^p} 
    &\leq \bigrou{2 \log N}^{\frac{p}{2}} 
    + \frac{2pK_p}{\sqrt{2\pi}}\bigrou{2 \log N}^{\frac{p-3}{2}} 
    \leq C_p (\log N)^{\frac{p}{2}}, 
\end{align*}
for a constant $C_p > 0$ depending only on $p$. 
This concludes the proof.
\end{proof}
\vspace{.5 cm}
With these tools, the proof of Lemma \ref{Pre:LemmaNormalVectors} becomes easy.
\vspace{.2 cm}
\begin{proof}[Proof of Lemma \ref{Pre:LemmaNormalVectors}]
     For simplicity, we first assume that $X_1, ..., X_N$ are i.i.d. standard normal distributed random vectors. Therefore, we can compute straightforwardly
    \begin{align}
       \mathbb{E} \biggeck{\maxu{i}{N} \N{X_i}^p} = \mathbb{E} \Biggeck{\maxu{i}{N} \biggrou{\sum ^d_{l=1} |X_{i, l}|^2 }^{\frac{p}{2}}},
        \label{Pre:Gaussian ineq.}
    \end{align}
where we note that for every $i\leq N$ and $l \leq d$, $X_{i, l} \sim N(0, 1)$. We use standard inequalities and the i.i.d. property to compute further:
\begin{align}
    \mathbb{E} \Biggeck{\maxu{i}{N} \biggrou{\sum ^d_{l=1} |X_{i, l}|^2 }^{\frac{p}{2}}} \leq d^{\frac{p}{2}}\,\mathbb{E} \Biggeck{\maxu{i}{N} \,   \maxu{l}{d} \,  |X_{i, l}|^p} &\leq d^{\frac{p}{2}}\,\mathbb{E} \Biggeck{\maxu{m}{2Nd} \, |\widetilde{X}_m|^p} \notag \\ &\leq C_p \Bigrou{d\log (2Nd)}^{\frac{p}{2}},
\end{align}
where $(\widetilde{X}_m)_{m \leq 2Nd}$ denotes the family $(X_{i,l})_{i \leq N, l \leq d}$ extended by $Nd$ further independent standard normal random variables. Adding random variables can only increase the maximum, and since $2Nd \geq 2$, we may apply Proposition \ref{Pre:LemmaNormaVariables}.
From this we can directly deduce the result for $Z_1, ..., Z_N \sim N(0, \sigma^2 I_d)$:
\begin{align}
    \mathbb{E} \biggeck{\maxu{i}{N} \N{Z_i}^p} \leq \sigma^pC_p \Bigrou{d\log (2Nd)}^{\frac{p}{2}}.
\end{align}
\end{proof}
\section{Proof of Lemma \ref{CH:ConsBound}}
\label{Ch:ConsBoundProof}
 A crucial tool for this will be the following theorem, which is due to Rosenthal \cite{Rosenthal1970OnVariables}. It provides an opportunity to estimate the moments of a sum of random variables in terms of the sum of their moments. 

\begin{theorem}[{{\cite[Theorem 3]{Rosenthal1970OnVariables}}}]
    Let $p \in [2, \infty)$. Then, there exists a constant $C_{p, d}>0$, depending only on $p$ and $d$ such that for any $N\in \mathbb{N}$, for any collection of independent, zero mean random variables $X_1, ..., X_N$ with finite $p$th moments, we have
    \begin{align*}
        \mathbb{E} \Biggeck{\N{\sum^N_{i=1} X_i}^p}^{\frac{1}{p}} \leq C_{p, d} \max \Bigg\{ \biggrou{\sum^N_{i=1} \mathbb{E} \Bigeck{\N{X_i}^p}}^{\frac{1}{p}}, \biggrou{\sum^N_{i=1} \mathbb{E} \Bigeck{\N{X_i}^2}}^{\frac{1}{2}}  \Bigg \}.
    \end{align*}
    \label{Rosenthal}
\end{theorem}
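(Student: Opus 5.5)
The plan is to reduce the vector-valued statement of Theorem \ref{Rosenthal} to the scalar case coordinatewise, and then prove the scalar case with a Burkholder-type square-function bound together with a moment inequality for sums of independent nonnegative variables.

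\textbf{Step 1 (reduction to $d=1$).} Write $S:=\sum_{i=1}^N X_i$ and $S_l:=\sum_{i=1}^N X_{i,l}$ for $l\le d$. Since $p\ge 2$, the elementary inequality $\N{x}^p\le d^{\frac p2-1}\sum_{l=1}^d |x_l|^p$ gives
\begin{align*}
\mathbb{E}\bigeck{\N{S}^p}\le d^{\frac p2-1}\sum_{l=1}^d \mathbb{E}\bigeck{|S_l|^p}.
\end{align*}
For each fixed $l$, the variables $X_{1,l},\dots,X_{N,l}$ are independent, centered, and satisfy $|X_{i,l}|\le\N{X_i}$. Hence a scalar Rosenthal bound for each $S_l$, in which $\mathbb{E}|X_{i,l}|^p\le \mathbb{E}\N{X_i}^p$ and $\mathbb{E}|X_{i,l}|^2\le\mathbb{E}\N{X_i}^2$, yields the claim with $C_{p,d}=d^{1/2}C_p$. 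This is why $C_{p,d}$ depends on $d$.

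\textbf{Step 2 (square function).} For real, centered, independent $X_i$, the partial sums $S_k=\sum_{i\le k}X_i$ form a discrete martingale with respect to the filtration generated by $X_1,\dots,X_k$. I would invoke the discrete Burkholder inequality, the discrete analogue of Lemma \ref{Burkholder}. One can obtain it from Lemma \ref{Burkholder} by embedding the sequence into a continuous martingale; if that is cumbersome, symmetrization plus Khintchine's inequality gives the same bound. The result is
\begin{align*}
\mathbb{E}|S_N|^p\le C\,\mathbb{E}\Bigeck{\Bigrou{\sum_{i=1}^N X_i^2}^{p/2}}.
\end{align*}

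\textbf{Step 3 (nonnegative sums; the main obstacle).} Set $Y_i:=X_i^2\ge0$ and $r:=p/2\ge1$. It remains to show
\begin{align*}
\mathbb{E}\Bigeck{\Bigrou{\sum_{i=1}^N Y_i}^r}\le C_r\Bigrou{\sum_{i=1}^N\mathbb{E}Y_i^r+\Bigrou{\sum_{i=1}^N\mathbb{E}Y_i}^r}.
\end{align*}
This gives exactly the two terms in the maximum of Theorem \ref{Rosenthal}. The difficulty is that this must hold for arbitrary $r\ge1$ with a constant independent of $N$.

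I would first try the following self-improving argument. Write $T:=\sum_{i=1}^N Y_i$ and $T_{-i}:=T-Y_i$. Then
\begin{align*}
\mathbb{E}T^r=\sum_{i=1}^N\mathbb{E}\bigeck{Y_i(Y_i+T_{-i})^{r-1}}\le 2^{(r-2)_+}\sum_{i=1}^N\Bigrou{\mathbb{E}Y_i^r+\mathbb{E}Y_i\,\mathbb{E}T^{r-1}},
\end{align*}
where the inequality uses the independence of $Y_i$ and $T_{-i}$ and the bound $T_{-i}\le T$. Next, by Jensen or Hölder, $\mathbb{E}T^{r-1}\le(\mathbb{E}T^r)^{(r-1)/r}$. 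Young's inequality then absorbs this piece:
\begin{align*}
\Bigrou{\sum_{i=1}^N\mathbb{E}Y_i}(\mathbb{E}T^r)^{\frac{r-1}{r}}\le \varepsilon\,\mathbb{E}T^r+C_\varepsilon\Bigrou{\sum_{i=1}^N\mathbb{E}Y_i}^r.
\end{align*}
Choosing $\varepsilon$ small and moving the $\varepsilon$-term to the left-hand side gives the desired bound. This absorption needs $\mathbb{E}T^r<\infty$, which holds because each $X_i$ has finite $p$th moment and $N$ is finite.

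If the absorption step causes trouble, the fallback is an iteration. Center the variables, $Y_i-\mathbb{E}Y_i$, apply Step 2 again with exponent $r$, and repeat, halving the exponent each time. Once the exponent drops to at most $1$, use subadditivity $(\sum a_i)^s\le\sum a_i^s$ for $s\le1$ to stop, and control the intermediate moments by interpolation between the second and $p$th moments.

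Finally, combining Steps 2 and 3 and taking $p$th roots gives the scalar inequality, and Step 1 lifts it to $\mathbb{R}^d$.
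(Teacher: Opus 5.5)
Your argument is correct, and it does more than the paper does. The paper gives no proof: it cites \cite[Theorem 3]{Rosenthal1970OnVariables}, which concerns real-valued random variables, and then applies it to $\mathbb{R}^d$-valued summands (the $Y_i$ in the proof of Lemma \ref{CH:ConsBound}) without spelling out the passage to $d>1$. Your Step 1 supplies that passage, with the explicit constant $C_{p,d}=d^{1/2}C_p$. Steps 2--3 then give a self-contained proof of the scalar case, modulo Khintchine's inequality. In Step 3, the identity $\mathbb{E}T^r=\sum_i\mathbb{E}[Y_i(Y_i+T_{-i})^{r-1}]$, independence, Lyapunov's inequality and Young's inequality close the estimate. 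The absorption is legitimate since $\mathbb{E}T^{p/2}\le N^{p/2-1}\sum_i\mathbb{E}|X_i|^p<\infty$, so the fallback iteration is not needed.

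Two small remarks. First, in Step 2, use the symmetrization--Khintchine route rather than embedding into a continuous martingale. Under a Skorokhod embedding, for instance, the terminal quadratic variation is a sum of stopping-time increments rather than $\sum_i X_i^2$, so Lemma \ref{Burkholder} does not yield Step 2 directly. Second, Step 1 can be dropped altogether. Symmetrization works verbatim for vectors, and the Khintchine--Kahane inequality $\mathbb{E}\N{\sum_i\varepsilon_ix_i}^p\le B_p\bigl(\sum_i\N{x_i}^2\bigr)^{p/2}$ holds in $\mathbb{R}^d$ with $B_p$ independent of $d$. Running Step 3 with $Y_i=\N{X_i}^2$ therefore gives a dimension-free constant. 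For the paper this refinement is immaterial, since Corollary \ref{RosenthalCorr} and Lemma \ref{CH:ConsBound} only use that the constant is independent of $N$.
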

\begin{corollary}
    Let $p \in [2, \infty)$. Then, there exists a constant $C_{p, d} > 0$, depending only on $p$ and $d$ such that for any $N \in \mathbb{N}$, for any collection of independent, identically distributed, zero mean random variables $X_1, ..., X_N$ with finite $p$th moments, we have
    \begin{align*}
        \mathbb{E} \Biggeck{\N{\frac{1}{N} \sum^N_{i= 1} X_i}^p} \leq C_{p, d} \max \Bigg\{ \frac{\mathbb{E}\bigeck{\N{X_1}^p}}{N^{p-1}},  \frac{\mathbb{E}\bigeck{\N{X_1}^2}^{\frac{p}{2}}}{N^{\frac{p}{2}}}  \Bigg \}.
    \end{align*}
    \label{RosenthalCorr}
    \end{corollary}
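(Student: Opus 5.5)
The approach is to apply Theorem \ref{Rosenthal} to the i.i.d.\ centered vectors $X_1,\dots,X_N$ and then track how the factors of $N$ come out of the normalization $\frac1N$. By homogeneity,
\begin{align*}
\mathbb{E}\biggeck{\N{\frac1N\sum_{i=1}^N X_i}^p} = N^{-p}\,\mathbb{E}\biggeck{\N{\sum_{i=1}^N X_i}^p}.
\end{align*}
Since the $X_i$ are independent, zero mean, and have finite $p$th moments, Theorem \ref{Rosenthal} gives
\begin{align*}
\mathbb{E}\biggeck{\N{\sum_{i=1}^N X_i}^p} \le C_{p,d}^p \max\Bigg\{ \sum_{i=1}^N \mathbb{E}\bigeck{\N{X_i}^p},\ \biggrou{\sum_{i=1}^N \mathbb{E}\bigeck{\N{X_i}^2}}^{p/2}\Bigg\}.
\end{align*}
Here we raised both sides of the Rosenthal bound to the power $p$. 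This is legitimate because $x\mapsto x^p$ is increasing on $[0,\infty)$, so it commutes with the maximum of the two nonnegative terms.

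Next we use that the $X_i$ are identically distributed. The first sum equals $N\,\mathbb{E}\bigeck{\N{X_1}^p}$, and the second term equals $N^{p/2}\,\mathbb{E}\bigeck{\N{X_1}^2}^{p/2}$. Multiplying by $N^{-p}$ yields
\begin{align*}
\mathbb{E}\biggeck{\N{\frac1N\sum_{i=1}^N X_i}^p} \le C_{p,d}^p \max\Bigg\{ \frac{\mathbb{E}\bigeck{\N{X_1}^p}}{N^{p-1}},\ \frac{\mathbb{E}\bigeck{\N{X_1}^2}^{p/2}}{N^{p/2}}\Bigg\}.
\end{align*}
Renaming $C_{p,d}^p$ as $C_{p,d}$ gives the claim. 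The new constant still depends only on $p$ and $d$.

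The only point that needs care is to apply Rosenthal in the unnormalized form and to distribute the exponents correctly, in particular $(N\,\mathbb{E}\bigeck{\N{X_1}^2})^{p/2}=N^{p/2}\,\mathbb{E}\bigeck{\N{X_1}^2}^{p/2}$. Finiteness of the second moment is automatic, because $p\ge 2$ gives $\mathbb{E}\bigeck{\N{X_1}^2}\le \mathbb{E}\bigeck{\N{X_1}^p}^{2/p}<\infty$ by Jensen's inequality.
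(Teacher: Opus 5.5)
Your proof is correct and is essentially the paper's argument. The paper applies Theorem \ref{Rosenthal} directly to the rescaled variables $X_i/N$ rather than first pulling out the factor $N^{-p}$, but it then uses identical distribution, raises to the $p$th power, and renames $K_{p,d}^p$ as $C_{p,d}$, just as you do.
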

\begin{proof}
    By Theorem \ref{Rosenthal}, we immediately get
    \begin{align*}
          \mathbb{E} \Biggeck{\N{ \frac{1}{N}\sum^N_{i=1} X_i}^p}^{\frac{1}{p}} =  \mathbb{E} \Biggeck{\N{\sum^N_{i=1} \frac{X_i}{N}}^p}^{\frac{1}{p}} &\leq  K_{p, d} \max \Bigg\{ \Biggrou{\sum^N_{i=1} \mathbb{E} \biggeck{\N{\frac{X_i}{N}}^p}}^{\frac{1}{p}}, \Biggrou{\sum^N_{i=1} \mathbb{E} \biggeck{\N{\frac{X_i}{N}}^2}}^{\frac{1}{2}}  \Bigg \} \\
         \text{(identically distributed)} & =  K_{p, d} \max \Bigg\{ \Biggrou{N \mathbb{E} \biggeck{\N{\frac{X_1}{N}}^p}}^{\frac{1}{p}}, \Biggrou{N \mathbb{E} \biggeck{\N{\frac{X_1}{N}}^2}}^{\frac{1}{2}}  \Bigg \}\\
         &= K_{p, d} \max \Bigg\{ \Biggrou{ \frac{\mathbb{E}\bigeck{\N{X_1}^p}}{N^{p-1}}}^{\frac{1}{p}}, \Biggrou{ \frac{\mathbb{E}\bigeck{\N{X_1}^2}}{N}}^{\frac{1}{2}}  \Bigg \}, 
    \end{align*}
where $K_{p, d}>0$ is the constant arising from Theorem \ref{Rosenthal}. Lastly, raising both sides to the power of $p$ and defining $C_{p, d} := K_{p, d}^p$ gives us
\begin{align*}
    \mathbb{E} \Biggeck{\N{ \frac{1}{N}\sum^N_{i=1} X_i}^p} \leq C_{p, d} \max \Bigg\{ \frac{\mathbb{E}\bigeck{\N{X_1}^p}}{N^{p-1}},  \frac{\mathbb{E}\bigeck{\N{X_1}^2}^{\frac{p}{2}}}{N^{\frac{p}{2}}}  \Bigg \}.
\end{align*}
\end{proof}

\begin{proof}[Proof of Lemma \ref{CH:ConsBound}]
       We fix $N \in \mathbb{N}$ and begin to estimate
\begin{align}
     &\mathbb{E} \Biggeck{ \N{X^\alpha(\mnu) - X^\alpha (\nu)}^p \mathbbm{1}_{\Omega_M}}\\ = \, & \mathbb{E} \Biggeck{ \N{ \frac{1}{N}\sum^N_{i=1} \overbar{X}^i\frac{\oa(\overbar{X}^i)}{\Gnorm{\mnu}}  -  \int_{\mathbb{R}^d} x \frac{\oa(x)}{\Gnorm{\nu}} d\nu(x)}^p \mathbbm{1}_{\Omega_M}} \notag \\
     \leq \, & 2^{p-1}\mathbb{E} \Biggeck{ \N{ \frac{1}{N}\sum^N_{i=1} \overbar{X}^i\frac{\oa(\overbar{X}^i)}{\Gnorm{\mnu}}  -  \int_{\mathbb{R}^d} x \frac{\oa(x)}{\Gnorm{\mnu}} d\nu(x)}^p \mathbbm{1}_{\Omega_M}} \notag \\
     \quad \, & + 2^{p-1}\mathbb{E} \Biggeck{\N{ \int_{\mathbb{R}^d} x \frac{\oa(x)}{\Gnorm{\mnu}} d\nu(x) -\int_{\mathbb{R}^d} x \frac{\oa(x)}{\Gnorm{\nu}} d\nu(x)}^p \mathbbm{1}_{\Omega_M}} \notag \\
     = \, & \colon 2^{p-1} \mathbb{E}\Bigeck{\N{S_1}^p \mathbbm{1}_{\Omega_M}} +  2^{p-1}\mathbb{E}\Bigeck{\N{S_2}^p \mathbbm{1}_{\Omega_M}},
     \label{CB:S1S2}
\end{align}
where, in the second line, we added 
\begin{align*}
   0 = \int_{\mathbb{R}^d} x \frac{\oa(x)}{\Gnorm{\mnu}} d\nu(x) - \int_{\mathbb{R}^d} x \frac{\oa(x)}{\Gnorm{\mnu}} d\nu(x)
\end{align*}
and used $(a+b)^p \leq 2^{p-1}(a^p+b^p)$.\newline
Now, we estimate each term separately, beginning with the first one: 
\begin{align}
   \mathbb{E}\Bigeck{\N{S_1}^p \mathbbm{1}_{\Omega_M}} &= \mathbb{E} \Biggeck{ \N{  \frac{1}{N}\sum^N_{i=1} \overbar{X}^i\frac{\oa(\overbar{X}^i)}{\Gnorm{\mnu}}  -  \int_{\mathbb{R}^d} x \frac{\oa(x)}{\Gnorm{\mnu}} d\nu(x)}^p \mathbbm{1}_{\Omega_M}} \notag \\
   &=  \mathbb{E} \Biggeck{ \biggrou{\frac{1}{\Gnorm{\mnu}}}^p \N{\frac{1}{N} \sum^N_{i=1} \overbar{X}^i\oa(\overbar{X}^i)  -  \int_{\mathbb{R}^d} x \oa(x) d\nu(x)}^p \mathbbm{1}_{\Omega_M}}. \notag
\end{align}
Since $p \geq 2$, Jensen's inequality yields on $\Omega_M$
\begin{align*}
    \mathbb{E}_{\mnu}\bigeck{\N{x}^2} = \frac{1}{N}\sum_{i=1}^N \N{\overbar{X}^i}^2 \leq \biggrou{\frac{1}{N}\sum_{i=1}^N \N{\overbar{X}^i}^p}^{\frac{2}{p}} \leq M^{\frac{2}{p}},
\end{align*}
so Lemma \ref{CarrL1} is applicable to $\mnu$ with $M_2 = M^{2/p}$.
We now expand the fraction that we pulled out by $\frac{\oa(x^*)}{\oa(x^*)}$ in the second line and use Lemma \ref{CarrL1} to further estimate:
\begin{align}
    & \mathbb{E} \Biggeck{ \biggrou{\frac{1}{\Gnorm{\mnu}}}^p \N{\frac{1}{N} \sum^N_{i=1} \overbar{X}^i\oa(\overbar{X}^i)  -  \int_{\mathbb{R}^d} x \oa(x) d\nu(x)}^p \mathbbm{1}_{\Omega_M}} \notag \\ 
    = \, & \mathbb{E} \Biggeck{ \biggrou{\frac{\oa(x^*)}{\Gnorm{\mnu}}\frac{1}{\oa(x^*)}}^p \N{ \frac{1}{N} \sum^N_{i=1} \overbar{X}^i\oa(\overbar{X}^i)  -  \int_{\mathbb{R}^d} x \oa(x) d\nu(x)}^p \mathbbm{1}_{\Omega_M}} \notag \\
  \text{(Lemma \ref{CarrL1})}  \quad  \leq \, & \mathbb{E} \Biggeck{ \biggrou{\frac{e^{\alpha C_2(1+M^{2/p})}}{ \oa(x^*)}}^p \N{ \frac{1}{N} \sum^N_{i=1} \overbar{X}^i\oa(\overbar{X}^i)  -  \int_{\mathbb{R}^d} x \oa(x) d\nu(x)}^p \mathbbm{1}_{\Omega_M} } \notag \\
      \leq \, & e^{ p \alpha (f(x^*)+C_2(1+M^{2/p}))} \mathbb{E}  \Biggeck{\N{ \frac{1}{N} \sum^N_{i=1} \overbar{X}^i\oa(\overbar{X}^i)  -  \int_{\mathbb{R}^d} x \oa(x) d\nu(x)}^p \mathbbm{1}_{\Omega_M}},
      \label{CB:S1Bd}
\end{align}
where we used the second-moment bound on $\Omega_M$ from above.
We are left with bounding the non constant part. To this end, for all $i \leq N$ we introduce the random variables
\begin{align*}
    Y_i := \overbar{X}^i\oa(\overbar{X}^i)  -  \int_{\mathbb{R}^d} x \oa(x) d\nu(x).
\end{align*}
Note that for all $i$, we have
\begin{align*}
    \mathbb{E} \bigeck{Y_i} = \underbrace{\mathbb{E} \Bigeck{\overbar{X}^i\oa(\overbar{X}^i)}}_{= \int_{\mathbb{R}^d} x \oa(x) d\nu(x)}  -  \int_{\mathbb{R}^d} x \oa(x) d\nu(x) = 0.
\end{align*}
This means that $\frac{1}{N}\sum^N_{i = 1} Y_i$ is the sample mean of independent, identically distributed, zero mean random variables with finite $p$th moments and we therefore can apply Corollary \ref{RosenthalCorr} to get the estimate
\begin{align}
    \mathbb{E} \Biggeck{\N{\frac{1}{N} \sum^N_{i= 1} Y_i}^p} \leq C_{p, d} \max \Bigg\{ \frac{\mathbb{E}\bigeck{\N{Y_1}^p}}{N^{p-1}},  \frac{\mathbb{E}\bigeck{\N{Y_1}^2}^{\frac{p}{2}}}{N^{\frac{p}{2}}}  \Bigg \}, \label{CB:RosenthalEstimate} 
\end{align}
for a constant $C_{p, d} > 0$ that depends only on $p$ and $d$.
To further estimate, we use the definition of $Y_1$ to bound its $p$th moment
\begin{align*}
   \mathbb{E} \Bigeck{\N{Y_1}^p} &= \mathbb{E} \biggeck{\N{\overbar{X}^1 \oa(\overbar{X}^1) - \int_{\mathbb{R}^d} x \oa(x) d\nu(x)}^p}\\ &\leq 2^{p-1} \mathbb{E} \biggeck{\N{\overbar{X}^1}^p \oa(\overbar{X}^1)^p + \N{\int_{\mathbb{R}^d} x \oa(x) d\nu(x)}^p} \\
    &\leq 2^{p-1} \Biggrou{\oa(x^*)^p \mathbb{E}\Bigeck{\N{\overbar{X}^1}^p} + \oa(x^*)^p \mathbb{E}\Bigeck{\N{\overbar{X}^1}^p}}.\\
    &=(2 \oa(x^*))^p \mathbb{E}\Bigeck{\N{\overbar{X}^1}^p}\\
    &\leq (2 \oa(x^*))^p M_p.
\end{align*}
Analogously, we get for the second moment of $Y_1$
\begin{align*}
    \mathbb{E} \Bigeck{\N{Y_1}^2} \leq 4 \oa(x^*)^2 M_2
\end{align*}
By Jensen's inequality, we have that 
\begin{align*}
    (M_2)^{\frac{p}{2}} \leq M_p. 
\end{align*}
Combining the latter with our Rosenthal estimate \eqref{CB:RosenthalEstimate} using the monotonicity of expectation yields
\begin{align*}
     \mathbb{E} \Biggeck{\N{\frac{1}{N} \sum^N_{i= 1} Y_i}^p} &\leq C_{p, d} \max \Bigg \{\frac{\bigrou{2\oa(x^*)}^p M_p}{N^{p-1}}, \frac{\bigrou{2 \oa(x^*)}^p (M_2)^{\frac{p}{2}}}{N^{\frac{p}{2}}} \Bigg \} \\ &\leq C_{p, d} M_p\bigrou{2 \oa(x^*)}^p \max \Bigg \{ \frac{1}{N^{p-1}} , \frac{1}{N^{\frac{p}{2}}}\Bigg \}.
\end{align*}
We insert the latter into \eqref{CB:S1Bd} to get our final estimate for the first part:
\begin{align}
    \mathbb{E} \bigeck{\N{S_1}^p \mathbbm{1}_{\Omega_M}} \leq e^{ p \alpha (f(x^*)+C_2(1+M^{2/p}))} C_{p, d} M_p\bigrou{2\oa(x^*)}^p \max \Bigg \{ \frac{1}{N^{p-1}} , \frac{1}{N^{\frac{p}{2}}}\Bigg \}. 
    \label{CB:S1FinalBound}
\end{align}
We now proceed to treat the second part of (\ref{CB:S1S2}). We start by expanding fractions to write the difference in terms of the consensus point of $\nu$ and then pulling it out.
\begin{align}
    \mathbb{E}\bigeck{\N{S_2}^p \mathbbm{1}_{\Omega_M} } &= \mathbb{E} \Biggeck{\N{\int_{\mathbb{R}^d} x \frac{\oa(x)}{\Gnorm{\mnu}} d\nu(x) - \int_{\mathbb{R}^d} x \frac{\oa(x)}{\Gnorm{\nu}} d\nu(x)}^p \mathbbm{1}_{\Omega_M} } \notag \\
  \Biggrou{\text{expand by } \frac{\Gnorm{\nu}}{\Gnorm{\nu}}} \quad  &= \mathbb{E} \Biggeck{\N{\frac{\Gnorm{\nu}}{\Gnorm{\mnu}} X^\alpha(\nu) - X^\alpha(\nu)}^p \mathbbm{1}_{\Omega_M}} \notag \\
  \bigrou{\text{pull out } \cons(\nu)} \quad   &= \mathbb{E} \Biggeck{ \frac{\N{\cons(\nu)}^p}{(\Gnorm{\mnu})^p} \Big|\Gnorm{\nu} - \Gnorm{\mnu} \Big|^p \mathbbm{1}_{\Omega_M}}.
    \label{CB:S2FirstBound}
\end{align}
We do a small auxiliary computation. With slight abuse of the notation and recalling that $\eta_{\nu} := \frac{\oa \nu}{\Gnorm{\nu}}$ defines a probability measure on $\mathbb{R}^d$, we can write $\N{\cons(\nu)}^p$ as
\begin{align*}
    \N{\cons(\nu)}^p = \N{\int_{\mathbb{R}^d} \frac{x \oa(x)}{\Gnorm{\nu}} d\nu}^p = \N{\int_{\mathbb{R}^d} x \,  d\eta_{\nu}(x)}^p = \N{\mathbb{E}_{\eta_{\nu}} \bigeck{x}}^p. 
\end{align*}
We can estimate this further by first using Jensen's inequality and after that Lemma \ref{GerberPropA4}:
\begin{align}
    \N{\mathbb{E}_{\eta_{\nu}} \bigeck{x}}^p \leq \mathbb{E}_{\eta_{\nu}} \bigeck{\N{x}^p} \leq b_p \mathbb{E}_{\nu} \bigeck{\N{x}^p}. 
    \label{CB:etaBound}
\end{align}
We insert (\ref{CB:etaBound}) into \eqref{CB:S2FirstBound}, so we can further compute:
\begin{align}
    & \quad \, \, \mathbb{E} \Biggeck{ \frac{\N{\cons(\nu)}^p}{(\Gnorm{\mnu})^p} \Big|\Gnorm{\nu} - \Gnorm{\mnu} \Big|^p \mathbbm{1}_{\Omega_M} } \notag \\
 (\ref{CB:etaBound}) \quad   &\leq \mathbb{E} \Biggeck{ \frac{b_p \mathbb{E}_{\nu} \bigeck{\N{x}^p}}{(\Gnorm{\mnu})^p} \Big|\Gnorm{\nu} - \Gnorm{\mnu} \Big|^p \mathbbm{1}_{\Omega_M}} \notag \\
  (\mathbb{E}_{\nu} \bigeck{\N{x}^p} \leq M_p) \quad &\leq  \mathbb{E} \Biggeck{ \frac{b_p M_p}{(\Gnorm{\mnu})^p} \Big|\Gnorm{\nu} - \Gnorm{\mnu} \Big|^p \mathbbm{1}_{\Omega_M} }
    \notag \\
\Biggrou{\text{multiply } \frac{\oa(x^*)}{\oa(x^*)}} \, \,   &= \mathbb{E} \Biggeck{b_p M_p \biggrou{\frac{\oa(x^*)}{\Gnorm{\mnu}} \frac{1}{\oa(x^*)}}^p \Big|\Gnorm{\nu} - \Gnorm{\mnu} \Big|^p \mathbbm{1}_{\Omega_M} } \notag \\
 (\text{Lemma \ref{CarrL1}}) \quad  &\leq \mathbb{E} \Biggeck{b_p M_p \biggrou{ \frac{e^{\alpha C_2 (1+ M^{2/p})}}{\oa(x^*)}}^p \Big|\Gnorm{\nu} -\Gnorm{\mnu} \Big|^p \mathbbm{1}_{\Omega_M} } \notag \\
 &\leq b_p M_p e^{ p \alpha (f(x^*)+C_2(1+M^{2/p}))} \mathbb{E} \Biggeck{\Big| \int_{\mathbb{R}^d} \oa(x) d\nu(x) - \frac{1}{N} \sum^N_{i=1} \oa(\overbar{X}^i) \Big|^p \mathbbm{1}_{\Omega_M} } \notag \\
    &\leq b_p M_p e^{ p \alpha (f(x^*)+C_2(1+M^{2/p}))} \mathbb{E} \Biggeck{\Big| \frac{1}{N} \sum^N_{i=1} \oa(\overbar{X}^i) - \int_{\mathbb{R}^d} \oa(x) d\nu(x)  \Big|^p \mathbbm{1}_{\Omega_M}}.
    \label{CB:S2ConsBound}
\end{align}
We want to treat the expectation in \eqref{CB:S2ConsBound} in a similar way as the expectation in \eqref{CB:S1Bd} by applying Corollary \ref{RosenthalCorr}. Therefore, we define for every $i \leq N$
\begin{align*}
Z_i := \oa(\overbar{X}^i) - \int_{\mathbb{R}^d} \oa(x) d\nu(x).
\end{align*}
Note that for all $i$, we have
\begin{align*}
    \mathbb{E} \bigeck{Z_i} = \underbrace{\mathbb{E} \biggeck{\oa(\overbar{X}^i)}}_{= \int_{\mathbb{R}^d} \oa(x) d\nu(x)}  -  \int_{\mathbb{R}^d} \oa(x) d\nu(x) = 0.
\end{align*}
This means that $\frac{1}{N}\sum^N_{i = 1} Z_i$ is the sample mean of independent, identically distributed random variables with finite $p$th moments and therefore we can apply Corollary \ref{RosenthalCorr} to get the estimate
\begin{align}
    \mathbb{E} \Biggeck{\N{\frac{1}{N} \sum^N_{i= 1} Z_i}^p} \leq C_{p, d} \max \Bigg \{\frac{\mathbb{E} \bigeck{\N{Z_1}^p}}{N^{p-1}} , \frac{\mathbb{E} \bigeck{\N{Z_1}^2}^{\frac{p}{2}}}{N^{\frac{p}{2}}}\Bigg \}, \label{CB:RosenthalEstimate2} 
\end{align}
for a constant $C_{p, d} > 0$ that depends only on $p$ and $d$.
To further estimate, we use the definition of $Z_1$ to compute on a fixed path $\omega \in \Omega$
\begin{align*}
    \N{Z_1} = \N{ \oa(\overbar{X}^1) - \int_{\mathbb{R}^d}  \oa(x) d\nu(x)} &\leq  |\oa(\overbar{X}^1)| + \bigg |\int_{\mathbb{R}^d} \oa(x) d\nu(x) \bigg | \\
    &\leq  \oa(x^*) + \oa(x^*) = 2\oa(x^*),
\end{align*}
where we used that $\oa(x) = e^{-\alpha f(x)}$ attains its maximum at $x^*$.
Combining the latter with our Rosenthal estimate \eqref{CB:RosenthalEstimate2} yields
\begin{align*}
     \mathbb{E} \Biggeck{\N{\frac{1}{N} \sum^N_{i= 1} Z_i}^p} &\leq C_{p, d} \max \Bigg \{\frac{\bigrou{2\oa(x^*)}^p}{N^{p-1}} ,\frac{\bigrou{2 \oa(x^*)}^p}{N^{\frac{p}{2}}} \Bigg \} \\ &= C_{p, d} \bigrou{2 \oa(x^*)}^p \max \Bigg \{ \frac{1}{N^{p-1}} , \frac{1}{N^{\frac{p}{2}}}\Bigg \}.
\end{align*}
Combining this with \eqref{CB:S2ConsBound} gives us our final estimate for the second part:
\begin{align}
    \mathbb{E} \bigeck{\N{S_2}^p \mathbbm{1}_{\Omega_M}} \leq b_p M_p e^{ p \alpha (f(x^*)+C_2(1+M^{2/p}))} C_{p, d} \bigrou{2 \oa(x^*)}^p \max \Bigg \{ \frac{1}{N^{p-1}} , \frac{1}{N^{\frac{p}{2}}}\Bigg \}. 
    \label{CB:S2FinalBound}
\end{align}
Finally, we can use (\ref{CB:S1FinalBound}) and (\ref{CB:S2FinalBound}) to get
\begin{align*}
     &\mathbb{E} \Biggeck{ \N{X^\alpha(\bar{\nu}^N) - X^\alpha (\nu)}^p \mathbbm{1}_{\Omega_M} }\\
     \leq \, & 2^{p-1}\Big( e^{ p \alpha (f(x^*)+C_2(1+M^{2/p}))} C_{p, d} M_p \bigrou{2 \oa(x^*)}^p \\
     & \quad + b_p M_p e^{ p \alpha (f(x^*)+C_2(1+M^{2/p}))} C_{p, d} \bigrou{2 \oa(x^*)}^p\Big) \max \Bigg \{ \frac{1}{N^{p-1}} , \frac{1}{N^{\frac{p}{2}}}\Bigg \}\\
     =\, & 2^{p-1} e^{ p \alpha (f(x^*)+C_2(1+M^{2/p}))}C_{p, d}(2\oa(x^*))^p  \bigrou{M_p + b_pM_p } \max \Bigg \{ \frac{1}{N^{p-1}} , \frac{1}{N^{\frac{p}{2}}}\Bigg \}\\
     =: \, &C_{\mathrm{EM}}\max \Bigg \{ \frac{1}{N^{p-1}} , \frac{1}{N^{\frac{p}{2}}}\Bigg \},
\end{align*}
with the constant $C_{\mathrm{EM}} = C_{\mathrm{EM}}(\alpha, C_2, M, M_p, b_p, C_{p, d}, p)$. As we choose $p \geq 2$, we have 
\begin{align*}
   \max \Bigg \{ \frac{1}{N^{p-1}} , \frac{1}{N^{\frac{p}{2}}}\Bigg \} = \frac{1}{N^{\frac{p}{2}}}, 
\end{align*}
and therefore
\begin{align*}
    \mathbb{E} \Biggeck{ \N{X^\alpha(\bar{\nu}^N) - X^\alpha (\nu)}^p \mathbbm{1}_{\Omega_M} } \leq C_{\mathrm{EM}} N^{-\frac{p}{2}}.
\end{align*}
This concludes the proof.
\end{proof}

\addcontentsline{toc}{chapter}{\protect Bibliography}
\bibliography{library}



\end{document}